\documentclass[10pt]{amsart}

\usepackage{amsmath,amsthm,amssymb,bm}
\usepackage{mathtools}
\usepackage{mathrsfs}
\usepackage{microtype}
\usepackage{lmodern}
\usepackage[T1]{fontenc}
\usepackage[english]{babel}
\usepackage{extarrows}
\usepackage{tikz,pgffor}
\usepackage{hyperref}
\hypersetup{colorlinks=true,citecolor=blue,linkcolor=blue,urlcolor=blue,
pdfstartview=FitH }

\textwidth=6in \textheight=8.6in \topmargin 0cm \oddsidemargin 0cm
\evensidemargin 0cm

\newcommand{\SL}{\mathrm{SL}}

\newcommand{\Qd}{\mathbb{Q}}
\newcommand{\Qbar}{\overline{\Qd}}

\newcommand{\Nd}{\mathbb{N}}
\newcommand{\Vd}{\mathbb{V}}
\newcommand{\Zd}{\mathbb{Z}}
\newcommand{\Fd}{\mathbb{F}}
\newcommand{\Rd}{\mathbb{R}}

\newcommand{\Gd}{\mathbb{G}}
\newcommand{\Pd}{\mathbb{P}}

\newcommand{\Ad}{\mathbb{A}}
\newcommand{\xv}{\mathbf{x}}
\newcommand{\vv}{\mathbf{v}}
\newcommand{\hv}{\mathbf{h}}
\newcommand{\tv}{\mathbf{t}}
\newcommand{\uv}{\mathbf{u}}
\newcommand{\yv}{\mathbf{y}}
\newcommand{\zv}{\mathbf{z}}

\newcommand{\dd}{\,{\mathrm d}}

\newcommand{\Vm}{\mathscr{V}}
\newcommand{\Wm}{\mathscr{W}}
\newcommand{\Dm}{\mathscr{D}}
\newcommand{\Rm}{\mathscr{R}}
\newcommand{\Pm}{\mathscr{P}}
\newcommand{\Nm}{\mathscr{N}}
\newcommand{\Mm}{\mathscr{M}}
\newcommand{\Om}{\mathscr{O}}
\newcommand{\Tm}{\mathscr{T}}
\newcommand{\Am}{\mathscr{A}}

\newcommand{\rleft}{\mathopen{}\mathclose\bgroup\left}
\newcommand{\rright}{\aftergroup\egroup\right}

\newcommand{\Xf}{\mathfrak{X}}

\newcommand{\cfr}{\mathfrak{c}}

\newcommand{\tX}{\widetilde{X}}

\newcommand{\stX}{\smash{\tX}}

\newcommand{\tY}{\widetilde{Y}}

\newcommand{\tT}{\widetilde{T}}
\newcommand{\tZ}{\widetilde{Z}}
\newcommand{\tpi}{\widetilde{\pi}}
\newcommand{\ie}{i.\,e.,~}
\newcommand{\eg}{e.\,g.,~}
\newcommand{\Sigmamax}{\Sigma_\mathrm{max}}
\newcommand{\Z}{\Zd}

\newcommand{\Q}{\Qd}

\DeclareMathOperator{\cone}{cone}
\DeclareMathOperator{\CF}{CF}
\DeclareMathOperator{\Gal}{Gal}
\DeclareMathOperator{\Pic}{Pic}
\DeclareMathOperator{\Eff}{Eff}
\DeclareMathOperator{\vol}{vol}
\DeclareMathOperator{\Div}{div}
\DeclareMathOperator*{\Hom}{Hom}
\DeclareMathOperator*{\Supp}{supp}
\DeclareMathOperator*{\Spec}{Spec}
\DeclareMathOperator*{\Cl}{Cl}
\DeclareMathOperator{\Res}{Res}
\DeclareMathOperator{\rank}{rk}
\DeclareMathOperator{\Fr}{Fr}

\theoremstyle{plain}
\newtheorem{theorem}{Theorem}
\newtheorem{lemma}[theorem]{Lemma}
\newtheorem{cor}[theorem]{Corollary}
\newtheorem{prop}[theorem]{Proposition}
\newtheorem{hyp}[theorem]{Hypothesis}
\theoremstyle{definition}
\newtheorem{remark}[theorem]{Remark}

\numberwithin{theorem}{section}
\numberwithin{equation}{section}
\numberwithin{table}{section}
 
\makeatletter
\@namedef{subjclassname@2020}{%
  \textup{2020} Mathematics Subject Classification}
\makeatother

\begin{document}

\author[Blomer]{Valentin Blomer}

\address{Universit\"at Bonn, Mathematisches Institut, Endenicher Allee 60, 53115 Bonn, Germany}

\email{blomer@math.uni-bonn.de}

\author[Br\"udern]{J\"org Br\"udern}

\address{Universit\"at G\"ottingen, Mathematisches Institut, Bunsenstra{\ss}e 3--5, 37073 G\"ottingen, Germany}

\email{jbruede@gwdg.de}

\author[Derenthal]{Ulrich Derenthal}

\address{Leibniz Universit\"at Hannover, Institut f\"ur Algebra,
  Zahlentheorie und Diskrete Mathematik, Welfengarten 1, 30167
  Hannover, Germany}

\email{derenthal@math.uni-hannover.de}

\author[Gagliardi]{Giuliano Gagliardi}

\address{Leibniz Universit\"at Hannover, Institut f\"ur Algebra,
  Zahlentheorie und Diskrete Mathematik, Welfengarten 1, 30167
  Hannover, Germany}
\email{gagliardi@math.uni-hannover.de}

\title[Manin--Peyre conjecture]
{The Manin--Peyre conjecture \\ for smooth spherical Fano varieties of semisimple rank one}

\thanks{First three authors partially supported by the DFG-SNF lead
  agency program (BL 915/2-2, BR 3048/2-2, DE 1646/4-2). Fourth author
  partially supported by the Israel Science Foundation (grant No.
  870/16) and the Max Planck Institute for Mathematics in Bonn.}
 
\keywords{rational points, spherical varieties, Fano threefolds, Manin--Peyre
  conjecture, Cox rings, harmonic analysis}

\begin{abstract}
  The Manin--Peyre conjecture is established for a class of smooth spherical
  Fano varieties of semisimple rank one.  This includes all smooth spherical
  Fano threefolds of type $T$ as well as some higher-dimensional smooth
  spherical Fano varieties.
\end{abstract}

\subjclass[2020]{Primary 14G05; Secondary 11D45, 14M27, 11G35}

\setcounter{tocdepth}{1}

\maketitle

\tableofcontents 

\section{Introduction}

\subsection{Manin's conjecture}

Manin's conjecture \cite{MR89m:11060} predicts an asymptotic formula for the
number of rational points of bounded height on Fano varieties. Its most
classical version is the following: let $X$ be a smooth Fano variety over
$\Qd$ whose set of rational points is Zariski dense. Let
$H \colon X(\Qd) \to \Rd$ be an anticanonical height function. For an open
  subset $U$ of $X$, let $ N_{X,U,H}(B)$ denote the number of $x \in U(\Qd)$
  with $ H(x) \le B$. Then one expects that there is a dense open subset
  $U \subseteq X$ and a positive number $c$ such that
\begin{equation}\label{MC}
  N_{X, U,H}(B) = (1+o(1)) c B(\log B)^{\rank \Pic X-1}.
\end{equation}
Peyre \cite{MR1340296} proposed a product formula for $c$, and in the sequel
we refer to this predicted value of $c$ as Peyre's constant.
It turned out that in its original form Manin's conjecture is not always
correct (see \cite{MR1401626}). The more recent thin set version (see \cite{MR2019019},
\cite[Conjectures~1.2, 5.2]{LST}) is in line with all known results hitherto. 

When the dimension is large compared to the degree of the variety, one may
apply the circle method to estimate $ N_{X,U,H}(B)$. In this way, Browning and
Heath-Brown \cite{MR3605019} confirmed Manin's conjecture whenever $X$ is
geometrically integral and the inequality
$\dim X \ge ((\deg X)-1)2^{\deg X}-1$ holds. The asymptotic formula \eqref{MC}
is also known for several classes of equivariant compactifications of
algebraic groups or homogeneous spaces: for certain horospherical varieties
(flag varieties \cite{MR89m:11060}, toric varieties \cite{MR1620682}, and
toric bundles over flag varieties \cite{MR1723811}), for wonderful
compactifications of semisimple groups of adjoint type
\cite{MR2328719,MR2482443}, for certain other wonderful varieties
\cite{MR2795511}, and for biequivariant compactifications of unipotent groups
\cite{MR3465086} (including equivariant $\Gd_\mathrm{a}^n$-compactifications
\cite{MR1906155}).  Here the proofs use harmonic analysis on adelic points.

In absence of additional structure, we only know four more low-dimensional
cases: Manin's conjecture was verified for two smooth quintic del Pezzo
surfaces \cite{MR1909606,MR2099200}, for one smooth quartic del Pezzo surface
\cite{MR2838351}, and (in the thin set version \cite{LST}) for a quadric bundle in
$\Pd^3 \times \Pd^3$ \cite{BHB}.  Not surprisingly, there are many more
results on versions of Manin's conjecture for \emph{singular} varieties,
because usually analytic techniques are easier to implement in the presence of
singularities.
  
In this paper, we take a different methodological approach and initiate a
systematic study of Manin's conjecture for varieties for which we have access
to the Cox ring, and where a universal torsor is given by a polynomial of the
shape
\begin{equation}\label{torsor}
  \sum_{i=1}^k b_i \prod_{j=1}^{J_i}   x_{ij}^{h_{ij}} = 0
\end{equation}
with integral coefficients $b_i$ and certain exponents $h_{ij} \in
\Bbb{N}$. This includes a fairly large class of interesting cases,  in particular 
numerous varieties with a torus action of complexity one or higher (see
\cite{HS,fahrner,HHW} and the references therein, for example),   most weak del Pezzo
surfaces whose universal torsor is given by one equation \cite{MR3180592},  (nontoric) spherical varieties of
semisimple rank one, as well as  several
nonspherical smooth Fano threefolds \cite{MR3348473}, and  many
other varieties. 

Our analytic approach towards Manin's conjecture, to be described later in
more detail, is insensitive to the dimension of the variety (in contrast to
the circle method) and independent of an additional group structure (in
contrast to methods based on harmonic analysis on adelic points). A 
showcase for our approach is the proof the Manin-Peyre conjecture for all smooth spherical Fano
threefolds of semisimple rank one and type $T$ in Theorem \ref{dim3}. We will
give several more examples in Theorems \ref{dim4} and \ref{thm2} to shed light
on the scope of the underlying method.

\subsection{Spherical varieties}

Let $G$ be a connected reductive group. A normal $G$-variety $X$ is called
spherical if a Borel subgroup of $G$ has a dense orbit in $X$.  Spherical
varieties have a rich theory. They include symmetric varieties, and the
corresponding space $L^2(X)$ has been the subject of intense investigation
from the point of view of (local) harmonic analysis and the (relative)
Langlands program (\eg \cite{Sa, SV}). Spherical varieties also admit a
combinatorial description. This is achieved by the recently completed Luna
program \cite{lun01,bp15b,cf2,los09a} and the Luna--Vust theory of spherical
embeddings \cite{lv83,kno91}. We recall the relevant theory in
Section~\ref{sec2} and refer to \cite{bl11,per14,tim11} as general
references. In this paper, we are interested in the size of smooth spherical
varieties in the context of Manin's conjecture.

If the acting group $G$ has semisimple rank zero, then $G$ is a torus and
Manin's conjecture is known (\cite{MR1620682}, see also \cite{MR1679841}).
The next interesting case is $G$ of semisimple rank one.  Here we may assume
$G = \SL_2\times \Gd_\mathrm{m}^r$ by passing to a finite cover (see
Section~\ref{sec:ssr1} for more details).  Let
$G/H = (\SL_2 \times \Gd_\mathrm{m}^r)/H$ be the open orbit in $X$. Let
$H'\times \Gd_\mathrm{m}^r = H\cdot \Gd_\mathrm{m}^r \subseteq \SL_2 \times
\Gd_\mathrm{m}^r$. Then the homogeneous space $\SL_2/H'$ is spherical, and
hence either $H'$ is a maximal torus (\emph{the case $T$}) or $H'$ is 
the normalizer of a maximal torus in $\SL_2$ (\emph{the case $N$}) or the
homogeneous space $\SL_2/H'$ is horospherical, in which case $X$ is isomorphic
(as an abstract variety, possibly with a different group action) to a toric
variety, so we may exclude this case from our discussion.

\subsection{Spherical Fano threefolds}

We start our discussion with dimension 3, the smallest dimension where
nonhorospherical spherical varieties of semisimple rank one exist.  A complete
classification of nontoric smooth spherical Fano threefolds over
$\overline{\Qd}$ was established by Hofscheier \cite{hofscheier}, cf.\
Table~\ref{tab:classification_spherical}. In this situation, the acting group
always has semisimple rank one, so our present setup is in fact already the
general picture, and the following discussion applies to all nontoric smooth
spherical Fano threefolds.

There are precisely four nonhorospherical examples of type $T$ that are
not equivariant $\Bbb{G}_{\rm a}^3$-compactifications. They have natural split
forms $X_1,\dots,X_4$ over $\Qd$, which we describe in Section~\ref{sec:fano3folds} in
detail; see Table~\ref{tab:v18} for an overview. In the classification of smooth
Fano threefolds by Iskovskikh \cite{I1,I2} and Mori--Mukai \cite{MR641971},
they have types III.24, III.20 (of Picard number $3$), IV.8, IV.7 (of Picard
number $4$), respectively.

In Section~\ref{sec:height}, we will define natural anticanonical height
functions $H_j \colon X_j(\Qd) \to \Rd$ using the anticanonical monomials in
their Cox rings.  We establish the Manin--Peyre conjecture in all these
cases. We write $N_j(B)$ for $N_{X_j, U_j, H_j}(B)$, where here and in all
subsequent cases, the open subset $U_j$ will be the set of all points with
nonvanishing Cox coordinates.
 
\begin{theorem}\label{dim3}
  The Manin--Peyre conjecture holds for the smooth spherical Fano
  threefolds $X_1,\dots,X_4$ of semisimple rank one and type $T$.  More
  precisely, there exist explicit constants $C_1,\ldots, C_4$ such that
  $$N_j(B) = (1 + o(1))C_j B (\log B)^{\rank \Pic X_j -1} $$
  for $1 \leq j \leq 4$. 
  The values of $C_j$ are the ones predicted by Peyre. 
\end{theorem}
 
It is a fun exercise to compute $C_j$ explicitly (cf.\ Appendix~\ref{A}), for
which the interesting and apparently previously unknown integral identities
involving sin-integrals and Fresnel integrals in Lemma~\ref{sin} play an
important role.  One obtains
\begin{displaymath}
  \begin{split}
    & C_1 = \frac{40 -\pi^2}{12} \prod_{p} (1 - p^{-2})^3,
    \quad C_3 =  \frac{5(258 - 4\pi^2)}{1296}  \prod_p\left(1-\frac 1
      p\right)^4\left(1+\frac 4 p+\frac 4{p^2}+\frac 1{p^3}\right),\\
    & C_2   = \frac{170 - \pi^2 - 96\log 2}{36}   \prod_{p} (1 - p^{-2})^3,
    \quad C_4 =  \frac{94-2\pi^2 }{72}\prod_p\left(1-\frac 1
      p\right)^4\left(1+\frac 4 p+\frac 4{p^2}+\frac 1{p^3}\right). 
\end{split}
\end{displaymath}

Theorem~\ref{dim3} is an easy consequence of Theorem~\ref{manin-cor} that
proves the Manin--Peyre conjecture for smooth split spherical Fano varieties
of arbitrary dimension with semisimple rank one and type $T$, subject to a
number of technical conditions that are straightforward to check in every
given instance.  Similar methods apply also to smooth spherical Fano varieties
of type $N$, but these have some additional features to which we return in a
subsequent paper.

Theorem~\ref{dim3} contains the first examples where Manin's conjecture is
established for smooth Fano threefolds that do not follow from general results
concerning equivariant compactifications of algebraic groups or homogeneous
spaces. Theorem~\ref{dim3} in fact confirms the Manin--Peyre conjecture for
\emph{all} classes of \emph{smooth spherical Fano threefolds of semisimple rank one
  and type $T$}. Previously the knowledge of the number of rational points on
these varieties has been much less precise. Manin \cite{MR1199203} shows that
smooth Fano threefolds have at least linear growth for rational points in
Zariski dense open subsets of bounded anticanonical height over sufficiently
large ground fields. A closer inspection of his arguments reveals in fact
lower bounds of the correct order of magnitude:
$N_j \gg B(\log B)^{\text{rk}(\text{Pic } X_j) - 1}$ in the situation of
Theorem~\ref{dim3} (cf.\ the proof of \cite[Proposition~1.4]{MR1199203} as the
$X_j$ in Theorem~\ref{dim3} are blow-ups of toric varieties).  Tanimoto
\cite[\S 7]{arXiv:1812.03423} proves the upper bounds
$N_j \ll B^{5/2 + \varepsilon}$ for $j = 1, 2, 4$ and
$N_3 \ll B^{2+ \varepsilon}$.
 
\subsection{Higher-dimensional cases}

A classification of higher-dimensional spherical varieties is currently not
available, but our methods work equally well in dimension exceeding three. For
a given dimension, there are still only finitely many cases of smooth
spherical Fano varieties of semisimple rank one, and we include some
representative examples with interesting torsor equations and high Picard
number. Many other examples are available by the same method.  The four
varieties $X_5, X_6, X_7, X_8$ that we investigate here are smooth spherical
Fano varieties of semisimple rank one and type $T$ of dimension $4, 5, 6, 7$,
respectively, with $\rank\Pic X_5=5$, $\rank\Pic X_6=3$, $\rank\Pic X_7=5$,
and $\rank\Pic X_8=6$. We refer to Section~\ref{sec:geometry_X5_X6} for their
combinatorial description and Table \ref{tab:v18} for a quick overview and
remark that for neither of these varieties, Manin's conjecture follows from
previous results (cf.\ Appendix~\ref{rem:not}).

\begin{theorem}\label{dim4}
  The Manin--Peyre conjecture holds for the smooth spherical Fano varieties
  $X_5, \ldots, X_8$ of semisimple rank one and type $T$. More
  precisely, there exist explicit constants $C_5, \ldots, C_8 > 0$ such that
 $$N_j(B) = (1 + o(1))C_j B (\log B)^{\rank\Pic X_j-1}  $$
 for $j = 5, \ldots, 8$.  The values of $C_j$ are the ones predicted by Peyre.
\end{theorem}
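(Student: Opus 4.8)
The plan is to reduce Theorem~\ref{dim4} to the general criterion Theorem~\ref{manin-cor}, exactly as Theorem~\ref{dim3} was obtained. By construction each of $X_5,\dots,X_8$ is a smooth split spherical Fano variety of semisimple rank one and type $T$, so it suffices to verify the technical hypotheses of Theorem~\ref{manin-cor} for these four varieties, one at a time, and then to invoke it. The values $C_j$ are then automatically those of Peyre's product, computed with respect to the adelic metric on the anticanonical bundle encoded by the anticanonical Cox monomials defining $H_j$ in Section~\ref{sec:height}.

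First, for each $X_j$ I would extract from the combinatorial description in Section~\ref{sec:geometry_X5_X6} the data needed to run Theorem~\ref{manin-cor}: the colored fan of the spherical embedding, the $G$-invariant prime divisors together with the colors, the induced grading of the Cox ring by $\Pic X_j$, and hence a presentation of the Cox ring and of the universal torsor. In each of the four cases the universal torsor is cut out in an affine space by a single polynomial of the shape \eqref{torsor}; from this presentation one reads off the anticanonical class, the anticanonical monomials, and the explicit height $H_j$ (cf.\ Table~\ref{tab:v18}). One records at the same time that the standing geometric assumptions of Theorem~\ref{manin-cor} are met -- $X_j$ is split, smooth, Fano, of semisimple rank one, and of type $T$ -- all of which is transparent from the construction.

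Next I would check, variety by variety, the remaining hypotheses of Theorem~\ref{manin-cor}: the combinatorial nondegeneracy conditions on the colored fan, the constraints on the exponents occurring in the torsor equation \eqref{torsor} and in the height that place the counting problem in the admissible range of the analytic argument, and the convergence and positivity of the local densities and of the associated archimedean integral. Each of these reduces to a finite computation with the explicit data assembled above. Granting them, Theorem~\ref{manin-cor} yields $N_j(B) = (1+o(1))\,C_j\,B(\log B)^{\rank\Pic X_j - 1}$ with $C_j$ equal to Peyre's constant. Positivity of $C_j$ is then immediate, since Peyre's constant is a convergent product of positive local densities multiplied by a manifestly positive rational number attached to the effective cone $\Eff X_j$; for Theorem~\ref{dim4} we need only this, not the closed-form evaluations that were pleasant to extract in dimension three.

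The main obstacle is precisely this per-variety verification of the analytic admissibility conditions. In dimensions $4$ through $7$, and with Picard rank up to six, the torsor equations of $X_5,\dots,X_8$ are considerably more elaborate than their threefold counterparts, so one must confirm in each instance that the equation still lies in the range where the local densities converge, where no degenerate anticanonical monomial of $H_j$ obstructs the main-term extraction, and where the error term is of smaller order than $B(\log B)^{\rank\Pic X_j-1}$. Once these checks are in place, the conclusion of Theorem~\ref{dim4} follows directly from Theorem~\ref{manin-cor}.
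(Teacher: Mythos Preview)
Your overall strategy is the paper's: reduce Theorem~\ref{dim4} to Theorem~\ref{manin-cor} and verify its hypotheses for each of $X_5,\dots,X_8$ individually, using the Cox ring data extracted in Section~\ref{sec:geometry_X5_X6}. That is exactly what Section~\ref{appl2} does.

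Where your proposal drifts is in identifying \emph{which} hypotheses actually need checking. Theorem~\ref{manin-cor} already packages the convergence of local densities, the positivity of the archimedean integral, and the error-term bounds into Hypotheses~\ref{H1} and~\ref{H2}, and these are discharged automatically by Propositions~\ref{circle-method} and~\ref{propH2} once their combinatorial inputs are in place. So you should not list ``convergence and positivity of the local densities'' or ``no degenerate anticanonical monomial obstructs the main-term extraction'' as separate obligations; those are not conditions in Theorem~\ref{manin-cor}. What the paper actually verifies, case by case, are: (i) the geometric hypothesis \eqref{eq:toric_smooth} and the existence of a maximal cone without colors (both read off the colored fan); (ii) condition \eqref{fail} on the primitive collections $S_\rho$; and (iii) the polytope conditions \eqref{ass2}, namely $C_2(\bm\tau^{(2)})$, $C_2((1-h_{ij}\zeta_i)_{ij})$, and the dimension bounds $C_3$ in \eqref{simplex2} for a suitably chosen $\bm\tau^{(2)}$ satisfying \eqref{tau1}, with $\lambda$ taken from \eqref{lambdabeta}. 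These last checks are linear programs on the height matrix $\mathscr{A}_1$ and constitute the genuine computational content; the paper records the relevant polytope dimensions explicitly for each variety. Your proposal would be complete once you replace the vague ``analytic admissibility conditions'' by precisely \eqref{fail} and \eqref{ass2} and carry out those finite checks.
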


We remark that Theorems~\ref{dim3} and \ref{dim4} are compatible with the thin
set version of Manin's conjecture. Since our spherical varieties have a
connected stabilizer for the open orbit, their sets of rational points are not
thin \cite[Corollary~2.5]{Bor}. As in \cite[Examples~5.12, 5.13]{LST}, one can
show that our results are compatible with \cite[Conjecture~5.2]{LST}.

\subsection{The methods}

The starting point of the quantitative analysis of Fano varieties in this
paper is a good understanding of their Cox ring. We use it to pass to a
universal torsor and translate Manin's conjecture into an explicit counting
problem whose structure we describe in a moment and that is amenable to
analytic techniques. The descent to a universal torsor is a common technique
in analytic approaches to Manin's conjecture, but in many cases it proceeds by
ad hoc considerations. Here we take a more systematic approach and derive the
passage from the Cox ring to the explicit counting problem in considerable
generality. This is summarized in
Proposition~\ref{prop:countingproblem_abstract}.  Next we take the opportunity
to express Peyre's constant in terms of Cox coordinates in
Proposition~\ref{prop:peyre} as a product of a surface integral, the volume of
a polytope and an Euler product, so that a verification of the complete
Manin--Peyre conjecture is possible without additional ad hoc computations.

This first part of the paper is presented in greater generality than necessary
for the direct applications to spherical varieties, and should prove  
to be useful in other situations.

The second part of the paper is devoted to an explicit solution of counting
problems having the structure required in
Proposition~\ref{prop:countingproblem_abstract}.  In many important cases, a
universal torsor is given by a single equation of the shape \eqref{torsor}.
We may have additional variables $x_{01}, \ldots, x_{0J_{0}}$ that do not
appear in the torsor equation; for those we put formally $h_{0j} = 0$.
Equation \eqref{torsor} is then to be solved in nonzero integers $x_{ij}$.
This seemingly simple diophantine problem has to be analyzed with certain
coprimality constraints on the variables, and the variables are restricted to
a highly cuspidal region. As specified in
Proposition~\ref{prop:countingproblem_abstract}, the height condition
translates into inequalities
\begin{equation}\label{height}
  \prod_{i=0}^{k} \prod_{j=1}^{J_i} |x_{ij}|^{\alpha^\nu_{ij}} \leq B \quad ( 1\le \nu \le N)
\end{equation}
for certain nonnegative exponents\footnote{The superscript $\nu$ is not an
  exponent, but an index. This notation is chosen in accordance with the
  notation in Section~\ref{sec:charts_torsors}.} $\alpha^\nu_{ij}$.  In order
to describe the coprimality conditions on the variables $x_{ij}$ in
\eqref{torsor}, let
$S_{\rho} \subseteq \{(i,j) : i = 0, \ldots, k, j = 1, \ldots, J_i\}$
$(1\le \rho\le r)$ be a collection of sets that define $r$ conditions
\begin{equation}\label{gcd}
\gcd\{x_{ij}: (i,j)\in S_\rho\}  = 1 \quad (1\le \rho\le r).
\end{equation}
Now fix a set of coefficients $b_i$ in \eqref{torsor}, and let
$N_{\mathbf b}(B)=N(B)$ denote the number of
$x_{ij} \in \Bbb{Z} \setminus \{0\}$ ($0 \leq i \leq k$, $1 \leq i \leq J_i$)
satisfying \eqref{torsor}, \eqref{height} and \eqref{gcd}. We aim to establish
an asymptotic formula of the shape
\begin{equation}\label{manin}
N(B) = (1+o(1)) c_1 B (\log B)^{c_2}
\end{equation}
for some constants $c_1 > 0$, $c_2 \in \Bbb{N}_0$, and our method succeeds
subject to quite general conditions. Of course, for a proper solution of the
Manin--Peyre conjecture, we do not only have to establish \eqref{manin}, but to
recover the geometric and arithmetic nature of $c_1$ and $c_2$ in terms of the
Manin--Peyre predictions. This will require some natural consistency
conditions involving the exponents $h_{ij}$ in the torsor equation
\eqref{torsor} and $\alpha^\nu_{ij}$ in the height conditions \eqref{height},
cf.\ in particular \eqref{1c}, \eqref{1a} below.

We now describe in more detail the analytic machinery that yields asymptotic
formulas of type \eqref{manin} for the problem given by \eqref{torsor},
\eqref{height}, \eqref{gcd}.  Input of two types is required.
 
On the one hand, we need a preliminary upper bound of the expected order of
magnitude for the count in question. The precise requirements are formulated
in the form of Hypothesis~\ref{H2} below. In many instances, the desired
bounds can be verified by soft and elementary techniques. In particular, for
smooth spherical Fano varieties of semisimple rank one and type $T$, this can
be checked by computing dimensions and extreme points of certain polytopes,
see Proposition~\ref{propH2}.

On the other hand, we require an asymptotic formula for the number of integral
solutions of \eqref{torsor} in potentially lopsided boxes, with variables
restricted by $\frac{1}{2} X_{ij} \leq |x_{ij}| \leq X_{ij}$, say. As a
notable feature of the method, the asymptotic information is required only
when the $k$ products $\prod_j X_{ij}^{h_{ij}}$ $(1 \leq i \leq k)$ have
roughly the same size. The circle method deals with this auxiliary counting
problem in considerable generality, culminating in
Proposition~\ref{circle-method} that comes with a power saving in the shortest
variable $\min_{ij} X_{ij}$.

The method described in Section~\ref{sec8} transfers the information obtained
for counting in boxes to the strangely shaped region described by the
conditions \eqref{height}.  In \cite{BB} we presented a combinatorial method
to achieve this for certain regions of hyperbolic type. Here we use complex
analysis to do this work for us in a far more general context. A prototype of
this idea, developed only in a special (and nonsmooth) case, can be found in
\cite{BBS2}. The final result is Theorem~\ref{analytic-theorem} that we will
state once the relevant notation has been developed. Again we are working in
greater generality than needed for the immediate applications in this paper,
with future applications in mind.

In the case of smooth spherical Fano threefolds of semisimple rank one and type
$T$ (and in many other examples that can be found in
\cite{MR3348473,fahrner,HS}, for example), the torsor equation \eqref{torsor}
is of the shape ``2-by-2 determinant equals some monomial'', that is (up to
changing signs)
\begin{equation}\label{typeT}
x_{11} x_{12} + x_{21} x_{22}  +   \prod_{j=1}^{J_3}   x_{3j}^{h_{3j}} = 0.
\end{equation}
While the general transition method is independent of the shape of the torsor
equation, for the particular case \eqref{typeT}, Theorem
\ref{analytic-theorem} together with Propositions~\ref{circle-method} and
\ref{propH2} offers a ``black box'' to obtain the Manin--Peyre conjecture in
any given situation with a small amount of elementary computations. This is
formalized in Theorem~\ref{manin-cor}, which readily yields the proofs of
Theorems~\ref{dim3} and \ref{dim4} in Sections~\ref{appl1} and \ref{appl2}.

This leaves us with the task to establish an asymptotic formula for the number of solutions of the torsor equation \eqref{typeT}, with suitable constraints on the variables. The equation \eqref{typeT} involves an isolated product $x_{11}x_{12}$, one way to proceed would be to view \eqref{typeT} as a congruence modulo $x_{11}$, thus eliminating $x_{12}$. This approach is very familiar to workers in the area of divisor sums; an exemplary  and historic reference is Titchmarsh's work on the divisor problem that now bears his name. In contexts very closely related to the questions that concern us here it has been successfully applied, too, for example in work of  Le Boudec \cite{Bou}, in a collaboration of the first two authors of this paper with Salberger \cite{BBS2}, and on many other occasions. However, there are a number of disadvantages stemming from the asymmetric use of the variables $x_{11}, x_{12}, x_{21}$ and $x_{22}$. In particular,  our transition to counting solutions of \eqref{typeT} in spiky regions needs to be feeded with information on the distribution of the solutions of \eqref{typeT} with {\em all}  variables in dyadic ranges. We therefore eschew the elementary approach in favour of the circle method. The restriction to dyadic ranges is easy to implement in this environment, and the resulting leading terms in the asymptotic formulae lend themselves more easily to Peyre's predictions, too.

The following table summarizes the analytic data discussed in this subsection
for the varieties $X_1, \ldots, X_8$ featured in Theorems~\ref{dim3} and
\ref{dim4}.  Here $N$ is the number of height conditions in \eqref{height};
the total number of variables is
$J = J_0+\dots+J_3 = \dim X_i + \rank \Pic X_i + 1$.

\begin{table}[ht]
  \centering
  \begin{tabular}[ht]{ccccccc}
    \hline
    & dim & rk Pic &   torsor equation & $N$ \\
    \hline\hline
    $X_1$ & $3$ & 3 &  {$x_{11}x_{12}-x_{21}x_{22}-x_{31}x_{32}$} & 13\\
    $X_2$& $3$ & 3 &  {$x_{11}x_{12}-x_{21}x_{22}-x_{31}x_{32}x_{33}^2$} & 13\\
    $X_3$ & $3$ & 4 &   {$x_{11}x_{12}-x_{21}x_{22}-x_{31}x_{32}$}  & 14 \\
    $X_4$ & $3$ & 4 &  {$x_{11}x_{12}-x_{21}x_{22}-x_{31}x_{32}$}  & 17 \\
    \hline
    $X_5$ & $4$ & 5  &   {$x_{11}x_{12}-x_{21}x_{22}-x_{31}x_{32}x_{33}$}  & 34 \\
    $X_6$ & $5$ & 3  &   {$x_{11}x_{12}-x_{21}x_{22}-x_{31}x_{32}^2$}  & 24\\
    $X_7$ & $6$ &  5  &   {$x_{11}x_{12}-x_{21}x_{22}-x_{31}x_{32}x_{33}x_{34}x_{35}^2$}  & 80 \\
    $X_8$ & $7$ &6 & {$x_{11}x_{12}-x_{21}x_{22}-x_{31}x_{32}x_{33}^2x_{34}^2$} & 156\\
    \hline
    $\tX^\dagger$ & 3 & 4 & {$x_{11}x_{12}-x_{21}x_{22}-x_{31}x_{32}x_{33}^2$} & 13 \\
    \hline
  \end{tabular}
  \caption{}
 \label{tab:v18}
\end{table}

\subsection{Another application}

Theorem~\ref{manin-cor} offers a promising line of attack to establish Manin's
conjecture in many instances, not only those covered by Theorems~\ref{dim3}
and \ref{dim4}. As proof of concept, we include a somewhat different
application featuring a singular spherical Fano threefold. The last two
authors \cite{MR3853044} have studied some examples, and have confirmed
Manin's conjecture for two families of singular spherical Fano threefolds. One
family was given by the equation $ad-bc-z^{n+1}=0$ in weighted projective
space $\mathbb{P}(1,n,1,n,1)$, the other was the family of hypersurfaces given
by $ad-bc-y^n z^{n+1}=0$ in a certain toric variety ($n \ge 2$).  For the
counting problem on the torsor, elementary analytic techniques were enough. We
believe that this is related to the fact that all the varieties have
noncanonical (log terminal) singularities, with the exception of the first
variety for $n = 2$, which is a slightly harder case with canonical
singularities and a crepant resolution. However, for similar varieties, the
elementary counting techniques in \cite{MR3853044} do not seem to be of
strength sufficient for a proof of Manin's conjecture.

In Section~\ref{sec:Xdagger}, we use the much stronger technology developed in
this paper to discuss one such case. Let $X^\dagger$ be the anticanonical
contraction of the blow-up of the hypersurface
$\Vd(z_{11}z_{12}-z_{21}z_{22}-z_{31}z_{32})$ in $\Pd^2_\Qd \times \Pd^2_\Qd$
(with coordinates $(z_{11}:z_{21}:z_{31})$ and $(z_{12}:z_{22}:z_{32})$) in
the two curves $\Vd(z_{31}) \times \{(0:0:1)\}$ and $\Vd(z_{31}, z_{32})$.
This is a singular Fano threefold admitting a crepant resolution.

\begin{theorem}\label{thm2}
  For the singular spherical Fano threefold $X^\dagger$, there exists a
  positive number $C^{\dag}$ such that
  \begin{equation*}
     N^{\dag}(B) = (1+o(1)) C^\dag B(\log B)^3.
  \end{equation*}
  The value of $C^{\dag}$ is the one predicted by Peyre \cite{MR2019019}.
\end{theorem}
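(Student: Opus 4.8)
The plan is to transfer the count on the singular threefold $X^\dagger$ to its crepant resolution $\tX^\dagger$, identify the Cox ring and universal torsor of $\tX^\dagger$, and then run the machinery of Section~\ref{sec8} --- Theorem~\ref{analytic-theorem} together with Proposition~\ref{circle-method} and the preliminary bound of Hypothesis~\ref{H2} --- exactly as in the proof of Theorem~\ref{manin-cor}. Let $\rho\colon\tX^\dagger\to X^\dagger$ denote the crepant resolution. As $X^\dagger$ is a singular spherical Fano threefold admitting a crepant resolution, $\tX^\dagger$ is a smooth split spherical variety of semisimple rank one and type $T$; it fails to be Fano, but its anticanonical class is big and nef, and --- this is the point --- neither the passage to the universal torsor in Proposition~\ref{prop:countingproblem_abstract} nor the Cox-theoretic description of Peyre's constant in Proposition~\ref{prop:peyre} requires ampleness. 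Since $\rho$ is crepant it is an isomorphism over the good locus and preserves the anticanonical class, so a natural anticanonical height on $X^\dagger$ lifts to one on $\tX^\dagger$ and $N^\dagger(B)$ coincides, via $\rho$, with the counting function of $\tX^\dagger$ attached to the open subset of points with nonvanishing Cox coordinates; in particular the expected power of $\log B$ is $\rank\Pic\tX^\dagger-1=3$.

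Next I would extract the geometry. From the colored fan of $\tX^\dagger$ (obtained from that of $X^\dagger$ via the two blow-ups and the anticanonical contraction described in Section~\ref{sec:Xdagger}) one reads off, by the combinatorial dictionary recalled in Section~\ref{sec2}, the generators and the single defining relation of the Cox ring. This produces a universal torsor cut out by
\[
x_{11}x_{12}-x_{21}x_{22}-x_{31}x_{32}x_{33}^2=0,
\]
an instance of the type $T$ equation~\eqref{typeT} with $J_3=3$ and $(h_{31},h_{32},h_{33})=(1,1,2)$, together with the free variables $x_{0j}$, the coprimality conditions~\eqref{gcd} dictated by the colored data, and the $N=13$ height inequalities~\eqref{height} with explicit nonnegative exponents $\alpha^\nu_{ij}$ (cf.\ Table~\ref{tab:v18}). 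I would then verify the consistency conditions~\eqref{1c} and~\eqref{1a} linking the $h_{ij}$ with the $\alpha^\nu_{ij}$, which is a finite and routine check on the explicit exponent matrix.

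It remains to check the analytic hypotheses and to match constants. The preliminary upper bound of Hypothesis~\ref{H2} follows from Proposition~\ref{propH2} --- whose proof uses only sphericity, semisimple rank one and type $T$, not the Fano condition --- or, if one prefers, directly from the computation of the dimensions and extreme points of the polytopes attached to the $13$ height conditions. The hypotheses of Proposition~\ref{circle-method} hold for the equation~\eqref{typeT} in general. With these inputs, Theorem~\ref{analytic-theorem} delivers the asymptotic~\eqref{manin}, i.e.\ $(1+o(1))c_1B(\log B)^3$ with $c_1>0$ given as the product of a surface integral, the volume of a polytope and an Euler product. Pulling this back along $\rho$ yields the stated formula for $N^\dagger(B)$. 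Finally, Proposition~\ref{prop:peyre} identifies $c_1$ with the Cox-theoretic expression for Peyre's constant of $\tX^\dagger$; since $\rho$ is crepant, this expression coincides with Peyre's constant $C^\dagger$ for the singular variety $X^\dagger$ in the sense of~\cite{MR2019019}, which completes the proof.

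The main obstacle I anticipate is the geometric bookkeeping of the first two steps: correctly describing the colored fan of $\tX^\dagger$ after the two blow-ups and the anticanonical contraction, reading off the Cox ring and in particular the precise exponents $h_{ij}$ and $\alpha^\nu_{ij}$, and confirming that the merely semi-Fano (rather than Fano) nature of $\tX^\dagger$ introduces no discrepancy in either the torsor parametrization or the comparison of leading constants. Once the explicit counting problem is set up correctly, the analytic part is a black-box application of the results of the first part of the paper.
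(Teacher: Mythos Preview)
Your proposal is correct and follows essentially the same route as the paper: pass to the almost Fano crepant resolution $\tX^\dagger$, extract the Cox ring and the explicit counting problem on the torsor (this is Corollary~\ref{prop:countingproblem_line_8}, with torsor equation $x_{11}x_{12}-x_{21}x_{22}-x_{31}x_{32}x_{33}^2=0$, one free variable $x_{01}$, and $N=13$ height conditions), and then apply Theorem~\ref{manin-cor}, whose hypotheses are verified by the same polytope computations as in Section~\ref{appl1}. The paper's proof is a one-line reference to Theorem~\ref{manin-cor} applied to $\tX^\dagger$; your write-up simply unpacks that reference.
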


Further applications are postponed to a separate paper. \medskip

\noindent \emph{Notational remarks.} This work draws on results from various
areas of mathematics. Due to the large number of topics covered it seemed
impracticable to aim for an entirely consistent notation. Any attempt to do so
would be in conflict with traditions in the respective fields. We opt for a
pragmatic approach and use notation that, locally, seems natural to working
mathematicians.  For example, almost everywhere in the paper, the letter $B$
signals the threshold for the height of points in several counting problems,
but in Section~\ref{sec2}, a Borel subgroup of the group $G$ that occurs in
the definition of a spherical variety is denoted by $B$.  This is just one
example of double booking for symbols that are often ``frozen'' in less
interdisciplinary writings. We therefore introduce notation at the appropriate
stage of the argument.

\subsection*{Acknowledgements}

The authors thank the anonymous referees for their useful remarks and suggestions.

\part{Heights and Tamagawa measures in Cox coordinates}\label{part1}

Universal torsors were introduced and studied by Colliot-Th\'el\'ene and
Sansuc; see \cite{MR0899402}. Their first major application to Manin's
conjecture can be found in the work of Salberger \cite{MR1679841} on toric
varieties.

Cox rings were defined by Hu and Keel \cite{MR1786494}, and they
provide a global description of universal torsors; the Cox ring of a
normal irreducible algebraic variety $X$ is roughly defined as
$\Rm(X) = \bigoplus_{[D]\in\Cl(X)} \Gamma(X, \mathcal{O}_X(D))$, where
specifying the multiplication law requires some care. Moreover, a
quotient construction $\Spec\Rm(X) \supseteq \stX \to X$ is obtained.
This generalizes the homogeneous coordinate ring of $\Pd^n$ with
quotient construction $\Ad^{n+1}\setminus\{0\} \to \Pd^n$ as well as
Cox's construction for toric varieties \cite{coxtor}. For details on
toric varieties and Cox rings, we refer to the books \cite{MR2810322,adhl15} and to
\cite{arXiv:1408.5358}.

Given a variety whose Cox ring with precisely one relation is known
explicitly, we show (under mild conditions) how to write down an anticanonical
height function \eqref{eq:height_definition}, how to make the counting problem
on a universal torsor explicit
(Proposition~\ref{prop:countingproblem_abstract}), and how to express Peyre's
constant (Proposition~\ref{prop:peyre}). This is achieved in terms of the Cox
ring data, without constructing an anticanonical embedding in a projective
space, widely generalizing results from
\cite{MR1340296,MR1681100,MR1679841,BBS1,BBS2}.

\section{Varieties and universal torsors in Cox coordinates}\label{sec:charts_torsors}

In this section, we recall how a variety $X$ with precisely one relation in
its Cox ring can be described in \emph{Cox coordinates} as a hypersurface in a
toric variety (with affine charts as in Section~\ref{sec:affine_charts} that
will be used in in the following sections), and how this gives a description
of their universal torsors as hypersurfaces in affine space
(Section~\ref{sec:torsors_models}). This leads to an explicit description of
the parameterization of the rational points on $X$ by integral points on a
universal torsor (Proposition~\ref{prop:lift_to_torsor}).

Let $X$ be a smooth split projective variety over $\Qd$ with big and semiample
anticanonical class $\omega_X^\vee$ whose Picard group is free of finite
rank. (Here, \emph{split} means that the natural map from the Picard group
$\Pic X$ over the ground field to the geometric Picard group is
an isomorphism.) Assume that it has a finitely generated Cox ring $\Rm(X)$
\cite[Definition~2.6]{MR1786494}, \cite[\S~1.4]{adhl15} with precisely
one relation with integral coefficients.

In other words, $X$ has a Cox ring over $\Qd$ \cite{arXiv:1408.5358} of the
form
\begin{equation}\label{eq:cox_ring}
  \Rm(X) \cong \Qd[x_1,\dots,x_J]/(\Phi),
\end{equation}
where $x_1,\dots,x_J$ is a system of pairwise nonassociated $\Pic X$-prime
generators and the relation $\Phi \in \Zd[x_1,\dots,x_J]$ is nonzero.
According to \cite[Construction~3.2.5.3]{adhl15}, \eqref{eq:cox_ring} defines a canonical
embedding of $X$ into a (not necessarily complete) ambient toric variety
$Y^\circ$.

\begin{lemma}
  \label{lemma:small_completion}
  The toric variety $Y^\circ$ can be completed to a projective toric
  variety $Y$ such that the natural map $\Cl Y \to \Cl X=\Pic X$ is an
  isomorphism and $-K_X$ is big and semiample on $Y$.
\end{lemma}

\begin{proof}
  By \cite[Proposition~3.2.5.4(iii)]{adhl15}, we have
  $\Cl Y^\circ = \Cl X$. We consider the Gelfand--Kapranov--Zelevinsky
  (GKZ) decomposition of $Y^\circ$ (see, for example,
  \cite[\S~2.2.2]{adhl15}). According to
  \cite[Construction~3.2.5.7]{adhl15} the chambers in the GKZ
  decomposition of $Y^\circ$ which contain ample divisors on $X$ give
  rise to completions $Y$ of $Y^\circ$ with $\Cl Y^\circ = \Cl Y$. Now
  choose $Y$ corresponding to a chamber whose closure contains $-K_X$.
  Since $-K_X$ is semiample on $X$, this is possible by
  \cite[Proposition~3.3.2.9]{adhl15}. Then $-K_X$ is semiample
  on $Y$ according to \cite[Proposition~2.4.2.6]{adhl15}.

  By \cite[Propositions~3.3.2.9 and 2.4.2.6]{adhl15},
  $-K_X$ is in the relative interior of the moving cone of $Y$, hence
  $-K_X$ is big on $Y$.
\end{proof}

We assume that $Y$ is chosen as in Lemma~\ref{lemma:small_completion}.  Its
Cox ring is $\Rm(Y) = \Qd[x_1,\dots,x_J]$ \cite[Construction~3.2.5.3]{adhl15}.
Let $\Sigma$ be the fan of $Y$, and let $\Sigmamax$ be the set of maximal
cones.  The generators $x_1,\dots,x_J$ have the same grading as in $\Rm(X)$
and are in bijection to the rays $\rho \in \Sigma(1)$; we also write $x_\rho$
for $x_i$ corresponding to $\rho$. We generally write
\begin{equation}\label{JN}
J = \#\Sigma(1), \quad N = \#\Sigmamax,
\end{equation}
and we assume:
\begin{equation}\label{eq:toric_smooth}
  \text{The projective toric variety $Y$ can be chosen to be regular.}
\end{equation}

\subsection{Affine charts in Cox coordinates}\label{sec:affine_charts}

Since $\Rm(X) \cong \Qd[x_\rho : \rho \in \Sigma(1)]/(\Phi)$ with
$\Pic X$-homogeneous $\Phi$, our variety $X$ is a hypersurface defined by
$\Phi$ (in Cox coordinates) in the toric variety $Y$ (with Cox ring
$\Rm(Y)=\Qd[x_\rho : \rho \in \Sigma(1)]$). On $Y$, we can regard $X$ as a
prime divisor of class $\deg\Phi \in \Cl Y$.

We introduce further notation for the toric variety $Y$.
In Part~\ref{part1}, let $U$ be the open
torus in $Y$. For each $\rho \in \Sigma(1)$, we have a $U$-invariant Weil divisor
$D_\rho$ defined by $x_\rho$ of class $[D_\rho]=\deg(x_\rho) \in \Cl Y$
\cite[\S 4.1]{MR2810322}. Let
\begin{equation}\label{eq:D0}
D_0 \coloneqq \sum_{\rho \in \Sigma(1)} D_\rho,
\end{equation}
which is an effective divisor of
class $[D_0]=-K_Y$. For a $U$-invariant divisor $D=\sum_{\rho \in \Sigma(1)}
\lambda_\rho D_\rho$, let
\begin{equation}\label{eq:x^D}
  x^D \coloneqq \prod_{\rho \in \Sigma(1)} x_\rho^{\lambda_\rho}
\end{equation}
denote the corresponding monomial of degree $[D]$. For example,
\begin{equation}\label{eq:x^D0}
x^{D_0} = \prod_{\rho \in \Sigma(1)} x_\rho.
\end{equation}

\begin{lemma}
  \label{lemma:basiscl}
  Let $M$ and $N$ be the character and cocharacter lattices of the
  toric variety $Y$ respectively. Let
  $\rho_1, \dots, \rho_k \in \Sigma(1)$ be rays such that their
  primitive generators $u_{\rho_1}, \dots, u_{\rho_k} \in N$ form a basis of $N$.
  Then the set $\{[D_\rho] : \rho \ne \rho_1, \dots, \rho_k\}$ is a
  basis of $\Cl Y$.
\end{lemma}
\begin{proof}
  According to \cite[Before Proposition~2.1.2.7]{adhl15}
  there are two exact sequences
  \begin{align*}0 \to L \to \Z^{\Sigma(1)} \to N \to 0,\\
  0 \leftarrow \Cl(Y) \leftarrow \Z^{\Sigma(1)} \leftarrow M \leftarrow 0.
  \end{align*}
  which are dual to each other. Here $\Z^{\Sigma(1)}$ denotes the
  lattice with basis $\{e_\rho : \rho \in \Sigma(1)\}$, which is
  assumed to be dual to itself. The top right map sends $e_\rho$ to
  $u_\rho$ while the lower left map sends $e_\rho$ to $[D_\rho]$.
  Since the top right map sends $e_{\rho_1}, \dots e_{\rho_k}$ to a
  basis of $N$, the lower left map sends their complement to a basis
  of $\Cl(Y)$.
\end{proof}

It follows from Lemma~\ref{lemma:basiscl} that for each
$\sigma \in \Sigmamax$, the set $\{[D_\rho] : \rho \notin \sigma(1)\}$
is a basis of $\Cl Y$; in other words,
\begin{equation}\label{eq:sigma-basis}
  \{\deg(x_\rho) : \rho \notin \sigma(1)\}
\end{equation}
is a basis of $\Pic X$.

\begin{lemma}\label{lem:monomials_degree_L}
  For each $\sigma \in \Sigmamax$, there is a unique effective Weil divisor
  $D(\sigma)=\sum_{\rho \notin \sigma(1)} \alpha^\sigma_\rho D_\rho$ of class
  $-K_X$ whose support is contained in
  $\bigcup_{\rho \notin \sigma(1)} D_\rho$.
\end{lemma}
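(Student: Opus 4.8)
The plan is to split the assertion into two independent parts --- uniqueness of a Weil divisor with the prescribed support and class, and nonnegativity of its coefficients --- and to settle the first by linear algebra over $\Pic X$ and the second via the semiampleness of $-K_X$ on $Y$. Bigness of $-K_X$ will play no role.

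First I would dispose of uniqueness. A Weil divisor on $Y$ whose support is contained in $\bigcup_{\rho\notin\sigma(1)}D_\rho$ is automatically of the form $\sum_{\rho\notin\sigma(1)}\alpha_\rho D_\rho$ with $\alpha_\rho\in\Zd$, since the $D_\rho$ are precisely the irreducible torus-invariant divisors of $Y$. Its class in $\Cl Y=\Pic X$ equals $\sum_{\rho\notin\sigma(1)}\alpha_\rho\deg(x_\rho)$, and by \eqref{eq:sigma-basis} the family $\{\deg(x_\rho):\rho\notin\sigma(1)\}$ is a basis of the free group $\Pic X$. Hence there is exactly one integer vector $(\alpha^\sigma_\rho)_{\rho\notin\sigma(1)}$ for which the associated divisor $D(\sigma)=\sum_{\rho\notin\sigma(1)}\alpha^\sigma_\rho D_\rho$ has class $-K_X$ --- namely the coordinate vector of $-K_X$ in that basis --- and the whole matter reduces to showing $\alpha^\sigma_\rho\ge 0$.

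For the effectivity I would invoke nefness. Since $Y$ is regular by \eqref{eq:toric_smooth}, $\Cl Y=\Pic Y$, and the class $-K_X$, being semiample on $Y$, is in particular nef on the projective (hence complete) toric variety $Y$. I would then fix a torus-invariant representative $L=\sum_{\rho\in\Sigma(1)}a_\rho D_\rho$ of this class and use the standard support-function picture: writing $u_\rho$ for the primitive generator of $\rho$, the smoothness of $Y$ makes $\{u_\rho:\rho\in\sigma(1)\}$ a $\Zd$-basis of the cocharacter lattice (as $\sigma$, being maximal in a complete fan, is full-dimensional), so there is a unique character $m_\sigma$ with $\langle m_\sigma,u_\rho\rangle=-a_\rho$ for all $\rho\in\sigma(1)$. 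Then $L+\Div(\chi^{m_\sigma})=\sum_{\rho\notin\sigma(1)}\bigl(a_\rho+\langle m_\sigma,u_\rho\rangle\bigr)D_\rho$ is torus-invariant, linearly equivalent to $-K_X$, supported in $\bigcup_{\rho\notin\sigma(1)}D_\rho$, and --- this is the only place nefness enters --- has nonnegative coefficients, since $a_\rho+\langle m_\sigma,u_\rho\rangle\ge 0$ for all $\rho$ is exactly the convexity of the support function of the nef divisor $L$ (a standard fact for complete toric varieties). By the uniqueness just proved, this divisor must equal $D(\sigma)$, so $\alpha^\sigma_\rho\ge 0$, which finishes the proof.

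I do not anticipate a genuine obstacle here; the lemma is essentially bookkeeping once \eqref{eq:sigma-basis} is available. The only points demanding care are the sign conventions in the support-function argument and the remark that the support restriction automatically forces the divisor to be torus-invariant (so that effectivity is the same as nonnegativity of the multiplicities $\alpha^\sigma_\rho$). If one prefers to avoid citing the convexity criterion for nefness, an equivalent route is to take $m\ge 1$ with $m(-K_X)$ basepoint free, use that for a basepoint-free torus-invariant Cartier divisor each vertex character of its polytope gives an effective representative avoiding the corresponding affine chart $U_\sigma$, and divide the resulting multiplicities by $m$.
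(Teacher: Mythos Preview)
Your proof is correct and follows essentially the same approach as the paper: uniqueness comes from the basis property \eqref{eq:sigma-basis}, and effectivity from the support-function/convexity characterization of semiample (equivalently, globally generated after a multiple) torus-invariant divisors on a complete toric variety, which the paper cites as \cite[Theorem~6.1.7]{MR2810322}. The only cosmetic difference is order: the paper first produces an effective $\Qd$-divisor via the character $\chi_\sigma$ and then invokes \eqref{eq:sigma-basis} to deduce integrality and uniqueness, whereas you settle uniqueness over $\Zd$ first and then verify nonnegativity --- the substance is identical.
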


\begin{proof}
  For the existence, choose an effective $U$-invariant $\Qd$-Weil divisor $D$
  on $Y$ with $[D]=-K_X$. Let $M$ be the character lattice of the torus
  $U$. We write $U_\sigma \subseteq Y$ for the open subset corresponding to the cone $\sigma$.
  
  Choose $\chi_\sigma \in M_\Qd$ such that
  $(\Div \chi_\sigma)_{|U_\sigma} = D_{|U_\sigma}$. Define
  $D(\sigma) \coloneqq D-\Div \chi_\sigma$. Then $D(\sigma)$ is of class
  $-K_X$ and its support is contained in
  $\bigcup_{\rho \notin \sigma(1)} D_\rho$. Moreover, a multiple of $-K_X$
  being globally generated means that we have $\chi_\sigma \le \chi_{\sigma'}$
  on $\sigma'$ for every $\sigma' \in \Sigmamax$ \cite[Theorem
  6.1.7]{MR2810322}. Hence $D(\sigma)$ is an effective $\Qd$-divisor.

  Because of \eqref{eq:sigma-basis}, there is a unique $\Zd$-linear
  combination of the $D_\rho$ with $\rho \notin \sigma(1)$ of class $-K_X$,
  which must be equal to $D(\sigma)$.
\end{proof}

For $\sigma \in \Sigmamax$, notation \eqref{eq:x^D} gives
\begin{equation}\label{eq:height_monomials}
  x^{D(\sigma)} = \prod_{\rho \notin \sigma(1)} x_\rho^{\alpha^\sigma_\rho},
\end{equation}
where $\alpha^\sigma_\rho$ are the unique nonnegative integers satisfying
$-K_X = \sum_{\rho \notin \sigma(1)} \alpha^\sigma_\rho \deg(x_\rho)$ in $\Pic X$
(as in Lemma~\ref{lem:monomials_degree_L}).

Every $\sigma \in \Sigmamax$ defines an affine chart on $Y$ as follows. For
each $\rho' \in \Sigma(1)$, we can write
\begin{equation}\label{eq:alpha_sigma_rho_rho'}
\deg(x_{\rho'}) = \sum_{\rho \notin \sigma(1)} \alpha^\sigma_{\rho',\rho} \deg(x_{\rho})
\end{equation}
with certain $\alpha^\sigma_{\rho',\rho} \in \Zd$ by \eqref{eq:sigma-basis}.
Then
\begin{equation*}
  z^\sigma_{\rho'} \coloneqq x_{\rho'}/\prod_{\rho \notin \sigma(1)}
  x_\rho^{\alpha^\sigma_{\rho',\rho}}
\end{equation*}
is a rational section of degree $0 \in \Cl Y$, with $z^\sigma_{\rho'}=1$
for $\rho' \notin \sigma(1)$. By \cite[Theorem~1.2.18]{MR2810322},
the sections $z^\sigma_{\rho'}$ for $\rho' \in \sigma(1)$ define an isomorphism
\begin{equation}\label{eq:U^sigma}
U^\sigma \to \Ad^{\sigma(1)}_\Qd,
\end{equation}
where $U^\sigma$ is the open subset of $Y$ where $x_\rho \ne 0$ for all
$\rho \notin \sigma(1)$ (\ie the complement of
$\bigcup_{\rho \notin \sigma(1)} D_\rho$ in $Y$).

We also obtain affine charts on the open subset
\begin{equation}\label{eq:X^sigma}
X^\sigma \coloneqq X \cap U^\sigma
\end{equation}
of $X$. The image
of $X^\sigma$ in $\Ad^{\sigma(1)}_\Qd$
is defined by 
\begin{equation}\label{eq:Phi^sigma}
  \Phi^\sigma \coloneqq \Phi(z^\sigma_\rho)=
  \Phi(x_\rho)/\prod_{\rho\notin \sigma(1)}x_\rho^{\beta^\sigma_\rho},
\end{equation}
where $\beta^\sigma_\rho \in \Zd$ satisfy
\begin{equation}\label{eq:deg_Phi}
\deg\Phi=\sum_{\rho\notin \sigma(1)} \beta^\sigma_\rho\deg(x_\rho)
\end{equation}
since
$x_\rho \ne 0$ on $U^\sigma$ for $\rho \notin \sigma(1)$.  By the implicit
function theorem, for every $P \in X^\sigma(\Qd_v)$ with
$\partial \Phi^\sigma/\partial z^\sigma_{\rho_0}(P) \ne 0$ for some
$\rho_0 \in \sigma(1)$, there is an open $v$-adic neighborhood
$U_0 \subseteq X^\sigma(\Qd_v)$ such that the composition of
$X^\sigma \to \Ad_\Qd^{\sigma(1)}$ with the natural projection
$\pi^\sigma_{\rho_0} \colon \Ad_\Qd^{\sigma(1)} \to \Ad_\Qd^{\sigma(1)
  \setminus \{\rho_0\}}$ that drops the $\rho_0$-coordinate induces a chart
\begin{equation}\label{eq:chart}
  U_0 \to \Qd_v^{\sigma(1) \setminus \{\rho_0\}}.
\end{equation}
Its inverse is obtained by
computing the $\rho_0$-coordinate
$z^\sigma_{\rho_0}=\phi((z^\sigma_\rho)_{\rho \in
  \sigma(1)\setminus\{\rho_0\}})$ using the implicit function $\phi$ obtained
by solving $\Phi^\sigma$ for $z^\sigma_{\rho_0}$.

\subsection{Universal torsors and models}\label{sec:torsors_models}

Let $T \cong \mathbb{G}_{\mathrm{m},\Qd}^{\rank \Pic X}$ be the
N\'eron--Severi torus of $X$ (\ie the torus whose characters are
$\Pic X =\Cl Y$).  Cox's construction and the theory of Cox rings \cite[\S
8]{MR1679841} and \cite[\S 5.1]{MR2810322} give universal torsors
$X_0 \subset Y_0$ (with inclusion morphism $\iota_0 \colon X_0\to Y_0$) over
$X \subset Y$ (with inclusion $\iota : X\to Y$). Here $Y_0$ is the
principal universal torsor over $Y$ under $T$. Both projections $X_0\to X$ and
$Y_0 \to Y$ are called $\pi$.

We have fans $\Sigma_1 \supset \Sigma_0 \to \Sigma$ (with the sets of rays
$\Sigma_1(1)=\Sigma_0(1)$ in natural bijection to $\Sigma(1)$) corresponding
to the toric varieties
$\Ad_\Qd^J = \Ad_\Qd^{\Sigma(1)} = Y_1 \supset Y_0 \to Y$.
We have $Y_0 = Y_1 \setminus Z_Y$, where $Z_Y$ is
defined by the \emph{irrelevant ideal} \cite[\S 5.2]{MR2810322} generated by the monomials
\begin{equation}\label{eq:underline_sigma}
x^{\underline\sigma}  \coloneqq  \prod_{\rho \notin \sigma(1)} x_\rho
\end{equation}
for all
maximal cones $\sigma\in\Sigmamax$. By \cite[Proposition~5.1.6]{MR2810322}, there
are \emph{primitive collections}
\begin{equation}\label{eq:primitive_collections}
  S_1,\dots,S_r \subseteq \Sigma(1)
\end{equation}
(\ie
$S_j \not\subseteq \sigma(1)$ for all $\sigma \in \Sigma$, but for every
proper subset $S_j'$ of $S_j$, there is a $\sigma \in \Sigma$ with
$S_j' \subseteq \sigma(1)$)
such that the $r$ irreducible components of $Z_Y$ are defined by the vanishing of
$x_\rho$ for all $\rho \in S_j$.

The fans and their maps allow us to construct $\Zd$-models
$\tpi\colon \tY_1 \setminus \tZ_Y = \tY_0 \to \tY$ with an action of
$\tT \cong \mathbb{G}_{\mathrm{m},\Zd}^{\rank \Cl Y}$ on $\tY_0$ and $\tY_1$
(see \cite[Remark 8.6b and later]{MR1679841}).

The characteristic space $X_0$ is defined in $Y_0$ by $\Phi$ (interpreted as
an affine equation; see \cite[\S 1.6.3]{adhl15}). Then
$X_0 = X_1 \setminus Z_X$, where $X_1 = \Spec\Rm(X)$ is defined by $\Phi$ in
$Y_1$, and $Z_X = Z_Y \cap X_1$.

We have $\tpi \colon \tX_1 \setminus \tZ_X = \tX_0 \to \tX$ for
$\Zd$-models of $X,X_0,X_1,Z_X$ defined in $\tY,\tY_0,\tY_1,\tZ_Y$ by
$\Phi$ (regarded as an affine equation for $\tX_0,\tX_1,\tZ_X$ and as
$\Cl Y$-homogeneous for $\tX$).

\begin{prop}\label{prop:lift_to_torsor}
  We have
  \begin{align*}
    \tX_0(\Zd) &= \{\xv=(x_\rho)_{\rho \in \Sigma(1)} \in \Zd^{\Sigma(1)} :
                 \Phi(\xv)=0,\ \gcd\{x_\rho : \rho \in S_j\}=1  \text{ {\rm
                 for all  }} j=1,\dots,r\},\\
    \tX_0(\Zd_p) &= \{\xv=(x_\rho)_{\rho \in \Sigma(1)} \in \Zd_p^{\Sigma(1)}
                   : \Phi(\xv)=0,\ p \nmid \gcd\{x_\rho : \rho \in S_j\}
                   \text{ {\rm  for all }}j=1,\dots,r\}.
  \end{align*}
  The map $\tpi$ induces a $2^{\rank \Pic X} : 1$-map $\tX_0(\Zd) \to \tX(\Zd)=X(\Qd)$.
\end{prop}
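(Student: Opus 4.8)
The plan is to realize $\tX_0(\Zd)$ and $\tX_0(\Zd_p)$ explicitly using the fact that $\tX_0 = \tX_1 \setminus \tZ_X$ sits inside $\tY_1 = \Ad_\Zd^{\Sigma(1)}$. First I would unwind the definitions: a point of $\tX_1(\Zd)$ is exactly a tuple $\xv = (x_\rho)_{\rho \in \Sigma(1)} \in \Zd^{\Sigma(1)}$ with $\Phi(\xv) = 0$, since $\tX_1 = \Spec \Rm(X)$ is cut out in affine space by the single relation $\Phi$. Then I would describe what it means to lie outside $\tZ_X = \tZ_Y \cap \tX_1$. The component of $\tZ_Y$ attached to the primitive collection $S_j$ is the vanishing locus of all $x_\rho$, $\rho \in S_j$; so over $\Zd$ the condition ``$\xv \notin \tZ_Y$'' must be interpreted integrally as: for no prime $p$ do all the $x_\rho$ with $\rho \in S_j$ reduce to zero mod $p$ simultaneously, for any $j$. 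This is precisely $\gcd\{x_\rho : \rho \in S_j\} = 1$ for each $j$. The $\Zd_p$-statement is the local analogue: a point of $\tX_0(\Zd_p)$ avoids the $p$-fibre of each component of $\tZ_Y$, i.e. $p \nmid \gcd\{x_\rho : \rho \in S_j\}$ for all $j$. The key technical point here is that the primitive collections give the scheme-theoretic decomposition of $\tZ_Y$ into irreducible components (as recalled from \cite[Proposition~5.1.6]{MR2810322}), and that this decomposition is compatible with the chosen $\Zd$-model; I would check that the relevant ideals are radical and that base change to $\Zd_p$ behaves as expected, which is routine given that all the $x_\rho$ are coordinates on affine space.

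For the second assertion, the identification $\tX(\Zd) = X(\Qd)$ comes from the fact that $X$ is a projective (hence proper) $\Zd$-scheme in its $\Zd$-model, so by the valuative criterion every $\Qd$-point extends uniquely to a $\Zd$-point. It remains to compute the fibres of $\tpi \colon \tX_0(\Zd) \to \tX(\Zd)$. Since $\tX_0 \to \tX$ is a torsor under $\tT \cong \Gd_{\mathrm m, \Zd}^{\rank \Pic X}$, two integral points of $\tX_0$ lie over the same point of $\tX$ if and only if they differ by the action of an element of $\tT(\Zd) = \{\pm 1\}^{\rank \Pic X}$; this already shows the fibres have cardinality at most $2^{\rank \Pic X}$. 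To get equality — i.e. that $\tpi$ is genuinely surjective onto $\tX(\Zd)$ and that the torsor is trivial over each integral point — I would use that $\Pic \Spec \Zd$ is trivial, so every $\tT$-torsor over $\Spec \Zd$ is trivial; pulling back the torsor $\tX_0 \to \tX$ along a $\Zd$-point $x \colon \Spec \Zd \to \tX$ gives a trivial $\tT$-torsor over $\Spec \Zd$, whose $\Zd$-points form a principal homogeneous space under $\tT(\Zd) = \{\pm1\}^{\rank \Pic X}$, hence has exactly $2^{\rank \Pic X}$ elements. Surjectivity of $\tpi$ on $\Zd$-points follows from the nonemptiness of these fibres.

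The main obstacle I anticipate is not any single deep step but rather the careful bookkeeping at the boundary between the generic fibre and the integral model: one must make sure that the $\Zd$-model $\tZ_X$ of the irrelevant locus really is cut out by the primitive-collection ideals over $\Zd$ (and not just over $\Qd$), and that $\tX_0(\Zd) \to \tX(\Zd)$ does not lose points due to some subtlety in the $\Zd$-structure — e.g. that a $\Qd$-point of $X$ whose unique $\Zd$-model point happens to meet $\tZ_X$ cannot occur, which is where one uses that the image of $X$ in $\tX$ under $\iota$ avoids $\tZ_Y$ by construction of the characteristic space together with properness to control reduction mod $p$. Once the model-theoretic compatibilities are in place, the counting of fibres is a formal consequence of $\Pic \Spec \Zd = 0$ and $\Zd^\times = \{\pm 1\}$, and the gcd reformulation is a direct translation of ``avoiding each component of $\tZ_Y$ modulo every prime.''
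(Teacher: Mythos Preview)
Your proposal is correct and follows essentially the same approach as the paper: both describe $\tX_0(\Zd)$ by intersecting the explicit description of $\tY_0(\Zd)$ via primitive collections with the hypersurface $\Phi=0$, and both use the torsor structure to count fibres. The only difference is that for the $2^{\rank\Pic X}:1$ claim the paper simply cites \cite[Lemma~11.4]{MR1679841} for the toric map $\tY_0(\Zd)\to\tY(\Zd)$ and then restricts to $\Phi=0$, whereas you give the underlying argument directly (triviality of $\tT$-torsors over $\Spec\Zd$ from $\Pic\Spec\Zd=0$ and $\Zd^\times=\{\pm1\}$); your version is more self-contained, the paper's is shorter. Your worries in the final paragraph about the $\Zd$-model of $\tZ_Y$ are unfounded: the model is \emph{defined} by the primitive-collection ideals over $\Zd$ (see Section~\ref{sec:torsors_models}), so there is nothing further to check.
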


\begin{proof}
  Arguing as in \cite[(11.5)]{MR1679841}, but using the 
  description of $\tZ_Y$ by the primitive collections shows 
  \begin{equation*}
    \tY_0(\Zd) = \{\yv \in \Zd^{\Sigma(1)} : \gcd\{y_\rho : \rho \in S_j\}=1
    \text{ for all } j=1,\dots,r\}.
  \end{equation*}
  Since $\tX$ is defined by $\Phi$ in $\tY$, the first result follows. The
  description of $\tX(\Zd_p)$ is obtained similarly.
  
  By \cite[Lemma~11.4]{MR1679841}, $\tpi$ induces a $2^{\rank \Cl Y} : 1$-map
  $\tY_0(\Zd) \to \tY(\Zd) = Y(\Qd)$. Restricting to the points where $\Phi$
  vanishes gives the result.
\end{proof}

\section{Heights in Cox coordinates}\label{sec:metrics_heights}

In this section, we construct an explicit adelic metrization of the
anticanonical bundle of our variety $X$ with one relation $\Phi$ in its Cox
ring (Section~\ref{sec:poincare}), using the charts from
Section~\ref{sec:affine_charts} and Poincar\'e residues. This metrization is
the basis for the construction of an anticanonical height function
(Section~\ref{sec:height}) that we use to count points, and of the Tamagawa
measure for Peyre's expected leading constant
(Section~\ref{sec:tamagawa_cox}). On the universal torsor, only the
archimedean factor of the height function remains
(Section~\ref{lem:height_torsor}). This leads to the main result of this
section: a completely explicit description of the counting problem
(Proposition~\ref{prop:countingproblem_abstract}) in terms of the Cox ring of
$X$. Section~\ref{sec:linear_algebra} contains some related linear algebra
results that will be used later.

We keep the assumptions and notation from Section~\ref{sec:charts_torsors}.

\subsection{Adelic metrization of $\omega_X^{-1}$ via Poincar\'e residues}\label{sec:poincare}

Here, we use the notation and results from Section~\ref{sec:affine_charts}.
A special case of the following can be found in \cite[\S 5]{BBS1}.  There is a
global nowhere vanishing section $s_Y$ of $\omega_Y(D_0)$ \eqref{eq:D0} whose restriction to
every open subset $U^\sigma \subset Y$ as in \eqref{eq:U^sigma} for $\sigma \in \Sigmamax$ is
$\pm \bigwedge_{\rho \in \sigma(1)} \frac{\dd z^\sigma_\rho}{z^\sigma_\rho}$
(see \cite[Proposition~8.2.3]{MR2810322}). Recall the definition of $\Phi^\sigma$ \eqref{eq:Phi^sigma}.

\begin{lemma}
  For each $\sigma \in \Sigmamax$, we define
  \begin{equation}\label{eq:varpi^sigma}
    \varpi^\sigma  \coloneqq  \frac{x^{D_0}}{x^{D(\sigma)}\Phi} s_Y \in
    \Gamma(Y,\omega_Y(D(\sigma)+X));
  \end{equation}
  this is a nowhere vanishing global section of $\omega_Y(D(\sigma)+X)$.
  On $U^\sigma$, we have
  \begin{align*}
    \varpi^\sigma  = \frac{\pm 1}{\Phi^\sigma} \bigwedge_{\rho \in \sigma(1)}
    \dd z^\sigma_\rho \in \Gamma(U^\sigma, \omega_Y(X)).
  \end{align*}
\end{lemma}

\begin{proof}
  For the first statement, note that $x^{D_0}(x^{D(\sigma)}\Phi)^{-1}$
  corresponds to the divisor $D_0-D(\sigma)-X$.

  On $U^\sigma$, we have
  \begin{equation}
    \varpi^\sigma  = \frac{\pm x^{D_0}}{x^{D(\sigma)}\Phi} \bigwedge_{\rho \in \sigma(1)}
    \frac{\dd z^\sigma_\rho}{z^\sigma_\rho} \in \Gamma(U^\sigma, \omega_Y(X))
  \end{equation}
  where
  $\Gamma(U^\sigma, \omega_Y(X)) = \Gamma(U^\sigma, \omega_Y(D(\sigma)+X))$
  since $D(\sigma)_{|U_{\sigma}} = 0$ by Lemma~\ref{lem:monomials_degree_L}. With $\beta^\sigma_\rho$ as in \eqref{eq:deg_Phi}, let
  \begin{align*}
    \lambda = \frac{x^{D_0}}{x^{D(\sigma)}\prod_{\rho\notin \sigma(1)}x_\rho^{\beta^\sigma_\rho}}.
  \end{align*}
  In view of \eqref{eq:Phi^sigma}, we obtain
  \begin{equation*}
    \varpi^\sigma  = \frac{\pm \lambda}{\Phi^\sigma} \bigwedge_{\rho \in \sigma(1)}
    \frac{\dd z^\sigma_\rho}{z^\sigma_\rho} \in \Gamma(U^\sigma, \omega_Y(X)).
  \end{equation*}

  On $U_\sigma$, we have
  \begin{equation*}
    \Div \lambda = (\Div x^{D_0})_{|U_\sigma} - (\Div
    x^{D(\sigma)})_{|U_\sigma}
    - \sum_{\rho \notin \sigma(1)} \beta_\rho^\sigma D_\rho
    = (\Div x^{D_0})_{|U_\sigma} - 0 - 0 = (\Div x^{D_0})_{|U_\sigma}.
  \end{equation*}
  We also have
  $\Div \prod_{\rho\in\sigma(1)} z_\rho^\sigma = (\Div
  x^{D_0})_{|U_\sigma}$. Therefore
  $\lambda = \prod_{\rho\in\sigma(1)} z_\rho^\sigma$ on $U_\sigma$, and we
  obtain the second statement.
\end{proof}

The Poincar\'e residue map
\begin{equation}\label{eq:poincare}
  \Res \colon \omega_Y(X) \to \iota_*\omega_X
\end{equation}
is a
homomorphism of $\Om_Y$-modules. On the smooth open subset $U^\sigma$ of $Y$,
it sends $\varpi^\sigma \in \Gamma(U^\sigma,\omega_Y(X))$ to
$\Res \varpi^\sigma \in \Gamma(U^\sigma,\iota_*\omega_X) =
\Gamma(X^\sigma,\omega_X)$, which is given by
\begin{equation}\label{eq:residue}
  \Res \varpi^\sigma = \frac{\pm 1}{\partial \Phi^\sigma/\partial
    z^\sigma_{\rho_0}} \bigwedge_{\rho \in \sigma(1)\setminus\{\rho_0\}} \dd z^\sigma_\rho
\end{equation}
on the open subset of $X^\sigma$ (see \eqref{eq:X^sigma}) where
$\partial \Phi^\sigma/\partial z^\sigma_{\rho_0} \ne 0$, for any
$\rho_0 \in \sigma(1)$.

\begin{lemma}
  The section $\Res \varpi^\sigma$ extends uniquely to a nowhere vanishing
  global section of $\omega_X(D(\sigma)\cap X)$.
\end{lemma}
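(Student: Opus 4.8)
The claim is that $\Res\varpi^\sigma$, which a priori is only a section of $\omega_X$ over the open subset $X^\sigma\subseteq X$ (in fact only on the locus where some partial derivative $\partial\Phi^\sigma/\partial z^\sigma_{\rho_0}$ is nonzero), actually extends to a \emph{nowhere vanishing global} section of the twisted sheaf $\omega_X(D(\sigma)\cap X)$. The strategy is to work chart by chart on the cover $\{U^{\sigma'}\}_{\sigma'\in\Sigmamax}$ of $Y$ and to compare the local expressions for $\varpi^\sigma$, using that the Poincar\'e residue is a map of $\Om_Y$-modules and hence commutes with the transition functions. The key point is that $\varpi^\sigma\in\Gamma(Y,\omega_Y(D(\sigma)+X))$ is already a \emph{global} section, nowhere vanishing as a section of that line bundle, so after applying $\Res\colon\omega_Y(X)\to\iota_*\omega_X$ (twisted by $\Om_Y(D(\sigma))$) we obtain a global section $\Res\varpi^\sigma\in\Gamma(X,\omega_X(D(\sigma)\cap X))$ — the only real content is the nowhere-vanishing assertion.

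First I would make precise why $\varpi^\sigma$ is a nowhere vanishing global section of $\omega_Y(D(\sigma)+X)$: by construction $\varpi^\sigma = x^{D_0-D(\sigma)}\Phi^{-1}s_Y$ with $s_Y$ a nowhere vanishing global section of $\omega_Y(D_0)$, so $\varpi^\sigma$ is a rational section of $\omega_Y$ with divisor $\Div(s_Y) - D_0 + D(\sigma) - X + \Div(\text{unit})$; using $\Div(s_Y)+D_0=0$ (as $s_Y$ trivializes $\omega_Y(D_0)$) this is exactly $D(\sigma)+X$ up to the effective divisor where numerator monomials vanish, and the degree bookkeeping $-K_Y=-K_X+\deg\Phi$ recorded in the excerpt shows there is no extra vanishing. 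Then I would invoke adjunction: on the smooth locus of $Y$ along $X$, the residue $\Res$ is an isomorphism $\omega_Y(X)|_X \xrightarrow{\sim}\omega_X$, hence after twisting by $\Om_Y(D(\sigma))$ it is an isomorphism $\omega_Y(D(\sigma)+X)|_X\xrightarrow{\sim}\omega_X(D(\sigma)\cap X)$ — this immediately sends the nowhere vanishing section $\varpi^\sigma$ to a nowhere vanishing section. The formula \eqref{eq:residue} then identifies this global section, on the dense open subset of $X^\sigma$ where $\partial\Phi^\sigma/\partial z^\sigma_{\rho_0}\ne 0$, with the stated expression, so uniqueness of the extension follows from $X$ being integral (a section of a line bundle agreeing with $\Res\varpi^\sigma$ on a dense open set equals it).

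The one subtlety I would be careful about — and which I expect to be the main obstacle to a fully rigorous write-up — is the behaviour of $\Res$ at points of $X$ lying in the singular locus of $Y$, i.e.\ where $Y$ fails to be smooth. The excerpt defines $\Res$ only as an $\Om_Y$-module homomorphism $\omega_Y(X)\to\iota_*\omega_X$ and gives the explicit formula \eqref{eq:residue} only on smooth $U^\sigma$; but by hypothesis \eqref{eq:toric_smooth} the projective toric variety $Y$ is regular, so $Y$ is smooth everywhere and this issue evaporates entirely — every $U^\sigma$ is smooth, $X$ is a smooth hypersurface (being a prime divisor in a smooth variety with the relevant tangency), and $\Res$ is genuinely an isomorphism of line bundles on $X$ after the twist. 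So the proof reduces to: (i) $Y$ smooth by \eqref{eq:toric_smooth}; (ii) $\varpi^\sigma$ nowhere vanishing global section of $\omega_Y(D(\sigma)+X)$ by the divisor computation above; (iii) $\Res$ twisted by $\Om_Y(D(\sigma))$ is an isomorphism onto $\omega_X(D(\sigma)\cap X)$ by adjunction; (iv) hence $\Res\varpi^\sigma$ is the desired nowhere vanishing global section, uniquely determined by its restriction to the dense open set on which \eqref{eq:residue} was computed. I would keep the write-up to a few lines, citing \cite[Proposition~8.2.3]{MR2810322} and the standard adjunction/Poincar\'e-residue isomorphism, since once smoothness of $Y$ is in hand there is nothing delicate left.
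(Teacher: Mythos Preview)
Your proposal is correct and follows essentially the same approach as the paper's proof: both argue that $\varpi^\sigma$ is a nowhere vanishing global section of $\omega_Y(D(\sigma)+X)$, then use that the adjunction map $\iota^*\omega_Y(X)\to\omega_X$ (twisted by $\Om_Y(D(\sigma))$, using $X\not\subseteq\Supp D(\sigma)$) is an isomorphism to conclude that $\Res\varpi^\sigma$ generates $\omega_X(D(\sigma)\cap X)$. Your write-up is more expansive---you spell out the divisor bookkeeping, the uniqueness via irreducibility, and the role of \eqref{eq:toric_smooth}---but the paper compresses the same argument into a few lines, citing \cite[Lemma~13]{BBS1}.
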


\begin{proof}
  This is similar to \cite[Lemma~13]{BBS1}. Since $s_Y$ generates the
  $\Om_Y$-module $\omega_Y(D_0)$, each
  $$\varpi^\sigma = \frac{x^{D_0}}{x^{D(\sigma)} \Phi} s_Y$$ generates
  the $\Om_Y$-module $\omega_Y(X+D(\sigma))$. Since
  $\iota^*\Om_Y(D(\sigma)) = \Om_X(D(\sigma) \cap X)$ (using that
  $X \not\subseteq \Supp D(\sigma)$), the isomorphism
  $\iota^*\omega_Y(X) \to \omega_X$ adjoint to
  $\Res \colon \omega_Y(X) \to \iota_* \omega_X$ induces an isomorphism
  $\iota^*\omega_Y(X+D(\sigma)) \to \omega_X(D(\sigma) \cap X)$ that maps
  $\iota^*\varpi^\sigma$ to $\Res \varpi^\sigma$. Hence $\Res\varpi^\sigma$
  generates $\omega_X(D(\sigma) \cap X)$, \ie it is a nowhere vanishing global
  section.
\end{proof}

Therefore,
\begin{equation}\label{eq:tau^sigma}
\tau^\sigma \coloneqq (\Res \varpi^\sigma)^{-1}
\end{equation}
is a
nowhere vanishing global sections of
$\omega_X^{-1}(-D(\sigma)\cap X)$, which we can also view as a global
section of $\omega_X^{-1}$.

\begin{lemma}\label{lem:tau}
  The section $\tau^\sigma \in \Gamma(X,\omega_X^{-1})$ does not vanish
  anywhere on $X^\sigma$.
\end{lemma}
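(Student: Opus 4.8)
The claim to establish is Lemma~\ref{lem:tau}: the global section $\tau^\sigma = (\Res \varpi^\sigma)^{-1}$ of $\omega_X^{-1}$ does not vanish anywhere on $X^\sigma$. The plan is to track exactly where $\tau^\sigma$, regarded as a rational section of $\omega_X^{-1}$, has zeros and poles, using that we already know $\Res\varpi^\sigma$ is a \emph{nowhere vanishing} global section of $\omega_X(D(\sigma)\cap X)$ by the preceding lemma. From that identification, the divisor of $\Res\varpi^\sigma$ as a rational section of $\omega_X$ is exactly $-D(\sigma)\cap X$; equivalently, the divisor of $\tau^\sigma$ as a rational section of $\omega_X^{-1}$ is $+D(\sigma)\cap X$, which is an \emph{effective} divisor. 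Hence $\tau^\sigma$ has no poles and its zero locus is supported on $D(\sigma)\cap X = \bigl(\sum_{\rho\notin\sigma(1)} \alpha^\sigma_\rho D_\rho\bigr)\cap X$.

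First I would recall from Section~\ref{sec:affine_charts} that $X^\sigma = X\cap U^\sigma$, where $U^\sigma$ is precisely the complement in $Y$ of $\bigcup_{\rho\notin\sigma(1)} D_\rho$. Therefore the support of $D(\sigma)\cap X$, being contained in $\bigcup_{\rho\notin\sigma(1)}(D_\rho\cap X)$, is disjoint from $X^\sigma$. Since the zero divisor of $\tau^\sigma$ (as a section of $\omega_X^{-1}$) is supported on $D(\sigma)\cap X$, it follows that $\tau^\sigma$ has no zeros on $X^\sigma$; combined with the absence of poles everywhere, $\tau^\sigma$ is a nowhere vanishing regular section of $\omega_X^{-1}$ over $X^\sigma$, which is exactly the assertion.

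Concretely, one can also verify this on the explicit charts: on the open subset of $X^\sigma$ where $\partial\Phi^\sigma/\partial z^\sigma_{\rho_0}\ne 0$ for some $\rho_0\in\sigma(1)$, formula \eqref{eq:residue} gives
\begin{equation*}
  \Res\varpi^\sigma = \frac{\pm 1}{\partial\Phi^\sigma/\partial z^\sigma_{\rho_0}}\bigwedge_{\rho\in\sigma(1)\setminus\{\rho_0\}} \dd z^\sigma_\rho,
\end{equation*}
so $\tau^\sigma = \pm\,(\partial\Phi^\sigma/\partial z^\sigma_{\rho_0})\cdot\bigl(\bigwedge_{\rho\in\sigma(1)\setminus\{\rho_0\}}\partial/\partial z^\sigma_\rho\bigr)$, which is a regular section of $\omega_X^{-1}$ there and is nonzero exactly where $\partial\Phi^\sigma/\partial z^\sigma_{\rho_0}\ne 0$. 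Ranging over all $\rho_0\in\sigma(1)$, these opens cover the smooth locus of $X^\sigma$; since $X$ is smooth, they cover all of $X^\sigma$, and the local pictures glue to the global statement. The only point requiring a little care — and the main (mild) obstacle — is the bookkeeping that the support of $D(\sigma)\cap X$ really is contained in $\bigcup_{\rho\notin\sigma(1)}D_\rho$ and hence avoids $U^\sigma$; this is immediate from Lemma~\ref{lem:monomials_degree_L}, which states exactly that $D(\sigma)$ is supported on $\bigcup_{\rho\notin\sigma(1)}D_\rho$, together with the fact that $X$ meets each $D_\rho$ properly (so no component of $D_\rho$ gets absorbed into $X$), which in turn follows because $X\not\subseteq\Supp D(\sigma)$ was already used in the previous lemma.
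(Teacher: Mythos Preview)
Your proof is correct and follows essentially the same approach as the paper: you identify the divisor of $\tau^\sigma$ as $D(\sigma)\cap X$ via the preceding lemma, observe that its support lies in $\bigcup_{\rho\notin\sigma(1)}D_\rho$, and conclude that this is disjoint from $X^\sigma$. The additional explicit chart verification you give is more than the paper provides, but is consistent and correct.
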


\begin{proof}
  The previous lemma shows that $\tau^\sigma$, as a global section of
  $\omega_X^{-1}$, has corresponding divisor $D(\sigma) \cap X$, whose support
  is contained in $X \cap \bigcup_{\rho \notin \sigma} D_\rho$, which is the
  complement of $X^\sigma$ \eqref{eq:X^sigma}.
\end{proof}

For any place $v$ of $\Qd$, we define a $v$-adic norm (or metric) on
$\omega_X^{-1}$ by
\begin{equation}\label{eq:def_norm}
  \|\tau(P)\|_v  \coloneqq 
  \min_{\sigma \in \Sigmamax : P \notin D(\sigma)} \left|\frac{\tau}{\tau^\sigma}(P)\right|_v
\end{equation}
for any local section $\tau$ of $\omega_X^{-1}$ not vanishing in
$P \in X(\Qd_v)$.  The next result shows that our family of local norms
$\|\cdot\|_v$ for all places $v$ is an adelic anticanonical norm as in
\cite[D\'efinition~2.3]{MR2019019}; see also \cite[Lemma~8.5]{BBS2}.

\begin{lemma}
  Let $p$ be a prime such that $\tX$ is smooth over $\Zd_p$.  On
  $\omega_X^{-1}$, the $p$-adic norm $\|\cdot\|_p$ defined by
  \eqref{eq:def_norm} coincides with the model norm $\|\cdot\|_p^*$ determined
  by $\tX$ over $\Zd_p$ as in \cite[Definition~2.9]{MR1679841}.
\end{lemma}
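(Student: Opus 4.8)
The plan is to verify that the two $p$-adic norms agree by testing them on a single nowhere-vanishing local section and comparing the resulting absolute values, using the standard characterization of the model norm in terms of integral points on affine charts. First I would fix a prime $p$ where $\tX/\Zd_p$ is smooth and work one $p$-adic point $P \in X(\Qd_p)$ at a time; by Lemma~\ref{lem:tau}, $P$ lies in $X^\sigma$ for some $\sigma \in \Sigmamax$ (indeed $X = \bigcup_{\sigma} X^\sigma$), and the section $\tau^\sigma$ is nowhere vanishing on $X^\sigma$, so it suffices to compute $\|\tau^\sigma(P)\|_p$ and $\|\tau^\sigma(P)\|_p^*$ and check they coincide. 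For the definition \eqref{eq:def_norm}, one has $\|\tau^\sigma(P)\|_p = \min_{\sigma'} |(\tau^\sigma/\tau^{\sigma'})(P)|_p$, where the ratio $\tau^\sigma/\tau^{\sigma'} = \Res\varpi^{\sigma'}/\Res\varpi^\sigma$ is a rational function whose divisor is $(D(\sigma)\cap X) - (D(\sigma')\cap X)$; concretely, using \eqref{eq:residue} and the chain rule relating the charts $z^\sigma$ and $z^{\sigma'}$, this ratio is the monomial $x^{D(\sigma)}/x^{D(\sigma')}$ evaluated in Cox coordinates (up to sign), i.e.\ $\prod_\rho x_\rho^{\alpha^\sigma_\rho - \alpha^{\sigma'}_\rho}$.

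Next I would compute the model norm. By \cite[Definition~2.9]{MR1679841}, $\|\cdot\|_p^*$ is determined by declaring that a local generator of $\omega_{\tX/\Zd_p}^{-1}$ over the affine chart $\tX^\sigma$ has norm identically $1$ on the integral points $\tX^\sigma(\Zd_p)$. On the smooth affine chart $X^\sigma \subset \Ad^{\sigma(1)}_\Qd$ cut out by $\Phi^\sigma$, the section $\Res\varpi^\sigma$ from \eqref{eq:residue} is, away from the vanishing of a chosen partial derivative, $\pm(\partial\Phi^\sigma/\partial z^\sigma_{\rho_0})^{-1}\bigwedge_{\rho\neq\rho_0}\dd z^\sigma_\rho$, which is precisely the Poincaré-residue generator of the relative canonical sheaf of the hypersurface $\{\Phi^\sigma = 0\}$ over $\Zd_p$; hence $\tau^\sigma = (\Res\varpi^\sigma)^{-1}$ is a generator of $\omega^{-1}$ over the $\Zd_p$-model $\tX^\sigma$, so $\|\tau^\sigma(P)\|_p^* = 1$ for every $P \in \tX^\sigma(\Zd_p)$, and more generally $\|\tau^\sigma(P)\|_p^*$ is governed by how far $P$ is from being $\sigma$-integral. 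The identification of $\tX^\sigma$ with the expected affine $\Zd_p$-model is exactly the content of the toric bookkeeping in Section~\ref{sec:charts_torsors} together with the smoothness assumption \eqref{eq:toric_smooth}: the charts $z^\sigma_\rho$ are the coordinates in which $\tY^\sigma \cong \Ad^{\sigma(1)}_{\Zd_p}$, and $\Phi^\sigma$ has $\Zd_p$-coefficients.

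It then remains to match the two quantities over all $P \in X(\Qd_p)$, not just the integral ones. Here I would argue that any $P \in X(\Qd_p)$ is integral on at least one chart $\tX^{\sigma'}$ (this uses properness of $\tX/\Zd_p$, via the valuative criterion, and the fact that the $\tX^{\sigma'}$ cover $\tX$), so $\|\tau^{\sigma'}(P)\|_p^* = 1$ for that $\sigma'$, while for any other $\sigma$ one gets $\|\tau^\sigma(P)\|_p^* = |(\tau^\sigma/\tau^{\sigma'})(P)|_p$ by the cocycle/transition behaviour of model norms. Taking the minimum over $\sigma'$ on the right-hand side reproduces exactly \eqref{eq:def_norm}, so the two norms agree at $P$. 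The main obstacle I anticipate is not any single computation but the careful translation between the three languages in play — the intrinsic Poincaré residue on $X$, the explicit toric affine charts with their $\Zd_p$-structure, and the model-norm formalism of \cite{MR1679841} — in particular checking that the affine $\Zd_p$-models $\tX^\sigma$ coming from the fan $\Sigma$ really are the charts governing $\|\cdot\|_p^*$, and that the implicit-function-theorem description of $X^\sigma$ near $P$ is compatible with the integral model; once this dictionary is set up, the equality of norms is a formal consequence of the divisor computation $\Div(\tau^\sigma/\tau^{\sigma'}) = (D(\sigma)-D(\sigma'))\cap X$ already established in the preceding lemmas.
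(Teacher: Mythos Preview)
Your approach is essentially correct and close in spirit to the paper's, but the final step is where it becomes imprecise. You have established that $\|\tau^{\sigma'}(P)\|_p^* = 1$ for the chart $\sigma'$ on which $P$ is integral, and hence $\|\tau^\sigma(P)\|_p^* = |(\tau^\sigma/\tau^{\sigma'})(P)|_p$ for \emph{this particular} $\sigma'$. To match this with \eqref{eq:def_norm}, which is a minimum over \emph{all} $\sigma''$, you still need to show that the minimum is achieved at the integral $\sigma'$, i.e.\ that $|(\tau^{\sigma''}/\tau^{\sigma'})(P)|_p \le 1$ for every $\sigma''$. This is true --- it follows because $\tau^{\sigma''}/\tau^{\sigma'} = x^{D(\sigma'')}/x^{D(\sigma')}$ restricts to a regular function on the integral chart $\tX^{\sigma'}$ (here $D(\sigma'')$ is effective and $x^{D(\sigma')}$ is a unit on $U^{\sigma'}$), so its value at $P \in \tX^{\sigma'}(\Zd_p)$ lies in $\Zd_p$ --- but you have not said so, and the phrase ``taking the minimum over $\sigma'$ on the right-hand side'' does not parse as written.

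The paper's argument is the mirror image of yours and thereby avoids both the appeal to properness and this extra step. Rather than locating an integral chart geometrically and then proving the minimum is achieved there, the paper simply \emph{defines} $\xi$ to be the index where $|(\tau^\xi/\tau)(P)|_p$ is maximal, so that $\|\tau^\xi(P)\|_p = 1$ and the inequalities $|(\tau^\sigma/\tau^\xi)(P)|_p \le 1$ are automatic. These inequalities say every $\widetilde\tau^\sigma(P)$ is a $\Zd_p$-multiple of $\widetilde\tau^\xi(P)$; since the family $\{\widetilde\tau^\sigma\}_\sigma$ generates $\omega_{\tX/\Zd_p}^{-1}$ as an $\Om_{\tX}$-module (this is the global form of your local Poincar\'e-residue observation), $\widetilde\tau^\xi(P)$ alone generates the fiber, whence $\|\tau^\xi(P)\|_p^* = 1$. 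Both routes rely on the same ingredient --- that each $\widetilde\tau^\sigma$ trivializes $\omega_{\tX/\Zd_p}^{-1}$ over $\tX^\sigma$ --- but the paper's ordering is slicker because the algebraic choice of $\xi$ simultaneously pins down the minimum in \eqref{eq:def_norm} and the generator for the model norm.
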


\begin{proof}
  Let $P \in X(\Qd_p)$, and let $\tau$ be a local section of $\omega_X^{-1}$
  not vanishing in $P$. Choose $\xi \in \Sigmamax$ such that
  $|(\tau^\xi/\tau)(P)|_p = \max_{\sigma \in \Sigmamax}
  |(\tau^\sigma/\tau)(P)|_p$, which is positive by Lemma~\ref{lem:tau} and the
  fact that the sets $X^\sigma$ cover $X$ \eqref{eq:X^sigma}; in particular, $\tau^\xi$ does not
  vanish in $P$. Hence we can compute
  \begin{equation*}
    \|\tau^\xi(P)\|_p^{-1} = \max_{\sigma \in \Sigmamax}
    \left|\frac{\tau^\sigma}{\tau^\xi}(P)\right|_p
    = \max_{\sigma \in \Sigmamax}
    \frac{|(\tau^\sigma/\tau)(P)|_p}{|(\tau^\xi/\tau)(P)|_p} = 1.
  \end{equation*}

  On the other hand, for each $\sigma \in \Sigmamax$, the section
  $\tau^\sigma$ extends to a global section $\widetilde\tau^\sigma$ of
  $\omega_{\tX/\Zd_p}^{-1}$, and $\omega_{\tX/\Zd_p}^{-1}$ is generated by the
  set of all these $\widetilde\tau^\sigma$ as an $\Om_{\tX}$-module.  The
  computation above shows for every $\sigma \in \Sigmamax$ that
  $\left|\frac{\tau^\sigma}{\tau^\xi}(P)\right|_p \le 1$, hence
  $\tau^\sigma(P) = a_\sigma \tau^\xi(P)$ for some $a_\sigma \in \Zd_p$ in the
  $\Qd_p$-module $\omega_X^{-1}(P)$, and hence also
  $\widetilde\tau^\sigma(P) = a_\sigma \widetilde\tau^\xi(P)$ in the
  $\Zd_p$-module ${\widetilde P}^*(\omega_{\tX/\Zd_p}^{-1})$. Therefore,
  ${\widetilde P}^*(\omega_{\tX/\Zd_p}^{-1})$ is generated by $\tau^\xi(P)$
  and consequently $\|\tau^\xi(P)\|_p^*=1$ by definition of the model
  norm. Finally we have
  \begin{equation*}
    \|\tau(P)\|_p = |(\tau/\tau^\xi)(P)|_p \cdot \|\tau^\xi(P)\|_p =
    |(\tau/\tau^\xi)(P)|_p \cdot \|\tau^\xi(P)\|_p^* = \|\tau(P)\|_p^*. \qedhere
  \end{equation*}  
\end{proof}

\subsection{Height function}\label{sec:height}

As in \cite[D\'efinition~2.3]{MR2019019}, our adelic anticanonical norm
$(\|\cdot\|_v)_v$ \eqref{eq:def_norm} allows us to define an anticanonical
height $H : X(\Qd) \to \Rd_{>0}$, namely
\begin{equation}\label{eq:height_definition}
  H(P) \coloneqq \prod_v \|\tau(P)\|_v^{-1}
\end{equation}
for any local section $\tau$ of $\omega_X^{-1}$ not vanishing in
$P \in X(\Qd)$; here and elsewhere, the product is taken over all places $v$
of $\Qd$. This anticanonical height on $X(\Qd)$ depends only on the choice of
Cox coordinates on $X$ \eqref{eq:cox_ring}.

In the following lemma, $x^{D(\sigma)}$ and $F_0$ are homogeneous elements of
$\Qd[x_\rho : \rho \in \Sigma(1)]$ of the same degree in $\Pic X$. Therefore,
$x^{D(\sigma)}/F_0$ can be regarded as a rational function on $X$ that can be
evaluated in $P \in X(\Qd)$ if $F_0$ does not vanish in $P$.

\begin{lemma}\label{lem:height}
 For any polynomial $F_0$ of degree $-K_X$ not vanishing in $P \in
  X(\Qd)$, one has
  \begin{equation*}
    H(P) = \prod_v \max_{\sigma \in \Sigmamax} \left|\frac{x^{D(\sigma)}}{F_0}(P)\right|_v.
  \end{equation*}
\end{lemma}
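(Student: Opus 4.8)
The statement says that the height, originally defined through the local sections $\tau^\sigma$ and the $v$-adic norms $\|\cdot\|_v$, can be rewritten using any single anticanonical polynomial $F_0$. The plan is to unwind the definition \eqref{eq:height_definition} together with \eqref{eq:def_norm} and express everything in terms of ratios of monomials. The key point is that $x^{D(\sigma)}/F_0$ is precisely $\tau/\tau^\sigma$ for an appropriate choice of local section $\tau$ attached to $F_0$.

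\textbf{Step 1.} First I would observe that since $F_0$ has degree $-K_X$ in $\Pic X$, the section $s_Y/(F_0 \cdot \Phi) \cdot x^{D_0}$ — more precisely, the Poincar\'e residue $\Res(x^{D_0}/(F_0\Phi)\, s_Y)$, call its inverse $\tau_{F_0}$ — is a global rational section of $\omega_X^{-1}$ which does not vanish at $P$ whenever $F_0(P)\ne 0$. This is the same computation as in the lemma preceding the definition of $\tau^\sigma$, just with $x^{D(\sigma)}$ replaced by $F_0$: the adjunction identity $-K_Y = -K_X + \deg\Phi$ guarantees that the forms have the right degrees and that the residue makes sense.

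\textbf{Step 2.} Next I would compute the quotient $\tau_{F_0}/\tau^\sigma$. By construction $\tau^\sigma = (\Res\varpi^\sigma)^{-1}$ with $\varpi^\sigma = (x^{D_0}/(x^{D(\sigma)}\Phi))\,s_Y$, and $\tau_{F_0} = (\Res(x^{D_0}/(F_0\Phi)\,s_Y))^{-1}$. Since $\Res$ is $\Om_Y$-linear and $x^{D_0}/(F_0\Phi)\,s_Y = (x^{D(\sigma)}/F_0)\cdot \varpi^\sigma$, we get $\Res(x^{D_0}/(F_0\Phi)\,s_Y) = (x^{D(\sigma)}/F_0)\,\Res\varpi^\sigma$, hence
\begin{equation*}
  \frac{\tau_{F_0}}{\tau^\sigma}(P) = \frac{x^{D(\sigma)}}{F_0}(P),
\end{equation*}
valid at every $P$ where this makes sense. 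Therefore $|(\tau_{F_0}/\tau^\sigma)(P)|_v = |(x^{D(\sigma)}/F_0)(P)|_v$ and the condition ``$P\notin D(\sigma)$'' in \eqref{eq:def_norm} is exactly the condition that $x^{D(\sigma)}(P)\ne 0$, i.e.\ that this quotient is nonzero. Plugging $\tau = \tau_{F_0}$ into \eqref{eq:def_norm} gives
\begin{equation*}
  \|\tau_{F_0}(P)\|_v = \min_{\sigma:\, x^{D(\sigma)}(P)\ne 0} \left|\frac{x^{D(\sigma)}}{F_0}(P)\right|_v = \min_{\sigma\in\Sigmamax} \left|\frac{x^{D(\sigma)}}{F_0}(P)\right|_v,
\end{equation*}
the last equality because the omitted $\sigma$ contribute a zero term, which does not affect the minimum as long as at least one term is nonzero — and some term is nonzero since the $X^\sigma$ cover $X$ and $\tau_{F_0}$ does not vanish at $P$. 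Taking reciprocals turns the minimum into a maximum, and then \eqref{eq:height_definition} with $\tau = \tau_{F_0}$ yields exactly the asserted formula.

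\textbf{Main obstacle.} The only genuine subtlety is making Step 1 rigorous: one must check that $\tau_{F_0}$ really is a global section of $\omega_X^{-1}$ (not merely a rational section) nonvanishing away from the zero locus of $F_0$, and that the divisor bookkeeping is consistent on overlaps of the charts $U^\sigma$. This is handled exactly as in the lemma establishing that $\tau^\sigma$ is a nowhere-vanishing global section of $\omega_X^{-1}(-D(\sigma)\cap X)$: here $\tau_{F_0}$ will be a nowhere-vanishing global section of $\omega_X^{-1}(-\Div_X F_0)$, where $\Div_X F_0$ is the effective divisor cut out by $F_0$ on $X$. Once that is in place, everything else is the formal manipulation of $\Res$ above, and the independence of $H$ from the choice of $F_0$ is automatic since the left-hand side of the identity does not involve $F_0$.
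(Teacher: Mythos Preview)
Your approach is correct but takes a different route from the paper. You build a new section $\tau_{F_0}$ for each $F_0$ and verify its properties from scratch, whereas the paper avoids any new construction: it fixes some $\xi\in\Sigmamax$ with $P\in X^\xi$, uses $\tau=\tau^\xi$ (whose nonvanishing on $X^\xi$ is already Lemma~\ref{lem:tau}), computes $\tau^\sigma/\tau^\xi=x^{D(\sigma)}/x^{D(\xi)}$ to obtain the formula for the particular choice $F_0=x^{D(\xi)}$, and then passes to arbitrary $F_0$ by the product formula, since $x^{D(\xi)}/F_0$ evaluates at $P$ to a nonzero rational number. Your route treats all $F_0$ uniformly but repeats the verification you flag as the ``main obstacle''; the paper's route sidesteps that entirely at the cost of one extra (trivial) line. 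Incidentally, your obstacle is milder than you suggest: the height definition only needs $\tau_{F_0}$ to be a \emph{local} section nonvanishing at $P$, and on $X^\xi$ one has $\tau_{F_0}=(F_0/x^{D(\xi)})\,\tau^\xi$, which is visibly regular and nonvanishing near $P$.

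One computational slip in Step~2: from $\Res\varpi_{F_0}=(x^{D(\sigma)}/F_0)\,\Res\varpi^\sigma$ you get $\tau_{F_0}/\tau^\sigma=F_0/x^{D(\sigma)}$, the reciprocal of what you wrote (since $\tau^\bullet=(\Res\varpi^\bullet)^{-1}$; compare the paper's own identity $\tau^\sigma=x^{D(\sigma)}x^{-D(\xi)}\tau^\xi$ derived from $\varpi^\sigma=x^{-D(\sigma)}x^{D(\xi)}\varpi^\xi$). With the corrected ratio, $\|\tau_{F_0}(P)\|_v=\min_\sigma|F_0/x^{D(\sigma)}(P)|_v$: now the terms with $x^{D(\sigma)}(P)=0$ are $+\infty$ and may indeed be dropped from the minimum, and inverting gives the desired $\max_\sigma|x^{D(\sigma)}/F_0(P)|_v$. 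As you wrote it, the omitted terms would equal $0$ and would spoil the minimum; your final conclusion is right only because two inversions silently cancelled.
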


\begin{proof}
  Since the sets $X^\sigma$ as in \eqref{eq:X^sigma} for $\sigma \in \Sigmamax$ cover $X$, our point
  $P$ is contained in $X^{\xi}(\Qd)$ for some $\xi \in \Sigmamax$. By
  Lemma~\ref{lem:tau}, we can compute $H(P)$ with $\tau \coloneqq
  \tau^{\xi}$ as in \eqref{eq:tau^sigma}. We have $\varpi^\sigma =
  x^{-D(\sigma)}x^{D(\xi)}\varpi^{\xi}$ by definition \eqref{eq:varpi^sigma}. Since $\Res$ is an $\Om_Y$-module
  homomorphism \eqref{eq:poincare}, this implies
  $\tau^\sigma = x^{D(\sigma)}x^{-D(\xi)}\tau^{\xi}$. Therefore,
  \begin{equation}\label{eq:norm_max}
    \|\tau^{\xi}(P)\|_v^{-1}
    = \max_{\sigma \in \Sigmamax} \left|\frac{\tau^\sigma}{\tau^{\xi}}(P)\right|_v
    = \max_{\sigma \in \Sigmamax}\left|\frac{x^{D(\sigma)}}{x^{D(\xi)}}(P)\right|_v,
  \end{equation}
  hence our claim holds for $F_0 \coloneqq x^{D(\xi)}$. By the product
  formula, it follows for arbitrary $F_0$ not vanishing in $P$.
\end{proof}

\subsection{Heights on torsors}

We lift the height function $H$ to the universal torsor $X_0$ as in
Section~\ref{sec:torsors_models} as follows. Let
\begin{equation*}
  H_0 \colon X_0(\Qd) \to \Rd_{>0}
\end{equation*}
be the composition of $\pi \colon X_0(\Qd) \to X(\Qd)$ and the height
function $H$ defined in \eqref{eq:height_definition}. The following
is analogous to \cite[Proposition~10.14]{MR1679841}.

\begin{lemma}\label{lem:height_torsor}
  For $P_0 \in X_0(\Qd)$, we have
  \begin{equation*}
    H_0(P_0) = \prod_v \max_{\sigma \in \Sigmamax} |x^{D(\sigma)}(P_0)|_v.
  \end{equation*}
\end{lemma}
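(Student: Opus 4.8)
The plan is to reduce the statement to Lemma~\ref{lem:height} by choosing a convenient homogeneous polynomial $F_0$ of degree $-K_X$. Specifically, fix any maximal cone $\xi\in\Sigmamax$ and take $F_0\coloneqq x^{D(\xi)}=\prod_{\rho\notin\xi(1)}x_\rho^{\alpha^\xi_\rho}$; by Lemma~\ref{lem:monomials_degree_L} this is a monomial of degree $-K_X$ in $\Pic X$. First I would record that for $P_0\in X_0(\Qd)$ lying over $P=\pi(P_0)\in X(\Qd)$, the coordinates $x_\rho(P_0)$ are well-defined elements of $\Qd$ (the torsor $X_0$ sits in $Y_0=Y_1\setminus Z_Y$, so by the description of $\tZ_Y$ via primitive collections not all Cox coordinates in any primitive collection vanish, but in any case the quotients $x^{D(\sigma)}/x^{D(\xi)}$ that appear are exactly the pullbacks under $\pi$ of the rational functions on $X$ appearing in Lemma~\ref{lem:height}). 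Concretely, $x^{D(\sigma)}/x^{D(\xi)}$ is a degree-$0$ element of the function field, hence descends to a rational function on $X$, and by definition of $H_0$ as $H\circ\pi$ we get
\begin{equation*}
  H_0(P_0)=H(P)=\prod_v\max_{\sigma\in\Sigmamax}\left|\frac{x^{D(\sigma)}}{x^{D(\xi)}}(P)\right|_v
  =\prod_v\max_{\sigma\in\Sigmamax}\left|\frac{x^{D(\sigma)}}{x^{D(\xi)}}(P_0)\right|_v,
\end{equation*}
using Lemma~\ref{lem:height} with $F_0=x^{D(\xi)}$ (valid as long as $x^{D(\xi)}(P)\neq 0$, i.e.\ $P\in X^\xi$, which holds for a suitable choice of $\xi$ since the $X^\sigma$ cover $X$).

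Next I would clear the denominator using the product formula. Since $x^{D(\xi)}(P_0)\in\Qd^\times$, the product formula gives $\prod_v|x^{D(\xi)}(P_0)|_v=1$, so
\begin{equation*}
  \prod_v\max_{\sigma\in\Sigmamax}\left|\frac{x^{D(\sigma)}}{x^{D(\xi)}}(P_0)\right|_v
  =\prod_v\frac{\max_{\sigma}|x^{D(\sigma)}(P_0)|_v}{|x^{D(\xi)}(P_0)|_v}
  =\prod_v\max_{\sigma\in\Sigmamax}|x^{D(\sigma)}(P_0)|_v,
\end{equation*}
which is exactly the claimed formula. The one point requiring care is that the expression $\max_\sigma|x^{D(\sigma)}(P_0)|_v$ genuinely depends only on $P_0$ and not on the chosen lift within the $T$-orbit in a way that would spoil things — but here $P_0$ itself is fixed, so this is a non-issue; what does need checking is that $x^{D(\xi)}(P_0)$ is a nonzero rational number so that the product formula applies, which follows because $D(\xi)\cap X$ is the complement of $X^\xi$ (as in Lemma~\ref{lem:tau}) and we chose $\xi$ with $P\in X^\xi$, together with the fact that the Cox coordinates of a point of $X_0(\Qd)$ lying over a rational point are rational and not all zero in any primitive collection.

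I do not anticipate a genuine obstacle here; the statement is essentially a bookkeeping consequence of Lemma~\ref{lem:height} and the product formula, entirely parallel to \cite[Proposition~10.14]{MR1679841}. If anything, the mild subtlety is making sure the chain of equalities is independent of the auxiliary choice of $\xi$: a posteriori the right-hand side $\prod_v\max_\sigma|x^{D(\sigma)}(P_0)|_v$ manifestly does not involve $\xi$, so the argument is consistent, and one could also note that two choices $\xi,\xi'$ differ by multiplication by $x^{D(\xi)}/x^{D(\xi')}$, whose adelic absolute value is $1$ by the product formula. I would keep the written proof to three or four lines: invoke Lemma~\ref{lem:height} with $F_0=x^{D(\xi)}$ for $\xi$ chosen so that $P\in X^\xi$, then apply the product formula to $x^{D(\xi)}(P_0)\in\Qd^\times$ to pull the denominator out of the product.
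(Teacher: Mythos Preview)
Your proposal is correct and follows essentially the same approach as the paper: invoke Lemma~\ref{lem:height}, identify the quotients $x^{D(\sigma)}/F_0$ on $X$ with the corresponding quotients of regular functions on $X_0$, and then apply the product formula to $F_0(P_0)\in\Qd^\times$ to clear the denominator. The only cosmetic difference is that the paper works with an arbitrary $F_0$ of degree $-K_X$ not vanishing in $P$, whereas you make the specific (and perfectly legitimate) choice $F_0=x^{D(\xi)}$ for some $\xi$ with $P\in X^\xi$.
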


\begin{proof}
  Let $P = \pi(P_0) \in X(\Qd)$. For $F_0$ of degree $-K_X$ not
  vanishing in $P$ and $\sigma \in \Sigmamax$, we can compute
  $(x^{D(\sigma)}/F_0)(P)$ as in Lemma~\ref{lem:height},
  but we can also regard $x^{D(\sigma)}$ and $F_0$ as regular functions on
  $X_0$ that can be evaluated in $P_0$. Here we have
  $x^{D(\sigma)}(P_0)/F_0(P_0) = (x^{D(\sigma)}/F_0)(P)$.  Using
  Lemma~\ref{lem:height}, we obtain
  \begin{equation*}
    H_0(P_0)=H(P) = \prod_v \max_{\sigma \in \Sigmamax}
    \left|\frac{x^{D(\sigma)}}{F_0}(P)\right|_v
    = \prod_v \max_{\sigma \in \Sigmamax}
    \left|\frac{x^{D(\sigma)}(P_0)}{F_0(P_0)}\right|,
  \end{equation*}
  and $\prod_v |F_0(P_0)|_v = 1$ by the product formula.
\end{proof}

The next result is analogous to \cite[Proposition~11.3]{MR1679841}.

\begin{cor}\label{cor:height_torsor_integral}
  For any prime $p$ and $P_0 \in \tX_0(\Zd_p)$, we have
  \begin{equation*}
    \max_{\sigma \in \Sigmamax} |x^{D(\sigma)}(P_0)|_p=1.
  \end{equation*}
  For $P_0 \in \tX_0(\Zd)$, we have
  \begin{equation*}
    H_0(P_0) = \max_{\sigma \in \Sigmamax} |x^{D(\sigma)}(P_0)|_\infty.
  \end{equation*}
\end{cor}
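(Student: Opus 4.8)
The plan is to deduce both statements from the description of $\tX_0(\Zd_p)$ in Proposition~\ref{prop:lift_to_torsor} together with the combinatorial identity characterizing the $\alpha^\sigma_\rho$ in Lemma~\ref{lem:monomials_degree_L}. First I would set up notation: write $P_0 = (x_\rho)_{\rho \in \Sigma(1)} \in \tX_0(\Zd_p)$, so the $x_\rho$ lie in $\Zd_p$ and, for each primitive collection $S_j$, we have $p \nmid \gcd\{x_\rho : \rho \in S_j\}$. Since each $x_\rho \in \Zd_p$, we immediately get $|x^{D(\sigma)}(P_0)|_p = \prod_{\rho \notin \sigma(1)} |x_\rho|_p^{\alpha^\sigma_\rho} \le 1$ for every $\sigma \in \Sigmamax$, because the exponents $\alpha^\sigma_\rho$ are nonnegative integers (Lemma~\ref{lem:monomials_degree_L}). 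So the content of the first assertion is the \emph{lower} bound: some $\sigma$ achieves $|x^{D(\sigma)}(P_0)|_p = 1$, i.e.\ there is a maximal cone $\sigma$ with $p \nmid x_\rho$ for all $\rho \notin \sigma(1)$.

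The heart of the argument is therefore purely combinatorial. Let $P \coloneqq \{\rho \in \Sigma(1) : p \mid x_\rho\}$. By the coprimality conditions, $P$ contains no primitive collection $S_j$; equivalently, $P$ does not contain any $S_j$. By the characterization of primitive collections and the irrelevant ideal (the $S_j$ are exactly the minimal subsets of $\Sigma(1)$ not contained in any cone of $\Sigma$), a subset of $\Sigma(1)$ that contains no $S_j$ must itself be contained in $\sigma(1)$ for some cone $\sigma \in \Sigma$; enlarging to a maximal cone, we may take $\sigma \in \Sigmamax$ with $P \subseteq \sigma(1)$. Then for every $\rho \notin \sigma(1)$ we have $\rho \notin P$, so $p \nmid x_\rho$, whence $|x_\rho|_p = 1$ and $|x^{D(\sigma)}(P_0)|_p = 1$. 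Combined with the upper bound, this gives $\max_{\sigma} |x^{D(\sigma)}(P_0)|_p = 1$. The step I expect to be the main (though still routine) obstacle is pinning down precisely the statement "a set containing no primitive collection lies in a cone" from the references \cite[Proposition~5.1.6]{MR2810322} and the definition \eqref{eq:primitive_collections}; one must be careful that primitive collections are defined via $\Sigma$ (all cones), not only via $\Sigmamax$, which is exactly how they are stated in the excerpt.

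For the second assertion, let $P_0 \in \tX_0(\Zd)$. By Lemma~\ref{lem:height_torsor}, $H_0(P_0) = \prod_v \max_{\sigma \in \Sigmamax} |x^{D(\sigma)}(P_0)|_v$. For each finite place $v = p$, the coordinates $x_\rho$ are in particular in $\Zd_p$ and satisfy the $p$-adic coprimality condition of Proposition~\ref{prop:lift_to_torsor}, so $P_0 \in \tX_0(\Zd_p)$ and the first part gives $\max_{\sigma} |x^{D(\sigma)}(P_0)|_p = 1$. Hence only the archimedean factor survives in the product, yielding $H_0(P_0) = \max_{\sigma \in \Sigmamax} |x^{D(\sigma)}(P_0)|_\infty$, as claimed.
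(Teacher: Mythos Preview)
Your proposal is correct and follows essentially the same approach as the paper. The only cosmetic difference is that the paper phrases the key step via the irrelevant ideal---reducing $P_0$ mod $p$ and observing that some monomial $x^{\underline{\xi}} = \prod_{\rho \notin \xi(1)} x_\rho$ is nonzero in $\Fd_p$---whereas you phrase it via primitive collections, arguing that the set $P=\{\rho: p\mid x_\rho\}$ contains no $S_j$ and hence lies in some $\sigma(1)$; these are equivalent by the two descriptions of $Z_Y$ in \eqref{eq:underline_sigma} and \eqref{eq:primitive_collections}.
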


\begin{proof}
  Let $p$ be a prime and $P_0 \in \tX_0(\Zd_p)$. Then $P_0 \bmod p$ is in
  $\tX_0(\Fd_p)$. Since $\tX_0$ is defined by the irrelevant ideal in $\tX_1$
  as in \eqref{eq:underline_sigma}, there is a $\xi \in \Sigmamax$ such that
  $x^{\underline{\xi}}(P_0 \bmod p) \ne 0 \in \Fd_p$.  Since the support of
  $D(\xi)$ is as in Lemma~\ref{lem:monomials_degree_L}, we have
  $x^{D(\xi)}(P_0 \bmod p) \ne 0 \in \Fd_p$, and hence
  $|x^{D(\xi)}(P_0)|_p=1$. Using $x^{D(\sigma)}(P_0) \in \Zd_p$ for all
  $\sigma \in \Sigmamax$, we conclude
  $\max_{\sigma \in \Sigmamax} |x^{D(\sigma)}(P_0)|_p=1$.

  Therefore, for $P_0 \in \tX_0(\Zd)$, only the archimedean factor in
  Lemma~\ref{lem:height_torsor} remains.
\end{proof}

\subsection{Parameterization in Cox coordinates}

The following proposition translates the analysis of $N_{X, U, H}(B)$ into a
counting problem as described in the introduction that is amenable to methods
of analytic number theory.  It parameterizes the rational points on $X$ by
integral points on the universal torsor $\tX_0$ in terms of the torsor
equation from the Cox ring \eqref{eq:cox_ring}, the height conditions from the
anticanonical monomials \eqref{eq:height_monomials} and the coprimality
conditions from the primitive collections \eqref{eq:primitive_collections}.

\begin{prop}\label{prop:countingproblem_abstract}
  Let $X$ be a variety as in the first paragraph of
  Section~\ref{sec:charts_torsors} that satisfies the
  assumption~\eqref{eq:toric_smooth}.  Let
  $U=X \setminus \bigcup_{\rho \in \Sigma(1)} D_\rho$ be the open
  subset of $X$ where all Cox coordinates $x_\rho$ are nonzero. Let
  $H$ be the anticanonical height function on $X(\Qd)$ defined in
  \eqref{eq:height_definition}. Then
  \begin{equation*}
    N_{X,U,H}(B) =  
    \frac{1}{2^{\rank\Pic X}} \#\left\{\xv \in \Zd^{\Sigma(1)}_{\ne 0} :
      \begin{aligned}
        &\Phi(\xv)=0,\,  \max_{\sigma \in \Sigmamax}|\xv^{D(\sigma)}|_\infty \le B,\\
        &\gcd\{x_\rho : \rho \in S_j\} = 1 \text{ for every $j=1,\dots,r$}
      \end{aligned}
    \right\}\text{,}
  \end{equation*}
  using the notation \eqref{eq:cox_ring}, \eqref{eq:height_monomials},
  \eqref{eq:primitive_collections}.
\end{prop}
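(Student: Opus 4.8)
The plan is to combine the point-counting results already established in this section, passing from rational points on $X$ to integral points on the $\Zd$-model $\tX_0$ of the universal torsor and then reading off the torsor equation, height inequality, and coprimality conditions. First I would invoke Proposition~\ref{prop:lift_to_torsor}: the map $\tpi$ realizes a $2^{\rank\Pic X}:1$ surjection $\tX_0(\Zd)\to \tX(\Zd)=X(\Qd)$, and the description of $\tX_0(\Zd)$ there says exactly that its elements are the tuples $\xv=(x_\rho)_{\rho\in\Sigma(1)}\in\Zd^{\Sigma(1)}$ with $\Phi(\xv)=0$ satisfying the coprimality conditions $\gcd\{x_\rho:\rho\in S_j\}=1$ for $j=1,\dots,r$ coming from the primitive collections \eqref{eq:primitive_collections}. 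So counting $X(\Qd)$ with a height bound amounts to $2^{-\rank\Pic X}$ times the count of such $\xv$ with the corresponding height constraint, and it remains only to translate the height.

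Second, I would match the height condition. For $P_0\in\tX_0(\Zd)$, Corollary~\ref{cor:height_torsor_integral} gives $H_0(P_0)=\max_{\sigma\in\Sigmamax}|x^{D(\sigma)}(P_0)|_\infty$, where $x^{D(\sigma)}=\prod_{\rho\notin\sigma(1)}x_\rho^{\alpha^\sigma_\rho}$ by \eqref{eq:height_monomials}. Since $H_0$ is by definition $H\circ\pi$, the condition $H(P)\le B$ on the image point $P=\pi(P_0)$ is precisely $\max_{\sigma}|\xv^{D(\sigma)}|_\infty\le B$ on the torsor point. Thus the fibre-by-fibre correspondence of Proposition~\ref{prop:lift_to_torsor} respects the height cutoff, and summing over fibres turns $N_{X,X,H}(B)$ into $2^{-\rank\Pic X}$ times the displayed cardinality.

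Third, I would address the passage from the full variety $X$ to the open subset $U=X\setminus\bigcup_\rho D_\rho$. The point is that a point $P\in X(\Qd)$ lies in $U$ if and only if none of the Cox coordinates of (any, equivalently every) torsor lift vanishes: for $\xv\in\tX_0(\Zd)$, the vanishing of $x_\rho$ cuts out exactly the preimage of $D_\rho$ under $\pi$, because $x_\rho$ is (the pullback of) the canonical section defining $D_\rho$ on $Y$, hence $\pi(\xv)\in U$ iff $\xv\in\Zd^{\Sigma(1)}_{\ne 0}$. So restricting the count on the left to $U$ corresponds exactly to restricting the count on the right to nonzero tuples, which is how the set is written. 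One small thing to check here is that the $2^{\rank\Pic X}$ torsor lifts of a given $P$ all have the same vanishing pattern of coordinates — this is clear since the fibre of $\tpi$ is a torsor under the finite group of sign changes $(\pm1)^{\rank\Cl Y}$ acting coordinatewise via the grading, which preserves nonvanishing.

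The main obstacle, such as it is, is bookkeeping rather than substance: one must be careful that the $2^{\rank\Pic X}:1$ count is genuinely uniform over the height-truncated and $U$-restricted subset, i.e.\ that neither the height condition nor the coprimality conditions nor the nonvanishing conditions break the equidistribution of fibre sizes. All three are invariant under the sign-change action on $\Zd^{\Sigma(1)}$ (the height because $|{\cdot}|_\infty$ ignores signs, the gcd conditions for the same reason, nonvanishing trivially), so the fibre over every counted point $P$ still has exactly $2^{\rank\Pic X}$ preimages inside the counted set, and dividing by $2^{\rank\Pic X}$ is legitimate. Assembling these three translations yields the stated formula.
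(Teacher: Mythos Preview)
Your proof is correct and follows essentially the same route as the paper: invoke Proposition~\ref{prop:lift_to_torsor} for the $2^{\rank\Pic X}:1$ correspondence and the description of $\tX_0(\Zd)$, apply Corollary~\ref{cor:height_torsor_integral} to translate the height condition, and observe that the preimage of $U$ is the locus where all Cox coordinates are nonzero. Your added paragraph checking that the sign-change action preserves the height, gcd, and nonvanishing conditions (so that fibres over counted points remain of full size $2^{\rank\Pic X}$) is a worthwhile point the paper leaves implicit.
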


\begin{proof}
  We combine the $2^{\rank\Pic X} : 1$-map and the description of $\tX_0(\Zd)$
  from Proposition~\ref{prop:lift_to_torsor} with the lifted height function in
  Corollary~\ref{cor:height_torsor_integral}. The preimage of $U(\Qd)$ in $\tX_0(\Zd)$ is
  the set where $x_\rho \ne 0$ for all $\rho \in \Sigma(1)$.
\end{proof}

\subsection{Some linear algebra}\label{sec:linear_algebra}

The monomials $\textbf{x}^{D(\sigma)}$ and the polynomial $\Phi$ that appear
in Proposition~\ref{prop:countingproblem_abstract} are not independent. In
this subsection, we analyze this dependence and describe it in the form of a
rank condition on a certain matrix. This will be useful later when we apply
methods from complex analysis to obtain an asymptotic formula for
$N_{X, U, H}(B)$.

We consider $\Qd^J=\Qd^{\Sigma(1)}$ \eqref{JN} with standard basis
$(e_\rho)_{\rho \in \Sigma(1)}$ indexed by the rays of $\Sigma$. Let
\begin{equation*}
  p\colon \Qd^{\Sigma(1)} \to (\Pic X)_\Qd
\end{equation*}
be the surjective linear map that sends $e_\rho$ to
$[D_\rho]=\deg(x_\rho)$ as in \eqref{eq:sigma-basis}. For
$\xv = (x_\rho)_{\rho \in \Sigma(1)} \in \Qd_v^{\Sigma(1)}$ for some place $v$
of $\Qd$ and
$\vv = (v_\rho)_{\rho \in \Sigma(1)} \in \Zd_{\ge 0}^{\Sigma(1)}$, let
$\xv^\vv \coloneqq \prod_{\rho \in \Sigma(1)} x_\rho^{v_\rho}$.

\begin{lemma}\label{lemma:lpp}
  The set $Q \coloneqq p^{-1}(-K_X) \cap \Qd_{\ge 0}^{\Sigma(1)}$ is a
  bounded polytope of dimension $J-\rank\Pic X$.  Its set $\Vm$ of vertices of
  $Q$ lies in $\Zd_{\ge 0}^{\Sigma(1)}$. Let $v$ be a place of $\Qd$.  For all
  nonzero $\xv \in \Qd_v^{\Sigma(1)}$, we have
  \begin{equation*}
    \max_{\sigma \in \Sigmamax} |\xv^{D(\sigma)}|_v = \max_{\vv \in \Vm} |\xv^{\vv}|_v.
  \end{equation*}
\end{lemma}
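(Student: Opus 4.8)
The plan is to first establish the combinatorial facts about $Q$ using toric geometry, then derive the max-identity by realizing each divisor $D(\sigma)$ as a vertex of $Q$ and conversely. For the first part, note that $Q = p^{-1}(-K_X) \cap \Qd_{\ge 0}^{\Sigma(1)}$ is cut out by finitely many rational inequalities, so it is a rational polyhedron; I would show it is bounded by observing that $Q$ is the polytope of effective $U$-invariant divisors of class $-K_X = [D_0]$, which is a translate of the (bounded) global sections polytope $P_{D_0} = \{m \in M_\Qd : \langle m, u_\rho\rangle \ge -1 \text{ for all } \rho\}$ via the exact sequence $0 \to M \to \Zd^{\Sigma(1)} \xrightarrow{p} \Pic X \to 0$; boundedness of $P_{D_0}$ follows since $-K_Y$ is big (its support function is strictly convex enough that $P_{D_0}$ is full-dimensional and bounded), and the dimension of $Q$ equals $\dim P_{D_0} = \dim M = J - \rank\Pic X$ since $-K_Y$ is big. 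That $\Vm \subseteq \Zd_{\ge 0}^{\Sigma(1)}$: a vertex of $Q$ is determined by setting $J - \dim Q = \rank\Pic X$ of the coordinate inequalities $x_\rho \ge 0$ to equalities, i.e.\ by a choice of $S \subseteq \Sigma(1)$ with $\#S = \rank\Pic X$ such that $\{[D_\rho] : \rho \notin S\}$ is a basis of $(\Pic X)_\Qd$; then the remaining coordinates are the unique solution of $-K_X = \sum_{\rho \notin S} \lambda_\rho [D_\rho]$, and one must check these are integral and nonnegative for the solution to actually be a vertex. By Lemma~\ref{lem:monomials_degree_L}, if $S = \sigma(1)^c$ is not of the shape coming from a maximal cone, integrality is not automatic --- but whenever the solution \emph{is} nonnegative integral it is a genuine vertex, and this is exactly the content we need.

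Next I would prove the two-sided containment between $\{D(\sigma) : \sigma \in \Sigmamax\}$ and $\Vm$ at the level of the relevant maxima. For one direction: by Lemma~\ref{lem:monomials_degree_L}, each $D(\sigma) = \sum_{\rho \notin \sigma(1)} \alpha^\sigma_\rho D_\rho$ has $\alpha^\sigma_\rho \in \Zd_{\ge 0}$ and class $-K_X$, so its coefficient vector $\vv(\sigma) \coloneqq (\alpha^\sigma_\rho)_\rho$ (with zeros on $\sigma(1)$) lies in $Q \cap \Zd_{\ge 0}^{\Sigma(1)}$; moreover, since it is supported away from $\sigma(1)$ and $\{[D_\rho]: \rho \notin \sigma(1)\}$ is a basis by \eqref{eq:sigma-basis}, it is the unique point of $Q$ in the affine subspace $\{x_\rho = 0 : \rho \in \sigma(1)\}$, hence a vertex. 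Therefore $\{\vv(\sigma) : \sigma \in \Sigmamax\} \subseteq \Vm$ and so $\max_\sigma |\xv^{D(\sigma)}|_v = \max_\sigma |\xv^{\vv(\sigma)}|_v \le \max_{\vv \in \Vm}|\xv^\vv|_v$. For the reverse inequality, I would show that for every $\vv \in \Vm$ and every nonzero $\xv$ there is a $\sigma \in \Sigmamax$ with $|\xv^\vv|_v \le |\xv^{D(\sigma)}|_v$. The clean way: for any $\vv \in Q$, write $\vv - \vv(\sigma) = p^{-1}\text{-lift of }0$, i.e.\ $\vv - \vv(\sigma) = \bar m_\sigma$ for a unique $m_\sigma \in M$ (the character $\chi_\sigma$ from the proof of Lemma~\ref{lem:monomials_degree_L}); the global-generation inequality $\chi_\sigma \le \chi_{\sigma'}$ on $\sigma'$ proved there translates, after pairing, into the statement that for each $\rho$ the $\rho$-coordinate of $\vv(\sigma) - \bar m$ is $\le$ the corresponding coordinate of some $\vv(\sigma')$; choosing $\sigma$ adapted to $\xv$ (the cone on which the relevant piecewise-linear function is realized) gives $v_\rho \le \alpha^\sigma_\rho$ wherever $|x_\rho|_v > 1$ and $v_\rho \ge \alpha^\sigma_\rho$ wherever $|x_\rho|_v \le 1$, hence $|\xv^\vv|_v \le |\xv^{D(\sigma)}|_v$.

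The main obstacle is this last step: making precise the choice of $\sigma \in \Sigmamax$ depending on $\xv$ (equivalently on the sign pattern of $\log|x_\rho|_v$) so that $\xv^\vv$ is dominated by $\xv^{D(\sigma)}$ simultaneously in all coordinates. This is genuinely the toric-geometry heart of the argument. The right framework is Cox's description of the toric variety: the point $\xv$ (up to the torus) lies in some affine chart $U^\sigma$, i.e.\ $x_\rho \ne 0$ for $\rho \notin \sigma(1)$; one then uses that $-K_X = -K_Y - X$ pulled back, or more directly that $x^{D(\sigma)}/x^{D(\sigma')}$ is a monomial in a character $\chi_{\sigma} - \chi_{\sigma'}$ that is nonnegative on $\sigma'$, to compare. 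I expect the cleanest writeup routes everything through the support function of $-K_Y$ (which is globally generated/semiample by hypothesis, so its support function is convex and piecewise linear on $\Sigma$), transferring \cite[Theorem~6.1.7]{MR2810322} or \cite[Proposition~8.2.3]{MR2810322}-type statements; once $\sigma$ is chosen as the cone realizing the value of that support function against the "direction" of $\xv$, the coordinatewise domination is immediate and the identity $\max_\sigma |\xv^{D(\sigma)}|_v = \max_{\vv \in \Vm}|\xv^\vv|_v$ follows by combining both inequalities, completing the proof.
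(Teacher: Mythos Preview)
Your setup is right and matches the paper: identify $Q$ with the section polytope $P_D\subset M_\Qd$ of a $U$-invariant divisor $D$ of class $-K_X$ via the affine embedding $\phi(m)=(a_\rho+\langle n_\rho,m\rangle)_\rho$, deduce boundedness and $\dim Q=\rank M=J-\rank\Pic X$ from bigness, and observe that each $D(\sigma)$ lies in $Q$. Your argument that every $D(\sigma)$ is in fact a vertex is correct (it is the unique point of $Q$ with vanishing $\rho$-coordinate for all $\rho\in\sigma(1)$, hence extreme). One slip: a vertex is obtained by setting $\dim Q=J-\rank\Pic X=\#\sigma(1)$ coordinates to zero, not $\rank\Pic X$.

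The genuine gap is your reverse inequality. The coordinatewise domination you assert---that one can choose $\sigma$ so that $v_\rho\le\alpha^\sigma_\rho$ where $|x_\rho|_v>1$ and $v_\rho\ge\alpha^\sigma_\rho$ where $|x_\rho|_v\le1$---is strictly stronger than $|\xv^{\vv}|_v\le|\xv^{D(\sigma)}|_v$ and is \emph{not} what the global-generation inequality $\chi_\sigma\le\chi_{\sigma'}$ on $\sigma'$ gives; that inequality only controls the pairing with vectors $u\in\sigma'$, not individual coordinates with prescribed signs. Your final paragraph gestures at the correct scalar argument via the support function, but as written it is a sketch, and the earlier coordinatewise claim would fail for points of $Q$ that are not themselves equal to some $D(\sigma)$.

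The paper bypasses this entirely by proving the containment you do not attempt, namely $\Vm\subseteq\{D(\sigma):\sigma\in\Sigmamax\}$. Under the bijection $\phi\colon P_D\to Q$, the vertices of $P_D$ are among the points $-\chi_\sigma$ (this is the standard fact for globally generated torus-invariant divisors on complete toric varieties, \cite[\S4.3 and after Lemma~9.3.9]{MR2810322}), and $\phi(-\chi_\sigma)=D(\sigma)$. Once $\Vm\subseteq\{D(\sigma)\}\subseteq Q$ is known, the max identity is immediate: each $\vv\in\Vm$ already equals some $D(\sigma)$, and conversely each $D(\sigma)\in Q=\mathrm{conv}(\Vm)$ satisfies $|\xv^{D(\sigma)}|_v=\prod_i|\xv^{\vv_i}|_v^{t_i}\le\max_i|\xv^{\vv_i}|_v$ for any convex combination $D(\sigma)=\sum_i t_i\vv_i$. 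Integrality of $\Vm$ is then automatic from $\phi(M)\subseteq\Zd^{\Sigma(1)}$. This single containment replaces your entire domination discussion.
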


\begin{proof}
  In the notation of the proof of Lemma~\ref{lem:monomials_degree_L},
  write $D = \sum_{\rho}a_\rho D_\rho$. Then the
  $-\chi_\sigma$ are the vertices, and possibly (if $-K_X$ is not ample) some
  other points, of the $\rank M$-dimensional polytope
  \begin{align*}
  P_D = \{\chi \in M_\Qd : \langle n_\rho, \chi\rangle \ge -a_\rho \text{ for
    all $\rho$}\}\text{;}
  \end{align*}
  see \cite[\S 4.3 and after Lemma~9.3.9]{MR2810322}.

  Now consider the injective affine map
  $\phi\colon M_\Qd \to \Qd^{\Sigma(1)}$,
  $\chi \mapsto \sum_{\rho} (a_\rho + \langle n_\rho, \chi\rangle)e_\rho$ as
  well as the linear surjective map
  $p\colon \Qd^{\Sigma(1)} \to (\Cl Y)_\Qd$. We have
  $\rank M = J - \rank \Pic X$ and
  $\operatorname{im}(p \circ \phi) = \{-K_X\}$. Moreover, the condition
  $\phi(\chi) \in \Qd^{\Sigma(1)}_{\ge 0}$ is equivalent to
  $\langle n_\rho, \chi\rangle \ge -a_\rho$ for all $\rho$. It follows that
  $\phi$ restricts to a bijection
  $P_D \to Q = p^{-1}(-K_X) \cap \Qd^{\Sigma(1)}_{\ge 0}$.  Hence $Q$ is bounded and of
  dimension $J - \rank \Pic X$.

  As we have $p(-\chi_\sigma) = D(\sigma)$, where $D(\sigma)$ is
  interpreted as an element of $\Zd^{\Sigma(1)}$ in the obvious way,
  we obtain $\Vm \subseteq \phi(\{D(\sigma) : \sigma \in \Sigmamax\}) \subseteq Q$.
  Hence the equality
    \begin{equation*}
    \max_{\sigma \in \Sigmamax} |\xv^{D(\sigma)}|_v = \max_{\vv \in \Vm} |\xv^{\vv}|_v
  \end{equation*}
  holds, and, since $\phi(M) \subseteq \Zd^{\Sigma(1)}$, we also obtain
  $\Vm \subset \Zd_{\ge 0}^{\Sigma(1)}$.
\end{proof}

We recall \eqref{JN} and the notation \eqref{eq:height_monomials} for the
exponents $\alpha_{\rho}^{\sigma}$ occurring in $\textbf{x}^{D(\sigma)}$. We write
the defining equation $\Phi$ from \eqref{eq:cox_ring} in the form
\begin{equation}\label{Phi-sec3}
  \Phi = \sum_{i=1}^k b_i\prod_{\rho \in \Sigma(1)} x_\rho^{h_{i\rho}},
\end{equation}
(\ie $k$ is the number of monomials, and
$\hv_i = (h_{i\rho})_{\rho \in \Sigma(1)} \in \Zd_{\ge 0}^{\Sigma(1)}$ is the
exponent vector of the $i$-th term of $\Phi$). We now consider the block
matrix
\begin{equation}\label{matrix}
  \Am = \begin{pmatrix}\Am_1&\Am_2\\ \Am_3&\Am_4\end{pmatrix}
  \in \Rd^{(J+1)\times(N+k)}.
\end{equation}
Here
$\Am_1 = (\alpha_{\rho}^{\sigma})_{(\rho, \sigma) \in \Sigma(1) \times
  \Sigmamax} \in \Bbb{R}^{J \times N}$ is the height matrix for the height
function from Proposition~\ref{prop:countingproblem_abstract}. We let
$\Am_2 \in \Rd^{J \times k}$ be the matrix whose $i$-th column is
$\hv_i-\hv_k$ for $i=1,\dots,k-1$ and whose $k$-th column is
$\hv_k-(1,\dots,1)^{\top}$. Furthermore, let
$\Am_3 = (1, \dots, 1) \in \Rd^{1 \times N}$ and
$\Am_4 = (0,\dots,0,-1) \in \Rd^{1 \times k}$.

The definition of $\Am_2$ may appear to be somewhat artificial. Its purpose
will become clear in \eqref{zast} in Section~\ref{54}.

\begin{lemma}\label{rank}
  We have $\rank \Am = \rank \Am_1 = J - \rank\Pic X + 1$.
\end{lemma}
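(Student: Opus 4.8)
The plan is to express all three ranks through the polytope $Q$ of Lemma~\ref{lemma:lpp} and the projection $p\colon\Qd^{\Sigma(1)}\to(\Pic X)_\Qd$. First I would note that the columns of $\Am_1$ are precisely the vectors $D(\sigma)$, $\sigma\in\Sigmamax$, regarded as points of $\Zd_{\ge 0}^{\Sigma(1)}$ (with $\alpha^\sigma_\rho=0$ for $\rho\in\sigma(1)$). By Lemma~\ref{lemma:lpp} the vertex set $\Vm$ of $Q=p^{-1}(-K_X)\cap\Qd_{\ge 0}^{\Sigma(1)}$ satisfies $\Vm\subseteq\{D(\sigma):\sigma\in\Sigmamax\}\subseteq Q$, so the affine hull $A$ of $\{D(\sigma):\sigma\in\Sigmamax\}$ coincides with the affine hull of $Q$, which has dimension $J-\rank\Pic X$. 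Since $A\subseteq p^{-1}(-K_X)$ and both have this same dimension, $A=p^{-1}(-K_X)$; as $-K_X\ne 0$ in $\Pic X$ (because $\omega_X^\vee$ is big, hence nonzero), we get $0\notin A$. The elementary fact that the linear span of a point set whose affine hull avoids the origin has dimension one more than that affine hull then yields $\rank\Am_1=\dim A+1=J-\rank\Pic X+1$.

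Next I would show $\rank\begin{pmatrix}\Am_1\\\Am_3\end{pmatrix}=\rank\Am_1$. The columns of this matrix are the vectors $(D(\sigma),1)$. Fixing any $\sigma_0$, the differences $(D(\sigma),1)-(D(\sigma_0),1)=(D(\sigma)-D(\sigma_0),0)$ span the direction space of $A$, namely $\ker p$, placed in the first $J$ coordinates; together with $(D(\sigma_0),1)$, which does not lie in $\ker p\times\{0\}$, this shows the column span $C$ of $\begin{pmatrix}\Am_1\\\Am_3\end{pmatrix}$ equals $(\ker p\times\{0\})\oplus\Qd\cdot(D(\sigma_0),1)$, of dimension $(J-\rank\Pic X)+1$. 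Hence adjoining the all-ones row leaves the rank unchanged.

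It then remains to check that the columns of $\begin{pmatrix}\Am_2\\\Am_4\end{pmatrix}$ lie in $C$. For $i=1,\dots,k-1$ the relevant column is $(\hv_i-\hv_k,0)$; since $\Phi$ is $\Pic X$-homogeneous, $p(\hv_i)=p(\hv_k)=\deg\Phi$, so $\hv_i-\hv_k\in\ker p$ and the column lies in $\ker p\times\{0\}\subseteq C$. The $k$-th column is $(\hv_k-(1,\dots,1)^{\top},-1)$; here $p((1,\dots,1)^{\top})=\sum_{\rho}[D_\rho]=-K_Y=-K_X+\deg\Phi$ by adjunction, so $p(\hv_k-(1,\dots,1)^{\top})=\deg\Phi-(-K_X+\deg\Phi)=-(-K_X)$, whence $\hv_k-(1,\dots,1)^{\top}\in-p^{-1}(-K_X)=-A=-D(\sigma_0)+\ker p$. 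Writing $\hv_k-(1,\dots,1)^{\top}=-D(\sigma_0)+w$ with $w\in\ker p$ gives $(\hv_k-(1,\dots,1)^{\top},-1)=-(D(\sigma_0),1)+(w,0)\in C$. Combining the three steps yields $\rank\Am=\rank\begin{pmatrix}\Am_1\\\Am_3\end{pmatrix}=\rank\Am_1=J-\rank\Pic X+1$.

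The only non-formal input is the identification, already made in Lemma~\ref{lemma:lpp}, that the $D(\sigma)$ exhaust the vertices of $Q$ and that $Q$ has dimension $J-\rank\Pic X$; everything afterwards is linear algebra over $\Qd$. The mildest point requiring care is the direction-space claim in the second step — that the differences $D(\sigma)-D(\sigma_0)$ already span all of $\ker p$ rather than a proper subspace — but this is immediate from $A=p^{-1}(-K_X)$ together with the standard description of the direction space of an affine hull as the span of differences of its points.
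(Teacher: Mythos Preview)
Your proof is correct and follows essentially the same route as the paper's: both compute $\rank\Am_1$ via the affine hull of the $D(\sigma)$ inside $p^{-1}(-K_X)$ using Lemma~\ref{lemma:lpp}, both observe that appending the all-ones row cannot raise the rank because that affine hull avoids the origin, and both dispatch the $\Am_2$-columns by noting that $\hv_i-\hv_k\in\ker p$ and $\hv_k-(1,\dots,1)^\top\in p^{-1}(K_X)$ (the paper citing \cite[Proposition~3.3.3.2]{adhl15}, you citing adjunction). Your version is somewhat more explicit in identifying the column span of $\begin{pmatrix}\Am_1\\\Am_3\end{pmatrix}$ as $(\ker p\times\{0\})\oplus\Qd\cdot(D(\sigma_0),1)$ and then checking membership directly, whereas the paper leaves this step terse; but the underlying argument is identical.
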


\begin{proof}
  According to Lemma~\ref{lemma:lpp}, the polytope $Q$ spans an affine
  subspace of dimension $J-\rank\Pic X$ in $\Rd^{J}$, which does not contain
  $0$ since $-K_X \ne 0$. It follows that $Q$ spans a vector space of
  dimension $J-\rank\Pic X + 1$ in $\Rd^{J}$. This shows
  $\rank \Am_1 = J - \rank\Pic X + 1$.

  Since the columns of $\Am_1$ lie in an affine subspace of $\Rd^J$ that does
  not contain $0$, a linear combination of these columns can be $0$ only if
  the sum of the coefficients is $0$. It follows that we have
  $ \rank \rleft(\begin{smallmatrix}\Am_1 \\ \Am_3\end{smallmatrix}\rright) =
  \rank \Am_1$.  Since $\Phi$ is $\Pic X$-homogeneous, the first $k-1$ columns
  of $\Am_2$ lie in $p^{-1}(0)$.  Moreover, note that the last column of
  $\Am_2$ lies in $p^{-1}(K_X)$ since
  $\deg\Phi-\sum_{\rho \in \Sigma(1)} \deg(x_\rho)=K_X$ by
  \cite[Proposition~3.3.3.2]{adhl15}. Together with the fact that the columns
  of $\Am_1$ lie in $p^{-1}(-K_X)$, we obtain
  $ \rank \Am = \rank \rleft(\begin{smallmatrix}\Am_1 \\
    \Am_3\end{smallmatrix}\rright)\text{.}$
\end{proof}

Let ${\bm \zeta} = (\zeta_1, \ldots, \zeta_k) \in \Bbb{R}^{k}$ be a vector
satisfying
\begin{equation}\label{zeta}
  \zeta_i > 0  \text{ for all }  1 \leq i \leq k, \quad
  \sum_{i=1}^kh_{i\rho} \zeta_i < 1\text{ for all }\rho\in \Sigma(1), \quad
  \sum_{i=1}^k \zeta_i = 1.
\end{equation}
This condition will reappear in Part~\ref{part2} as \eqref{zeta1}.

\begin{lemma}\label{pos}
  Let ${\bm \zeta}$ be as in \eqref{zeta},
  ${\bm \tau}_1= (1-\sum_{i=1}^k h_{i\rho}\zeta_i)_{\rho \in \Sigma(1)} =
  (1,\dots,1)-\sum_{i=1}^k \zeta_i \hv_i$, and let
  ${\bm\tau}= ({\bm \tau}_1, 1)^\top$.  The system of $J+1$ linear equations
  \begin{align*}
    \begin{pmatrix}\Am_1 \\ \Am_3 \end{pmatrix}{\bm\sigma} = {\bm\tau}
  \end{align*}
  has a solution ${\bm\sigma} \in \Rd^N_{> 0}$. 
\end{lemma}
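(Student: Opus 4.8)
The plan is to read the linear system geometrically. The $\sigma$-th column of $\Am_1$ is exactly the exponent vector of $\xv^{D(\sigma)}$, \ie the point $D(\sigma) \in \Zd_{\ge 0}^{\Sigma(1)}$, while the single row $\Am_3 = (1,\dots,1)$ imposes that the entries of $\bm\sigma$ sum to $1$. Thus a solution $\bm\sigma = (s_\sigma)_{\sigma \in \Sigmamax} \in \Rd_{>0}^N$ of the system in the statement is the same as an expression of $\bm\tau_1$ as a convex combination $\bm\tau_1 = \sum_{\sigma \in \Sigmamax} s_\sigma D(\sigma)$ in which \emph{all} weights are strictly positive. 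By Lemma~\ref{lemma:lpp} and its proof, every $D(\sigma)$ lies in the polytope $Q$ and the vertices of $Q$ are among the $D(\sigma)$, so $Q$ is the convex hull of $\{D(\sigma) : \sigma \in \Sigmamax\}$. Hence it suffices to show that $\bm\tau_1$ lies in the relative interior of $Q$, since a point in the relative interior of a polytope is a strictly positive convex combination of any finite set whose convex hull is that polytope.

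First I would check that $\bm\tau_1 \in Q$, indeed in its relative interior. The $\rho$-th coordinate of $\bm\tau_1$ equals $1 - \sum_{i=1}^k h_{i\rho}\zeta_i$, which is positive by the middle inequality in \eqref{zeta}; hence $\bm\tau_1 \in \Rd_{>0}^{\Sigma(1)}$. Applying the map $p$ and using $p((1,\dots,1)) = [D_0] = -K_Y$, $p(\hv_i) = \deg\Phi$ for every $i$ (because $\Phi$ is $\Pic X$-homogeneous), and $\sum_{i=1}^k \zeta_i = 1$, one obtains $p(\bm\tau_1) = -K_Y - \deg\Phi = -K_X$ by adjunction, so $\bm\tau_1$ lies on the affine subspace $A \coloneqq p^{-1}(-K_X)$. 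Since all coordinates of $\bm\tau_1$ are strictly positive, for every $d \in \ker p$ and every sufficiently small $t$ the point $\bm\tau_1 + td$ still has positive coordinates and still lies on $A$, hence lies in $Q = A \cap \Rd_{\ge 0}^{\Sigma(1)}$. Thus $Q$ contains a neighborhood of $\bm\tau_1$ that is open in $A$; since $\dim Q = \dim A$ by Lemma~\ref{lemma:lpp}, this places $\bm\tau_1$ in the relative interior of $Q$.

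The one step that needs a genuine argument --- and the only place I expect to have to be careful --- is upgrading ``relative interior'' to ``all weights strictly positive''. Here I would perturb: fix any representation $\bm\tau_1 = \sum_\sigma t_\sigma D(\sigma)$ with $t_\sigma \ge 0$ and $\sum_\sigma t_\sigma = 1$, and set $\bm c \coloneqq \frac1N \sum_{\sigma \in \Sigmamax} D(\sigma) \in Q$. Since $\bm\tau_1$ lies in the relative interior of $Q$, there is $\epsilon > 0$ with $(1+\epsilon)\bm\tau_1 - \epsilon\bm c \in Q$; writing the latter as $\sum_\sigma u_\sigma D(\sigma)$ with $u_\sigma \ge 0$, $\sum_\sigma u_\sigma = 1$, and solving for $\bm\tau_1$ gives $\bm\tau_1 = \sum_\sigma \frac{u_\sigma + \epsilon/N}{1+\epsilon}\, D(\sigma)$, a convex combination whose coefficients are all at least $\frac{\epsilon/N}{1+\epsilon} > 0$. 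Taking $s_\sigma \coloneqq \frac{u_\sigma + \epsilon/N}{1+\epsilon}$ yields the required $\bm\sigma \in \Rd_{>0}^N$. Everything else is elementary linear and convex geometry combined with the already established Lemma~\ref{lemma:lpp}, so I do not anticipate further obstacles.
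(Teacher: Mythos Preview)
Your proof is correct and follows essentially the same route as the paper's: both identify $\bm\tau_1$ as a point of $p^{-1}(-K_X)$ with strictly positive coordinates, deduce that it lies in the relative interior of $Q$, and then use that the columns of $\Am_1$ are exactly the $D(\sigma)$ (whose convex hull is $Q$) to extract a strictly positive convex combination. The paper simply asserts the last step, while you supply the explicit barycenter-perturbation argument; your derivation of $p(\bm\tau_1)=-K_X$ via $p((1,\dots,1))=-K_Y$, $p(\hv_i)=\deg\Phi$ and adjunction is exactly what the paper abbreviates by citing \cite[Proposition~3.3.3.2]{adhl15}.
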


\begin{proof}
  According to \cite[Proposition~3.3.3.2]{adhl15}, we have
  $\bm\tau_1 \in p^{-1}(-K_X)$.  It follows from
  $Q = p^{-1}(-K_X) \cap \Qd_{\ge 0}^{\Sigma(1)}$ that the relative interior
  of $Q$ satisfies
  $Q^\circ \supseteq p^{-1}(-K_X) \cap \Qd_{> 0}^{\Sigma(1)}$. Since all
  coordinates of $\bm\tau_1$ are positive, we obtain $\bm\tau_1 \in
  Q^\circ$. Since the columns of $\Am_1$ are the vertices of $Q$, the column
  $\tau_1^\top$ can be written as a linear combination of the columns of
  $\Am_1$ with strictly positive coefficients whose sum is $1$. The existence
  of ${\bm\sigma} \in \Rd^N_{> 0}$ as required follows.
\end{proof}
 
\section{Tamagawa numbers in Cox coordinates}\label{sec:tamagawa_cox}

In this section, we use the adelic metrization (see
Section~\ref{sec:poincare}) of the anticanonical bundle on our variety $X$ to
make the local measures (Section~\ref{sec:local_measures}) explicit that are
used in the Tamagawa number (Section~\ref{sec:tamagawa_number}) in Peyre's
constant. We lift the $p$-adic measures to the universal torsor
(Section~\ref{sec:measures_torsor}), which allows as to express the $p$-adic
densities in the Tamagawa number in terms of the number of points on the
universal torsor modulo $p^\ell$, which is the number of solutions
modulo $p^\ell$ of the relation $\Phi$ in the Cox ring
(Section~\ref{sec:points_mod_p^l}). Furthermore, we rewrite the real density
and Peyre's constant $\alpha$ (Section~\ref{sec:real_density}) in a way that
will appear in our analytic method in Part~\ref{part2}. In total, we obtain a
description of Peyre's constant for $X$ in terms of the Cox ring of
$X$ (Proposition~\ref{prop:peyre}).

We continue to work in the setting of Sections~\ref{sec:charts_torsors} and
\ref{sec:metrics_heights}. Additionally, we assume that $X$ is an almost Fano
variety (\eg a smooth Fano variety) as in \cite[D\'efinition~3.1]{MR2019019}
(\ie $X$ is smooth, projective and geometrically integral with
$H^1(X,\Om_X) = H^2(X,\Om_X) = 0$, free geometric Picard group of finite rank,
and big $\omega_X^\vee$).

\subsection{Local measures}\label{sec:local_measures}

By \cite[(2.2.1)]{MR1340296}, \cite[Notations~4.3]{MR2019019} and
\cite[Theorem~1.10]{MR1679841}, the $v$-adic norm $\|\cdot\|_v$ on
$\omega_X^{-1}$ defined in \eqref{eq:def_norm} induces a measure $\mu_v$ on
$X(\Qd_v)$. We express it using the Poincar\'e residues from
Section~\ref{sec:poincare} and the affine charts from
Section~\ref{sec:affine_charts}; in particular, recall \eqref{eq:height_monomials}, \eqref{eq:X^sigma}, \eqref{eq:varpi^sigma}, \eqref{eq:tau^sigma}. See \cite[(5.8), (5.9)]{BBS1} for an example
of the next result.

\begin{prop}\label{prop:local_measure}
  Let $\xi \in \Sigmamax$. For a Borel subset $N_v$ of $X^{\xi}(\Qd_v)$, we have
  \begin{equation}\label{eq:local_measure_abstract}
    \mu_v(N_v)
    =\int_{N_v} \frac{|\Res\varpi^{\xi}|_v}{\max_{\sigma \in \Sigmamax}
      |\tau^\sigma\Res\varpi^{\xi}|_v}
    =\int_{N_v} \frac{|\Res\varpi^{\xi}|_v}{\max_{\sigma \in \Sigmamax}
      |x^{D(\sigma)}/x^{D(\xi)}|_v},
  \end{equation}
  where $|\Res\varpi^{\xi}|_v$ is the $v$-adic density on
  $X^{\xi}(\Qd_v)$ of the volume form $\Res\varpi^{\xi}$
  on $X^{\xi}$.

  Let $\rho_0 \in \xi(1)$.  If $N_v$ is contained in a sufficiently small open
  $v$-adic neighborhood of a point $P$ in $X^{\xi}(\Qd_v)$ with
  $\partial \Phi^{\xi}/\partial z^{\xi}_{\rho_0}(P) \ne 0$, then
  \begin{equation}\label{eq:local_measure_explicit}
    \mu_v(N_v)=\int_{\pi^{{\xi}}_{\rho_0}(N_v)} \frac{\bigwedge_{\rho \in
        {\xi(1)} \setminus \{\rho_0\}} \dd z^{\xi}_\rho}
    {|\partial \Phi^{\xi}/\partial z^{\xi}_{\rho_0}(\zv^{\xi})|_v \max_{\sigma
        \in \Sigmamax}|x^{D(\sigma)}(\zv^{\xi})|_v}
  \end{equation}
  in the affine coordinates
  $\zv^{\xi} = (z^{\xi}_{\rho})_{\rho \in {\xi(1)}}$, where
  $\pi^{\xi}_{\rho_0} \colon U^{\xi}(\Qd_v)=\Qd_v^{{\xi(1)}} \to
  \Qd_v^{\xi(1)\setminus\{\rho_0\}}$ is the natural projection and
  $z^{\xi}_{\rho_0}$ is expressed in terms of the other coordinates using the
  implicit function for $\Phi^{\xi}$.
\end{prop}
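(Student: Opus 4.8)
The plan is to recall how the $v$-adic norm $\|\cdot\|_v$ on $\omega_X^{-1}$ induces the measure $\mu_v$, and then to trace through the chart description from Section~\ref{sec:affine_charts}. By the recipe of \cite[(2.2.1)]{MR1340296} and \cite[Notations~4.3]{MR2019019}, a metrized anticanonical line bundle produces, on each chart isomorphic to an open subset of $\Qd_v^{\dim X}$, the density $\|\omega\|_v^{-1} \cdot |\omega|_v$ where $\omega$ is any local generator of $\omega_X$ and $|\omega|_v$ is the associated translation-invariant density. First I would fix $\xi \in \Sigmamax$, take $\omega = \Res\varpi^{\xi}$ on $X^{\xi}$ (which is a nowhere vanishing section of $\omega_X$ there, since $\tau^{\xi} = (\Res\varpi^{\xi})^{-1}$ is nowhere vanishing on $X^{\xi}$ by Lemma~\ref{lem:tau}), and compute $\|\Res\varpi^{\xi}(P)\|_v^{-1}$ directly from the definition \eqref{eq:def_norm}. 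Since $\Res\varpi^{\xi} = (\tau^{\xi})^{-1}$, definition \eqref{eq:def_norm} gives $\|\Res\varpi^{\xi}(P)\|_v^{-1} = \left(\min_{\sigma}|\tau^{\xi}/\tau^\sigma(P)|_v\right)^{-1} = \max_{\sigma}|\tau^\sigma/\tau^{\xi}(P)|_v = \max_\sigma |\tau^\sigma \Res\varpi^{\xi}(P)|_v$. This yields the first integral expression in \eqref{eq:local_measure_abstract}; the second follows from the identity $\tau^\sigma = x^{D(\sigma)}x^{-D(\xi)}\tau^{\xi}$ established in the proof of Lemma~\ref{lem:height}, which gives $\tau^\sigma \Res\varpi^{\xi} = \tau^\sigma/\tau^{\xi} = x^{D(\sigma)}/x^{D(\xi)}$.

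Next I would pass to explicit affine coordinates to prove \eqref{eq:local_measure_explicit}. On $U^{\xi}$ the coordinates $z^{\xi}_\rho$, $\rho \in \xi(1)$, give an isomorphism with $\Ad_\Qd^{\xi(1)}$, and $X^{\xi}$ is cut out by $\Phi^{\xi}$. By \eqref{eq:residue}, on the locus where $\partial\Phi^{\xi}/\partial z^{\xi}_{\rho_0} \ne 0$ we have $\Res\varpi^{\xi} = \pm (\partial\Phi^{\xi}/\partial z^{\xi}_{\rho_0})^{-1}\bigwedge_{\rho\in\xi(1)\setminus\{\rho_0\}}\dd z^{\xi}_\rho$. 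Restricting a neighborhood $N_v$ to be small enough that $\pi^{\xi}_{\rho_0}$ restricts to a chart $N_v \to \pi^{\xi}_{\rho_0}(N_v) \subseteq \Qd_v^{\xi(1)\setminus\{\rho_0\}}$ (possible by the implicit function theorem, exactly as set up at the end of Section~\ref{sec:affine_charts}), the density $|\Res\varpi^{\xi}|_v$ pushes forward to $|\partial\Phi^{\xi}/\partial z^{\xi}_{\rho_0}(\zv^{\xi})|_v^{-1}\bigwedge_{\rho\in\xi(1)\setminus\{\rho_0\}}\dd z^{\xi}_\rho$, where $z^{\xi}_{\rho_0}$ is understood as the implicit function of the remaining coordinates. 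Combining with the factor $\max_\sigma|x^{D(\sigma)}/x^{D(\xi)}|_v$ from the first part, and observing that $x^{D(\xi)}(\zv^{\xi}) = 1$ on $U^{\xi}$ by the definition of the $z^{\xi}_\rho$ (since $D(\xi)$ is supported away from the rays $\rho \notin \xi(1)$, so $x^{D(\xi)}$ is a monomial in the $x_\rho$ with $\rho \notin \xi(1)$, all of which equal $1$ after dividing through — more precisely $x^{D(\xi)}$ has degree $0$ when written in the $z$-coordinates adapted to $\xi$), gives exactly \eqref{eq:local_measure_explicit}.

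The main obstacle, and the point needing the most care, is the identification $x^{D(\xi)}(\zv^{\xi}) = 1$: one must check that the homogenization-to-affinization dictionary of Section~\ref{sec:affine_charts} indeed sends the monomial $x^{D(\xi)}$ to the constant $1$ in the chart $U^{\xi}$. This is because $D(\xi) = \sum_{\rho \notin \xi(1)}\alpha^{\xi}_\rho D_\rho$ by Lemma~\ref{lem:monomials_degree_L}, so $x^{D(\xi)} = \prod_{\rho\notin\xi(1)}x_\rho^{\alpha^{\xi}_\rho}$; and in the affine chart every $x_\rho$ with $\rho \notin \xi(1)$ is (the section defining $D_\rho$, which is) invertible, with the trivialization chosen precisely so that $z^{\xi}_\rho = 1$ for $\rho \notin \xi(1)$. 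Unwinding, $x^{D(\xi)}$ becomes the constant function $1$ on $U^{\xi}$ under this trivialization, which is what makes the displayed formula clean. Everything else is a routine unpacking of the change-of-variables formula for $v$-adic densities under the implicit-function chart, together with the two monomial identities $\tau^\sigma\Res\varpi^{\xi} = x^{D(\sigma)}/x^{D(\xi)}$ and $x^{D(\xi)}|_{U^{\xi}} = 1$ already implicit in Sections~\ref{sec:affine_charts}--\ref{sec:height}.
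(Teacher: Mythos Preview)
Your approach is essentially the same as the paper's: both use the identity $\tau^\sigma\Res\varpi^\xi = x^{D(\sigma)}/x^{D(\xi)}$, the residue formula \eqref{eq:residue}, the implicit-function chart, and the observation $x^{D(\xi)}(\zv^\xi)=1$; the paper merely reverses the order, deriving \eqref{eq:local_measure_explicit} first from Peyre's chart-based definition and then obtaining \eqref{eq:local_measure_abstract} by covering $X^\xi(\Qd_v)$ with such neighborhoods. One small bookkeeping slip to fix: with the dual norm on $\omega_X$ one has $\|\Res\varpi^\xi\|_v^{-1}=\|\tau^\xi\|_v=\min_\sigma|\tau^\xi/\tau^\sigma|_v$, not its reciprocal, so that $\mu_v=\|\tau^\xi\|_v\cdot|\Res\varpi^\xi|_v$ places $\max_\sigma|\tau^\sigma\Res\varpi^\xi|_v$ in the \emph{denominator} of \eqref{eq:local_measure_abstract} as stated.
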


\begin{proof}
  As in \eqref{eq:chart}, the implicit function theorem gives a $v$-adic neighborhood
  $U_0 \subseteq X^\xi(\Qd_v)$ of $P$ and an implicit function
  $\phi \colon V \to \Qd_v$ for
  $V = \pi^\xi_{\rho_0}(U_0) \subseteq \Qd_v^{\xi(1)\setminus\{\rho_0\}}$ such that
  $\Phi^\xi(\zv^\xi)=0$ for all $\zv^\xi \in X^\xi(\Qd_v)$ with
  $z^\xi_{\rho_0}$ the image of
  $(z^\xi_\rho)_{\rho \in \xi(1)\setminus\{\rho_0\}} \in V$ under $\phi$. We
  work with $\|\tau^\xi(P)\|_v$ as in \eqref{eq:tau^sigma} and use
  $x^{D(\xi)}(\zv^\xi)=1$ (see \eqref{eq:height_monomials}) in our affine
  coordinates on $X^\xi(\Qd_v)$.  Then the formulas in
  \cite[(2.2.1)]{MR1340296} and \cite[Theorem~1.10]{MR1679841} give
  \eqref{eq:local_measure_explicit} for $N_v \subseteq U_0$.  Indeed, our chart
  is
  \begin{equation*}
    \pi \coloneqq \pi^\xi_{\rho_0} \colon U_0 \to V \subseteq
    \Qd_v^{\xi(1)\setminus\{\rho_0\}}.
  \end{equation*}
  In this chart, by \eqref{eq:residue},
  the image of the local canonical section
  $\bigwedge_{\rho \in \xi(1) \setminus \{\rho_0\}} \dd z^\xi_\rho$ under
  \begin{equation*}
    \omega(\pi) \colon \pi^*\omega_{\Ad_\Qd^{\xi(1) \setminus
        \{\rho_0\}}} \to \omega_X
  \end{equation*}
  is $\partial \Phi^\xi/\partial z^\xi_{\rho_0}
  \cdot \Res\varpi^\xi$.  
  This implies that the image of the local anticanonical section
  $\bigwedge_{\rho \in \xi(1) \setminus \{\rho_0\}}
  \frac{\partial}{\partial z^\xi_{\rho}}$ under
  \begin{equation*}
    {}^t\omega(\pi)^{-1} \colon \pi^*\omega^{-1}_{\Ad_\Qd^{\xi(1)
        \setminus \{\rho_0\}}} \to \omega^{-1}_X
  \end{equation*}
  is
  $(\partial \Phi^\xi/\partial z^\xi_{\rho_0})^{-1} \cdot
  \tau^\xi$. Therefore, $\mu_v(N_v)$ for $N_v \subseteq U_0$ as defined
  in [Peyre95, (2.2.1)] is the integral over $\pi(N_v)$ of
  \begin{align*}
    \omega_v
    &= \|((\partial \Phi^\xi/\partial
      z^\xi_{\rho_0})^{-1}\cdot\tau^\xi)(\pi^{-1}((z^\xi_\rho)_{\rho
      \in \xi(1)\setminus\{\rho_0\}}))\|_v \bigwedge_{\rho \in \xi(1)
      \setminus \{\rho_0\}} \dd z^\xi_\rho\\
    &=|\partial \Phi^\xi/\partial z^\xi_{\rho_0}(\zv^\xi)|_v^{-1} \cdot
      \|\tau^\xi(\zv^\xi)\|_v\bigwedge_{\rho \in \xi(1) \setminus \{\rho_0\}}
      \dd z^\xi_\rho
  \end{align*}
  Using \eqref{eq:norm_max} together with $x^{D(\xi)}(\zv^\xi)=1$, we obtain
  \eqref{eq:local_measure_explicit}.
  
  By \eqref{eq:residue}, we see that the right hand side of
  \eqref{eq:local_measure_abstract} coincides with
  \eqref{eq:local_measure_explicit} for $N_v \subseteq U_0$. Since $X$ is
  smooth, $X^\xi(\Qd_v)$ can be covered with such $U_0$, hence $\mu_v(N_v)$ is
  equal to the right hand side for all $N_v \subseteq X^\xi(\Qd_v)$. Since
  $\varpi^\sigma/\varpi^\xi = x^{D(\xi)}/x^{D(\sigma)}$ by definition \eqref{eq:varpi^sigma}, we have
  $\tau^\sigma \Res\varpi^\xi = \tau^\sigma/\tau^\xi =
  x^{D(\sigma)}/x^{D(\xi)}$ by \eqref{eq:tau^sigma}, and hence the integrals in
  \eqref{eq:local_measure_abstract} are equal.
\end{proof}

\subsection{Tamagawa number}\label{sec:tamagawa_number}

Here we use some standard notation as in \cite[\S 2]{MR1340296}, \cite[\S
4]{MR2019019}. Let $S$ be a sufficiently large finite set of finite places of
$\Qd$ as in \cite[Notations~4.5]{MR2019019}. For any prime $p \in S$, let
\begin{equation*}
L_p(s,\Pic\overline{X}) \coloneqq \det(1-p^{-s}\Fr_p \mid \Pic(X_{\overline
  \Fd_p}) \otimes \Qd)^{-1}.
\end{equation*}
Since $X$ is split,
$L_p(s, \Pic\overline{X}) = (1- p^{-s})^{-\rank \Pic X}$, hence
\begin{equation*}
  L_S(s,\Pic\overline{X})  \coloneqq  \prod_{p \notin S} L_p(s,\Pic\overline{X}) =
  \zeta(s)^{\rank \Pic X}\prod_{p \in S}(1- p^{-s})^{\rank \Pic X}.
\end{equation*}
Therefore,
$\lim_{s \to 1} (s-1)^{\rank \Pic X} L_S(s,\Pic\overline{X}) = \prod_{p \in
  S}(1- p^{-1})^{\rank \Pic X}$, and the convergence factors are
\begin{equation*}
  \lambda_p^{-1}  \coloneqq  L_p(1,\Pic\overline X)^{-1} = \left(1-p^{-1}\right)^{\rank\Pic X}
\end{equation*}
for $p \notin S$ and $\lambda_p^{-1} \coloneqq 1$ for $p \in S$. Hence Peyre's
Tamagawa number \cite[D\'efinition~4.5]{MR2019019} is
\begin{equation}\label{eq:tamagawa}
  \tau_H(X) =\mu_\infty(X(\Rd)) \prod_p (1 - p^{-1})^{\rank\Pic X} \mu_p(X(\Qd_p)).
\end{equation}
The Euler product converges by \cite[Remarque~4.6]{MR2019019}.

\subsection{Measures on the torsor}\label{sec:measures_torsor}

By \cite[Proposition~8.2.3]{MR2810322}, we have a rational $\#\Sigma(1)$-form
\begin{equation*}
  s_{Y_0} = \bigwedge_{\rho \in \Sigma_0(1)} \frac{\dd y_\rho}{y_\rho}
\end{equation*}
on the toric principal universal torsor
$Y_0 \subset Y_1 = \Ad^{\Sigma_0(1)}_\Qd$ as in
Section~\ref{sec:torsors_models}, with coordinates $y_\rho$ for
$\rho \in \Sigma_0(1)$, using our bijection $\Sigma_0(1) \to \Sigma(1)$. Now
we regard $\Phi$ and $y^D$ (defined as in \eqref{eq:x^D} for $U$-invariant
divisors $D$ on $Y$) as polynomials in $y_\rho$ and as functions on $Y_0$. As
in \cite[(5.12)]{BBS1} and using the notation \eqref{eq:x^D0}, \eqref{eq:height_monomials}, we define 
\begin{equation*}
  \varpi_{Y_0}^\sigma = \frac{y^{D_0}}{y^{D(\sigma)}\Phi} s_{Y_0}
\end{equation*}
for each $\sigma \in \Sigmamax$, and
\begin{equation*}
  \varpi_{Y_0} = \frac{1}{\Phi}\bigwedge_{\rho \in \Sigma_0(1)} \dd y_\rho.
\end{equation*}
We have
\begin{equation}\label{eq:varpi_Y0^sigma}
\varpi_{Y_0}^\sigma = \varpi_{Y_0}/y^{D(\sigma)}
\end{equation}
on the open subset $Y_0^\sigma \coloneqq \pi^{-1}(U^\sigma)$ of $Y_0$, see \eqref{eq:U^sigma}.

We have
\begin{equation*}
\varpi_{Y_0}^\sigma \in \Gamma(Y_0^\sigma, \omega_{Y_0}(X_0))
\end{equation*}
with Poincar\'e
residue $\Res\varpi_{Y_0}^\sigma \in \Gamma(X_0^\sigma,\omega_{X_0})$
on
$X_0^\sigma = \pi^{-1}(X^\sigma) = X_0 \cap Y_0^\sigma$.
As in Section~\ref{sec:local_measures}, we obtain a $v$-adic
measure $m_v$ on $X_0(\Qd_v)$ defined by
\begin{equation*}
  m_v(M_v) = \int_{M_v} \frac{|\Res\varpi_{Y_0}^\xi|_v}
  {\max_{\sigma \in \Sigmamax} |y^{D(\sigma)}/y^{D(\xi)}|_v}
\end{equation*}
for a Borel subset $M_v$ of $X_0^\xi(\Qd_v)$.
Alternatively, we can write
\begin{equation*}
  m_v(M_v) = \int_{M_v} \frac{|\Res\varpi_{Y_0}|_v}{\max_{\sigma \in \Sigmamax} |y^{D(\sigma)}|_v}
\end{equation*}
because $\varpi_{Y_0} \in \Gamma(Y_0,\omega_{Y_0}(X_0))$ has a residue form
$\Res\varpi_{Y_0} \in \Gamma(X_0,\omega_{X_0})$ that restricts to
$y^{D(\xi)}\Res\varpi_{Y_0}^{\xi}$ on $X_0^{\xi}$ by \eqref{eq:varpi_Y0^sigma}. If $M_v$
is sufficiently small, this is explicitly
\begin{equation}\label{eq:measure_torsor_explicit}
  m_v(M_v) = \int_{\pi_{\rho_0}(M_v)} \frac{\bigwedge_{\rho \in \Sigma_0(1) \setminus \{\rho_0\}} \dd y_\rho}
  {|\partial \Phi/\partial x_{\rho_0}(\yv)|_v \max_{\sigma \in \Sigmamax} |\yv^{D(\sigma)}|_v}
\end{equation}
in the coordinates $\yv=(y_\rho)_{\rho \in \Sigma_0(1)}$, where $\pi_{\rho_0}$ is the
projection to all coordinates $y_\rho$ with $\rho\ne\rho_0$ and where
$y_{\rho_0}$ is expressed in terms of these coordinates using the
implicit function theorem.

\begin{lemma}
  Let $D_0^{Y_0} = \pi^* D_0$ be the sum of the prime divisors defined
  by $y_\rho=0$ for $\rho \in \Sigma_0(1)$. Then there is a unique
  nowhere vanishing global section
  $s_{Y_0/Y} \in \Gamma(Y_0,\omega_{Y_0/Y})$ such that
  $s_{Y_0} = s_{Y_0/Y} \otimes \pi^*s_Y$ via the natural isomorphism
  $\omega_{Y_0}(D_0^{Y_0}) = \omega_{Y_0/Y} \otimes \pi^*
  \omega_Y(D_0)$.
  
  Let $s_{X_0/X}$ be the image of $\iota_0^*s_{Y_0/Y}$ under the isomorphism
  $\Gamma(X_0,\iota_0^*\omega_{Y_0/Y}) \to \Gamma(X_0,\omega_{X_0/X})$, and
  $s_{X_0/X}^\sigma$ be the restriction of $s_{X_0/X}$ to $X_0^\sigma$. Then
  $\Res\varpi_{Y_0}^\sigma = s_{X_0/X}^\sigma \otimes
  \pi^*\Res\varpi^\sigma$ under the canonical isomorphism
  $\omega_{X_0} = \omega_{X_0/X} \otimes \pi^*\omega_X$.
\end{lemma}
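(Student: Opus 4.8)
The statement is a pair of compatibility assertions linking the relative canonical sheaves on the torsor side to those downstairs, plus a statement that the residue forms $\Res\varpi_{Y_0}^\sigma$ factor accordingly. The plan is to extract the first claim from the standard theory of principal $T$-bundles (here $Y_0 \to Y$ under the Néron--Severi torus $T$) and then bootstrap everything else by naturality.

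First I would treat the existence and uniqueness of $s_{Y_0/Y}$. Since $\pi \colon Y_0 \to Y$ is a torsor under the torus $T \cong \mathbb{G}_{\mathrm m}^{\rank\Pic X}$, the relative canonical sheaf $\omega_{Y_0/Y}$ is canonically trivialized by the translation-invariant top form on the fibers, namely $\bigwedge \frac{\dd t_j}{t_j}$ in any coordinates on $T$; concretely, in the Cox coordinates $y_\rho$ on $Y_1 = \Ad^{\Sigma_0(1)}_\Qd$ this invariant form is visible because the $T$-action is by scaling. So $\omega_{Y_0/Y}$ admits a nowhere vanishing global section, unique up to a global unit on $Y_0$, and since $Y_0$ is an open subset of affine space (complement of $\tZ_Y$, which has codimension $\ge 2$) its global units are just nonzero constants. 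Next, $s_{Y_0}$ itself is the logarithmic form $\bigwedge_{\rho}\frac{\dd y_\rho}{y_\rho}$, which is a global section of $\omega_{Y_0}(D_0^{Y_0})$, and the natural adjunction isomorphism $\omega_{Y_0}(D_0^{Y_0}) \cong \omega_{Y_0/Y}\otimes\pi^*\omega_Y(D_0)$ sends $s_{Y_0}$ to a decomposable tensor whose second factor, after pushing to $Y$, is precisely $s_Y$ (both being the logarithmic forms coming from the toric structure, cf.\ \cite[Proposition~8.2.3]{MR2810322}). Matching the scalar then pins down $s_{Y_0/Y}$ uniquely with $s_{Y_0} = s_{Y_0/Y}\otimes\pi^*s_Y$.

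For the second claim I would restrict everything to $X_0$ and its open pieces $X_0^\sigma = \pi^{-1}(X^\sigma)$. The isomorphism $\Gamma(X_0,\iota_0^*\omega_{Y_0/Y}) \to \Gamma(X_0,\omega_{X_0/X})$ is the one induced by $\iota_0 \colon X_0 \hookrightarrow Y_0$ together with the fact that $X_0 \to X$ is again a $T$-torsor (it is the pullback of $Y_0 \to Y$ along $\iota \colon X \hookrightarrow Y$, so $\omega_{X_0/X} = \iota_0^*\omega_{Y_0/Y}$ tautologically); thus $s_{X_0/X}$ is a nowhere vanishing global section of $\omega_{X_0/X}$. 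The heart of the matter is the compatibility of Poincaré residues with the decomposition $\omega_{Y_0} = \omega_{Y_0/Y}\otimes\pi^*\omega_Y$: one has the identity of functorial maps that $\Res$ (taken on $Y_0$ along $X_0$) applied to a tensor $s_{Y_0/Y}\otimes\pi^*\eta$ equals $s_{X_0/X}\otimes\pi^*(\Res\,\eta)$, because $\pi$ is smooth along the divisor and the residue only sees the $\omega_{Y}$-factor (the conormal direction of $X_0$ in $Y_0$ is pulled back from that of $X$ in $Y$, since $X_0 = \pi^{-1}(X)$). Applying this with $\eta = \varpi^\sigma$ and using $\varpi_{Y_0}^\sigma = \frac{y^{D_0}}{y^{D(\sigma)}\Phi}s_{Y_0} = \frac{y^{D_0}}{y^{D(\sigma)}\Phi}\,s_{Y_0/Y}\otimes\pi^*s_Y$, together with the fact that $\frac{y^{D_0}}{y^{D(\sigma)}\Phi} = \pi^*\!\left(\frac{x^{D_0}}{x^{D(\sigma)}\Phi}\right)$ as a rational function on $Y_0$ (these are $T$-invariant monomials, hence pulled back from $Y$), yields $\Res\varpi_{Y_0}^\sigma = s_{X_0/X}^\sigma \otimes \pi^*\Res\varpi^\sigma$ on $X_0^\sigma$ as claimed.

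The main obstacle is not the toric bookkeeping but the clean formulation of the residue-compatibility lemma "$\Res(s_{Y_0/Y}\otimes\pi^*\eta) = s_{X_0/X}\otimes\pi^*\Res\,\eta$" under the tensor decomposition $\omega_{Y_0}=\omega_{Y_0/Y}\otimes\pi^*\omega_Y$; this requires checking that the Poincaré residue, which is defined via the conormal exact sequence of $X_0$ in $Y_0$, is compatible with base change along the smooth map $\pi$ — equivalently, that the conormal bundle of $X_0$ in $Y_0$ is the $\pi$-pullback of that of $X$ in $Y$ and that the residue maps identify under this. This is a local computation on the charts $U^\sigma$: there, using the affine coordinates $z^\sigma_\rho$ downstairs and their pullbacks upstairs, the identity \eqref{eq:residue} for $\Res\varpi^\sigma$ combined with the explicit form of $s_{X_0/X}$ (the fiber log-form) makes the asserted factorization a direct, if slightly tedious, check that I would spell out on one chart and then invoke uniqueness of global sections to conclude globally.
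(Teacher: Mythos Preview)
The paper does not prove this lemma in the body of the text; it simply cites \cite[Lemma~16]{BBS1}. Your proposal is therefore not something I can compare line by line to the paper, but it is a correct and self-contained sketch of the argument, and is almost certainly what the cited reference does: use the torus-torsor structure to trivialize $\omega_{Y_0/Y}$ by the invariant top form, then check the compatibility of Poincar\'e residues with the smooth base change $\pi$.

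Two minor remarks. First, the uniqueness of $s_{Y_0/Y}$ is even more immediate than you make it: since $\pi^*s_Y$ is nowhere vanishing, the equation $s_{Y_0}=s_{Y_0/Y}\otimes\pi^*s_Y$ determines $s_{Y_0/Y}$ on the nose; the real content is that this ratio is regular and nowhere vanishing, which is exactly what your torsor argument supplies. Second, your key claim $\Res(s_{Y_0/Y}\otimes\pi^*\eta)=\iota_0^*s_{Y_0/Y}\otimes\pi^*\Res\eta$ is cleanly seen by noting that $\Res\colon\omega_{Y_0}(X_0)\to\iota_{0*}\omega_{X_0}$ is $\Om_{Y_0}$-linear and hence commutes with tensoring by the line bundle $\omega_{Y_0/Y}$, while $\pi^*$ commutes with $\Res$ because $X_0=\pi^{-1}(X)$ so a local equation for $X_0$ is the pullback of one for $X$; your suggested local chart computation would of course also confirm this, and the degree-zero check you do for $\tfrac{y^{D_0}}{y^{D(\sigma)}\Phi}$ (via adjunction $-K_Y=-K_X+\deg\Phi$) is exactly right.
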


\begin{proof}
  See \cite[Lemma~16]{BBS1}.
\end{proof}

\begin{lemma}\label{lem:measure_variety_torsor_p-adic}
  For any prime $p$, we have $m_p(\tX_0(\Zd_p)) = (1-p^{-1})^{\rank \Pic X} \mu_p(X(\Qd_p))$.
\end{lemma}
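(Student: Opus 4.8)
The plan is to compare the measure $m_p$ on the torsor $X_0(\Qd_p)$ with the measure $\mu_p$ on $X(\Qd_p)$ via the fibration $\pi \colon X_0 \to X$, whose fibers are torsors under $T(\Qd_p) \cong (\Qd_p^\times)^{\rank\Pic X}$, and then to integrate over those fibers. Concretely, I would fix $\xi \in \Sigmamax$ and work over the chart $X_0^\xi = \pi^{-1}(X^\xi)$. On this chart the preceding lemma gives a factorization $\Res\varpi_{Y_0}^\xi = s_{X_0/X}^\xi \otimes \pi^*\Res\varpi^\xi$ of the residue form, compatibly with the decomposition $\omega_{X_0} = \omega_{X_0/X} \otimes \pi^*\omega_X$. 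Since the ``height'' factor $\max_\sigma |y^{D(\sigma)}/y^{D(\xi)}|_p$ appearing in the definition of $m_p$ depends only on $\pi(\yv) \in X(\Qd_p)$ (because $y^{D(\sigma)}/y^{D(\xi)}$ has degree $0$ in $\Cl Y$, hence descends to a function on $X^\xi$, and equals $x^{D(\sigma)}/x^{D(\xi)}$ pulled back), the integrand of $m_p$ splits as a product of the integrand of $\mu_p$ pulled back from $X^\xi(\Qd_p)$ and the fiberwise density $|s_{X_0/X}^\xi|_p$. Fubini over the torsor fibration then yields
\begin{equation*}
  m_p(\tX_0(\Zd_p)) = \int_{X^\xi(\Qd_p)} \left( \int_{\text{fiber over }P \cap \tX_0(\Zd_p)} |s_{X_0/X}^\xi|_p \right) \dd\mu_p(P),
\end{equation*}
so everything reduces to computing the inner fiber integral.

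Next I would identify the fiber of $\tX_0(\Zd_p) \to X(\Qd_p)$ over a point $P$. Using Proposition~\ref{prop:lift_to_torsor}, a point of $\tX_0(\Zd_p)$ is an integral lift $\yv \in \Zd_p^{\Sigma(1)}$ of $P$ satisfying $p \nmid \gcd\{y_\rho : \rho \in S_j\}$ for every primitive collection $S_j$; the fiber over $P$ is a single $T(\Zd_p)$-orbit inside the full $T(\Qd_p)$-torsor fiber, cut out by these coprimality conditions. Because $s_{Y_0/Y}$ (hence $s_{X_0/X}^\xi$) is the invariant relative form $\bigwedge \dd y_\rho / y_\rho$ in suitable coordinates — a Haar measure on the torus fiber — the inner integral is exactly the Haar measure (normalized so $\mathbb{Z}_p^\times$ has measure $1-p^{-1}$) of the subset of $T(\Zd_p) \cong (\Zd_p^\times)^{\rank\Pic X}$ obtained after the coordinate change, which is a $2^{\rank\Pic X}$-to-$1$ quotient issue handled exactly as in the proof of Proposition~\ref{prop:lift_to_torsor}. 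The upshot is that the inner integral equals $(1-p^{-1})^{\rank\Pic X}$, independently of $P$: this is the torsor computation carried out for toric varieties in \cite[\S 11]{MR1679841}, and the point is that the coprimality conditions from the primitive collections exactly account for the passage from the naive Haar measure of the full fiber to the integral measure on the characteristic space, mirroring Cox's construction.

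Pulling this constant out of the integral leaves $m_p(\tX_0(\Zd_p)) = (1-p^{-1})^{\rank\Pic X} \mu_p(X^\xi(\Qd_p))$; since the sets $X^\xi$ cover $X$ and the boundary $X \setminus X^\xi$ has measure zero for $\mu_p$ (it lies in the support of the divisor $D(\xi) \cap X$, a proper closed subvariety, hence is $\mu_p$-null), this gives $m_p(\tX_0(\Zd_p)) = (1-p^{-1})^{\rank\Pic X} \mu_p(X(\Qd_p))$ as claimed. The main obstacle is the middle step: carefully verifying that the coprimality constraints coming from the primitive collections $S_1,\dots,S_r$ translate, under the coordinate change $z^\xi_\rho = x_\rho / \prod_{\rho'\notin\xi(1)} x_{\rho'}^{\alpha^\xi_{\rho,\rho'}}$, into precisely the condition that the torus coordinate lies in $T(\Zd_p)$ up to the $2^{\rank\Pic X}$-to-$1$ ambiguity, so that the fiberwise Haar volume comes out to exactly $(1-p^{-1})^{\rank\Pic X}$ with no stray powers of $p$; this is the analogue of \cite[Proposition~11.3 and Lemma~11.4]{MR1679841} in the present hypersurface-in-a-toric-variety setting, and it is where the assumption \eqref{eq:toric_smooth} that $Y$ is regular is used to ensure the charts behave well.
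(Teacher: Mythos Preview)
Your approach is essentially the same as the paper's: factor the measure via the torsor fibration $\pi\colon X_0 \to X$ using the decomposition $\Res\varpi_{Y_0}^\xi = s_{X_0/X}^\xi \otimes \pi^*\Res\varpi^\xi$, observe that the height factor descends to the base, apply Fubini, and show that the fiber integral is the constant $(1-p^{-1})^{\rank\Pic X}$.

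One correction: your reference to a ``$2^{\rank\Pic X}$-to-$1$ quotient issue handled exactly as in the proof of Proposition~\ref{prop:lift_to_torsor}'' is misplaced. That $2^{\rank\Pic X}$ counts $\tT(\Zd) = \{\pm 1\}^{\rank\Pic X}$ and concerns $\Zd$-points, not $\Zd_p$-points. Over $\Zd_p$ the fiber of $\tX_0(\Zd_p) \to \tX(\Zd_p)$ over $P$ is a full $\tT(\Zd_p) = (\Zd_p^\times)^{\rank\Pic X}$-torsor, not a finite cover; there is no $2^{\rank\Pic X}$ ambiguity to resolve.

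The paper sidesteps the ``main obstacle'' you flag (translating the primitive-collection coprimality conditions into a torus coordinate condition chart-by-chart) by working directly with the integral torsor: $\tX_0 \to \tX$ is a $\tT$-torsor over $\Zd$, so its pullback to any $P \in \tX(\Zd_p)$ is a $\tT_{\Zd_p}$-torsor over $\Spec\Zd_p$, hence trivial. Choosing a trivialization gives coordinates $(t_1,\dots,t_r)$ on the fiber with $(\tX_0)_P(\Zd_p) = (\Zd_p^\times)^r$, and the $\tT$-equivariant extension $s_{\tX_0/\tX}$ of the relative form pulls back to $\dd t_1/t_1 \wedge \cdots \wedge \dd t_r/t_r$. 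The fiber integral is then literally $\bigl(\int_{\Zd_p^\times}\dd t/|t|_p\bigr)^r = (1-p^{-1})^r$, with no need to unwind the coprimality conditions in explicit Cox coordinates.
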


\begin{proof}
  Our proof follows \cite[Lemma~18]{BBS1}. 
  By \cite[pp.~126--127]{MR1679841}, the map $\pi\colon X_0 \to X$ induces an
  $v$-adic analytic torsor $\pi_v \colon X_0(\Qd_v) \to X(\Qd_v)$ under $T(\Qd_v)$. By
  \cite[Theorem~1.22]{MR1679841} and the previous lemma, the relative volume form
  $s_{X_0/X}$ defines $v$-adic measures on the fibers of $\pi_v$ over
  $X(\Qd_v)$. Integrating along these fibers gives a linear functional
  $\Lambda_v \colon C_c(X_0(\Qd_v)) \to C_c(X(\Qd_v))$.

Let $\chi_p\colon X_0(\Qd_p) \to \{0,1\}$ be the characteristic function
  of $\tX_0(\Zd_p) \subset \tX_0(\Qd_p) = X_0(\Qd_p)$. Since
  $\chi_p \in C_c(X_0(\Qd_p))$, we have
  $m_p(\tX_0(\Zd_p)) = \int_{X(\Qd_p)} \Lambda_p(\chi_p) \mu_p$.

  We claim that $(\Lambda_p(\chi_p))(P) = (1-p^{-1})^{\rank \Pic X}$
  for every $P \in X(\Qd_p) = \tX(\Zd_p)$. Indeed, we have
  $s_{\tY_0} = s_{\tY_0/\tY} \otimes \pi^* s_{\tY}$, where $s_{\tY_0/\tY}$
  is the extension of $s_{Y_0/Y}$ to a $\tT$-equivariant generator of
  $\omega_{\tY_0/\tY}$. Furthermore, $s_{X_0/X}$ extends to a
  $\tT$-equivariant generator $s_{\tX_0/\tX}$ of $\omega_{\tX_0/\tX}$. For
  a point $P \in \tX(\Zd_p)$, the torsor $\tX_0\to \tX$ can be pulled
  back to $(\tX_0)_P \to P$, and hence $s_{\tX_0/\tX}$ pulls back to a
  $\tT_{\Zd_p}$-equivariant global section $s_{(\tX_0)_P}$ on
  $\omega_{(\tX_0)_P/\Zd_p}$. But the torsor over $P$ is trivial, and
  $\tT \cong \mathbb{G}_{\mathrm{m}}^r$ with $r = \rank\Pic X$, hence
  there are affine coordinates $(t_1,\dots,t_r)$ for the affine
  $\Zd_p$-scheme $(\tX_0)_P$ with
  $s_{(\tX_0)_P} = \dd t_1/t_1 \wedge \dots \wedge \dd
  t_r/t_r$. Therefore,
  \begin{equation*}
    (\Lambda_p(\chi_p))(P) = \int_{(\tX_0)_P(\Zd_p)} |s_{(\tX_0)_P}|_p =
    \Big(\int_{\Zd_p^\times} \frac{\dd t}{t}\Big)^r =  (1- p^{-1})^r.\qedhere
  \end{equation*}
\end{proof}

\subsection{Comparison to the number of points modulo $p^\ell$}\label{sec:points_mod_p^l}

In this section, we describe $ \mu_p(X(\Qd_p))$ in terms of congruences. In
the special case $Y=\Pd^n_\Qd$, this was worked out in
\cite[Lemma~3.2]{MR1681100}.

Let $p$ be a prime.  For $\ell \in \Zd_{>0}$, using notation
\eqref{eq:primitive_collections}, we have
\begin{equation*}
   \tX_0(\Zd/p^\ell\Zd) = \{\xv \in (\Zd/p^\ell\Zd)^{\Sigma(1)} : \Phi(\xv)=0 \in \Zd/p^\ell \Zd,
  \  p \nmid \gcd\{x_\rho : \rho \in S_j\} \text{ for all } j=1,\dots,r\}
\end{equation*}
as in Proposition~\ref{prop:lift_to_torsor} and define
\begin{equation}\label{eq:c_p}
  c_p  \coloneqq  \lim_{\ell \to \infty} \frac{\# \tX_0(\Zd/p^{\ell}\Zd)}{(p^\ell)^{\#\Sigma(1)-1}} \,\, \text{ and
  } \,\, c_{\mathrm{fin}}  \coloneqq  \prod_p c_p.
\end{equation}
We will see in Proposition~\ref{prop:measure_torsor_mod_p^l} that the
sequence defining $c_p$ becomes stationary;  in particular, the limit $\ell \rightarrow \infty$ exists.  The convergence of
$c_{\mathrm{fin}}$ will follow from
Proposition~\ref{prop:p-adic_density};  see \eqref{eq:tamagawa}.
For $\xv \in  \tX_0(\Zd/p^{\ell}\Zd)$, let
\begin{equation*}
  \tX_0(\Zd_p)_\xv \coloneqq \{\yv \in \tX_0(\Zd_p)\mid \yv \equiv \xv \bmod{p^\ell}\}.
\end{equation*}

\begin{lemma}\label{lem:partial_derivatives}
  There is an $\ell_1 \in \Zd_{>0}$ such that the following holds for
  all $\ell \ge \ell_1$: for any $\xv \in  \tX_0(\Zd/p^{\ell}\Zd)$, there is a nonnegative integer $c_\xv<\ell_1$
  and an $\rho_\xv \in \Sigma(1)$ such that for all $\yv \in \tX_0(\Zd_p)_\xv$ one has
  \begin{equation*}
    \inf_{\rho \in \Sigma(1)}\{v_p(\partial \Phi/\partial x_\rho(\yv))\} = v_p(\partial \Phi/\partial x_{\rho_\xv}(\yv)) = c_\xv.
  \end{equation*}
  
\end{lemma}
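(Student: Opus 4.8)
The plan is to analyze the $p$-adic valuation of the gradient $\nabla\Phi$ along the $p$-adic fiber $\tX_0(\Zd_p)_\xv$, exploiting that $\tX_0$ is a torsor over $\tX$, which is smooth over $\Zd_p$ outside a closed subscheme that we must control. First I would observe that for a fixed $\yv \in \tX_0(\Zd_p)$, the function $\rho \mapsto v_p(\partial\Phi/\partial x_\rho(\yv))$ attains a finite infimum: by Proposition~\ref{prop:lift_to_torsor}, some primitive collection $S_j$ contains a $\rho$ with $p \nmid y_\rho$, and more importantly, since $\tX$ is a smooth $\Zd_p$-scheme of the expected dimension (being cut out by the single equation $\Phi$ in the smooth $\tY$, at least on the locus where all $x_\rho$ with $\rho\notin\sigma(1)$ are units for some maximal cone $\sigma$), the Jacobian criterion forces $v_p(\nabla\Phi(\yv))$ to be bounded. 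In fact, on each affine chart $\tX_0^\sigma$ the implicit function theorem from Section~\ref{sec:affine_charts} shows that one of the partials $\partial\Phi^\sigma/\partial z^\sigma_{\rho_0}$ is a unit at every point, which after clearing denominators pins down $v_p(\partial\Phi/\partial x_{\rho_0})$ to a value depending only on the valuations of the $x_\rho$ with $\rho\notin\sigma(1)$; the key point is that these valuations are $0$ on the relevant chart.

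Next I would argue that this valuation is \emph{locally constant} on $\tX_0(\Zd_p)$. The function $\yv \mapsto \inf_\rho v_p(\partial\Phi/\partial x_\rho(\yv))$ is upper semicontinuous (an infimum of lower semicontinuous $v_p\circ(\text{polynomial})$), and by the chart argument above it is also bounded; but more than that, on $\tX_0(\Zd_p)$ the value cannot increase as we refine congruences because the minimizing coordinate $\rho_\xv$ keeps its valuation once it is achieved. Concretely, for each $\xv$ choose $\rho_\xv$ achieving the infimum and let $c_\xv = v_p(\partial\Phi/\partial x_{\rho_\xv}(\xv))$; if $\yv \equiv \xv \bmod p^{\ell}$ with $\ell > c_\xv$, then $v_p(\partial\Phi/\partial x_{\rho_\xv}(\yv)) = c_\xv$ as well, and no other partial can have strictly smaller valuation either, since that would already have been visible modulo $p^{c_\xv+1}$. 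The content is that $c_\xv$ is \emph{uniformly bounded}: here I would use compactness of $\tX_0(\Zd_p)$ together with the fact that $\inf_\rho v_p(\partial\Phi/\partial x_\rho)$ is everywhere finite, so it is bounded on the whole compact space by some $\ell_1 - 1$. Then for $\ell \ge \ell_1$ the description is exactly as stated, with $c_\xv < \ell_1$ and $\rho_\xv$ as above.

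The main obstacle I anticipate is making rigorous the claim that $\inf_\rho v_p(\partial\Phi/\partial x_\rho(\yv))$ is \emph{everywhere finite} on $\tX_0(\Zd_p)$, i.e.\ that the gradient never vanishes identically modulo all powers of $p$ at a $\Zd_p$-point of $\tX_0$. This is really a smoothness statement: it amounts to saying that $\tX_0 \to \Spec\Zd_p$ is smooth at every $\Zd_p$-point, or at least that the relevant points of $\tX_0(\Zd_p)$ lie in the smooth locus. I would derive this from the covering of $\tX_0$ by the charts $\tX_0^\sigma$ and the implicit-function description: on $\tX_0^\sigma(\Zd_p)$ the coordinates $z^\sigma_\rho$ with $\rho\notin\sigma(1)$ are units, and solving $\Phi^\sigma = 0$ via Hensel/implicit function exhibits a unit partial derivative $\partial\Phi^\sigma/\partial z^\sigma_{\rho_0}$; translating back from the $z^\sigma$ to the $x_\rho$ via the monomial relations multiplies this partial by a monomial in the unit coordinates $x_\rho$ ($\rho\notin\sigma(1)$), hence only changes its valuation by a bounded amount determined by the chart $\sigma$. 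Since $\Sigmamax$ is finite, taking the maximum of these bounds over all $\sigma$ gives a single $\ell_1$ that works, and since the charts $X_0^\sigma$ cover $X_0$, every $\yv \in \tX_0(\Zd_p)$ lies in at least one of them. Once this finiteness-with-uniform-bound is in hand, the rest of the lemma is the elementary valuation bookkeeping sketched above.
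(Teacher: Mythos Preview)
Your overall architecture matches the paper's: pointwise finiteness of $I_p(\yv)=\inf_\rho v_p(\partial\Phi/\partial x_\rho(\yv))$, then a uniform bound via compactness of $\tX_0(\Zd_p)$, then the easy local-constancy bookkeeping modulo $p^\ell$. The second and third steps are fine and essentially identical to the paper.

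The gap is in your justification of the first step and in your chart-based alternative for the uniform bound. You assume ``$\tX$ is a smooth $\Zd_p$-scheme'' and, on charts, that some $\partial\Phi^\sigma/\partial z^\sigma_{\rho_0}$ is a \emph{unit}. Neither is available: the standing hypothesis is only that $X$ is smooth over $\Qd$, and for bad primes $\tX$ need not be smooth over $\Zd_p$; correspondingly, smoothness over $\Qd_p$ only forces the gradient to be \emph{nonzero} in $\Qd_p$, not to have a unit component. So the chart argument in your third paragraph cannot produce an explicit $\ell_1$ from the finitely many $\sigma\in\Sigmamax$.

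The fix is exactly what the paper does, and you already gesture at it: since $X$ is smooth over $\Qd$, so is the torsor $X_0$, hence $\nabla\Phi$ is nowhere zero on $X_0(\Qd_p)\supset\tX_0(\Zd_p)$, giving $I_p(\yv)<\infty$ for every $\yv$. For the uniform bound, your phrase ``finite on a compact space, hence bounded'' is not automatic (the function is only lower semicontinuous), but the underlying contradiction argument is correct: if $I_p(\yv_j)\ge j$, pass to a convergent subsequence in the compact set $\tX_0(\Zd_p)$; by continuity of the polynomials $\partial\Phi/\partial x_\rho$, all partials vanish at the limit, contradicting smoothness of $X_0$ over $\Qd_p$. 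Drop the $\Zd_p$-smoothness claim and the unit-partial chart argument, and your proof goes through along the paper's lines.
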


\begin{proof}
  Since $X$ is smooth, $X_0$ is also smooth.  Hence for any
  $\yv \in X_0(\Qd_p)$, we have $\partial\Phi/\partial x_\rho(\yv) \ne 0$ for
  some $\rho \in \Sigma(1)$. In particular, for any $\yv \in \tX_0(\Zd_p)$,
  the valuation $v_p(\partial\Phi/\partial x_\rho(\yv))$ is finite for some
  $\rho$. Hence
  $I_p(\yv) \coloneqq \inf_{\rho \in \Sigma(1)}\{v_p(\partial \Phi/\partial
  x_\rho(\yv))\}$ is finite.

  There is an $\ell_1$ such that $I_p(\yv)<\ell_1$ for all
  $\yv \in \tX_0(\Zd_p)$.  To see this, assume the contrary. Then there is a
  sequence $\yv_1,\yv_2,\ldots \in \tX_0(\Zd_p)$ with $I_p(\yv_j)\ge j$ for all
  $j$. The description of $\tX_0(\Zd_p)$ in
  Proposition~\ref{prop:lift_to_torsor} shows that this sequence has an
  accumulation point $\yv_0 \in \tX_0(\Zd_p)$: infinitely many $\yv_i$ have
  the same first $p$-adic digits, infinitely many of these have the same
  second $p$-adic digits, and so on; we obtain $\yv_0$ by using these $p$-adic
  digits; $\Phi(\yv_0)=0$ since $\Phi$ is continuous, and $\yv_0$ satisfies
  the coprimality conditions since these depend only on the first $p$-adic
  digits. Passing to a subsequence, we may assume that $\yv_0$ is the limit of
  the sequence $(\yv_j)_j$. Then
  $\partial \Phi/\partial x_\rho(\yv_0) = \lim_{j \to \infty} \partial
  \Phi/\partial x_\rho(\yv_j) = 0$ for all $\rho \in \Sigma(1)$. This
  contradicts the  smoothness of $X$ over $\Qd_p$.

  Let $\ell \ge \ell_1$ and $\xv \in  \tX_0(\Zd/p^{\ell}\Zd)$. For any
  $\yv \in \tX_0(\Zd_p)_\xv$, the first $\ell$ digits of
  $\partial \Phi/\partial x_\rho(\yv)$ depend only on $\xv$, and since
  $I_p(\yv) < \ell_1 \le \ell$, at least one of these digits is
  nonzero for some $\rho \in \Sigma(1)$. We choose $c_\xv$ and $\rho_\xv$ such that digit number
  $c_\xv$ (\ie the coefficient of $p^{c_\xv}$ in the $p$-adic expansion) of
  $\partial \Phi/\partial x_{\rho_\xv}(\yv)$ is nonzero, while all lower
  digits of $\partial \Phi/\partial x_\rho(\yv)$ for all $\rho \in \Sigma(1)$ are zero.
\end{proof}

\begin{prop}\label{prop:measure_torsor_mod_p^l}
  For every prime $p$, there is an $\ell_0 \in \Zd_{>0}$ such that for all
  $\ell \ge \ell_0$ we have 
  \begin{equation*}
    m_p(\tX_0(\Zd_p)) = \frac{\# \tX_0(\Zd/p^{\ell}\Zd)}{(p^\ell)^{\dim X_0}}.
  \end{equation*}
\end{prop}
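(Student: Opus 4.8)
The plan is to evaluate $m_p(\tX_0(\Zd_p))$ directly from the explicit integral \eqref{eq:measure_torsor_explicit}, by partitioning $\tX_0(\Zd_p)$ according to residues modulo $p^\ell$, and to match the result with the normalized point count. First recall that $\dim X_0 = \#\Sigma(1)-1 = J-1$, since $X_0$ is the hypersurface $\Phi=0$ in the $J$-dimensional torsor $Y_0$ over $Y$, so that the exponent $(p^\ell)^{\dim X_0}$ is indeed $(p^\ell)^{J-1}$. Next, by Corollary~\ref{cor:height_torsor_integral} we have $\max_{\sigma\in\Sigmamax}|x^{D(\sigma)}|_p = 1$ on all of $\tX_0(\Zd_p)$; hence the ``height factor'' in the denominator of \eqref{eq:measure_torsor_explicit} equals $1$ there, and $m_p(\tX_0(\Zd_p))$ is simply the relative $p$-adic volume $\int_{\tX_0(\Zd_p)}|\Res\varpi_{Y_0}|_p$.

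I would then fix $\ell\ge\ell_1$, with $\ell_1$ as in Lemma~\ref{lem:partial_derivatives}, and decompose $\tX_0(\Zd_p)=\bigsqcup_{\xv}\tX_0(\Zd_p)_{\xv}$ over $\xv\in\tX_0(\Zd/p^\ell\Zd)$; only the classes $\xv$ lying in the image of the reduction map $\tX_0(\Zd_p)\to\tX_0(\Zd/p^\ell\Zd)$ contribute, call these \emph{liftable}. For a liftable $\xv$, Lemma~\ref{lem:partial_derivatives} provides a ray $\rho_\xv$ and an integer $c_\xv<\ell_1$ such that $v_p(\partial\Phi/\partial x_{\rho_\xv}(\yv))=c_\xv$ is the minimum of $v_p(\partial\Phi/\partial x_{\rho}(\yv))$ over all rays, uniformly for $\yv\in\tX_0(\Zd_p)_\xv$. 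Taking $\rho_0=\rho_\xv$ in \eqref{eq:measure_torsor_explicit}, the integrand is the constant $|\partial\Phi/\partial x_{\rho_\xv}|_p^{-1}=p^{c_\xv}$; moreover, since all ratios $(\partial\Phi/\partial x_\rho)(\partial\Phi/\partial x_{\rho_\xv})^{-1}$ lie in $\Zd_p$ on this class, the $p$-adic implicit function theorem shows that moving the coordinates $y_\rho$, $\rho\ne\rho_\xv$, within the residue ball moves the implicitly determined $y_{\rho_\xv}$ within it as well. Hence $\pi_{\rho_\xv}$ maps $\tX_0(\Zd_p)_\xv$ bijectively onto the full ball $\{(y_\rho)_{\rho\ne\rho_\xv}:y_\rho\equiv x_\rho\bmod p^\ell\}$, of volume $(p^{-\ell})^{J-1}$, and therefore
\[
  m_p(\tX_0(\Zd_p)) \;=\; \sum_{\xv\ \mathrm{liftable}} m_p(\tX_0(\Zd_p)_{\xv}) \;=\; p^{-\ell(J-1)}\sum_{\xv\ \mathrm{liftable}} p^{c_\xv}.
\]

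It then remains to prove that $\sum_{\xv\ \mathrm{liftable}}p^{c_\xv}=\#\tX_0(\Zd/p^\ell\Zd)$ for all $\ell$ above a suitable threshold $\ell_0$, which together with $\dim X_0=J-1$ finishes the proof. The mechanism is that each liftable $\xv$ accounts for exactly $p^{c_\xv}$ residue classes modulo $p^\ell$: besides $\xv$ itself, its $p^{c_\xv}-1$ \emph{siblings}, obtained by perturbing the $\rho_\xv$-coordinate by nonzero multiples of $p^{\ell-c_\xv}$, still satisfy $\Phi\equiv0\bmod p^\ell$ (two-term Taylor expansion, using $v_p(\partial\Phi/\partial x_{\rho_\xv}(\xv))=c_\xv$ and $2(\ell-c_\xv)\ge\ell$) and the coprimality conditions, but do not lift to $\tX_0(\Zd_p)$. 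Conversely, every class in $\tX_0(\Zd/p^\ell\Zd)$ should be a sibling of a unique liftable $\xv$: on a class where some partial derivative has $p$-valuation below $\ell_1$, Hensel's lemma produces a nearby point of $\tX_0(\Zd_p)$ and thereby identifies the class as a sibling; and a ``critical'' class, on which every partial derivative is $\equiv 0\bmod p^{\ell_1}$, cannot occur once $\ell$ is large, because by Lemma~\ref{lem:partial_derivatives} no point of $\tX_0(\Zd_p)$ is critical modulo $p^{\ell_1}$, so the digit-by-digit compactness extraction used in the proof of that lemma forces the finitely many critical points modulo $p^{\ell_1}$ to stop lifting beyond some level $\ell_0$. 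Choosing $\ell\ge\ell_0$ and $\ell_0\ge 2\ell_1$ (so every invocation of Hensel is legitimate) completes the argument.

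I expect the main obstacle to be exactly the bookkeeping of this last paragraph: showing that the sibling classes of distinct liftable points are disjoint and jointly exhaust $\tX_0(\Zd/p^\ell\Zd)$ — the disjointness needs the uniqueness clause of the $p$-adic implicit function theorem applied with care — and that a single threshold $\ell_0$ (allowed to depend on $p$) eliminates all remaining ``orphan'' classes, which rests on a compactness argument over the possibly singular special fibre of $\tX_0$. An alternative route is to invoke the general principle that normalized mod-$p^\ell$ point counts of a finite-type $\Zd_p$-scheme converge to its $p$-adic volume and stabilize once the $\Zd_p$-points stay a bounded $p$-adic distance from the singular locus; but verifying that hypothesis via Lemma~\ref{lem:partial_derivatives} comes down to the same combinatorics.
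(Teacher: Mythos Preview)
Your approach is correct and the ingredients are right, but the paper organizes the argument differently and thereby sidesteps precisely the bookkeeping you flag as the main obstacle. Instead of partitioning $\tX_0(\Zd_p)$ at level $\ell$ into \emph{liftable} classes and then matching each with its $p^{c_\xv}$ siblings in $\tX_0(\Zd/p^\ell\Zd)$, the paper partitions \emph{both} sides at the coarse level $\ell_1$ of Lemma~\ref{lem:partial_derivatives} and proves the identity class by class there. For $\xv\in\tX_0(\Zd/p^{\ell_1}\Zd)$ the two-term Taylor expansion shows that $\Phi(\yv)\bmod p^{\ell_1+c_\xv}$ takes a single value $\Phi^*(\xv)$ on the whole residue ball, and the argument splits into a clean dichotomy: if $\Phi^*(\xv)\ne 0$ then $\tX_0(\Zd_p)_\xv$ and $\tX_0(\Zd/p^\ell\Zd)_\xv$ are both empty; if $\Phi^*(\xv)=0$ then \emph{every} lift of $\xv$ to level $\ell_1+c_\xv$ satisfies $\Phi\equiv 0$, so $\#\tX_0(\Zd/p^{\ell_1+c_\xv}\Zd)_\xv=p^{c_\xv J}$, while the Hensel bijection gives $m_p(\tX_0(\Zd_p)_\xv)=p^{c_\xv-\ell_1(J-1)}$, and the two match. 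Stability of the right-hand side for $\ell\ge\ell_1+c_\xv$ then follows from Hensel again, and $\ell_0=2\ell_1-1$ works.

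Your route computes the measure side more directly but pays for it on the counting side: the sibling decomposition only works once you fix $\rho_\xv$ and $c_\xv$ to depend only on $\xv\bmod p^{\ell_1}$ (so that a common sibling of two liftable classes forces them to coincide mod $p^{\ell_1}$ and hence to share $\rho_\xv$), and the orphan argument reruns the compactness in the proof of Lemma~\ref{lem:partial_derivatives}. None of this is wrong, but the paper's coarse partition makes both complications disappear: at level $\ell_1$ there are no siblings to disentangle, and the ``orphan'' case is just the branch $\Phi^*(\xv)\ne 0$.
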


\begin{proof}
  Let $\ell_1$ be as in Lemma~\ref{lem:partial_derivatives}. For
  $\xv \in  \tX_0(\Zd/p^{\ell_1}\Zd)$ and $\ell \ge \ell_1$, let
  \begin{equation*}
     \tX_0(\Zd/p^{\ell}\Zd)_\xv \coloneqq \{\yv \in (\Zd/p^{\ell}\Zd)^{\Sigma(1)} \mid \Phi(\yv)=0 \in
    \Zd/p^{\ell}\Zd,\ \yv \equiv \xv \bmod{p^{\ell_1}}\}.
  \end{equation*}
  We will see that
  \begin{equation}\label{eq:measure_eq_number_x}
    m_p(\tX_0(\Zd_p)_\xv) = \frac{\# \tX_0(\Zd/p^{\ell}\Zd)_\xv}{(p^{\ell})^{\#\Sigma(1)-1}}
  \end{equation}
  for all $\ell \ge \ell_1+c_\xv$ with $c_\xv<\ell_1$ as in
  Lemma~\ref{lem:partial_derivatives}. Since $\tX_0(\Zd_p)$ is the disjoint
  union of the sets $\tX_0(\Zd_p)_\xv$ and $ \tX_0(\Zd/p^{\ell}\Zd)$ is the disjoint union of the sets
  $ \tX_0(\Zd/p^{\ell}\Zd)_\xv$ for $\xv \in  \tX_0(\Zd/p^{\ell_1}\Zd)$, our result follows for all
  $\ell \ge \ell_0  \coloneqq  2\ell_1-1$.

  For the proof of \eqref{eq:measure_eq_number_x}, we fix
  $\xv \in  \tX_0(\Zd/p^{\ell_1}\Zd)$ and let $c_\xv, \rho_\xv$ be as in
  Lemma~\ref{lem:partial_derivatives}.  
  We claim that $\Phi(\yv) \bmod{p^{\ell_1+c_\xv}}$ is the same for all
  $\yv \in \Zd_p^{\Sigma(1)}$ with $\yv \equiv \xv \bmod{p^{\ell_1}}$; we write
  $\Phi^*(\xv)$ for this value in $\Zd/p^{\ell_1+c_\xv}\Zd$.  Indeed, for
  $\yv,\yv' \in \Zd_p^{\Sigma(1)}$, we have
  \begin{equation*}
    \Phi(\yv') = \Phi(\yv) + \sum_{\rho \in \Sigma(1)} (y_\rho'-y_\rho)\cdot \partial\Phi/\partial x_\rho(\yv)
    + \sum_{\rho',\rho'' \in \Sigma(1)} \Psi_{\rho',\rho''}(\yv,\yv')(y_{\rho'}'-y_{\rho'})(y_{\rho''}'-y_{\rho''})
  \end{equation*}
  for certain polynomials
  $\Psi_{\rho',\rho''} \in \Zd_p[X_\rho,X'_\rho : \rho \in \Sigma(1)]$ by Taylor expansion.
  If $\yv' \equiv \yv \bmod{p^{\ell_1}}$, we conclude
  $\Phi(\yv') \equiv \Phi(\yv) \bmod{p^{\ell_1+c_\xv}}$.

  If $\Phi^*(\xv) \ne 0 \in \Zd/p^{\ell_1+c_\xv}\Zd$, then there is no
  $\yv \in \Zd_p^{\Sigma(1)}$ with $\yv \equiv \xv \bmod{p^{\ell_1}}$ and
  $\Phi(\yv)=0$, hence the set $\tX_0(\Zd_p)_\xv$ is empty, and the same holds
  for $ \tX_0(\Zd/p^{\ell}\Zd)_\xv$ for all $\ell \ge \ell_1+c_\xv$ for similar reasons.

  Now assume $\Phi^*(\xv) = 0 \in \Zd/p^{\ell_1+c_\xv}\Zd$.  By Hensel's lemma, the
  map $\pi_{\rho_\xv}$ that drops the $\rho_\xv$-coordinate defines an isomorphism
  from the integration domain $\tX_0(\Zd_p)_\xv$ to the set
  \begin{align*}
    &\{(y_\rho)_{\rho \in \Sigma(1) \setminus \{\rho_\xv\}} \in \Zd_p^{\Sigma(1)
      \setminus \{\rho_\xv\}} \mid y_\rho \equiv x_\rho \bmod{p^{\ell_1}}\text{
      for all }\rho \in \Sigma(1) \setminus \{\rho_\xv\}\}\\
    ={}&\{(x_\rho+z_\rho)_{\rho \in \Sigma(1) \setminus \{\rho_\xv\}} \mid z_\rho \in
         p^{\ell_1}\Zd_p\} \cong (p^{\ell_1}\Zd_p)^{\Sigma(1) \setminus \{\rho_\xv\}}
  \end{align*}
  Therefore, by \eqref{eq:measure_torsor_explicit} and the first statement in
  Corollary~\ref{cor:height_torsor_integral}, 
  \begin{equation*}
    m_p(\tX_0(\Zd_p)_\xv) = \int_{\pi_{\rho_\xv}(\tX_0(\Zd_p)_\xv)}
    \frac{\bigwedge_{\rho \in \Sigma(1)\setminus \{\rho_\xv\}} \dd y_\rho}
    {|\partial\Phi/\partial x_{\rho_\xv}(\yv)|_p},
  \end{equation*}
  where $y_{\rho_\xv}$ is expressed in terms of the other coordinates using
  $\pi_{\rho_\xv}^{-1}$.  We have
  $|\partial\Phi/\partial x_{\rho_\xv}(\yv)|_p = p^{-c_\xv}$ on the integration
  domain (Lemma~\ref{lem:partial_derivatives}).  Thus,
  \begin{equation*}
    m_p(\tX_0(\Zd_p)_\xv)
    = \int_{(p^{\ell_1}\Zd_p)^{\Sigma(1) \setminus  \{\rho_\xv\}} }
    \frac{\bigwedge_{\rho \in \Sigma(1) \setminus \{\rho_\xv\}} \dd z_\rho}{p^{-c_\xv}}
    = p^{c_\xv-\ell_1(\#\Sigma(1)-1)}.
  \end{equation*}
  On the other hand, by the discussion above,
  $\Phi^*(\xv)=0 \in \Zd/p^{\ell_1+c_\xv}\Zd$ means $\Phi(\yv)=0 \in \Zd/p^{\ell_1+c_\xv}\Zd$
  for all $\yv \equiv \xv \bmod{p^{\ell_1}}$. Therefore,
  \begin{align*}
    \frac{\# \tX_0(\Zd/p^{\ell_1+c_\xv}\Zd)_\xv}{(p^{\ell_1+c_\xv})^{\#\Sigma(1)-1}} = \frac{p^{c_\xv \#\Sigma(1)}}{(p^{\ell_1+c_\xv})^{\#\Sigma(1)-1}} = p^{c_\xv-\ell_1(\#\Sigma(1)-1)}.
  \end{align*}
  Using Hensel's lemma as before, we see  that
  $\# \tX_0(\Zd/p^{\ell}\Zd)_\xv/(p^\ell)^{\#\Sigma(1)-1}$ has the same value
  for all $\ell \ge \ell_1+c_\xv$. This completes the proof of
  \eqref{eq:measure_eq_number_x}.
\end{proof}

\begin{prop}\label{prop:p-adic_density}
  We have
  \begin{equation*}
   (1 - p^{-1})^{\rank \Pic X}\mu_p(X(\Qd_p)) =  c_p.
  \end{equation*}
\end{prop}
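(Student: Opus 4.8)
The plan is to chain together the two $p$-adic results already established, after recording a dimension identity that reconciles the normalization in the definition \eqref{eq:c_p} of $c_p$ with the one appearing in Proposition~\ref{prop:measure_torsor_mod_p^l}.

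First I would pin down $\dim X_0$. Since $Y$ is projective, hence complete, its rays span the cocharacter lattice, so there is an exact sequence $0 \to M \to \Zd^{\Sigma(1)} \to \Cl Y \to 0$ and therefore $\rank \Cl Y = \#\Sigma(1) - \dim Y$, i.e. $\dim Y = J - \rank \Pic X$ (using $\Cl Y \cong \Pic X$). As $X$ is the hypersurface in $Y$ cut out by $\Phi$ (see \eqref{eq:cox_ring}), we have $\dim X = \dim Y - 1$. The universal torsor $X_0 \to X$ has fibers isomorphic to the N\'eron--Severi torus $T \cong \Gd_{\mathrm m}^{\rank \Pic X}$, so $\dim X_0 = \dim X + \rank \Pic X = J - 1 = \#\Sigma(1) - 1$. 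Thus the exponent $(p^\ell)^{\dim X_0}$ occurring in Proposition~\ref{prop:measure_torsor_mod_p^l} is precisely $(p^\ell)^{\#\Sigma(1)-1}$, the denominator in \eqref{eq:c_p}.

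Next, Proposition~\ref{prop:measure_torsor_mod_p^l} furnishes an $\ell_0$ such that $m_p(\tX_0(\Zd_p)) = \#\tX_0(\Zd/p^\ell\Zd)/(p^\ell)^{\#\Sigma(1)-1}$ for all $\ell \ge \ell_0$. In particular the sequence whose limit defines $c_p$ is eventually constant (this is the stabilization claim announced after \eqref{eq:c_p}), so the limit exists and equals $m_p(\tX_0(\Zd_p))$; hence $c_p = m_p(\tX_0(\Zd_p))$. Finally, Lemma~\ref{lem:measure_variety_torsor_p-adic} gives $m_p(\tX_0(\Zd_p)) = (1-p^{-1})^{\rank \Pic X}\mu_p(X(\Qd_p))$, and combining the two equalities yields $(1-p^{-1})^{\rank \Pic X}\mu_p(X(\Qd_p)) = c_p$.

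There is no analytic obstacle: the statement is a formal consequence of Lemma~\ref{lem:measure_variety_torsor_p-adic} (the torsor fibration/volume computation) and Proposition~\ref{prop:measure_torsor_mod_p^l} (the stabilization of point counts modulo $p^\ell$). The only point needing care is the matching of normalizations, i.e. the identity $\dim X_0 = \#\Sigma(1) - 1$, which relies on completeness of $Y$ (so that $\rank \Cl Y = \#\Sigma(1) - \dim Y$) together with the hypersurface relation $\dim X = \dim Y - 1$ coming from \eqref{eq:cox_ring}.
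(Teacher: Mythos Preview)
Your proof is correct and follows exactly the paper's approach: combine Lemma~\ref{lem:measure_variety_torsor_p-adic} with Proposition~\ref{prop:measure_torsor_mod_p^l} and the definition \eqref{eq:c_p}. The paper states this in a single sentence, while you helpfully spell out the dimension identity $\dim X_0 = \#\Sigma(1)-1$ needed to match the normalizations.
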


\begin{proof}
  We combine Lemma~\ref{lem:measure_variety_torsor_p-adic} and
  Proposition~\ref{prop:measure_torsor_mod_p^l} with \eqref{eq:c_p}.
\end{proof}

\subsection{The real density}\label{sec:real_density}

In this section, we compute the real density and Peyre's $\alpha$-constant in
terms of quantities that come up naturally in the analytic method in
Sections~\ref{sec8} and \ref{sec9}. For the case $Y = \Pd^n_\Qd$, see \cite[\S
5.4]{MR1340296}.

For any $\sigma \in \Sigmamax$, we can write
\begin{align*}
  -K_X = \sum_{\rho \notin \sigma(1)} \alpha^\sigma_\rho \deg(x_\rho)
\end{align*}
with $\alpha^\sigma_\rho \in \Zd$ by Lemma~\ref{lem:monomials_degree_L}. In
this section, we assume for convenience:
\begin{equation}\label{eq:assumption_real_density_strong}
  \begin{aligned}
    &\text{Every variable $x_\rho$ appears in at most one monomial  of $\Phi$.}\\
    &\text{There are $\sigma \in \Sigmamax$, $\rho_0 \in \sigma(1)$ and $\rho_1 \in \Sigma(1)
      \setminus \sigma(1)$ such that $\alpha^\sigma_{\rho_1} \ne 0$, }\\
    &\text{the variable $x_{\rho_0}$ appears
      with exponent $1$ in $\Phi$, and }\\
    &\text{no $x_\rho$ with $\rho \in \sigma(1) \cup \{\rho_1\} \setminus
      \{\rho_0\}$ appears in the same monomial  of $\Phi$ as $x_{\rho_0}$,}
  \end{aligned}
\end{equation}
This assumption will be satisfied and easy to check in all our applications. 
It implies assumption~\eqref{simplifying} below and hence will allow us to
compare Peyre's real density with $c_\infty$ as in Section~\ref{sec9}.

We fix $\sigma,\rho_0,\rho_1$ as in
\eqref{eq:assumption_real_density_strong}. Let $\sigma(1)' \coloneqq \sigma(1) \cup
\{\rho_1\}$. When we write $\rho \notin \sigma(1)'$, we mean $\rho \in
\Sigma(1) \setminus \sigma(1)'$. Because of $\alpha^\sigma_{\rho_1} \ne 0$
and \eqref{eq:sigma-basis},
$\{\deg(x_\rho) : \rho \notin \sigma(1)'\} \cup \{K_X\}$ is an
$\Rd$-basis of $(\Pic X)_\Rd$. Hence we can define the real numbers
$b_{\rho,\rho'}$ and $b_{\rho'}$ to satisfy
\begin{align*}
  \deg(x_{\rho'}) = - b_{\rho'}K_X -\sum_{\rho \notin \sigma(1)'} b_{\rho,\rho'}\deg(x_\rho)
\end{align*}
for $\rho' \in \sigma(1)'$.

We consider the height matrix
$\Am_1 = (\alpha_{\rho}^{\sigma})_{(\rho, \sigma) \in \Sigma(1) \times
  \Sigmamax} \in \Rd^{\Sigma(1) \times \Sigmamax} = \Bbb{R}^{J \times N}$ as in \eqref{matrix}.  Let $Z_\rho$ for
$\rho \in \Sigma(1)$ be the rows of this matrix. The following shows that our
definition of $b_{\rho,\rho'}$ and $b_{\rho'}$ is consistent with definitions
\eqref{beta} and \eqref{beta0} that will be needed in Section~\ref{sec8}.

\begin{lemma}\label{lem19}
  We have
  \begin{align*}
    Z_{\rho} = \sum_{\rho' \in \sigma(1)'} b_{\rho,\rho'} Z_{\rho'}
    \quad  \text{and} \quad 
    (1,\dots,1) = \sum_{\rho' \in \sigma(1)'} b_{\rho'} Z_{\rho'}
  \end{align*}
  for all $\rho \notin \sigma(1)'$. In particular, with
  \begin{equation}\label{eq:rkPic}
    R = 2+\dim X = J-\rank\Pic X + 1,
  \end{equation}
  the $R$ rows $\{Z_{\rho'} : \rho' \in \sigma(1)'\}$ form a maximal linearly
  independent subset.
\end{lemma}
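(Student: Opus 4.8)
The plan is to deduce both identities for the rows $Z_\rho$ from the single fact that the columns of $\Am_1$ are the vertices of the polytope $Q = p^{-1}(-K_X) \cap \Qd_{\ge 0}^{\Sigma(1)}$, together with the base-change relation defining the $b_{\rho,\rho'}$ and $b_{\rho'}$. The key observation is that the rows of $\Am_1$ are indexed by $\Sigma(1)$, and asking for a linear relation $Z_\rho = \sum_{\rho' \in \sigma(1)'} b_{\rho,\rho'} Z_{\rho'}$ among these rows is the same as asking that, for \emph{every} $\sigma'' \in \Sigmamax$, one has $\alpha^{\sigma''}_\rho = \sum_{\rho' \in \sigma(1)'} b_{\rho,\rho'}\alpha^{\sigma''}_{\rho'}$, i.e.\ a relation among the coordinates of each vertex $D(\sigma'')$ of $Q$. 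So the statement is really about a linear functional that vanishes on all of $Q$.

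First I would set up the dual picture. By Lemma~\ref{lemma:lpp}, the columns $D(\sigma'')^\top$ of $\Am_1$ span the affine hull of $Q$, which is an affine subspace of $\Qd^{\Sigma(1)}$ of dimension $J - \rank\Pic X$ not containing $0$; hence they span a linear subspace $W \subseteq \Qd^{\Sigma(1)}$ of dimension $R = J - \rank\Pic X + 1$, and $W$ is exactly the column span of $\Am_1$. A linear relation among the rows $Z_\rho$ of $\Am_1$ is precisely an element of the left kernel of $\Am_1$, equivalently a linear functional on $\Qd^{\Sigma(1)}$ vanishing on $W$. Concretely, I would fix $\sigma'' \in \Sigmamax$ and use $p(D(\sigma'')) = -K_X$: applying the base-change relation $\deg(x_{\rho'}) = -b_{\rho'}K_X - \sum_{\rho \notin \sigma(1)'} b_{\rho,\rho'}\deg(x_\rho)$ to $-K_X = \sum_{\rho'' \notin \sigma''(1)} \alpha^{\sigma''}_{\rho''}\deg(x_{\rho''})$ and re-expressing everything in the basis $\{\deg(x_\rho) : \rho \notin \sigma(1)'\} \cup \{K_X\}$ shows that the vector $Z_\rho - \sum_{\rho' \in \sigma(1)'} b_{\rho,\rho'} Z_{\rho'}$ (for $\rho \notin \sigma(1)'$) and the vector $(1,\dots,1) - \sum_{\rho' \in \sigma(1)'} b_{\rho'} Z_{\rho'}$ both lie in a subspace that is forced to be $0$. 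The cleanest way to see the vanishing: both candidate relations are consistent with the defining equation of the affine hyperplane $p(\,\cdot\,) = -K_X$ (respectively with the statement $\sum_\rho \deg(x_\rho) = -K_X$, i.e.\ $p((1,\dots,1)) = -K_X$, cf.\ \cite[Proposition~3.3.3.2]{adhl15}), so the difference vectors are annihilated by $p$ and lie in the row span we already control; since the $b$'s were \emph{defined} by the base-change identity in $(\Pic X)_\Rd$, the difference is orthogonal to every column $D(\sigma'')^\top$ of $\Am_1$, hence orthogonal to all of $W = \operatorname{colspan}\Am_1$, hence zero.

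With the two identities in hand, the final assertion is a dimension count. The identities show that every row $Z_\rho$ with $\rho \notin \sigma(1)'$, as well as the all-ones row $(1,\dots,1)$, lies in the span of $\{Z_{\rho'} : \rho' \in \sigma(1)'\}$; since $\#\sigma(1)' = \dim X + 1 = R$ and $\rank\Am_1 = R$ by Lemma~\ref{rank} (equivalently Lemma~\ref{lemma:lpp}), a spanning set of size $R$ for a row space of dimension $R$ must be a basis, so $\{Z_{\rho'} : \rho' \in \sigma(1)'\}$ is linearly independent and maximal. The one thing to be careful about — and the main obstacle — is checking that $\#\sigma(1)' = R$: this uses $\#\sigma(1) = \dim Y = \dim X$ (as $\sigma$ is a maximal cone in the smooth fan $\Sigma$ and $X$ is a hypersurface in $Y$ of the same dimension... wait, $\dim X = \dim Y - 1$), so in fact $\#\sigma(1)' = \#\sigma(1) + 1 = \dim Y + 1 = \dim X + 2 = R$ by \eqref{eq:rkPic}, using that $\rho_1 \notin \sigma(1)$. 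I would double-check that $\rho_1 \notin \sigma(1)$ is guaranteed by \eqref{eq:assumption_real_density_strong} (it is: $\rho_1 \in \Sigma(1)\setminus\sigma(1)$ there), so $\sigma(1)'$ genuinely has one more element than $\sigma(1)$, and the count closes.
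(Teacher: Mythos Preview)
Your direct argument — fixing $\sigma'' \in \Sigmamax$, expanding $-K_X = \sum_{\rho''}\alpha^{\sigma''}_{\rho''}\deg(x_{\rho''})$ via the base-change relations for $\rho'' \in \sigma(1)'$, and comparing coefficients in the basis $\{K_X\}\cup\{\deg(x_\rho):\rho\notin\sigma(1)'\}$ — is correct and gives both identities entry by entry. This is the same content as the paper's proof, which packages it as the exact sequence $\Rd^{\Sigmamax}\to\Rd^{\Sigma(1)}\times\Rd\xrightarrow{\deg}(\Pic X)_\Rd\to 0$ and its dual: the vectors $e_\rho-\sum_{\rho'}b_{\rho,\rho'}e_{\rho'}$ and $e_0-\sum_{\rho'}b_{\rho'}e_{\rho'}$ are exactly $\deg^\vee(d_\rho^\vee)$ and $\deg^\vee(K_X^\vee)$, hence annihilated by $(\Am_1^\top\ \Am_3^\top)$. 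Your coefficient comparison is the unwound version of this duality step; the dimension count at the end (including $\#\sigma(1)'=\dim Y+1=\dim X+2=R$) is fine.

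Your ``cleanest way'' paragraph, however, contains errors and should be dropped. The claim $p((1,\dots,1))=-K_X$ is false: one has $p((1,\dots,1))=\sum_\rho\deg(x_\rho)=-K_Y$, and $-K_X=-K_Y-\deg\Phi$ by adjunction; this discrepancy is precisely why the paper augments $\Am_1$ by the extra row $\Am_3$ and the extra coordinate $e_0$ with $\deg(e_0)=K_X$. Likewise, the coefficient vector $e_\rho-\sum_{\rho'}b_{\rho,\rho'}e_{\rho'}\in\Rd^{\Sigma(1)}$ is not ``annihilated by $p$''; what is true is that, viewed in $(\Rd^{\Sigma(1)}\times\Rd)^\vee$, it lies in $\operatorname{im}\deg^\vee$, i.e., in the annihilator of the column space of $\left(\begin{smallmatrix}\Am_1\\\Am_3\end{smallmatrix}\right)$ — a different statement. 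Since your direct argument already suffices, simply delete this alternative justification.
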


\begin{proof}
  As in \eqref{matrix}, let
  $\Am_3 = (1,\dots,1) \in \Bbb{R}^{1\times \Sigmamax} = \Rd^{1 \times N}$.
  Let $\{e_\rho : \rho \in \Sigma(1)\} \cup \{e_0\}$ be the standard basis of
  $\Rd^{\Sigma(1)} \times \Rd$. We define $\deg(e_\rho) = \deg(x_\rho)$ for
  $\rho \in \Sigma(1)$ and $\deg(e_0) = K_X$. Consider the sequence of linear
  maps
  \begin{align*}
    \Rd^{\Sigmamax}
    \xlongrightarrow{\begin{pmatrix}\Am_1\\\Am_3\end{pmatrix}}
    \Rd^{\Sigma(1)} \times \Rd
    \xlongrightarrow{\deg} (\Pic X)_\Rd\xlongrightarrow{} 0\text{.}
  \end{align*}
  The second map is surjective, and the image of the first is contained in the
  kernel of the second. Since  we have
  $\rank \Am_1 = \#\Sigma(1) + 1 - \rank \Pic X$ by Lemma~\ref{rank}, this sequence is
  exact. It follows that the dual sequence
  \begin{align*}
    \Rd^{\Sigmamax}
    \xlongleftarrow{\begin{pmatrix}\Am_1^\top & \Am_3^\top\end{pmatrix}}
    \Rd^{\Sigma(1)} \times \Rd
    \xlongleftarrow{\deg^\vee} (\Pic X)_\Rd^\vee\xlongleftarrow{} 0
  \end{align*}
  is exact as well. Let $\{d^\vee_\rho : \rho \notin \sigma(1)'\} \cup \{K_X^\vee\}$ be the
  $\Rd$-basis of $(\Pic X)_\Rd^\vee$ dual to the $\Rd$-basis of $(\Pic X)_\Rd$
  given above. We have
  \begin{align*}
    \deg^\vee(d_\rho^\vee) = e_\rho - \sum_{\rho' \in \sigma(1)'} b_{\rho,\rho'} e_{\rho'}
    \quad  \text{and} \quad     \deg^\vee(K_X^\vee) = e_0 - \sum_{\rho' \in \sigma(1)'} b_{\rho'} e_{\rho'}
  \end{align*}
  for all $\rho \notin \sigma(1)'$. Since these elements lie in the kernel
  of the leftmost map in the dual exact sequence, this gives the required
  relations between the rows of the matrix $\Am_1$ and the row
  $\Am_3$.
\end{proof}

We compare the factor $\alpha(X)$ of Peyre's constant as in
\cite[D\'efinition~2.4]{MR1340296} to
\begin{equation}\label{cast-final-first}
 c^{\ast}   \coloneqq    \text{vol}\Big\{ \textbf{r} \in [0,
 \infty]^{\Sigma(1)\setminus\sigma(1)'}  : b_{\rho'}  -\sum_{\rho \notin \sigma(1)'}
 r_{\rho}  b_{\rho,\rho'}   \geq 0 \text{ for all }  \rho' \in \sigma(1)' \Big\},
\end{equation}
which will appear in \eqref{cast-final}.

\begin{lemma}\label{lem:alpha}
  We have
  \begin{equation*}
    \alpha(X) = \frac{1}{|\alpha^\sigma_{\rho_1}|} c^{\ast}.
  \end{equation*}
\end{lemma}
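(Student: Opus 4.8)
The plan is to compute Peyre's constant $\alpha(X)$ by unwinding its definition as a volume inside the dual of the effective cone, and to match this volume with $c^\ast$ via the linear-algebra dictionary set up in Lemma~\ref{lem19}. Recall $\alpha(X) = \frac{1}{(\rank\Pic X - 1)!}\,\vol\{y \in \Eff(X)^\vee : \langle y, -K_X\rangle \le 1\}$, the volume taken with respect to the lattice $(\Pic X)^\vee$ dual to $\Pic X$. Since $X$ is a smooth split projective Fano variety arising as a hypersurface in the smooth toric variety $Y$ with $\Cl Y \to \Cl X$ an isomorphism, the effective cone $\Eff(X)$ is generated by the classes $\deg(x_\rho)$ of the Cox generators (these are the classes of the boundary divisors $D_\rho \cap X$); this is where I would invoke the hypothesis that $X$ is almost Fano together with the Cox-ring structure. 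Hence $\Eff(X)^\vee = \{y : \langle y, \deg(x_\rho)\rangle \ge 0 \text{ for all } \rho \in \Sigma(1)\}$.

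Next I would introduce the coordinates on $(\Pic X)^\vee_\Rd$ dual to the basis $\{\deg(x_\rho) : \rho \notin \sigma(1)'\} \cup \{K_X\}$ used in Lemma~\ref{lem19}, writing a general element as $y = \sum_{\rho \notin \sigma(1)'} r_\rho\, d_\rho^\vee + t\, K_X^\vee$. For $\rho \notin \sigma(1)'$ the inequality $\langle y, \deg(x_\rho)\rangle \ge 0$ reads simply $r_\rho \ge 0$; the condition $\langle y, -K_X\rangle \le 1$ reads $-t \le 1$, i.e. $t \ge -1$; and for $\rho' \in \sigma(1)'$, using $\deg(x_{\rho'}) = -b_{\rho'} K_X - \sum_{\rho \notin \sigma(1)'} b_{\rho,\rho'}\deg(x_\rho)$, the inequality $\langle y, \deg(x_{\rho'})\rangle \ge 0$ becomes $-b_{\rho'} t - \sum_{\rho \notin \sigma(1)'} r_\rho b_{\rho,\rho'} \ge 0$. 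The region carved out is a cone in the $(r_\rho, t)$ variables, truncated by $t \ge -1$. The slice at fixed $t$ scales linearly, so integrating over $t \in [-1, 0]$ (the upper bound $t=0$ coming from the remaining facet inequalities, or rather from where the region becomes empty) and using the homogeneity, the $(\rank\Pic X)$-dimensional volume reduces, after the $t$-integration and the change of variables $r_\rho \mapsto (-t) r_\rho$, to $\frac{1}{\rank\Pic X} \cdot \frac{1}{(\rank\Pic X - 1)!}$ times nothing new — more carefully, one finds $\vol\{\cdots\} = \frac{1}{\rank\Pic X}\,\vol_{\rank\Pic X - 1}(\text{the }t=1\text{ slice})$, and the slice at $t=-1$ is exactly $\{\mathbf r \in [0,\infty)^{\Sigma(1)\setminus\sigma(1)'} : b_{\rho'} - \sum_\rho r_\rho b_{\rho,\rho'} \ge 0\}$, whose volume is $c^\ast$ by \eqref{cast-final-first}.

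The one genuinely delicate point — and the step I expect to be the main obstacle — is the normalization: $\alpha(X)$ is defined using the lattice $(\Pic X)^\vee$, whereas $c^\ast$ is a naive Euclidean volume in the coordinates $r_\rho$ indexed by $\Sigma(1)\setminus\sigma(1)'$. These two normalizations differ by the covolume of the lattice spanned by $\{d_\rho^\vee : \rho \notin \sigma(1)'\} \cup \{K_X^\vee\}$ inside $(\Pic X)^\vee$, equivalently by the index $[\Pic X : \langle \deg(x_\rho) : \rho \notin \sigma(1)'\rangle + \Zd K_X]$. Since $X$ is split and $Y$ is smooth, $\{\deg(x_\rho) : \rho \notin \sigma(1)\}$ is already a $\Zd$-basis of $\Pic X$ by \eqref{eq:sigma-basis}; removing $\rho_1$ and adjoining $K_X$, the determinant of the change of basis is exactly the coefficient of $\deg(x_{\rho_1})$ in the expansion of $-K_X$ in the basis $\{\deg(x_\rho): \rho \notin \sigma(1)\}$, which is $\alpha^\sigma_{\rho_1}$ by \eqref{eq:height_monomials}. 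Tracking signs and taking absolute values of the Jacobian, this produces precisely the factor $1/|\alpha^\sigma_{\rho_1}|$ and the combinatorial factor $\frac{1}{\rank\Pic X}$ cancels against the $(\rank\Pic X - 1)!$ in the definition of $\alpha(X)$ exactly as it should, since $\rank\Pic X = J - R + 1$ and the $t$-integration contributes the missing dimension. Assembling these pieces yields $\alpha(X) = \frac{1}{|\alpha^\sigma_{\rho_1}|}\,c^\ast$.
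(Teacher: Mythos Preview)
Your overall strategy is exactly the paper's: pass to the dual basis $\{d_\rho^\vee : \rho \notin \sigma(1)'\} \cup \{K_X^\vee\}$, express the constraints cutting out $\Eff(X)^\vee$ via the relations $\deg(x_{\rho'}) = -b_{\rho'}K_X - \sum_\rho b_{\rho,\rho'}\deg(x_\rho)$, identify the resulting polytope with the one in \eqref{cast-final-first}, and pick up the factor $|\alpha^\sigma_{\rho_1}|$ from the change of lattice basis \eqref{eq:sigma-basis}. All of that is correct and matches the paper line for line.

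The gap is in your starting formula for $\alpha(X)$ and the ``cancellation'' you invoke at the end. Peyre's $\alpha(X)$ (\cite[D\'efinition~2.4]{MR1340296}, as the paper uses it) is the $(\rank\Pic X - 1)$-dimensional volume of the \emph{slice} $\{y \in \Eff(X)^\vee : \langle y, K_X\rangle = -1\}$ with respect to the form $\vol_{\mathrm P}$ normalized by $\vol_{\mathrm P} \wedge K_X = \vol^\vee_\Zd$. Your formula $\alpha(X) = \frac{1}{(n-1)!}\vol_n\{y \in \Eff(X)^\vee : \langle y, -K_X\rangle \le 1\}$ (with $n=\rank\Pic X$, $\vol_n$ the lattice volume) is not equivalent: the cone-over-slice identity gives $\vol_n(\text{solid}) = \tfrac{1}{n}\vol_{\mathrm P}(\text{slice}) = \tfrac{1}{n}\alpha(X)$, so your expression equals $\tfrac{1}{n!}\alpha(X)$, off by a factor $n!$. (Sanity check: for $\Pd^1\times\Pd^1$ your formula gives $1/8$, whereas $\alpha(\Pd^1\times\Pd^1)=1/4$.) You are probably thinking of the genuinely equivalent formulation $\alpha(X)=\tfrac{1}{(n-1)!}\int_{\Eff(X)^\vee} e^{-\langle -K_X,y\rangle}\,dy$, which does carry a $(n-1)!$ but is not the solid-polytope volume. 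The fix is simply to start from the slice definition, as the paper does; then there is no $t$-integration, no spurious combinatorial factor, and you land directly on \eqref{cast-final-first}.
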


\begin{proof}
  Let $\vol_{\Zd}$ be the volume on $(\Pic X)_\Rd$ defined by the
  lattice $\Pic X$, and let $\vol_\Rd$ be the volume on $(\Pic X)_\Rd$
  defined by the basis $\{K_X\} \cup \{\deg(x_\rho) : \rho \notin \sigma(1)'\}$. Since
  the determinant of the transformation matrix is $-\alpha^\sigma_{\rho_1}$, we have
  $\vol_\Zd = |\alpha^\sigma_{\rho_1}|\vol_\Rd$. For the corresponding dual volumes
  on $(\Pic X)^\vee_\Rd$, we have
  $\vol^\vee_\Zd =  |\alpha^\sigma_{\rho_1}|^{-1}\vol^\vee_\Rd$.

  Peyre considers the unique $(\rank \Pic X-1)$-form $\vol_{\mathrm{P}}$ on
  $(\Pic X)_\Rd^\vee$ such that
  $\vol_{\mathrm{P}} \wedge K_X = \vol^\vee_\Zd$. We also consider the
  form  $\vol_V = \bigwedge_{\rho \notin \sigma(1)'} \deg(x_\rho)$. Note
  that we have $\vol_V \wedge K_X = \vol_\Rd^\vee$. It follows that we have
  $\vol_{\mathrm P} = |\alpha^\sigma_{\rho_1}|^{-1} \vol_V$. These
  forms can be restricted to volumes on any affine subspace parallel to the
  subspace $V = \{\phi \in (\Pic X)_\Rd^\vee : \langle\phi, K_X\rangle = 0\}$.
  Hence
  \begin{align*}
    \alpha(X)
    &= \vol_P{}\{r \in (\Eff X)^\vee : \langle r, K_X\rangle = -1\}\\
    &= |\alpha^\sigma_{\rho_1}|^{-1}\vol_V{}\{r \in (\Pic X)_\Rd^\vee :
      \langle r, K_X \rangle = -1, \langle r, \deg x_\rho\rangle \ge
      0 \text{ for all $\rho \in \Sigma(1)$}\}\\
    &= |\alpha^\sigma_{\rho_1}|^{-1}\vol_V{}\rleft\{r_0 K_X^\vee + \sum_{\rho
      \notin \sigma(1)'} r_\rho d^\vee_\rho :
      \begin{aligned}
        &r_0 = -1, r_\rho \ge 0\text{ for all }\rho \notin \sigma(1)',\\
        & b_{\rho'}-\textstyle\sum_{\rho \notin \sigma(1)'}
        r_{\rho}b_{\rho,\rho'} \ge 0 \text{ for all } \rho' \in \sigma(1)'
      \end{aligned}
          \rright\}, 
  \end{align*}
  and the claim follows.
\end{proof}

Next we analyze Peyre's real density $\mu_\infty(X(\Rd))$ as given in
Proposition~\ref{prop:local_measure}.  By our assumption
\eqref{eq:assumption_real_density_strong}, the equation $\Phi = 0$ can be
solved for $x_{\rho_0}$ when all $x_\rho$ with $\rho \notin \sigma(1)'$ are
nonzero; here, the implicit function $\phi$ is a rational function in
$\{x_\rho : \rho \in \Sigma(1) \setminus \{\rho_0\}\}$ whose total
$\Pic X$-degree is $\deg(x_{\rho_0})$. Whenever
$S \subseteq \sigma(1)' \setminus \{\rho_0\}$ and $\uv = (u_\rho) \in \Rd^S$,
we write $\phi(\uv,\mathbf{1})$ for
$\phi((x_\rho)_{\rho \in \Sigma(1) \setminus \{\rho_0\}})$ with
$x_\rho=u_\rho$ for $\rho \in S$ and $x_\rho=1$ otherwise; this is a
polynomial expression in $\uv$. Using notation \eqref{eq:height_monomials}, we write
\begin{equation*}
  H_\infty(\xv)  \coloneqq  \max_{\sigma' \in \Sigmamax}|\xv^{D(\sigma')}|
\end{equation*}
for any $\xv \in \Rd^{\Sigma(1)}$.

For the computation of $\mu_\infty(X(\Rd))$, we work with
\eqref{eq:local_measure_explicit} and the chart \eqref{eq:chart} from the subset of
$X^\sigma(\Rd)$ to $\Rd^{\sigma(1) \setminus \{\rho_0\}}$ that drops the
$\rho_0$-coordinate.  Its inverse is induced by the map
\begin{equation*}
  f\colon 
  \Rd^{\sigma(1) \setminus \{\rho_0\}} \to \Rd^{\Sigma(1)},\quad
  \zv =(z_\rho)
  \mapsto (x_\rho)
  \text{ with } x_\rho \coloneqq 
  \begin{cases}
    \phi(\zv,\mathbf{1}), & \rho=\rho_0,\\
    z_\rho, & \rho \in \sigma(1) \setminus \{\rho_0\},\\
    1, & \rho \notin \sigma(1)
  \end{cases}
\end{equation*}
if we interpret the right hand side in Cox coordinates.  Since
$f(\Rd^{\sigma(1) \setminus \{\rho_0\}})$ and $X(\Rd)$ differ by a set of
measure zero, Peyre's real density can be expressed as
\begin{equation}\label{eq:omegaInftyPeyre}
  \omega_\infty  \coloneqq  \mu_\infty(X(\Rd)) = \int_{\zv\in \Rd^{\sigma(1)
      \setminus \{\rho_0\}}} \frac{\dd\zv}{|\partial\Phi/\partial
    x_{\rho_0}(f(\zv))|\cdot H_\infty(f(\zv))}.
\end{equation}

Using the map 
\begin{equation*}
  g\colon 
  \Rd^{\sigma(1)' \setminus \{\rho_0\}} \to \Rd^{\Sigma(1)},\quad
  \tv = (t_\rho) \mapsto (x_\rho)
  \text{ with }x_\rho \coloneqq 
  \begin{cases}
    \phi(\tv,\mathbf{1}),  & \rho=\rho_0,\\
    t_\rho, & \rho \in \sigma(1)' \setminus \{\rho_0\},\\
    1, & \rho \notin \sigma(1)',
  \end{cases}
\end{equation*}
we define
\begin{equation}\label{cinf-first}
  c_\infty  \coloneqq  2^{\#\Sigma(1)-\#\sigma(1)-1} \int_{\tv \in
    \Rd^{\sigma(1)' \setminus \{\rho_0\}},\ H_\infty(g(\tv))
    \le 1} \frac{\dd \tv}{|\partial\Phi/\partial x_{\rho_0}(g(\tv))|},
\end{equation}
which will reappear in \eqref{cinf-final} and \eqref{eq:c_infty}.

To compare $\omega_\infty$ and $c_\infty$, we use the following substitution.

\begin{lemma}\label{lem:transform}
  Let $\Psi$ be a $\Pic X$-homogeneous rational function
  in $\{x_\rho : \rho \in \Sigma(1)\}$ of degree
  \begin{equation*}
    \sum_{\rho \notin \sigma(1)} \alpha^\sigma_{\Psi,\rho}\deg(x_\rho).
  \end{equation*}
  Let $\alpha^\sigma_{\rho',\rho} \in \Zd$ for $\rho' \in \Sigma(1)$ and
  $\rho \notin \sigma(1)$ be as in \eqref{eq:alpha_sigma_rho_rho'}.  Then the
  substitution $z_{\rho'}=t_{\rho_1}^{-\alpha^\sigma_{\rho',\rho_1}}t_{\rho'}$
  for $\rho' \in \sigma(1) \setminus \{\rho_0\}$ gives
  $\Psi(f(\zv))=t_{\rho_1}^{-\alpha^\sigma_{\Psi,\rho_1}}\Psi(g(\tv))$.  In
  particular,
  $\phi(\zv,\mathbf{1})=t_{\rho_1}^{-\alpha^\sigma_{\rho_0,\rho_1}}\phi(\tv,\mathbf{1})$.

  If $t_{\rho_1}$ appears in $\phi(\tv,\mathbf{1})$ with odd exponent, then
  there is another $t_\rho$ with odd exponent in the same monomial or there is
  a $t_\rho$ with odd exponent in each of the other monomials of
  $\phi(\tv,\mathbf{1})$.
\end{lemma}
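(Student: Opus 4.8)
The plan is to make the monomial structure of $\phi(\tv,\mathbf{1})$ completely explicit and then extract a parity constraint from the $\Pic X$-homogeneity of $\Phi$. First I would record that, by assumption~\eqref{eq:assumption_real_density_strong}, there is a unique monomial of $\Phi$ in which $x_{\rho_0}$ occurs, say the $i_0$-th, and that it has the shape $b_{i_0}x_{\rho_0}\prod_{\rho\notin\sigma(1)'}x_\rho^{h_{i_0,\rho}}$, since no $x_\rho$ with $\rho\in\sigma(1)'\setminus\{\rho_0\}$ occurs in it. Solving $\Phi=0$ for $x_{\rho_0}$ expresses it as $-\big(b_{i_0}\prod_{\rho\notin\sigma(1)'}x_\rho^{h_{i_0,\rho}}\big)^{-1}\sum_{i\ne i_0}b_i\prod_{\rho}x_\rho^{h_{i\rho}}$, where $x_{\rho_0}$ does not occur in the monomials with $i\ne i_0$. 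Substituting $x_\rho=t_\rho$ for $\rho\in\sigma(1)'\setminus\{\rho_0\}$ and $x_\rho=1$ for $\rho\notin\sigma(1)'$ kills the denominator and leaves
\begin{equation*}
  \phi(\tv,\mathbf{1})=-\frac{1}{b_{i_0}}\sum_{i\ne i_0}b_i\prod_{\rho\in\sigma(1)'\setminus\{\rho_0\}}t_\rho^{h_{i\rho}}\text{,}
\end{equation*}
so each monomial of $\phi(\tv,\mathbf{1})$ equals $\prod_{\rho\in\sigma(1)'\setminus\{\rho_0\}}t_\rho^{h_{i\rho}}$ for some $i\ne i_0$; several indices $i$ may give the same monomial, but then they agree on all exponents $h_{i\rho}$ with $\rho\in\sigma(1)'\setminus\{\rho_0\}$, so parity statements about a monomial of $\phi(\tv,\mathbf{1})$ are well defined.

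Next I would argue by contradiction: suppose $t_{\rho_1}$ occurs with odd exponent in a monomial $m$ of $\phi(\tv,\mathbf{1})$, that no $t_\rho$ with $\rho\in\sigma(1)'\setminus\{\rho_0,\rho_1\}$ occurs with odd exponent in $m$, and that some other monomial $m'$ of $\phi(\tv,\mathbf{1})$ has all exponents even. Choosing $i\ne i_0$ contributing to $m$ and $i'\ne i_0$ contributing to $m'$, we have $h_{i,\rho_1}$ odd, $h_{i',\rho_1}$ even, and $h_{i\rho},h_{i'\rho}$ even for every $\rho\in\sigma(1)'\setminus\{\rho_0,\rho_1\}$. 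Homogeneity of $\Phi$ gives $\sum_{\rho\in\Sigma(1)}(h_{i\rho}-h_{i'\rho})\deg(x_\rho)=0$ in $\Pic X$; expanding each $\deg(x_\rho)$ in the $\Zd$-basis $\{\deg(x_{\tilde\rho}):\tilde\rho\notin\sigma(1)\}$ by means of~\eqref{eq:alpha_sigma_rho_rho'} and comparing the coefficients of $\deg(x_{\rho_1})$ — recall $\rho_1\notin\sigma(1)$, so this is one of the basis vectors — yields
\begin{equation*}
  h_{i,\rho_1}-h_{i',\rho_1}=-\sum_{\rho\in\sigma(1)}(h_{i\rho}-h_{i'\rho})\,\alpha^\sigma_{\rho,\rho_1}\text{.}
\end{equation*}
The term $\rho=\rho_0$ vanishes because $x_{\rho_0}$ occurs in no monomial other than $i_0$, whence $h_{i,\rho_0}=h_{i',\rho_0}=0$; the remaining indices $\rho$ lie in $\sigma(1)\setminus\{\rho_0\}$, where $h_{i\rho}-h_{i'\rho}$ is even. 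Hence the right-hand side is even while the left-hand side is odd, a contradiction. Thus, if $t_{\rho_1}$ occurs with odd exponent in $m$ and no further $t_\rho$ occurs with odd exponent in $m$, then each of the other monomials of $\phi(\tv,\mathbf{1})$ must carry some $t_\rho$ with odd exponent, which is the assertion.

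I do not anticipate a serious obstacle here; the only delicate point is the index bookkeeping — that $\sigma(1)'\setminus\{\rho_0\}$ is the disjoint union of $\sigma(1)\setminus\{\rho_0\}$ and $\{\rho_1\}$, that $\rho_1$ is not a ray of $\sigma$ so that $\deg(x_{\rho_1})$ is genuinely a member of the basis used in~\eqref{eq:alpha_sigma_rho_rho'}, and that a monomial of $\phi(\tv,\mathbf{1})$ inherits a well-defined exponent vector on $\sigma(1)'\setminus\{\rho_0\}$ from the (possibly several) monomials of $\Phi$ that produce it, so that the two parity hypotheses are meaningful.
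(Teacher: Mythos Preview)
Your proposal addresses only the second assertion of the lemma (the parity statement about $\phi(\tv,\mathbf{1})$); the first assertion --- the substitution identity $\Psi(f(\zv))=t_{\rho_1}^{-\alpha^\sigma_{\Psi,\rho_1}}\Psi(g(\tv))$ and its specialization $\phi(\zv,\mathbf{1})=t_{\rho_1}^{-\alpha^\sigma_{\rho_0,\rho_1}}\phi(\tv,\mathbf{1})$ --- is not touched. That identity is the part actually used in the proof of Proposition~\ref{prop:omega_infty-c_infty}, and it is not entirely automatic: one verifies it first for $\Psi=x_\rho$ with $\rho\ne\rho_0$ (a direct check in the three cases $\rho\in\sigma(1)\setminus\{\rho_0\}$, $\rho=\rho_1$, $\rho\notin\sigma(1)'$), extends by homogeneity to all rational functions in $\{x_\rho:\rho\ne\rho_0\}$, applies this to $\phi$ to settle the $\Psi=x_{\rho_0}$ case, and then concludes for general $\Psi$. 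You should supply this.

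For the parity assertion your argument is correct and takes a genuinely different route from the paper. The paper deduces it \emph{from} the first part: applying the substitution identity to the numerator $\psi$ of $\phi$ gives $\psi(\zv,\mathbf{1})=t_{\rho_1}^{-\alpha^\sigma_{\psi,\rho_1}}\psi(\tv,\mathbf{1})$; since $t_{\rho_1}$ occurs in at most one monomial of $\psi(\tv,\mathbf{1})$, either that monomial or every other one carries an odd power of $t_{\rho_1}$ on the right-hand side, and then the trivial observation that a monomial of $\psi(\zv,\mathbf{1})$ with all $z_\rho$-exponents even produces an even power of $t_{\rho_1}$ under the substitution forces some $z_\rho$ (hence $t_\rho$, $\rho\in\sigma(1)\setminus\{\rho_0\}$) to have odd exponent. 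You instead read off the $\rho_1$-coordinate of the relation $\sum_\rho(h_{i\rho}-h_{i'\rho})\deg(x_\rho)=0$ in the basis~\eqref{eq:sigma-basis} directly. Your approach is cleaner in that it does not rely on the substitution identity, though it requires tracking the coefficients $\alpha^\sigma_{\rho,\rho_1}$ explicitly; the paper's approach packages the same linear algebra inside the already-established first part.
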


\begin{proof}
  Consider the case $\Psi=x_\rho$ first. For
  $\rho \in \sigma(1)\setminus \{\rho_0\}$, the claim holds by definition of
  the substitution. For $\rho = \rho_1$, we have
  $\Psi(f(\zv))=1=t_{\rho_1}^{-1}\cdot t_{\rho_1} =
  t_{\rho_1}^{-\alpha^\sigma_{\Psi,\rho_1}}\Psi(g(\tv))$. For
  $\rho \notin \sigma(1)'$, we have
  $\Psi(f(\zv))=1\cdot
  1=t_{\rho_1}^{-\alpha^\sigma_{\Psi,\rho_1}}\Psi(g(\tv))$. Therefore, the
  claim holds for all monomials and hence also for all homogeneous polynomials
  and all homogeneous rational functions in
  $\{x_\rho : \rho \in \Sigma(1) \setminus \{\rho_0\}\}$. In particular, in
  the case $\Psi=x_{\rho_0}$, since $\phi$ is such a rational function of
  degree $\deg(x_{\rho_0})$, the substitution gives
  $\Psi(f(\zv))=\phi(\zv,\mathbf{1}) =
  t_{\rho_1}^{-\alpha^\sigma_{\rho_0,\rho_1}}\phi(\tv,\mathbf{1}) =
  t_{\rho_1}^{-\alpha^\sigma_{\Psi,\rho_1}}\Psi(g(\tv))$. Now the claim
  follows for all monomials, homogeneous polynomials, and finally all
  homogeneous rational functions in $\{x_\rho : \rho \in \Sigma(1)\}$.

  Let $\psi$ be the numerator of $\phi$. Because of
  \eqref{eq:assumption_real_density_strong}, $t_{\rho_1}$ appears in at most
  one monomial of $\psi(\tv,\mathbf{1})$; we assume that it appears in the
  first monomial with odd exponent. Therefore, either the exponent of
  $t_{\rho_1}$ in the first monomial of
  $t_{\rho_1}^{-\alpha^\sigma_{\psi,\rho_1}}\psi(\tv,\mathbf{1})$ is odd, or
  the exponents of $t_{\rho_1}$ in all other monomials of this expression are
  odd. But since our substitution gives
  $\psi(\zv,\mathbf{1})=t_{\rho_1}^{-\alpha^\sigma_{\psi,\rho_1}}\psi(\tv,\mathbf{1})$,
  the exponent of $t_{\rho_1}$ in a certain monomial of
  $t_{\rho_1}^{-\alpha^\sigma_{\psi,\rho_1}}\psi(\tv,\mathbf{1})$ can only be
  odd if there is a $z_\rho$ with odd exponent in the corresponding monomial
  of $\psi(\zv,\mathbf{1})$, and then the exponent of $t_\rho$ in this
  monomial of $\psi(\tv,\mathbf{1})$ is also odd.
\end{proof}

\begin{prop}\label{prop:omega_infty-c_infty}
  We have
  \begin{equation*}
     \mu_\infty(X(\Rd)) = \frac{|\alpha^\sigma_{\rho_1}|}{2^{\rank \Pic X}} c_\infty.
  \end{equation*}
\end{prop}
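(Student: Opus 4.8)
The plan is to obtain the identity by transforming the integral \eqref{cinf-first} for $c_\infty$ into the integral \eqref{eq:omegaInftyPeyre} for $\omega_\infty=\mu_\infty(X(\Rd))$ via the change of variables recorded in Lemma~\ref{lem:transform}, with the coordinate $t_{\rho_1}$ playing the role of a scaling parameter whose integration absorbs the height constraint $H_\infty(g(\tv))\le 1$. First I would split $\tv=(t_{\rho_1},(t_\rho)_{\rho\in\sigma(1)\setminus\{\rho_0\}})$ in \eqref{cinf-first}. Since $\{t_{\rho_1}=0\}$ is a null set, for almost every value of $t_{\rho_1}$ I substitute $z_\rho=t_{\rho_1}^{-\alpha^\sigma_{\rho,\rho_1}}t_\rho$ for $\rho\in\sigma(1)\setminus\{\rho_0\}$; this is a diagonal linear automorphism of $\Rd^{\sigma(1)\setminus\{\rho_0\}}$, so $\prod_{\rho\in\sigma(1)\setminus\{\rho_0\}}\dd t_\rho=|t_{\rho_1}|^{E}\prod_{\rho\in\sigma(1)\setminus\{\rho_0\}}\dd z_\rho$ with $E\coloneqq\sum_{\rho\in\sigma(1)\setminus\{\rho_0\}}\alpha^\sigma_{\rho,\rho_1}$.

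Next I would feed the two $\Pic X$-homogeneous rational functions appearing in \eqref{cinf-first} into Lemma~\ref{lem:transform}. Applied to $\Psi=x^{D(\sigma')}$ for each $\sigma'\in\Sigmamax$ — which has degree $-K_X=\sum_{\rho\notin\sigma(1)}\alpha^\sigma_\rho\deg(x_\rho)$, so that $\alpha^\sigma_{\Psi,\rho_1}=\alpha^\sigma_{\rho_1}$ regardless of $\sigma'$ — and taking the maximum over $\sigma'$, the lemma gives $H_\infty(g(\tv))=|t_{\rho_1}|^{\alpha^\sigma_{\rho_1}}H_\infty(f(\zv))$. Applied to $\Psi=\partial\Phi/\partial x_{\rho_0}$, it gives $|\partial\Phi/\partial x_{\rho_0}(g(\tv))|=|t_{\rho_1}|^{d}\,|\partial\Phi/\partial x_{\rho_0}(f(\zv))|$ with $d\coloneqq\alpha^\sigma_{\partial\Phi/\partial x_{\rho_0},\rho_1}$. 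The one genuine computation is to evaluate $d$: combining $\deg\Phi=K_X+\sum_{\rho\in\Sigma(1)}\deg(x_\rho)$ (cf.\ \cite[Proposition~3.3.3.2]{adhl15}), $K_X=-\sum_{\rho\notin\sigma(1)}\alpha^\sigma_\rho\deg(x_\rho)$, and $\deg(x_{\rho'})=\sum_{\rho\notin\sigma(1)}\alpha^\sigma_{\rho',\rho}\deg(x_\rho)$ for $\rho'\in\sigma(1)$ (see \eqref{eq:sigma-basis}, \eqref{eq:alpha_sigma_rho_rho'}), one reads off $\alpha^\sigma_{\partial\Phi/\partial x_{\rho_0},\rho}=1-\alpha^\sigma_\rho+\sum_{\rho'\in\sigma(1)\setminus\{\rho_0\}}\alpha^\sigma_{\rho',\rho}$, hence $d=1-\alpha^\sigma_{\rho_1}+E$.

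Writing $a\coloneqq\alpha^\sigma_{\rho_1}$, which is a positive integer (nonnegative by Lemma~\ref{lem:monomials_degree_L}, nonzero by \eqref{eq:assumption_real_density_strong}), the exponent of $|t_{\rho_1}|$ surviving in the transformed integrand is $E-d=a-1\ge 0$, while the constraint $H_\infty(g(\tv))\le 1$ becomes $|t_{\rho_1}|\le H_\infty(f(\zv))^{-1/a}$. Integrating out $t_{\rho_1}$ first, by Fubini's theorem,
\begin{equation*}
  \int_{|t_{\rho_1}|\le H_\infty(f(\zv))^{-1/a}}|t_{\rho_1}|^{a-1}\,\dd t_{\rho_1}=\frac{2}{a}\,H_\infty(f(\zv))^{-1},
\end{equation*}
so \eqref{cinf-first} becomes $c_\infty=2^{\#\Sigma(1)-\#\sigma(1)-1}\cdot\frac{2}{a}\int_{\zv}\frac{\dd\zv}{|\partial\Phi/\partial x_{\rho_0}(f(\zv))|\,H_\infty(f(\zv))}=\frac{2^{\#\Sigma(1)-\#\sigma(1)}}{a}\,\omega_\infty$ upon comparison with \eqref{eq:omegaInftyPeyre}. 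To finish, I note that $Y$ being regular forces the full-dimensional cone $\sigma$ to have exactly $\dim Y=\dim X+1$ rays, so $\#\sigma(1)=\dim X+1$; together with $\#\Sigma(1)=J=\dim X+1+\rank\Pic X$ (cf.\ \eqref{eq:rkPic}) this gives $\#\Sigma(1)-\#\sigma(1)=\rank\Pic X$, and therefore $\mu_\infty(X(\Rd))=\omega_\infty=\frac{|\alpha^\sigma_{\rho_1}|}{2^{\rank\Pic X}}c_\infty$. The main thing to get right is the degree bookkeeping producing $d=1-a+E$ (and keeping the signs of these integer exponents straight); the remaining steps are a routine application of Fubini's theorem together with the substitution already prepared in Lemma~\ref{lem:transform}.
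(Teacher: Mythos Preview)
Your proof is correct and uses the same substitution from Lemma~\ref{lem:transform} that the paper does; the only difference is that you run the argument from $c_\infty$ toward $\omega_\infty$ rather than the other way around. Because the identities of Lemma~\ref{lem:transform} are algebraic (integer exponents) and you work with absolute values throughout, your change of variables is a bijection for every $t_{\rho_1}\ne 0$ and the inner integral visibly depends only on $|t_{\rho_1}|$; this lets you bypass the separate symmetry step $\omega_\infty=\omega_\infty^-$ that the paper carries out via the parity clause at the end of Lemma~\ref{lem:transform}. Your exponent bookkeeping $d=1-a+E$ matches the paper's computation exactly (the paper derives the equivalent $\alpha^\sigma_{\rho_1}=\sum_{\rho'\in\sigma(1)\setminus\{\rho_0\}}\alpha^\sigma_{\rho',\rho_1}+1-\alpha^\sigma_{\partial\Phi/\partial x_{\rho_0},\rho_1}$), and the identification $\#\Sigma(1)-\#\sigma(1)=\rank\Pic X$ is handled the same way.
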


\begin{proof}
  Our starting point is \eqref{eq:omegaInftyPeyre}. We use the identity (for
  positive real $s$)
  \begin{equation*}
    \frac{1}{s} = \int_{z_{\rho_1} > 0,\ sz_{\rho_1}\le 1} \dd z_{\rho_1}
  \end{equation*}
  to deduce
  \begin{equation*}
    \omega_\infty = \int_{(\zv,z_{\rho_1}) \in \Rd^{\sigma(1) \setminus
        \{\rho_0\}} \times \Rd_{>0},\ H_\infty(f(\zv))\cdot z_{\rho_1} \le 1}
    \frac{\dd\zv\dd z_{\rho_1}}{|\partial\Phi/\partial x_{\rho_0}(f(\zv))|}.
  \end{equation*}
  We use the transformation $z_{\rho_1} = t_{\rho_1}^{\alpha^\sigma_{\rho_1}}$
  (with positive $t_{\rho_1}$) and the transformations from
  Lemma~\ref{lem:transform}. The latter give
  $H_\infty(f(\zv)) = t_{\rho_1}^{-\alpha^\sigma_{\rho_1}}H_\infty(g(\tv))$
  since all monomials appearing in the definition of the anticanonical height
  function $H_\infty$ have degree $-K_X$; therefore,
  $H_\infty(f(\zv))\cdot z_{\rho_1} = H_\infty(g(\tv))$.  Furthermore,
  $|\partial\Phi/\partial x_{\rho_0}(f(\zv))| =
  |t_{\rho_1}^{-\alpha^\sigma_{\partial\Phi/\partial
      x_{\rho_0},\rho_1}}\partial\Phi/\partial x_{\rho_0}(g(\tv))|$ (even without using the
  observation that these are the same constant by
  \eqref{eq:assumption_real_density_strong}).  We obtain
  $\dd z_{\rho_1} = |\alpha^\sigma_{\rho_1}
  t_{\rho_1}^{\alpha^\sigma_{\rho_1}-1}| \dd t_{\rho_1}$ and
  \begin{equation*}
    \dd \zv = |t_{\rho_1}^{-\sum_{\rho' \in \sigma(1) \setminus
        \{\rho_0\}}\alpha^\sigma_{\rho',\rho_1}}|\bigwedge_{\rho' \in \sigma(1)
      \setminus \{\rho_0\}} \dd t_{\rho'}.
  \end{equation*}
  The integration domain is unchanged.

  We have $-K_X=\sum_{\rho' \in \Sigma(1)} \deg(x_{\rho'}) - \deg(\Phi)$ by
  \cite[Proposition~3.3.3.2]{adhl15}, and
  $\deg(\partial\Phi/\partial
  x_{\rho_0})=\deg(\Phi)-\deg(x_{\rho_0})$. Therefore,
  $\alpha^\sigma_{\rho_1}=\sum_{\rho' \in \Sigma(1)}
  \alpha^\sigma_{\rho',\rho_1}-\alpha^\sigma_{\Phi,\rho_1}$ and
  $\alpha^\sigma_{\partial\Phi/\partial
    x_{\rho_0},\rho_1}=\alpha^\sigma_{\Phi,\rho_1}-\alpha^\sigma_{\rho_0,\rho_1}$. Since
  $\alpha^\sigma_{\rho',\rho}=\delta_{\rho'=\rho}$ for all
  $\rho',\rho \notin \sigma(1)$, we conclude that
  $$\alpha^\sigma_{\rho_1}=\sum_{\rho' \in \sigma(1) \setminus
    \{\rho_0\}}\alpha^\sigma_{\rho',\rho_1}+1-\alpha^\sigma_{\partial\Phi/\partial
    x_{\rho_0},\rho_1}.$$ This
  shows that the powers of $t_{\rho_1}$ cancel out, so that
  $\dd\zv\dd z_{\rho_1}/|\partial\Phi/\partial x_{\rho_0}(f(\zv))| = \dd
  \tv/|\partial\Phi/\partial x_{\rho_0}(g(\tv))|$. Therefore,
  \begin{equation*}
    \omega_\infty = |\alpha^\sigma_{\rho_1}|
    \int_{\tv \in \Rd^{\sigma(1) \setminus \{\rho_0\}} \times \Rd_{>0},\
      H_\infty(g(\tv))\le 1}
    \frac{\dd \tv}{|\partial\Phi/\partial x_{\rho_0}(g(\tv))|}.
  \end{equation*}
  
  We claim that 
  \begin{equation*}
    \omega_\infty^-  \coloneqq  |\alpha^\sigma_{\rho_1}|
    \int_{\tv \in \Rd^{\sigma(1) \setminus \{\rho_0\}} \times \Rd_{<0},\
      H_\infty(g(\tv))\le 1}
    \frac{\dd \tv}{|\partial\Phi/\partial x_{\rho_0}(g(\tv))|}
  \end{equation*}
  has the same value as $\omega_\infty$. Indeed, $\phi(\tv,\mathbf{1})$ (the
  $\rho_0$-component of $g(\tv)$) is the only place where the sign of
  $t_{\rho_1}$ might matter. Our claim is clearly true if $t_{\rho_1}$ does
  not appear in $\phi(\tv,\mathbf{1})$ or if $t_{\rho_1}$ has an even exponent
  in $\phi(\tv,\mathbf{1})$. If $t_{\rho_1}$ appears in $\phi(\tv,\mathbf{1})$
  with odd exponent, then the change of variables
  $t_{\rho_1}' \coloneqq -t_{\rho_1}$ and $t_\rho' \coloneqq -t_\rho$ for all
  $t_\rho$ appearing in the final statement of Lemma~\ref{lem:transform} in
  $\omega_\infty^-$ shows that $\omega_\infty^-=\omega_\infty$.  Therefore,
  \begin{equation*}
    \mu_\infty(X(\Rd)) = \omega_\infty = \frac{1}{2}(\omega_\infty+\omega_\infty^-) =
    \frac{|\alpha^\sigma_{\rho_1}|}{2}
    \int_{\tv \in \Rd^{\sigma(1) \setminus \{\rho_0\}} \times \Rd_{\ne 0},\
      H_\infty(g(\tv))\le 1}
    \frac{\dd \tv}{|\partial\Phi/\partial x_{\rho_0}(g(\tv))|}.
  \end{equation*} 
  Since $\rank \Pic X = \#\Sigma(1)-\#\sigma(1)$ and replacing
  $\Rd^{\sigma(1) \setminus \{\rho_0\}} \times \Rd_{\ne 0}$ by
  $\Rd^{\sigma(1)' \setminus \{\rho_0\}}$ does not change the integral, this
  completes the proof.
\end{proof}

\subsection{Peyre's constant in Cox coordinates}\label{sec:peyres_constant}

\begin{prop}\label{prop:peyre}
  Let $X$ be a split almost Fano variety over $\Qd$ with semiample
  $\omega_X^\vee$ that has a finitely generated Cox ring $\Rm(X)$ with
  precisely one relation $\Phi$ with integral coefficients and satisfies the
  assumptions \eqref{eq:toric_smooth} and
  \eqref{eq:assumption_real_density_strong}.  Then Peyre's constant for $X$
  with respect to the anticanonical height $H$ as in
  \eqref{eq:height_definition} is
  \begin{equation*}
    c = \frac{1}{2^{\rank\Pic X}}c^\ast c_\infty c_{\mathrm{fin}},
  \end{equation*}
  using the notation~\eqref{eq:c_p}, \eqref{cast-final-first},
  \eqref{cinf-first}.
\end{prop}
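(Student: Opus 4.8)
The plan is to assemble the ingredients prepared in the preceding subsections; the statement then follows by a short computation. Recall that Peyre's constant attached to the anticanonical height $H$ is $c = \alpha(X)\,\beta(X)\,\tau_H(X)$ in the notation of \cite[D\'efinition~2.4]{MR1340296} and \cite[\S 4]{MR2019019}, where $\beta(X) = \#H^1(\Qd,\Pic\overline X)$ and $\tau_H(X)$ is the Tamagawa number. Since $X$ is split, the Galois action on the finitely generated free abelian group $\Pic\overline X$ is trivial, whence $\beta(X)=1$; and $\tau_H(X)$ is precisely the quantity written out in \eqref{eq:tamagawa}, computed with the local measures $\mu_v$ induced by our adelic metrization \eqref{eq:def_norm}. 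That these $\mu_v$ are indeed the measures entering Peyre's definition is exactly the content of Proposition~\ref{prop:local_measure} together with the lemmas of Section~\ref{sec:poincare}, which check that $(\|\cdot\|_v)_v$ is an adelic metric in the sense of \cite[D\'efinition~2.3]{MR2019019}.

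First I would substitute \eqref{eq:tamagawa} into $c = \alpha(X)\tau_H(X)$ to obtain
\begin{equation*}
  c = \alpha(X)\,\mu_\infty(X(\Rd)) \prod_p (1-p^{-1})^{\rank\Pic X}\mu_p(X(\Qd_p)).
\end{equation*}
Proposition~\ref{prop:p-adic_density} identifies the $p$-adic factor $(1-p^{-1})^{\rank\Pic X}\mu_p(X(\Qd_p))$ with $c_p$ for every prime $p$, so by \eqref{eq:c_p} the product over all primes equals $c_{\mathrm{fin}}$; its convergence is guaranteed by \cite[Remarque~4.6]{MR2019019}. For the two remaining factors, Proposition~\ref{prop:omega_infty-c_infty} gives $\mu_\infty(X(\Rd)) = \frac{|\alpha^\sigma_{\rho_1}|}{2^{\rank\Pic X}}\,c_\infty$, while Lemma~\ref{lem:alpha} gives $\alpha(X) = \frac{1}{|\alpha^\sigma_{\rho_1}|}\,c^*$. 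Multiplying these together, the two occurrences of $|\alpha^\sigma_{\rho_1}|$ cancel, leaving $c = \frac{1}{2^{\rank\Pic X}}\,c^*\,c_\infty\,c_{\mathrm{fin}}$, as desired.

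Since the three substantive comparisons have already been established, there is no real obstacle here: the argument is purely formal bookkeeping. The only two points to keep an eye on are that the hypotheses \eqref{eq:toric_smooth} and \eqref{eq:assumption_real_density_strong} assumed in the proposition are exactly those needed to invoke Lemma~\ref{lem:alpha} and Proposition~\ref{prop:omega_infty-c_infty}, and that the equality $\beta(X)=1$ genuinely uses the splitness of $X$. Neither causes any difficulty.
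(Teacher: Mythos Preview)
Your proof is correct and follows essentially the same approach as the paper: both combine $c=\alpha(X)\beta(X)\tau_H(X)$ with $\beta(X)=1$ from splitness, invoke Proposition~\ref{prop:p-adic_density} for the $p$-adic factors, and multiply Lemma~\ref{lem:alpha} with Proposition~\ref{prop:omega_infty-c_infty} so that the $|\alpha^\sigma_{\rho_1}|$ cancels. Your write-up is slightly more explicit about the cancellation and the role of the hypotheses, but there is no substantive difference.
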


\begin{proof}
  According to \cite[5.1]{MR2019019}, Peyre's constant for $X$ is
  $c = \alpha(X) \beta(X) \tau_H(X)$. Here the cohomological constant is
  \begin{equation*}
    \beta(X)=\#H^1(\Gal(\overline{\Qd}/\Qd), \Pic(X \otimes_{\Qd} \overline{\Qd})) = 1
  \end{equation*}
  since $X$ is split. Recall \eqref{eq:tamagawa} for $\tau_H(X)$. By
  Lemma~\ref{lem:alpha} and Proposition~\ref{prop:omega_infty-c_infty},
  $\alpha(X)\mu_\infty(X(\Rd))= c^\ast c_\infty$. Furthermore, we use
  Proposition~\ref{prop:p-adic_density} for the $p$-adic densities.
\end{proof}

\part{The asymptotic formula}\label{part2}

This part, culminating in Theorem~\ref{analytic-theorem}, is devoted to a
proof of the asymptotic formula \eqref{manin} for the counting problem
described by \eqref{torsor}, \eqref{height} and \eqref{gcd}, subject to
certain conditions to be specified in due course. The nature of our results will be similar to
Proposition~\ref{prop:countingproblem_abstract}, except that we
specialize the general polynomial $\Phi$ to a polynomial of the shape
\eqref{torsor}. In other words, every variable appears in at most one
monomial, and for better readability in comparison with \eqref{Phi-sec3}, we
relabel the variables and their exponents as in \eqref{torsor}.  In the
notation of \eqref{torsor}, we have
$$J = J_0 + J_1 + \dots + J_k$$
variables, where $J_0$ is the number of variables that do not occur in any of
the monomials.  As mentioned in the introduction, the particular shape
\eqref{torsor} is not an atypical situation; it appears sufficiently often in
practice that it deserves special attention. In Section~\ref{sec9}, we will
also show that if the conditions \eqref{torsor}--\eqref{gcd} come from an
algebraic variety satisfying the hypotheses of Proposition~\ref{prop:peyre},
then the leading constant in \eqref{manin} agrees with Peyre's prediction, as
computed in Proposition~\ref{prop:peyre}.

Before we begin, we fix some notation for use in the remainder of the paper.
Vector operations are to be understood componentwise. In particular, just like
the common addition of vectors, for
$\mathbf{x} = (x_1, \ldots, x_n)\in \Bbb{C}^n$,
$\mathbf{y} = (y_1, \ldots, y_n)\in \Bbb{C}^n$, we write
$\mathbf{x} \cdot \mathbf{y} = (x_1y_1, \ldots, x_ny_n) \in \Bbb{C}^n$. If
$\mathbf{x} \in \Bbb{R}^n_{>0}$, $\mathbf{y} \in \Bbb{C}^n$, we write
$\mathbf{x}^{\mathbf{y}} = x_1^{y_1} \cdots x_n^{y_n}$.  We also use this
notation when $\mathbf{x} \in \Bbb{R}^n$ and $\mathbf y \in \mathbb N^n$. We
put $\langle \mathbf x \rangle = x_1x_2\cdots x_n$.  We write
$|\,\cdot\,|_{1}$ for the usual 1-norm, and $|\,\cdot\,|$ denotes the maximum
norm. For $q \in \Bbb{N}$, we write $\mu(q)$ for the M\"obius function of $q$, the Euler totient is denoted $\phi(q)$, and we write
$\underset{a \bmod{q}}{\left.\sum \right.^{\ast}}$ for a sum over reduced
residue classes modulo $q$. The greatest common divisor of non-zero integers $a$, $b$ is denoted by $(a,b)$; confusion with elements of $\mathbb Z^2$ should not arise. The lowest common multiple is $[a,b]$. 
As usual, $e(x) = e^{2 \pi i x}$ for $x\in \Bbb{R}$.  Finally we apply the following convention
concerning the letter $\varepsilon$: whenever $\varepsilon$ occurs in a
statement, it is asserted that the statement is true for any positive real
number $\varepsilon$. Note that this allows implicit constants in Landau or
Vinogradov symbols to depend on $\varepsilon$, and that one may conclude from
$A_1\ll B^\varepsilon$ and $A_2\ll B^\varepsilon$ that one has
$A_1A_2\ll B^\varepsilon$, for example.

\def\eps{\varepsilon}
\section{Diophantine analysis of the torsor}\label{dioph}

In this section and the next, we study the torsor equation \eqref{torsor} with
its variables \emph{restricted to boxes}. For the number of its integral
solutions, we seek an asymptotic expansion whose leading term features a
product of local densities. All estimates are required uniformly relative to
the coefficients $b_1,\ldots,b_k\in {\mathbb Z}\setminus\{0\}$ that occur in
\eqref{torsor}. We assume $k \geq 3$ throughout.

The building blocks of the local densities are Gau\ss\ sums and their
continuous analogues, and we begin by defining the former. Let
$\mathbf h=(h_1,\ldots,h_n)\in\mathbb N^n$ be a ``chain of exponents''. In the following, all implied constants may depend on $\mathbf h$. Then,
for $a\in\mathbb Z$, $q\in\mathbb N$ let
\begin{equation}\label{E1}
  E(q,a;{\mathbf h}) = q^{-n} \sum_{\substack{1\le x_j \le q\\ 1\le j\le n} }
  e\Big(\frac{ax_1^{h_1}x_2^{h_2}\cdots x_n^{h_n}}{q}\Big)
  = q^{-n} \sum_{\substack{1\le x_j \le q\\ 1\le j\le n} } e\Big(\frac{a
    {\mathbf x}^{\mathbf h}}{q}\Big) .
\end{equation}
For a continuous counterpart, let $\mathbf Y \in [\frac12, \infty)^n$, put
${\mathscr Y}=\{\mathbf y\in\mathbb R^n : \frac12 Y_j< |y_j| \le Y_j\;\; (1\le
j\le n)\}$ and define
\begin{equation}\label{E2}
  I(\beta, {\mathbf Y};\mathbf h) = \int_{\mathscr Y} e(\beta
  y_1^{h_1}y_2^{h_2}\cdots y_n^{h_n})\,\mathrm d\mathbf y.
\end{equation} 
This exponential integral satisfies the simple bound
\begin{equation}\label{E3}
  I(\beta, {\mathbf Y};\mathbf h) \ll \langle \mathbf
  Y\rangle (1+ {\mathbf Y}^{\mathbf h}|\beta|)^{-1}.
\end{equation}
Indeed, if $n=1$, then integration by parts yields the bound $O(Y^{1-h}|\beta|^{-1})$, which together with the trivial bound $O(Y)$ confirms 
\eqref{E3}. If
$n>1$, then one uses the obvious relation
$$  I(\beta, {\mathbf Y};\mathbf h) = \int_{\frac12 Y_1\le |y|\le Y_1} I(\beta
y^{h_1}, (Y_2,\ldots, Y_{n}); (h_2,\ldots, h_{n}))\,\mathrm dy $$ together
with induction. With \eqref{E3} in hand for $n-1$ in place of $n$, one infers
\eqref{E3} for $n$ from
$$ I(\beta, {\mathbf Y};\mathbf h) \ll Y_2 Y_3\cdots Y_{n}  \int_{\frac12
  Y_1\le |y|\le Y_1} (1+Y_2^{h_2}\cdots
Y_{n}^{h_{n}}|y^{h_1}\beta|)^{-1}\,\mathrm d y. $$

We now describe the counting problem at the core of this section.  For
$\mathbf{b} \in (\Bbb{Z}\setminus\{0\})^k$ and
$\mathbf{X}=(X_{ij}) \in [1, \infty)^{J}$, let
$\mathscr{N}_{\mathbf{b}}(\mathbf{X})$ denote the number of solutions
$\mathbf{x} \in \Bbb{Z}^J$ to \eqref{torsor} satisfying
$\frac{1}{2}X_{ij} \leq |x_{ij}| \leq X_{ij}$.  Associated with each summand
in \eqref{torsor} are a chain of exponents
$\mathbf h_i=(h_{i1},\ldots,h_{iJ_i})$ and boxing vectors
$\mathbf X_i=(X_{i1},\ldots,X_{iJ_i})$. In the interest of brevity, we now put
\begin{equation}\label{E4} 
  E_i(q,a) = E(q,a; \mathbf h_i), \quad I_i(\beta, \mathbf X)= I(\beta, \mathbf
  X_i; \mathbf h_i) \quad (1\le i \le k).
\end{equation}
The {\em singular integral} for this counting problem is then defined by 
\begin{equation}\label{E5}
  \mathscr{I}_{\mathbf{b}}(\mathbf{X}) = \langle\mathbf
  X_0\rangle\int_{-\infty}^\infty I_1(b_1\beta,\mathbf X) I_2(b_2\beta,\mathbf
  X)\cdots I_k(b_k\beta, \mathbf X)\,\mathrm d\beta,
\end{equation}
and the {\em singular series} is 
\begin{equation}\label{E6}
  {\mathscr E}_{\mathbf b} = \sum_{q=1}^{\infty} \underset{a
    \bmod{q}}{\left.\sum \right.^{\ast}} E_1(q,ab_1)E_2(q,ab_2)\cdots
  E_k(q,ab_k).
\end{equation} 
By \eqref{E3}, the singular integral converges absolutely provided only that
$k\ge 2$. Unfortunately, it is not as easy to determine whether the singular
series converges; this depends on the chains of exponents in a subtle
manner. However, we note that an argument paralleling that in the proof of
\cite[Lemma~2.11]{Va} shows that the sum
\begin{equation} \label{E7} 
 \underset{a \bmod{q}}{\left.\sum \right.^{\ast}} E_1(q,ab_1)E_2(q,ab_2)\cdots
 E_k(q,ab_k)
\end{equation}
is a multiplicative function of $q$. Hence, based on the hypothesis that the
singular series is absolutely convergent, one has the alternative
representation
\begin{equation*} 
  {\mathscr E}_{\mathbf b} = \prod_p \sum_{l=0}^\infty
  \underset{a \bmod{p^l}}{\left.\sum \right.^{\ast}}
  E_1(p^l,ab_1)E_2(p^l,ab_2)\cdots E_k(p^l,ab_k).
\end{equation*}
By orthogonality of additive characters, the partial sums $0 \leq l \leq L$
count congruences modulo $p^L$, and (still under the assumption of absolute
convergence) we can therefore express the singular series as a product of
``local densities'':
\begin{equation}\label{localdensities}
  {\mathscr E}_{\mathbf b} = \prod_p  \lim_{L \rightarrow \infty}
  \frac{1}{p^{L(J_1 + \dots +J_k - 1)}} \#\Big\{(\textbf{x}_1, \ldots,
  \textbf{x}_k)  \bmod{ p^L} :  b_1 \textbf{x}_1^{\textbf{h}_1} + \dots +
  b_k\textbf{x}_k^{\textbf{h}_k} \equiv 0  \bmod{ p^L}\Big\} .
\end{equation}

The transition method to be detailed in Section~\ref{sec8} works with the
proviso that the product
$ {\mathscr E}_{\mathrm b} \mathscr{I}_{\mathbf{b}}(\mathbf{X})$ is a good
approximation to $\mathscr{N}_{\mathbf{b}}(\mathbf{X})$. We detail these
requirements as follows; note that \eqref{zeta1} is \eqref{zeta} specialized
to the equation \eqref{torsor}.

\begin{hyp}\label{H1}
  The singular series $\mathscr{E}_{\mathbf{b}}$
  converges absolutely. There are real numbers $\beta_1,\dots,\beta_k \le 1$
  with
  \begin{equation}\label{E}
    \mathscr{E}_{\mathbf{b}} \ll  |b_1|^{\beta_1} |b_2|^{\beta_2}\cdots |b_k|^{\beta_k}.
  \end{equation}
  Further, there exists $\bm\zeta\in \mathbb R^{k}$ with
  \begin{equation}\label{zeta1}
    \zeta_i > 0  \text{ for all }  1 \leq i \leq k, \quad
    h_{ij} \zeta_i < 1\text{ for all }i, j, \quad
    \sum_{i=1}^k \zeta_i = 1,
  \end{equation}
  and there exist real numbers $0 < \lambda \leq 1$, $\delta_1>0$ and $C\ge 0$ with the property  that
  whenever $\mathbf{X} \in [1, \infty)^{J}$ obeys the condition that
  \begin{equation}\label{samesize}
    \min_{1 \leq i \leq k} \mathbf X_i^{\mathbf h_i} \geq \bigl(\max_{1 \leq i
      \leq k} \mathbf X_i^{\mathbf h_i}\big)^{1-\lambda}, 
  \end{equation}
  then uniformly in $\mathbf b\in(\mathbb Z\setminus\{0\})^k$, one has 
  \begin{equation}\label{errorterm}
    \mathscr{N}_{\mathbf{b}}(\mathbf{X}) - \mathscr{E}_{  \mathbf{b}}
    \mathscr{I}_{\mathbf{b}}(\mathbf{X})
    \ll  |b_1 \cdots b_k|^C (\min_{ij}X_{ij})^{-\delta_1}   \prod_{i=0}^k
    \prod_{j=1}^{J_i} X_{ij}^{1 - h_{ij}\zeta_i + \varepsilon},
  \end{equation}
  wherein we wrote $\zeta_0=h_{0j}=0$ $(1\le j\le J_0)$.
\end{hyp}

In the situation of \eqref{typeT}, Hypothesis~\ref{H1} is in fact a theorem.

\begin{prop}\label{circle-method}
  Suppose that $k=3$, $J_1\ge J_2\ge 2$ and $h_{ij} = 1$ for $i = 1, 2$,
  $1\le j \le J_i$. Then Hypothesis~\ref{H1} is true.
 \end{prop} 
  
We prove this in the next section.
As the proof will show, much more is true. We are free to choose $\bm\zeta$ according to \eqref{zeta1}, and one can specify the parameters $\bm\beta$, $\lambda$ and $C$. In terms of the number $\omega$ defined in \eqref{omegabound} below, one may take
$$ \lambda = 2^{-4-|\mathbf h_3|_1} \omega, \quad   C=300/\omega $$
and
\begin{equation} \label{lambdabeta}  \bm \beta = \Big(\frac{1}{2}(1-\mu)+\varepsilon,
  \frac{1}{2}(1-\mu)+\varepsilon, \mu\Big),
\end{equation}
for any $\varepsilon > 0$, and any $\mu$ with $\varepsilon < \mu < |\mathbf h_3|^{-1}$.   

In the rest of this section we prepare the proof of Proposition \ref{circle-method}
with some 
bounds for the local factors, and we begin with an upper bound for the
singular integral. At the same time, we compare the singular integral with a
truncated version of it. To define the latter, let $Z_0$ be the maximum of the
numbers $\mathbf X_i^{\mathbf h_i}$ $(1\le i\le k)$, and let $Q\ge 1$. Then
put
$$  \mathscr{I}_{\mathbf{b}}(\mathbf{X}, Q) =  \langle\mathbf
  X_0\rangle \int_{-QZ_0^{-1}}^{QZ_0^{-1}}
I_1(b_1\beta,\mathbf X) I_2(b_2\beta,\mathbf X)\cdots I_k(b_k\beta, \mathbf
X)\,\mathrm d\beta. $$

\begin{lemma}\label{singint}
  Let $k\ge 3$, let $\zeta_0=0$, and let $\zeta_i$ $(1\le i\le k)$ be positive real numbers with
  $\zeta_1+\zeta_2+\dots+\zeta_k=1$. Then
  $$ \mathscr{I}_{\mathbf{b}}(\mathbf{X}) \ll |b_1|^{-\zeta_1} \cdots
  |b_k|^{-\zeta_k}  \prod_{i=0}^k \prod_{j=1}^{J_i} X_{ij}^{1 -
    h_{ij}\zeta_i}.$$
  Further, there is a number $\delta>0$ such that whenever $Q\ge 1$ one has
  $$ \mathscr{I}_{\mathbf{b}}(\mathbf{X})-\mathscr{I}_{\mathbf{b}}(\mathbf{X},Q)
  \ll Q^{-\delta}  \prod_{i=0}^k \prod_{j=1}^{J_i} X_{ij}^{1 - h_{ij}\zeta_i}.$$
\end{lemma}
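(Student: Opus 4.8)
Both bounds follow by plugging the elementary estimate \eqref{E3} into the definitions \eqref{E5} and its truncation, reducing everything to a one-dimensional integral of $\beta$ against a product of $k$ functions of the shape $(1+c_i|\beta|)^{-1}$ with $c_i=|b_i|\mathbf X_i^{\mathbf h_i}$. The first step is to record, from \eqref{E3} and \eqref{E4},
\begin{equation*}
  I_i(b_i\beta,\mathbf X)\ll \langle\mathbf X_i\rangle\bigl(1+|b_i|\mathbf X_i^{\mathbf h_i}|\beta|\bigr)^{-1}\qquad(1\le i\le k),
\end{equation*}
so that, writing $P_i:=|b_i|\mathbf X_i^{\mathbf h_i}$, one gets
\begin{equation*}
  |\mathscr I_{\mathbf b}(\mathbf X)|\ll \langle\mathbf X_0\rangle\langle\mathbf X_1\rangle\cdots\langle\mathbf X_k\rangle\int_{-\infty}^\infty\prod_{i=1}^k\bigl(1+P_i|\beta|\bigr)^{-1}\,\mathrm d\beta.
\end{equation*}
Since $\langle\mathbf X_0\rangle\cdots\langle\mathbf X_k\rangle=\prod_{i=0}^k\prod_{j=1}^{J_i}X_{ij}$, it remains only to evaluate the $\beta$-integral and redistribute the factors $P_i$ to produce the exponents $1-h_{ij}\zeta_i$ and the weights $|b_i|^{-\zeta_i}$.

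For the main estimate I would split $[0,\infty)$ (by symmetry it suffices to treat $\beta\ge 0$) at the points $1/P_i$. On each dyadic-type region it is cleanest to bound the integral by Hölder's inequality with weights $\zeta_1,\dots,\zeta_k$: since $\sum_i\zeta_i=1$,
\begin{equation*}
  \int_0^\infty\prod_{i=1}^k\bigl(1+P_i\beta\bigr)^{-1}\,\mathrm d\beta\le\prod_{i=1}^k\Bigl(\int_0^\infty\bigl(1+P_i\beta\bigr)^{-1/\zeta_i}\,\mathrm d\beta\Bigr)^{\zeta_i}.
\end{equation*}
Because $1/\zeta_i>1$ the inner integrals converge and equal $\bigl(\zeta_i/(1-\zeta_i)\bigr)\,P_i^{-1}$, so the whole thing is $\ll\prod_{i=1}^k P_i^{-\zeta_i}=\prod_{i=1}^k|b_i|^{-\zeta_i}\bigl(\mathbf X_i^{\mathbf h_i}\bigr)^{-\zeta_i}=\prod_{i=1}^k|b_i|^{-\zeta_i}\prod_{j=1}^{J_i}X_{ij}^{-h_{ij}\zeta_i}$. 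Multiplying by $\prod_{i=0}^k\prod_j X_{ij}$ and noting $X_{0j}$ carries $h_{0j}=0$ gives exactly the claimed bound $|b_1|^{-\zeta_1}\cdots|b_k|^{-\zeta_k}\prod_{i=0}^k\prod_{j=1}^{J_i}X_{ij}^{1-h_{ij}\zeta_i}$.

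For the truncation error I would write $\mathscr I_{\mathbf b}(\mathbf X)-\mathscr I_{\mathbf b}(\mathbf X,Q)=\int_{|\beta|>QZ_0^{-1}}I_1(b_1\beta,\mathbf X)\cdots I_k(b_k\beta,\mathbf X)\,\mathrm d\beta$ and run the same Hölder argument, but now on the truncated range $|\beta|>QZ_0^{-1}$. Here one gains a power of $Q$: in the region $\beta>QZ_0^{-1}$ one has $\beta>Q/(|b_i|^{-1}P_i\cdot|b_i|)$ — more carefully, since $Z_0=\max_i\mathbf X_i^{\mathbf h_i}$ and $|b_i|\ge 1$, at least one factor $P_i$ satisfies $P_i\beta>Q$ throughout, and one can afford to spend, say, an extra exponent $\eta>0$ with $\zeta_i+\eta<1$ on that factor, writing $(1+P_i\beta)^{-1}\ll Q^{-\delta}(1+P_i\beta)^{-1+\delta'}$ on that range for suitable small $\delta,\delta'>0$ (concretely one may take $\delta$ a small multiple of $\min_i(1-\zeta_i)$, e.g.\ $\delta=\tfrac12\min_i(1/\zeta_i-1)$ suffices after a slightly wasteful Hölder split). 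The surviving integral is then again handled by Hölder with a perturbed weight vector summing to slightly less than $1$, giving $\ll Q^{-\delta}\prod_{i=1}^k P_i^{-\zeta_i}\cdot(\text{const})$; since here we only need an $O(Q^{-\delta})$ bound without the $|b_i|^{-\zeta_i}$ savings, and $|b_i|\ge 1$, this is absorbed into $Q^{-\delta}\prod_{i=0}^k\prod_j X_{ij}^{1-h_{ij}\zeta_i}$ after multiplying by $\prod X_{ij}$.

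The only genuinely delicate point is bookkeeping: making sure that the power of $Q$ extracted in the second part does not degrade the $\mathbf X$-exponents below $1-h_{ij}\zeta_i$ and that the weight vector used in Hölder still has coordinates $\ge\zeta_i$ (so that the $X$-powers come out at least as small as claimed) while summing to at most $1$ (so the one-dimensional integrals still converge). This is arranged by choosing $\eta$ small enough that $\zeta_i+\eta<1$ for every $i$ and then spending the gain only on the single index where $P_i\beta$ is forced large; everything else is routine. I expect no real obstacle beyond this tracking of exponents.
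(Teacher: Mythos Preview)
Your approach is correct and coincides with the paper's: both parts rest on bounding each $I_i$ via \eqref{E3} and then applying H\"older with the weights $\zeta_i$. Your treatment of the truncation error is more laboured than necessary, though. The paper does not pre-extract a power of $Q$ from the integrand or perturb the weight vector; it simply applies the \emph{same} H\"older inequality with the \emph{same} weights $\zeta_i$ over the truncated range $|\beta|>QZ_0^{-1}$, bounds the factors with $i\ne\iota$ by their full-line integrals, and for the index $\iota$ with $Z_0=\mathbf X_\iota^{\mathbf h_\iota}$ computes directly
\[
\int_{QZ_0^{-1}}^\infty(1+\mathbf X_\iota^{\mathbf h_\iota}|b_\iota|\beta)^{-1/\zeta_\iota}\,\mathrm d\beta\ll Q^{1-1/\zeta_\iota}\mathbf X_\iota^{-\mathbf h_\iota},
\]
which already carries the saving $Q^{\zeta_\iota-1}$ after raising to the $\zeta_\iota$-th power. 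This yields $\delta=\min_i(1-\zeta_i)$ with no bookkeeping of perturbed exponents.
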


\begin{proof}
  By H\"older's inequality,
  $$ \int_{-\infty}^\infty \prod_{i=1}^k (1+\mathbf X_i^{\mathbf h_i}
  |b_i\beta|)^{-1}\,\mathrm d\beta \le \prod_{i=1}^k \Big(
  \int_{-\infty}^\infty (1+\mathbf X_i^{\mathbf h_i}
  |b_i\beta|)^{-1/\zeta_i}\,\mathrm d\beta\Big)^{\zeta_i}, $$ and by
  \eqref{E5} and \eqref{E3} the first statement in the lemma is immediate. For
  the second, one picks $\iota$ with $Z_0=\mathbf X_\iota^{\mathbf h_\iota}$
  and observes that
  $$   \int_{QZ_0^{-1}}^\infty  (1+\mathbf X_\iota^{\mathbf h_\iota}
  |b_\iota\beta|)^{-1/\zeta_\iota}\,\mathrm d\beta \ll Q^{1-(1/\zeta_\iota)}
  \mathbf X_\iota^{-\mathbf h_\iota}.$$
  If this bound is used within the preceding application of H\"older's
  inequality, one arrives at the second statement in the lemma.
\end{proof}

We continue with some general remarks on Gau\ss\ sums.

\begin{lemma}\label{Gauss}
  Let $\mathbf h\in\mathbb N^n$. Let $b\in\mathbb Z$, $q\in\mathbb N$ and
  $q'=q/(q,b)$, $b'=b/(q,b)$. Then $E(q,b;\mathbf h)= E(q',b';\mathbf h)$.  If
  $n\ge 2$, $h_1=1$ and $(b,q)=1$, then
  $$ E(q,b,\mathbf h) = q^{1-n} \#\{x_2,\ldots,x_n:\, 1\le x_j\le q, \;
  x_2^{h_2}x_3^{h_3}\cdots x_n^{h_n}\equiv 0 \bmod q\}. $$
  Further
  $$ E(q,b, (1,\ldots, 1)) = q^{1-n}\sum_{\substack{ d_j\mid q \\ q\mid
      d_2d_3\cdots d_n}} \varphi\Big(\frac{q}{d_2}\Big)\cdots
  \varphi\Big(\frac{q}{d_n}\Big). $$ In particular,
  $E(q,b, (1,\ldots, 1)) \ll q^{\varepsilon-1}$ and $E(q,b,(1,1))=q^{-1}$.
\end{lemma}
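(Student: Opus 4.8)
The plan is to treat the four assertions in the order stated, each following from an elementary manipulation of the exponential sum \eqref{E1}. First, for the reduction $E(q,b;\mathbf h)=E(q',b';\mathbf h)$ with $q'=q/(q,b)$ and $b'=b/(q,b)$: writing $d=(q,b)$, the map $x_j\mapsto x_j+q'$ shows that the summand $e(b\mathbf x^{\mathbf h}/q)$ depends on each $x_j$ only modulo $q'$, since $b(x_j+q')^{h_j}\cdots\equiv b x_j^{h_j}\cdots\bmod q$ would follow from $bq'/q=b'\in\Zd$ together with the binomial expansion; so the sum over $1\le x_j\le q$ equals $(q/q')^n$ times the sum over $1\le x_j\le q'$, and after dividing by $q^n$ one recovers $E(q',b';\mathbf h)$, noting $b\mathbf x^{\mathbf h}/q=b'\mathbf x^{\mathbf h}/q'$. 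I would spell out the binomial step only briefly, since it is routine.

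Next, assume $n\ge 2$, $h_1=1$ and $(b,q)=1$. Here I single out the variable $x_1$, which appears linearly, and carry out the inner sum over $1\le x_1\le q$ first:
\[
  \sum_{1\le x_1\le q} e\Big(\frac{b x_1\, x_2^{h_2}\cdots x_n^{h_n}}{q}\Big)
  = \begin{cases} q, & x_2^{h_2}\cdots x_n^{h_n}\equiv 0\bmod q,\\ 0,&\text{otherwise,}\end{cases}
\]
by orthogonality of additive characters, using $(b,q)=1$ so that $b x_2^{h_2}\cdots x_n^{h_n}\equiv 0\bmod q$ iff $x_2^{h_2}\cdots x_n^{h_n}\equiv 0\bmod q$. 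Summing over $x_2,\dots,x_n$ and dividing by $q^n$ gives the claimed formula $E(q,b;\mathbf h)=q^{1-n}\#\{x_2,\dots,x_n:\ 1\le x_j\le q,\ x_2^{h_2}\cdots x_n^{h_n}\equiv 0\bmod q\}$.

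For the special case $\mathbf h=(1,\dots,1)$, I specialize the previous formula and count the solutions of $x_2\cdots x_n\equiv 0\bmod q$ by stratifying according to $d_j:=(x_j,q)$: the number of $x_j$ with $1\le x_j\le q$ and $(x_j,q)=d_j$ is $\varphi(q/d_j)$, and $q\mid x_2\cdots x_n$ holds as soon as $q\mid d_2 d_3\cdots d_n$ (this is the only place the divisibility condition $q\mid d_2\cdots d_n$ enters—strictly speaking $d_2\cdots d_n\mid x_2\cdots x_n$ and one needs $q\mid x_2\cdots x_n\iff q\mid d_2\cdots d_n$, which is immediate since for each prime power $p^a\parallel q$ one has $p^a\mid x_j$ iff $p^a\mid d_j$). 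This yields
\[
  E(q,b;(1,\dots,1)) = q^{1-n}\sum_{\substack{d_j\mid q\\ q\mid d_2 d_3\cdots d_n}}\varphi\Big(\frac{q}{d_2}\Big)\cdots\varphi\Big(\frac{q}{d_n}\Big).
\]
Finally, for the bounds: when $n=2$ the sum has the single term $d_2=q$, giving $E(q,b;(1,1))=q^{-1}\varphi(1)=q^{-1}$; in general, the condition $q\mid d_2\cdots d_n$ forces $\prod d_j\ge q$, so $\prod_j(q/d_j)\le q^{n-1}$, and using $\varphi(m)\le m$ together with the divisor bound (there are $\ll q^\varepsilon$ choices of the tuple $(d_2,\dots,d_n)$ with all $d_j\mid q$) one gets $E(q,b;(1,\dots,1))\ll q^{1-n}\cdot q^{n-1}\cdot q^\varepsilon = q^{\varepsilon-1}$.

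There is no serious obstacle here; the only point requiring a little care is the bookkeeping in the divisor-sum identity—specifically justifying that $q\mid x_2\cdots x_n$ is equivalent to $q\mid d_2\cdots d_n$ (a prime-by-prime check) and the divisor-count bound $\ll q^\varepsilon$ for the number of admissible tuples—so I would state these explicitly but not belabor them.
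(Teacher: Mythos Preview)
Your proof is correct and follows essentially the same route as the paper: reduce by periodicity, sum over $x_1$ via orthogonality, then stratify by $d_j=(x_j,q)$. Two minor slips to fix: (i) in the final estimate, $\prod_{j=2}^n(q/d_j)=q^{n-1}/\prod d_j\le q^{n-2}$ (not $q^{n-1}$), which is what actually gives $q^{1-n}\cdot q^{n-2}\cdot q^\varepsilon=q^{\varepsilon-1}$; (ii) your parenthetical ``$p^a\mid x_j\iff p^a\mid d_j$'' is true but does not by itself yield $q\mid\prod x_j\iff q\mid\prod d_j$ --- the clean prime-by-prime argument is that $v_p(d_j)=\min(v_p(x_j),a)$ with $a=v_p(q)$, and $\sum_j\min(v_p(x_j),a)\ge a$ whenever $\sum_j v_p(x_j)\ge a$.
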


\begin{proof}
  We have $b/q=b'/q'$ whence $e(bx_1^{h_1}\cdots x_n^{h_n}/q)$ has period $q'$
  in all $x_j$.  Summing over all $x_j$ modulo $q$ gives the first statement
  at once. The second statement follows from \eqref{E1} and orthogonality,
  after carrying out the sum over $x_1$. If we specialize the second statement
  to $h_j=1$ for all $j$ and sort the $x_j$ according to the values of
  $d_j=(x_j,q)$, then we arrive at the formula for $E(q,b, (1,\ldots, 1))$,
  from which the remaining claims are immediate.
\end{proof}

\begin{lemma}\label{Gaussaverage}
  Let $\mathbf h\in\mathbb N^n$ with
  $h_1\le h_2\le \dots\le h_n$. Then, for each $b\in\mathbb Z$, the sum
  $$ D(q,b,\mathbf h) =    \underset{a \bmod{q}}{\left.\sum \right.^{\ast}} E(q,ab,\mathbf h) $$
  is multiplicative as a function of $q$, and one has
  $D(q,b,\mathbf h) \ll (q,b)^{1/h_n} q^{1+\varepsilon-1/h_n}$.
\end{lemma}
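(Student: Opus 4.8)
The plan is to establish the two assertions separately, beginning with multiplicativity and then deriving the bound from a careful analysis of the Gauss sums $E(q,ab,\mathbf h)$ prime by prime. For the multiplicativity of $D(q,b,\mathbf h)$ in $q$, I would follow the standard template used for the sum \eqref{E7} (the argument ``paralleling \cite[Lemma~2.11]{Va}'' cited after \eqref{E7}): write $q=q_1q_2$ with $(q_1,q_2)=1$, use the Chinese Remainder Theorem to split each variable $x_j$ modulo $q$ as a pair modulo $q_1$ and modulo $q_2$, and split the reduced residue $a$ modulo $q$ correspondingly via $a\equiv a_1\overline{q_2}+a_2\overline{q_1}$; the monomial $\mathbf x^{\mathbf h}$ splits multiplicatively in the exponential, and since $a$ runs over reduced residues modulo $q$ iff $(a_1,a_2)$ runs over reduced residues modulo $(q_1,q_2)$, the sum factors as $D(q_1,b,\mathbf h)D(q_2,b,\mathbf h)$ (absorbing the coprime units $\overline{q_1},\overline{q_2}$ into the summation variables). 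Here one uses that $E(q,ab,\mathbf h)=q^{-n}\sum_{\mathbf x}e(ab\,\mathbf x^{\mathbf h}/q)$ is itself a finite exponential sum, so there are no convergence issues.

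By multiplicativity it suffices to prove $D(p^l,b,\mathbf h)\ll (p^l,b)^{1/h_n}(p^l)^{1+\varepsilon-1/h_n}$ for each prime power $p^l$, with the implied constant absolute (and $\mathbf h$-uniform, so that the product over $p$ of the local bounds gives the claimed global $q^\varepsilon$); the $\varepsilon$ in the exponent will come from the usual $d(q)\ll q^\varepsilon$ or $\prod_{p\mid q}(1+O(1/p))\ll q^\varepsilon$ device. Fix $p$ and $l$. Opening the definition,
\begin{equation*}
  D(p^l,b,\mathbf h)=\underset{a\bmod p^l}{\left.\sum\right.^\ast} E(p^l,ab,\mathbf h),
\end{equation*}
and I would split the sum over $a$ according to $\ell$, the base-$p$ valuation of the argument: writing $ab\equiv p^t u\pmod{p^l}$ with $(u,p)=1$, Lemma~\ref{Gauss} gives $E(p^l,ab,\mathbf h)=E(p^{l-t},u',\mathbf h)$ with $(u',p)=1$. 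Thus only the $p$-adic valuation of $ab$ matters, and the contribution is organized by the quantity $s:=\min(l,\,v_p(b)+v_p(a))$. Using the first part of Lemma~\ref{Gauss} to reduce to a \emph{primitive} Gauss sum $E(p^m,u,\mathbf h)$ with $(u,p)=1$ and $m=l-s$, and the second formula of Lemma~\ref{Gauss} (valid since $h_1=1$) to write
\begin{equation*}
  E(p^m,u,\mathbf h)=p^{m(1-n)}\#\{1\le x_j\le p^m:\ x_2^{h_2}\cdots x_n^{h_n}\equiv 0\bmod p^m\},
\end{equation*}
I would bound the number of such $(x_2,\dots,x_n)$ by $O_\varepsilon(p^{m(n-1)+\varepsilon m}\cdot p^{-m/h_n})$: the divisibility $\sum_j h_j v_p(x_j)\ge m$ with $h_j\le h_n$ forces $\sum_j v_p(x_j)\ge m/h_n$, and the number of tuples in $[1,p^m]^{n-1}$ with prescribed total $p$-adic valuation $\ge m/h_n$ is, after summing over the $O(m^{n})$ compositions, at most $O(m^{n-1}p^{m(n-1)-m/h_n})$. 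Hence $E(p^m,u,\mathbf h)\ll p^{\varepsilon m}p^{-m/h_n}$, uniformly in $u$.

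It remains to sum these bounds over $a$. For a given valuation split, the number of reduced residues $a\bmod p^l$ with $v_p(a)+v_p(b)=s$ (for $s<l$) is at most $p^{l-s}\cdot p^{\min(s,v_p(b))}\cdot p^{-\min(s,v_p(b))}$-type bookkeeping; more cleanly, grouping by $j:=v_p(a)$ one has at most $\varphi(p^{l-j})\le p^{l-j}$ choices of $a$ with that valuation, each contributing $E(p^{l-s},\cdot,\mathbf h)\ll p^{\varepsilon l}p^{-(l-s)/h_n}$ where $s=\min(l,j+v_p(b))$. Summing the geometric-type series over $0\le j\le l$ (the dominant terms being those where $s$ is as large as possible, i.e.\ $j$ near $l-v_p(b)$ or beyond) yields
\begin{equation*}
  D(p^l,b,\mathbf h)\ll p^{\varepsilon l}\sum_{j=0}^{l}p^{l-j}p^{-(l-\min(l,j+v_p(b)))/h_n}\ll p^{\varepsilon l}\,p^{v_p(b)/h_n}\,p^{l(1-1/h_n)}=p^{\varepsilon l}(p^l,b)^{1/h_n}(p^l)^{1-1/h_n},
\end{equation*}
where in the last step I used $(p^l,b)=p^{\min(l,v_p(b))}$ and a short case distinction according to whether $v_p(b)\ge l$ or not (when $v_p(b)\ge l$ the sum over $a$ is essentially unrestricted and $E(p^l,ab,\mathbf h)=E(p^l,0,\mathbf h)=1$, giving $\varphi(p^l)\le p^l=(p^l,b)^{1/h_n}\cdot\text{something}$ — here $(p^l,b)=p^l$ so $(p^l,b)^{1/h_n}(p^l)^{1-1/h_n}=p^l$, consistent). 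Taking the product over $p\mid q$ and absorbing $\prod_{p}p^{\varepsilon v_p(q)}=q^\varepsilon$ completes the proof.

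I expect the \textbf{main obstacle} to be the bookkeeping in the last step: getting the exponent of $(p^l,b)$ to come out as exactly $1/h_n$ (and not larger) requires tracking precisely how the gain $p^{-(l-s)/h_n}$ from the primitive Gauss sum interacts with the count $p^{l-j}$ of residues $a$ of each valuation, and handling the boundary cases $v_p(b)\ge l$ and small $l$ carefully so that the bound is uniform in both $b$ and $\mathbf h$. The estimate for the primitive Gauss sum $E(p^m,u,\mathbf h)$ itself — counting tuples with $\sum h_j v_p(x_j)\ge m$ — is routine once one notes $h_j\le h_n$, so the single dominating variable carries the saving $p^{-m/h_n}$.
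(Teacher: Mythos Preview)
Your multiplicativity argument is fine and matches the paper's. The bound argument, however, has two defects.

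The first is a bookkeeping slip: you are summing over \emph{reduced} residues $a\bmod p^l$, so $(a,p)=1$ and hence $v_p(a)=0$ for every term in the sum. Your ``grouping by $j:=v_p(a)$'' and the ensuing series $\sum_{j=0}^{l}$ are therefore spurious --- only $j=0$ occurs. As it happens, the $j=0$ term alone already gives the right size, so this slip is cosmetic, but it shows the organization is off.

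The second is a genuine gap. To bound the primitive sums $E(p^m,u,\mathbf h)$ with $(u,p)=1$ you invoke the second formula of Lemma~\ref{Gauss}, explicitly noting ``valid since $h_1=1$''. But nothing in the hypotheses of Lemma~\ref{Gaussaverage} forces $h_1=1$: the statement is for arbitrary $\mathbf h\in\mathbb N^n$ with $h_1\le\cdots\le h_n$. When $h_1\ge 2$ the quantity $E(p^m,u,\mathbf h)$ is a genuine oscillatory exponential sum, not a normalized solution count, and the pointwise estimate $|E(p^m,u,\mathbf h)|\ll p^{\varepsilon m-m/h_n}$ --- while ultimately true --- requires a separate argument that you have not supplied.

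The paper's proof sidesteps both issues with a cleaner device: instead of bounding each $E(p^l,ab,\mathbf h)$ individually, it \emph{completes} the sum over $a$ to all residues modulo $p^l$ and uses orthogonality, obtaining $\sum_{a\bmod p^l}E(p^l,ab,\mathbf h)=p^{l(1-n)}M(p^l)$ where $M(q)=\#\{\mathbf x\in[1,q]^n:q\mid b\,\mathbf x^{\mathbf h}\}$. Subtracting the level-$(l-1)$ expression (via the first part of Lemma~\ref{Gauss}) yields
\[
  D(p^l,b,\mathbf h)=p^{l(1-n)}M(p^l)-p^{(l-1)(1-n)}M(p^{l-1}).
\]
The counting bound $M(p^l)\le c\,p^{ln-\lceil(l-\beta)/h_n\rceil}$ for $l>\beta:=v_p(b)$ (and $M(p^l)=p^{ln}$ for $l\le\beta$) is then exactly your valuation argument, but now applied with all $n$ variables participating, so it needs no hypothesis on $h_1$. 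This gives $D(p^l)\ll p^{l-\lceil(l-\beta)/h_n\rceil}$, and multiplicativity together with a divisor-function estimate finishes.
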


\begin{proof}
  Within this proof the numbers $h_j$ are fixed. Therefore, we remove $\mathbf h$ from the notation temporarily. Thus $D(q,b)$ abbreviates $D(q,b,\mathbf h)$, for example.

  By \eqref{E7},
  the function $D(q,b)$ is multiplicative in $q$, and we proceed to evaluate it 
  for $q=p^l$ with $p$ prime and
  $l\in\mathbb N$. 
  Let $M_b(q)$ denote the number of $\mathbf x\in (\mathbb Z/ q\mathbb Z)^n$
  with $bx_1^{h_1}\cdots x_n^{h_n} \equiv 0 \bmod q$. Now, first applying Lemma~\ref{Gauss}, and then \eqref{E1} and orthogonality, one confirms the identities  
  $$ D(p^l,b) = \underset{a \bmod{p^l}}{\sum} E(p^l,ab,\mathbf h) - \underset{a
    \bmod{p^{l-1}}}{ \sum  } E(p^{l-1},ab,\mathbf h)= p^{l(1-n)} M_b(p^l) - p^{(l-1)(1-n)} M_b(p^{l-1}). $$
Let $\beta$ be the number with $p^\beta\mid b$ and $p^{\beta+1}\nmid b$.  
  Obviously, if $l\le \beta$, then $M_b(p^l) = p^{ln}$, and the preceding formula gives $D (p^l,b) =\phi(p^l)$. 
  If $l>\beta$, then $M_b(p^l)$ is the number of solutions of $x_1^{h_1}\cdots x_n^{h_n} \equiv 0 \bmod p^{l-\beta}$ with $1\le x_j\le p^l$ $(1\le j\le n)$. Thus $M_b(p^l)= p^{\beta n} M_1(p^{l-\beta})$. We now estimate $M_1(p^\sigma)$. Consider $x_1,\ldots, x_n$ with $p^{\nu_j}\mid x_j$. The congruence $x_1^{h_1}\cdots x_n^{h_n} \bmod p^\sigma$ is equivalent with 
\begin{equation}\label{jb1}  h_1\nu_1 +\dots + h_n\nu_n\ge \sigma.
\end{equation} 
Thus, for a fixed tuple $\nu_1,\ldots \nu_n$, there are at most $p^{n\sigma-\nu_1-\dots-\nu_n}$ solutions counted by $M_1(p^\sigma)$. Further, if \eqref{jb1} holds, then
$$ \nu_1+\dots+\nu_n\ge \frac1{h_n} (h_1\nu_1+ \dots+ h_n\nu_n) \ge \frac{\sigma}{h_n}. $$ Since the number of tuples $\nu_1,\ldots,\nu_n$ that arise here certainly does not exceed $\sigma^n$, we deduce that $M_1(p^\sigma) \le \sigma^n p^{n\sigma - \lceil \sigma/h_n\rceil}$. This implies $M_b(p^l)\le l^n p^{ln - \lceil(l-\beta)/h_n\rceil}$. On inserting this bound in the identity for $D(p^l,b)$, one first confirms the desired estimate for $D(q,b)$ for prime powers $q$, and then for general $q$ by multiplicativity.
\end{proof} 

We now use these results to discuss the singular series that arises in
Proposition~\ref{circle-method}. Then we have $k=3$, $J_1\ge J_2\ge 2$, and we
may use the last clause of Lemma~\ref{Gauss} with $\mathbf h_1$ and
$\mathbf h_2$. Further, we put $h= \max h_{3j}$ and use
Lemma~\ref{Gaussaverage} to confirm that
\begin{equation}\label{innergauss}
  \underset{a \bmod{q}}{\left.\sum \right.^{\ast}} E_1(q,ab_1)E_2(q,ab_2)E_3(q,ab_3)
  \ll q^{\eps-1-1/h} (q,b_1)(q,b_2)(q,b_3)^{1/h}.
\end{equation}
It is now immediate that the singular series converges absolutely.  Further, on using crude bounds of the type $(x,y)\le x^\sigma y^{1-\sigma}$ with $0\le \sigma\le 1$, it follows from \eqref{innergauss} that whenever 
$0<\varepsilon < \mu < 1/h$ one has
 from \eqref{innergauss} that
\begin{align} \label{murks}
    &  \sum_{q=1}^\infty \Big| \underset{a \bmod{q}}{\left.\sum
      \right.^{\ast}} E_1(q,ab_1)E_2(q,ab_2)E_3(q,ab_3)\Big| \ll
    \sum_{q=1}^\infty  q^{\eps-1-\mu} (q,b_1)(q,b_2)b_3^{\mu}\notag \\
    & \ll b_3^{\mu} \sum_{c_1 \mid b_1} \sum_{c_2 \mid b_2} (c_1c_2)^{\varepsilon -\mu}  (c_1, c_2)^{1+\mu - \varepsilon } \leq b_3^{\mu} \sum_{c_1 \mid
      b_1} \sum_{c_2 \mid b_2} (c_1c_2)^{\frac{1}{2} (1 - \mu + \varepsilon)}
    \ll b_3^{\mu}(b_1b_2)^{\frac{1}{2}(1 - \mu) + \varepsilon}.
\end{align}
This establishes all the statements in Proposition~\ref{circle-method} that
concern the singular series, and it also confirms the comment following Proposition~\ref{circle-method} about an admissible choice of $\bm\beta$.

\section{The circle method}

\subsection{Weyl sums}

In this section, we apply the circle method to establish
Proposition~\ref{circle-method}. We prepare the ground with a discussion of
the generalized Weyl sums
\begin{equation*}
  W(\alpha,\mathbf Y;\mathbf h)
  = \sum_{\mathbf y \in \mathbb Z^n \cap \mathscr Y} e(\alpha {\mathbf y}^{\mathbf h}).
\end{equation*}
Here and in the sequel, we continue to use the notation from the previous
section, and in particular, $\mathbf h$, $\mathbf Y$ and $ \mathscr Y$ are as
in \eqref{E2}. The upper bound for the mean square
\begin{equation}\label{W2}
  \int_0^1 |W(\alpha,\mathbf Y;\mathbf h)|^2 \,\mathrm d \alpha
  \ll \langle\mathbf Y\rangle^{1+\varepsilon}
\end{equation}
is pivotal,  and is readily checked:
by orthogonality, the integral in question equals the number of solutions of the 
diophantine equation ${\mathbf x}^{\mathbf h}={\mathbf y}^{\mathbf h}$ with $\mathbf x, \mathbf y \in \mathbb Z^n \cap \mathscr Y$. There are $\langle\mathbf Y\rangle$ choices for $\mathbf x$, and $y_1\cdots y_n$ is a divisor of ${\mathbf x}^{\mathbf h}$, leaving $\langle\mathbf Y\rangle^\varepsilon$ choices for $\mathbf y$, once $\mathbf x$ is chosen. 

The next result is a version of Weyl's inequality.

\begin{lemma}\label{Weyl}
  Let $\alpha\in\mathbb R$, $a\in\mathbb Z$, $q\in\mathbb N$ and
  $|q\alpha -a |\le q^{-1}$. Suppose that $Y_1\ge Y_2\ge \dots\ge Y_n$. Then
  $$ |W(\alpha,\mathbf Y;\mathbf h)|^{2^{|\mathbf h|_1 -n}} \ll \langle
  \mathbf Y\rangle ^{2^{|\mathbf h|_1 -n+\varepsilon}} \Big(\frac1{q} +
  \frac1{Y_n}+\frac{q}{\mathbf Y^{\mathbf h}}\Big).$$
\end{lemma}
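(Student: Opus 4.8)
The plan is to prove the estimate by Weyl differencing, following the classical degree-$k$ argument but adapted to the multivariate monomial phase $\mathbf y^{\mathbf h}=y_1^{h_1}\cdots y_n^{h_n}$. Set $D=|\mathbf h|_1-n=\sum_j(h_j-1)$, so the exponent on $|W|$ is $2^D$; since $\varepsilon>0$ is arbitrary, $\langle\mathbf Y\rangle^{2^{|\mathbf h|_1-n+\varepsilon}}$ may be read as $\langle\mathbf Y\rangle^{2^D+\varepsilon}$. One may assume $(a,q)=1$ (reducing the fraction only tightens $|q\alpha-a|\le q^{-1}$, so that $|\alpha-a/q|\le q^{-2}$), that $q\le\mathbf Y^{\mathbf h}$ (otherwise the asserted bound already exceeds the trivial $|W|\le\langle\mathbf Y\rangle$), and that every $Y_j\ge1$ (if some $Y_j<1$ the range $\tfrac12 Y_j<|y_j|\le Y_j$ contains no integer, so $W=0$). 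When $D=0$, i.e.\ all $h_j=1$, the bound is elementary: executing the sum over $y_1$ gives $\ll\min(Y_1,\|\alpha y_2\cdots y_n\|^{-1})$, and grouping the remaining variables by the value of $y_2\cdots y_n$ (with $\ll(\mathbf Y^{\mathbf h})^{\varepsilon}$ representations, by the divisor bound) and invoking the standard estimate for $\sum_{m\le M}\min(U,\|\alpha m\|^{-1})$ yields the claim.

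Assume now $D\ge1$. Writing $\Delta_t f(y)=f(y+t)-f(y)$, I would Weyl-difference $h_j-1$ times in the variable $y_j$ for each $j=1,\dots,n$ in turn, applying Cauchy--Schwarz in the remaining variables at each step and retaining the difference parameters. Since each difference lowers the $y_j$-degree by one, and since the iterated difference has the closed form
\[
 \Delta_{t_1}\cdots\Delta_{t_{h-1}}(y^h)=P_h(y;\mathbf t):=\frac{h!}{2}\,(t_1\cdots t_{h-1})\,(2y+t_1+\cdots+t_{h-1})
\]
(read off from the Hermite--Genocchi formula for divided differences), the procedure leaves the phase equal to $\alpha\prod_{j=1}^n P_{h_j}(y_j;\mathbf t^{(j)})$, each new parameter ranging over $|t^{(j)}_l|\le Y_j$. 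Tracking the accumulated Cauchy--Schwarz factors (a standard but slightly tedious bookkeeping generalizing the single-variable count) yields
\[
 |W|^{2^D}\ \ll\ \frac{\langle\mathbf Y\rangle^{2^D}}{\mathbf Y^{\mathbf h}}\sum_{\mathbf t}\Big|\sum_{\mathbf y}e\Big(\alpha\prod_{j=1}^n P_{h_j}(y_j;\mathbf t^{(j)})\Big)\Big|,
\]
the outer sum being over all the $\mathbf t^{(j)}$.

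The phase is linear in $y_n$ with leading coefficient $\alpha\,h_n!\,(t^{(n)}_1\cdots t^{(n)}_{h_n-1})\prod_{j<n}P_{h_j}(y_j;\mathbf t^{(j)})$, so the inner $y_n$-sum is $\ll\min(Y_n,\|\alpha Cv\|^{-1})$, where $C$ depends only on $\mathbf h$ and $v$ is the product of the $|\mathbf h|_1-1$ integer factors $t^{(n)}_l$ $(1\le l\le h_n-1)$ together with, for each $j<n$, the factors $t^{(j)}_l$ $(1\le l\le h_j-1)$ and $2y_j+t^{(j)}_1+\cdots+t^{(j)}_{h_j-1}$; in particular $|v|\ll\mathbf Y^{\mathbf h}/Y_n$. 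It remains to sum $\min(Y_n,\|\alpha Cv\|^{-1})$ over all $\{y_j\}_{j<n}$ and $\{\mathbf t^{(j)}\}_j$. This is where the product structure of $v$ is essential: a nonzero value $v_0$ of $v$ is attained by $\ll d_{|\mathbf h|_1-1}(|v_0|)\ll|v_0|^{\varepsilon}$ tuples (recover the factors, and each $y_j$ from $2y_j+\sum_l t^{(j)}_l$), while the tuples with $v=0$ number $\ll\mathbf Y^{\mathbf h}/Y_n^2$ and contribute $\ll\mathbf Y^{\mathbf h}/Y_n$ altogether. The standard bound $\sum_{m\le M}\min(U,\|\beta m\|^{-1})\ll(M/q+1)(U+q\log q)$, applied with $\beta=\alpha$ and $CM$ in place of $M$, with $M\asymp\mathbf Y^{\mathbf h}/Y_n$ and $U=Y_n$, then gives $\sum\ll(\mathbf Y^{\mathbf h})^{\varepsilon}(\mathbf Y^{\mathbf h}/q+\mathbf Y^{\mathbf h}/Y_n+Y_n+q)$; multiplying by the weight $\langle\mathbf Y\rangle^{2^D}/\mathbf Y^{\mathbf h}$ and using $Y_n^2\le\mathbf Y^{\mathbf h}$ (immediate from $Y_1\ge Y_n$ and $Y_j\ge1$) produces the asserted inequality.

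The crux is the reduction of the inner sum: after multivariate differencing the phase is no longer a monomial, and the cross terms would be unmanageable but for the explicit product factorization of $P_h(y;\mathbf t)$, which is exactly what lets the generalized divisor bound absorb the whole sum over the spectator and difference variables. The only other delicate point is the weight bookkeeping in the differencing, which is routine.
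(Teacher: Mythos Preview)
Your argument is correct and follows essentially the same route as the paper's proof: iterated Weyl differencing in each coordinate (the paper phrases this via H\"older's inequality combined with the identity \cite[Lemma~2.4 and Exercise~2.8.1]{Va}, which is exactly your Cauchy--Schwarz step together with the closed form $P_h(y;\mathbf t)=\tfrac{h!}{2}(t_1\cdots t_{h-1})(2y+t_1+\cdots+t_{h-1})$), then grouping the outer variables by the integer value of the resulting product $v$, applying the divisor bound, and finishing with the standard min-sum lemma \cite[Lemma~2.1]{Va}. The weight $\langle\mathbf Y\rangle^{2^D}/\mathbf Y^{\mathbf h}$ and the factor count $|\mathbf h|_1-1$ for $v$ match the paper's display~(W5) exactly; your separate treatment of the $v=0$ contribution and of the case $D=0$ are handled in the paper in the same way (the latter being subsumed in the general argument). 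The only cosmetic point is that your constant $C$ is not literally an integer when some $h_j=1$, but then the corresponding factor $2y_j$ in $v$ supplies the missing $2$, so $Cv\in\mathbb Z$ and the min-sum lemma applies directly.
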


\begin{proof}
  If $n=1$ this is the familiar form of Weyl's inequality. If $n\ge 2$, then
  we apply repeated Weyl differencing. Let $h\in\mathbb N$. On combining
  \cite[Lemma 2.4]{Va} with \cite[Exercise 2.8.1]{Va}, one has
  $$ \Big| \sum_{X<x\le 2X} e(\beta x^h)\Big|^{2^{h-1}} \le (2X)^{2^{h-1}-h}
  \sum_{\substack{|u_j|\le X\\ 1\le j < h}} \sum_{x\in I(\mathbf u)}
  e\big(h!\beta u_1u_2\dots u_{h-1}(x + {\textstyle \frac12}|\mathbf
  u|_1)\big)$$ where the $I(\mathbf u)$ are certain subintervals of
  $[X,2X]$. Note here that the sum on the right is real and non-negative. One trivially has
  $$  \Big| \sum_{-2X\leq x< -X} e(\beta x^h)\Big|= \Big| \sum_{X<x\le 2X}
  e(\beta x^h)\Big|, $$
  and hence it follows that
  \begin{equation}\label{wstep}
    \Big| \sum_{X<|x|\le 2X} e(\beta x^h)\Big|^{2^{h-1}} \ll X^{2^{h-1}-h}
    \sum_{\substack{|u_j|\le X\\ 1\le j < h}}
    \sum_{x\in I(\mathbf u)} e\big(h!\beta u_1u_2\dots u_{h-1}
    (x + {\textstyle \frac12}|\mathbf u|_1)\big).
  \end{equation}

  By H\"older's inequality,
  $$ |W(\alpha,\mathbf Y;\mathbf h)|^{2^{h_1 -1}} \le (Y_2\cdots Y_n)^{2^{h_1-1}-1}
  \sum_{\substack{\frac12 Y_\nu< |y_\nu|\le Y_\nu \\ 2\le \nu\le n}}
  \Big|\sum_{\frac12 Y_1<|y_1|\le Y_1} e(\alpha y_1^{h_1}y_2^{h_2}\cdots
  y_n^{h_n})\Big|^{ 2^{h_1 -1}}. $$ We apply \eqref{wstep} with
  $\beta=\alpha y_2^{h_2}\cdots y_n^{h_n}$ to the sum over $y _1$. We write
  $\mathbf h'=(h_2,h_3,\ldots, h_n)$, $\mathbf Y'=(Y_2,Y_3,\ldots, Y_n)$ and
  then find that
  $$  |W(\alpha,\mathbf Y;\mathbf h)|^{2^{h_1 -1}} \ll
  Y_1^{2^{h_1-1}-h_1} \langle \mathbf Y'\rangle^{2^{h_1 -1}-1}
  \sum_{\substack{|u_j|\le Y_1\\ 1\le j < h_1}} \sum_{y\in I_1(\mathbf u)}
  W(h_1! \alpha u_1u_2\cdots u_{h_1-1}(y+ {\textstyle \frac12}|\mathbf u|_1) ,
  \mathbf Y'; \mathbf h') $$ where $I_1(\mathbf u)$ are certain subintervals
  of $[\frac12 Y_1, Y_1]$. Now we apply H\"older's inequality again to bring
  in $|W(\beta,\mathbf Y'; \mathbf h')|^{2^{h_2-1}}$. We may then estimate the
  sum over $y_2$ by \eqref{wstep}. Repeated use of this process produces the
  inequality
  \begin{equation}\label{W5}
    |W(\alpha,\mathbf Y;\mathbf h)|^{2^{h_1-1}\cdots 2^{h_n-1}}
    \ll \langle \mathbf Y\rangle^{2^{h_1+\dots+h_n-n}} {\mathbf Y}^{-\mathbf h}
    \sum_{\mathbf u_1,\ldots, \mathbf u_n} \sum_{\substack{y_\nu\in
        I_\nu(\mathbf u_\nu) \\ 1\le \nu <n}} \Big| \sum_{y_n\in I_n(\mathbf
      u_n)} e(\alpha vy_n)\Big|
  \end{equation}
  in which $\mathbf u_\nu \in \Bbb{Z}^{h_{\nu} - 1}$ runs over integer vectors
  with $|\mathbf u_\nu|\le Y_\nu$ for $1\le \nu\le n$, the
  $I_\nu(\mathbf u_\nu)$ are certain subintervals of $[\frac12 Y_\nu, Y_\nu]$
  and
  $$ v= h_1! h_2!\cdots h_n!\langle \mathbf u_1\rangle\cdots
  \langle \mathbf u_n\rangle y_1y_2\cdots y_{n-1}. $$ Note that $v=0$ will
  occur in \eqref{W5} only when one of the $\mathbf u_\nu$ has a zero entry, so
  that the total contribution to \eqref{W5} from summands with $v=0$ does not
  exceed $ \langle \mathbf Y\rangle^{2^{h_1+\dots+h_n-n}} Y_n^{-1}$, which is
  acceptable. For nonzero $v$, the innermost sum in \eqref{W5} does not exceed
  $\min(Y_n, \|\alpha v\|^{-1})$. Further, we have
  $v\ll \mathbf Y^{\mathbf h}Y_n^{-1}$, and a divisor function estimate shows that
  there are no more than $O(|v|^\varepsilon)$ choices for $\mathbf u_\nu$,
  $y_\nu$ that correspond to the same $v$. This shows that
  $$ |W(\alpha,\mathbf Y;\mathbf h)|^{2^{h_1-1}\cdots 2^{h_n-1}}
  \ll \langle \mathbf Y\rangle^{2^{|\mathbf h|_1-n}} Y_n^{-1} + \langle
  \mathbf Y\rangle^{2^{|\mathbf h|_1-n}+\varepsilon} {\mathbf Y}^{-\mathbf
    h}\!  \sum_{1\le v \ll \mathbf Y^{\mathbf h}Y_n^{-1}}\! \min(Y_n, \|\alpha
  v\|^{-1}). $$ Reference to \cite[Lemma~2.1]{Va} completes the proof.
\end{proof}

We complement this result with an approximate evaluation of $W$.

\begin{lemma}\label{Weylapprox}
  Let $\alpha\in\mathbb R$, $a\in\mathbb Z$, $q\in\mathbb N$ and
  $\alpha = (a/q) +\beta$. Suppose that $Y_1\ge Y_2\ge \dots\ge Y_n$. Then
  $$  W(\alpha,\mathbf Y;\mathbf h)= E(q,a;\mathbf h) I(\beta,\mathbf Y;\mathbf h)
  + O\big(Y_1Y_2\cdots Y_{n-1}q(1+\mathbf Y^{\mathbf h}|\beta|)\big). $$
\end{lemma}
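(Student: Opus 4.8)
The plan is to follow standard major-arc arguments for exponential sums (cf.\ \cite{Va}), adapted to boxes and to the monomial phase $\mathbf y^{\mathbf h}$. First I would sort the summation variables into residue classes modulo $q$: writing $y_j = q z_j + r_j$ with $1 \le r_j \le q$ and using $a \mathbf y^{\mathbf h} \equiv a \mathbf r^{\mathbf h} \pmod q$, the contribution of the linear phase $a/q$ factors out of the sum over $\mathbf z$, so that
$$ W(\alpha,\mathbf Y;\mathbf h) = \sum_{\substack{1 \le r_j \le q \\ 1 \le j \le n}} e\Big(\frac{a\mathbf r^{\mathbf h}}{q}\Big) \sum_{\mathbf z} e\big(\beta (q\mathbf z + \mathbf r)^{\mathbf h}\big), $$
where for fixed $\mathbf r$ the inner sum runs over the integer vectors $\mathbf z$ with $\frac12 Y_j < |q z_j + r_j| \le Y_j$ for every $j$. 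The substitution $y_j = q t_j + r_j$ identifies the associated integral $\int e(\beta (q\mathbf t + \mathbf r)^{\mathbf h})\,\mathrm d\mathbf t$, over the same box in the variables $\mathbf t$, with $q^{-n} I(\beta,\mathbf Y;\mathbf h)$, and this value does not depend on $\mathbf r$; summing it against the $q^n$ twiddle factors $e(a\mathbf r^{\mathbf h}/q)$ reproduces exactly $E(q,a;\mathbf h)\,I(\beta,\mathbf Y;\mathbf h)$. So everything reduces to bounding, uniformly in $\mathbf r$, the difference between the inner sum over $\mathbf z$ and its integral. We may also assume $q \le \min_j Y_j = Y_n$, since in the opposite case $|W(\alpha,\mathbf Y;\mathbf h)|$ and $|E(q,a;\mathbf h)\,I(\beta,\mathbf Y;\mathbf h)|$ are both $\le \langle \mathbf Y\rangle \le q\, Y_1 \cdots Y_{n-1}$ and the assertion is immediate.

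For the comparison I would argue one coordinate at a time. Put $g(\mathbf t) = e(\beta (q\mathbf t + \mathbf r)^{\mathbf h})$ and, for $0 \le m \le n$, let $S_m$ be the quantity obtained by summing $g$ over the first $m$ coordinates (with $z_1,\dots,z_m$ ranging over their one-dimensional index sets) and integrating $g$ over the remaining $n-m$ coordinates; thus $S_n$ is the inner sum above while $S_0 = q^{-n} I(\beta,\mathbf Y;\mathbf h)$. Each telescoping difference $S_m - S_{m-1}$ reduces, once the other variables are fixed, to the elementary estimate $\big|\sum_{z} F(z) - \int F(t)\,\mathrm dt\big| \ll \sup|F| + \int |F'(t)|\,\mathrm dt$ applied to $F(t_m) = g(\dots, t_m, \dots)$ on the union of at most two intervals of total length $\ll Y_m/q$ cut out by $\frac12 Y_m < |q t_m + r_m| \le Y_m$. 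Since $|g| = 1$ and
$$ \Big| \frac{\partial g}{\partial t_m} \Big| \ll |\beta|\, q\, |q t_m + r_m|^{h_m-1} \prod_{j \ne m} |q t_j + r_j|^{h_j} \ll |\beta|\, q\, \mathbf Y^{\mathbf h}/Y_m $$
throughout this region (here $h_m \ge 1$, since $\mathbf h \in \mathbb N^n$; were $h_m = 0$ allowed the step would be trivial), the one-dimensional error after integrating over $t_m$ is $\ll 1 + |\beta|\,\mathbf Y^{\mathbf h}$. Multiplying by the number $\ll \prod_{j < m} Y_j/q$ of already-summed lattice points and the measure $\ll \prod_{j > m} Y_j/q$ of the not-yet-integrated block (using $q \le Y_n$), and summing over the $q^n$ residues $\mathbf r$, one obtains $\ll q \prod_{j \ne m} Y_j \cdot (1 + |\beta|\,\mathbf Y^{\mathbf h})$ for each fixed $m$; adding the $n$ telescoping contributions and bounding $\prod_{j \ne m} Y_j \le Y_1 \cdots Y_{n-1}$, which holds because $Y_1 \ge Y_2 \ge \dots \ge Y_n$, gives
$$ \big| W(\alpha,\mathbf Y;\mathbf h) - E(q,a;\mathbf h)\,I(\beta,\mathbf Y;\mathbf h) \big| \ll q\, Y_1 \cdots Y_{n-1} \big(1 + \mathbf Y^{\mathbf h} |\beta|\big), $$
as required.

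I do not expect a genuine obstacle: the argument is standard, and the work is essentially bookkeeping. The points that call for a little care are that the one-dimensional summation range in each coordinate is a union of two intervals whose exact endpoints depend on $\mathbf r$, that the derivative bound relies on $h_m \ge 1$, and --- the one place where the order of operations matters --- that the telescoping should be organized so that the coordinates not yet processed are kept as integrals rather than sums, which is what lets the elementary sum-versus-integral estimate apply cleanly at each stage. As with any major-arc approximation, the bound is weak when $|\beta|$ is large, but this is harmless since the lemma will only be used for $\beta$ close to $0$.
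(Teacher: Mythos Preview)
Your proof is correct but proceeds differently from the paper's. The paper argues by induction on $n$: the base case $n=1$ is quoted as a rough version of \cite[Theorem~4.1]{Va}, and the inductive step isolates the outermost variable $y_1$, applies the induction hypothesis to the inner sum $W(\alpha y_1^{h_1},\mathbf Y';\mathbf h')$, and then applies the $n=1$ case once more to the remaining sum over $y_1$. You instead sort all $n$ variables into residue classes modulo $q$ simultaneously, which factors out $E(q,a;\mathbf h)$ in one stroke, and then bound the discrepancy between the $n$-dimensional Riemann sum and its integral by a telescoping sum-versus-integral comparison one coordinate at a time. Both approaches are standard major-arc arguments and lead to the same error term; yours is arguably more self-contained (it does not invoke the one-variable result as a black box but effectively re-derives it inside the telescoping), while the paper's induction is more modular and makes the recursive structure in the variables explicit. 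Your handling of the case $q > Y_n$ by a trivial bound, and your remark that the derivative estimate needs $h_m \ge 1$, are both fine.
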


\begin{proof}
  The case $n=1$ is a rough and elementary version of \cite[Theorem
  4.1]{Va}. We now induct on $n$ and suppose that the lemma is already
  available with $n-1$ in place of $n$. As before, we write
  $\mathbf Y'=(Y_2,Y_3,\ldots,Y_n)$ etc., isolate the sum over $y_1$ and
  invoke the induction hypothesis with $\alpha y_1^{h_1}$ for $\alpha$. This
  yields
  \begin{align*}  
    W(\alpha,\mathbf Y;\mathbf h)
    &= \sum_{\frac12 Y_1 <|y_1|\le Y_1}
      \Big( E(q,ay_1^{h_1};\mathbf h')I(\beta y_1^{h_1},\mathbf Y'; \mathbf h')
      + O\big(Y_2\cdots Y_{n-1}q(1+{\mathbf Y'}^{\mathbf h'} |y_1|^{h_1}|\beta|)\big)\Big)
    \\
    & =  \sum_{\frac12 Y_1 <|y_1|\le Y_1}
      E(q,ay_1^{h_1};\mathbf h')I(\beta y_1^{h_1},\mathbf Y'; \mathbf h')
      + O\big(Y_1 Y_2\cdots Y_{n-1}q(1+{\mathbf Y}^{\mathbf h} |\beta|)\big).
  \end{align*}
  In view of \eqref{E1} and \eqref{E2} we may rewrite the sum over $y_1$ on
  the right hand side as
  $$ q^{1-n}\sum_{\substack{1\le x_\nu\le q\\ 2\le \nu\le n}} 
  \int_{{\mathscr Y}'} \sum_{\frac12 Y_1 <|y_1|\le Y_1}
  e\Big(y_1^{h_1}\Big(\beta \mathbf y'^{\mathbf h'} + \frac{a\mathbf
    x'^{\mathbf h'}}{q}\Big)\Big)\,\mathrm d\mathbf y', $$ where $\mathscr Y'$
  is the analogue of $\mathscr Y$ in the coordinates $\mathbf y'$.  We may now
  apply the case $n=1$ with $\beta \mathbf y'^{\mathbf h'}$ for $\beta$ and
  $a\mathbf x'^{\mathbf h'}$ for $a$ to conclude that
  \begin{align*}
    \sum_{\frac12 Y_1 <|y_1|\le Y_1}
    &e\Big(y_1^{h_1}\Big(\beta \mathbf y'^{\mathbf h'} + \frac{a\mathbf
      x'^{\mathbf h'}}{q}\Big)\Big)\\
    & = q^{-1} \sum_{x_1=1}^q e\Big(\frac{ax_1^{h_1} x'^{\mathbf h'}}{q}\Big)
  \int_{\frac12 Y_1<|y_1|\le Y_1} e(\beta y_1^{h_1}\mathbf y'^{\mathbf
      h'})\,\mathrm dy_1
      + O\big(q+q Y_1^{h_1}|y'^{\mathbf h'}\beta|\big).
  \end{align*}
  The induction is now completed by inserting this last formula into the two
  preceding displays.
\end{proof}

\def\om{\omega}
\subsection{Towards the circle method}\label{sec6.2}

We are ready to embark on the proof of Proposition~\ref{circle-method}. We
work in the broader framework of Hypothesis~\ref{H1} in large parts of the
argument, but will restrict to the situation described in
Proposition~\ref{circle-method} whenever the bounds for Gauss sums are entering the argument. We hope that the wider
scope of our presentation will be helpful in related investigations.

We begin with a general remark concerning the ``dummy variables'' $x_{0j}$
that do not occur explicitly in the torsor equation. Suppose that
Hypothesis~\ref{H1} has been established for a given torsor equation, without
any dummy variables, that is, with $J_0=0$. Now consider the same torsor
equation with $J_0\ge 1$ dummy variables. For this new problem, the count
$\mathscr N_{\mathbf b}(\mathbf X)$ factorizes as
$\mathscr N_{\mathbf b}(\mathbf X)=W_0(\mathbf X_0)\mathscr N^*$, say, where
$\mathscr N^*$ is the number of solutions counted by
$\mathscr N_{\mathbf b}(\mathbf X)$ but with the variables $\mathbf x_0$
ignored, and $W_0(\mathbf X_0)$ is the number of
$\mathbf x_0\in\mathbb Z^{J_0}$ with $\frac 12 X_{0j}<|x_{0j}| \le X_{0j}$ for
$1\le j\le J_0$. A trivial lattice point count yields
$$ W_0(\mathbf X_0) = \langle \mathbf X_0\rangle + O(\langle \mathbf X_0\rangle
(\min X_{0j})^{-1}), $$ and if one multiplies this with the asymptotic formula
for $\mathscr N^*$ that we have assumed to be available to us, then one
derives the claims in Hypothesis~\ref{H1} with dummy variables. This shows
that it suffices to address the problem of verifying Hypothesis~\ref{H1} only
in the case where $J_0=0$, and we will assume this for the rest of this
section.
  
To launch the circle method argument, recall the definition of
$\mathscr N_{\mathbf b}(\mathbf X) $ in the paragraph encapsulating displays
\eqref{E4}--\eqref{E6}. In the notation of that section, we define
$$ W_i(\alpha, \mathbf X) = W(\alpha, \mathbf X_i; \mathbf h_i) \quad (1\le i\le k) . $$
By orthogonality,
\begin{equation*}
  \mathscr N_{\mathbf b}(\mathbf X) = 
  \int_0^1 W_1(b_1 \alpha, \mathbf X) \cdots  W_k(b_k \alpha, \mathbf X)\,\mathrm d\alpha .
\end{equation*}
 Our main parameters are
\begin{equation*}
  Z= \min_{1\le i\le k} \mathbf X_i^{\mathbf h_i}, \quad Z_0 
  = \max_{1\le i\le k} \mathbf X_i^{\mathbf h_i}, \quad M = \min_{ij} X_{ij},
\end{equation*}
and we find it convenient to renumber variables to ensure that
\begin{equation}\label{C4}
  X_{i1}\le X_{i2}\le \dots \le X_{iJ_i} \quad (1\le i\le k).
\end{equation}
Once and for all, fix positive numbers $\zeta_i$ as in \eqref{zeta1},  and the number $\om$ defined by
\begin{equation}\label{omegabound}
  \om^{-1} = 40 k \max_{1\le i\le k} J_i |\mathbf h_i|.
\end{equation}
In particular we have $0< \om \le 1/120$. Hence, the intervals $\mathfrak M(q,a)$, defined as the set of $\alpha\in\mathbb R$ with
$|\alpha-(a/q)|\le Z^{\om-1}$,  are disjoint as $a, q$ range over
$1\le a\le q\le Z^\om$, $(a,q)=1$. The union of these intervals we denote by
$\mathfrak M$. Let
$\mathfrak m = [Z^{\om-1}, 1+Z^{\om-1}]\setminus \mathfrak M$. On writing
$$ \mathscr N_{\mathfrak A} = \int_{\mathfrak A} 
W_1(b_1 \alpha, \mathbf X) \cdots  W_k(b_k \alpha, \mathbf X)\,\mathrm d\alpha $$
one has
\begin{equation}\label{C5}
  \mathscr N_{\mathbf b}(\mathbf X)    =\mathscr N_{\mathfrak M} +\mathscr N_{\mathfrak m}.
\end{equation}

The circle method treatment depends on the relative size of $M$ and $Z$. We
first give a proof of Proposition~\ref{circle-method} in the case where
$M\ge Z^{10k\om}$ (the {\em tame} case).

\subsection{The tame case: major arcs}\label{sec6.3}

For $\alpha\in\mathfrak M$, there is a unique pair $a,q$ with
$1\le a\le q\le Z^\om$, $(a,q)=1$ and a number $\beta\in\mathbb R$ with
$|\beta|\le Z^{\om-1}$ and $\alpha =(a/q)+\beta$. By Lemma~\ref{Weylapprox},
\begin{equation}\label{Sapprox}
  W_i(b_i\alpha, \mathbf X) = E_i(q,ab_i)I_i(\beta b_i,\mathbf X_i) +
  O(\langle\mathbf X^\dag_i\rangle q (1+\mathbf X_i^{\mathbf h_i}|b_i\beta|))
\end{equation}
where, temporarily, $\mathbf X^\dag_i=(X_{i2},\ldots,X_{iJ_i})$ is the vector
that is $\mathbf X_i$ with its smallest entry deleted. Since we are in the
tame case, this implies that
$\langle\mathbf X^\dag_i\rangle \le \langle\mathbf X_i\rangle
Z^{-10k\om}$. Further, by hypothesis and \eqref{samesize}, we have
$\mathbf X_i^{\mathbf h_i} \le Z_0\le Z^{1/(1-\lambda)}$.  Now, since $\lambda \le \om/2$, it follows that $(1-  \lambda)^{-1}  \leq 1 + \omega$, and therefore
\begin{equation}\label{lam1}
   \mathbf X_i^{\mathbf h_i}\le Z_0 \le Z^{1+\om}\quad (1\le i\le k).
 \end{equation}    
We shall use these bounds frequently. Here we apply \eqref{lam1} to obtain the estimate
$$W_i(b_i\alpha, \mathbf X) = E_i(q,ab_i)I_i(\beta b_i,\mathbf X_i) +
O(\langle\mathbf X_i\rangle Z^{-9k\om}|b_i|). $$
Noting the trivial bounds
$$ W_i(b_i\alpha, \textbf{X})\ll \langle\mathbf X_i\rangle, \qquad 
E_i(q,ab_i)I_i(\beta b_i,\mathbf X_i) \ll \langle\mathbf X_i\rangle $$ and the
 identity
$$ W_1W_2 \cdots W_k - T_1T_2\cdots T_k = \sum_{i=1}^k
(W_i-T_i)W_1\cdots W_{i-1}T_{i+1}\cdots T_k, $$
we conclude that
$$ \prod_{i=1}^k W_i(b_i\alpha, \mathbf X) =
\prod_{i=1}^k E_i(q,ab_i)I_i(\beta b_i,\mathbf X_i) +O( \langle\mathbf
X_1\rangle\cdots \langle\mathbf X_k\rangle |\mathbf b|_1 Z^{-9k\om}). $$ We
integrate this over $\mathfrak M$. Since the measure of $\mathfrak M$ is
$O(Z^{3\om-1})$, the error will contribute an amount not exceeding
$$ \langle\mathbf X_1\rangle\cdots \langle\mathbf X_k\rangle |\mathbf b|_1 Z^{-8k\om-1}\le
\langle\mathbf X_1\rangle\cdots \langle\mathbf X_k\rangle |\mathbf b|_1
M^{-1/5}Z^{-6k\om-1}\le \langle\mathbf X_1\rangle\cdots \langle\mathbf
X_k\rangle |\mathbf b|_1 M^{-1/5}Z_0^{-1}.  $$ It follows that
\begin{equation}\label{C6}
  {\mathscr N}_{\mathfrak M} = {\mathscr E}_{\mathbf b} (Z^{\om}) {\mathscr
    I}_{\mathbf b}(\mathbf X, Z^\om)+O(\langle\mathbf X_1\rangle\cdots
  \langle\mathbf X_k\rangle |\mathbf b|_1
  M^{-1/5}Z_0^{-1}),
\end{equation}
where
$$ {\mathscr E}_{\mathbf b} (Q) = \sum_{q\le Q}  \underset{a
  \bmod{q}}{\left.\sum \right.^{\ast}} E_1(q,ab_1)E_2(q,ab_2)\cdots
E_k(q,ab_k).$$
Note here that the error estimate in \eqref{C6} is good enough to be absorbed in the error term in \eqref{errorterm}.

We are now required to complete the singular series. At this stage, we have to
be content with the setup in Proposition~\ref{circle-method}, but then have
recourse to \eqref{innergauss}, which provides us with the bound
$$ {\mathscr E}_{\mathbf b} (Z^{\om}) ={\mathscr E}_{\mathbf b} +O(Z^{-\om/(2h)}|b_1b_2b_3|). $$
In combination with Lemma~\ref{singint}, we then infer that there is a number $\delta>0$ with 
$$
{\mathscr E}_{\mathbf b} (Z^{\om}) {\mathscr I}_{\mathbf b}(\mathbf X,Z^\om)
=
{\mathscr E}_{\mathbf b}  {\mathscr I}_{\mathbf b}(\mathbf X)
+O( |b_1b_2b_3|Z^{-\om \delta} \langle\mathbf X_1\rangle \langle\mathbf X_2\rangle \langle\mathbf X_3\rangle
\mathbf X_1^{-\zeta_1\mathbf h_1}\mathbf X_2^{-\zeta_2\mathbf h_2}\mathbf X_3^{-\zeta_3\mathbf h_3}).
$$
It follows that in the tame case, there is indeed a number $\delta_1>0$ such that
\begin{equation}\label{C9}
{\mathscr N}_{\mathfrak M}={\mathscr E}_{\mathbf b}  {\mathscr I}_{\mathbf b}(\mathbf X) + O( |b_1b_2b_3|M^{-\delta_1} \langle\mathbf X_1\rangle\langle\mathbf X_2\rangle \langle\mathbf X_3\rangle
\mathbf X_1^{-\zeta_1\mathbf h_1}\mathbf X_2^{-\zeta_2\mathbf h_2}\mathbf X_3^{-\zeta_3\mathbf h_3}).
\end{equation}

\subsection{The tame case: minor arcs}\label{sec6.4}

In our treatment of the minor arcs, we again work subject to the conditions in
Proposition~\ref{circle-method}. There are two cases.

First suppose that $|b_3|\le Z^{\om/2}$. We apply Weyl's inequality to
$W_3(b_3\alpha, \textbf{X})$. Let
$$ H= 2^{h_{31}+\dots+h_{3J_3}-J_3} .$$
We claim that uniformly for $\alpha\in\mathfrak m$, one has
\begin{equation}\label{C9a}
  W_3(b_3\alpha,\mathbf X) \ll \langle \mathbf X_3\rangle Z^{-\om/(3H)}.
\end{equation}
Indeed, if $Z$ is large and $\alpha\in\mathbb R$ is such that
$|W_3(b_3\alpha,\mathbf X)|\ge \langle \mathbf X_3\rangle Z^{-\om/(3H)}$, then
a familiar coupling of Lemma~\ref{Weyl} with Dirichlet's theorem on
diophantine approximation shows that there are coprime numbers $a$, $q$ with
$|qb_3\alpha -a |\le Z^{\om/2}\mathbf X_3^{-\mathbf h_3} \le Z^{(\om/2)-1}$
and $1\le q\le Z^{\om/2}$. But then $1\le |b_3|q\le Z^{\om}$, and hence
$\alpha$ cannot be in $\mathfrak m$.

By \eqref{W2} and an obvious substitution,
$$ \int_0^1 |W_i(b_i\alpha,\mathbf X)|^2\,\mathrm d\alpha \ll \langle \mathbf
X_i\rangle^{1+\eps}. $$
Hence, by Schwarz's inequality and \eqref{C9a},
$$ \mathscr N_{\mathfrak m} \ll \big( \langle \mathbf X_1\rangle \langle
\mathbf X_2\rangle \big)^{1/2+\eps} \sup_{\alpha\in\mathfrak m}
|W_3(b_3\alpha,\mathbf X)| \ll  \langle \mathbf X_1\rangle \langle \mathbf
X_2\rangle \langle \mathbf X_3\rangle Z^{\varepsilon-1-\om/(3H)}. $$ 
We have $\lambda \le \om/(12H)$, and so
\begin{equation}\label{lam2}
  (1-\lambda) (1 + \omega/(3H)) \geq 1 + \omega/(6H).
\end{equation}
Hence $Z^{-1-(\om/3H)}\ll Z_0^{-1-\om/(6H)}$, which shows that
$\mathscr N_{\mathfrak m}$ is an acceptable error in
Proposition~\ref{circle-method}. This combines with \eqref{C5} to complete the
proof of Proposition~\ref{circle-method} in the case under consideration.

Next consider the case where $|b_3|>Z^{\om/2}$. Here the claim in
Proposition~\ref{circle-method} reduces to a trivial upper bound, as we now explain. The triangle inequality give $|W_i(\alpha)|\le \langle \mathbf X_i\rangle$, and therefore, the integral representation of $\mathscr N_{\mathbf b}(\mathbf X)$
gives 
$\mathscr N_{\mathbf b}(\mathbf X) \le \langle \mathbf X_1\rangle\langle \mathbf X_2\rangle\langle \mathbf X_3\rangle $. Similarly, on combing \eqref{murks} with Lemma \ref{singint}, we have the crude bound 
$$ \mathscr{E}_{  \mathbf{b}}
    \mathscr{I}_{\mathbf{b}}(\mathbf{X}) \ll |b_1b_2b_3|^{1/2} \langle \mathbf X_1\rangle\langle \mathbf X_2\rangle\langle \mathbf X_3\rangle. $$
We take $C=300/\om$ in \eqref{errorterm}. Then $|b_3|^C\ge Z^{150}$, and so
$$     |b_1b_2b_3|^{1/2} \langle \mathbf X_1\rangle\langle \mathbf X_2\rangle\langle \mathbf X_3\rangle \le |b_1b_2b_3|^C  Z_0^{-2} $$
which is more than is required to confirm \eqref{errorterm} in this final case.
It should be noted that the discussion of the case $|b_3|>Z^{\om/2}$ did not use that we are in the tame case, but applies in general. Also, we have now completed the proof of Proposition~\ref{circle-method} in the tame case.

\subsection{Major arcs again}

It remains to deal with the case where $M<Z^{10k\om}$. We assume this
inequality from now on. Again, we work in the broader framework of
Sections~\ref{sec6.2} and~\ref{sec6.3}, and refine the circle method approach
to cover the current situation as well. We say that a variable $x_{ij}$ is
small if $X_{ij}<Z^{10k\om}$. By hypothesis, there is at least one small
variable. Also, by \eqref{C4}, there is a number $J'_i$ such that the $x_{ij}$
with $j\le J'_i$ are small, and those with $j>J'_i$ are not. We proceed to show that
\begin{equation} \label{smally} \prod_{j\le J'_i} X_{ij}  \le \langle \mathbf X_i\rangle^{1/4}. 
\end{equation}
To see this, note that the definition of $J'_i$ gives 
\begin{equation}
\label{smallvar}    \prod_{j\le J'_i} X_{ij}  \le Z^{10k\om J'_i} \le Z^{10k\om J_i}.
\end{equation}
But $Z\le \mathbf X_i^{\mathbf h_i} \le \langle  \mathbf X_i\rangle^{|\mathbf h_i|}$. We insert this in the previous display and apply the inequality 
$$ 10 k\om J_i |\mathbf h_i|\le \frac14 $$
(which is immediate from \eqref{omegabound}) to derive \eqref{smally}.

The significance of \eqref{smally} is that it implies that
for each $i$, there are variables $x_{ij}$ that are not small. This is
important throughout this section. We put
$$ \mathbf X'_i = (X_{i1},\ldots,X_{iJ'_i}), \quad
\mathbf X''_i = (X_{i,J'_i+1},\ldots,X_{iJ_i}), \quad \mathbf X_i= (\mathbf
X'_i,\mathbf X''_i), $$ where $\mathbf X'_i$ is void if $x_{i1}$ is not
small. In the same way, we dissect the variable
$\mathbf x_i = (\mathbf x'_i,\mathbf x''_i)$ and the chain of exponents
$\mathbf h_i = (\mathbf h'_i,\mathbf h''_i)$.  By orthogonality, we then have
\begin{equation}\label{C10}
  \mathscr N_{\mathbf b}(\mathbf X) = \sum_{(\mathbf x'_1,\ldots,\mathbf x'_k)
    \in \mathscr{Y}'\cap \mathbb Z^{J'}}\int_0^1
  W(b_1\alpha {\mathbf x'_1}^{\mathbf h'_1}, \mathbf X''_1; \mathbf h''_1)
  \dots
  W(b_k\alpha {\mathbf x'_k}^{\mathbf h'_k}, \mathbf X''_k; \mathbf h''_k)
  \,\mathrm d\alpha,
\end{equation}
where $J'=J'_1+\cdots+J'_k$ and 
\begin{equation}\label{C11}
  \mathscr{Y}'  \coloneqq  \{ \textbf{x}' \in \Bbb{R}^{J'} :
  {\textstyle\frac12 } X_{ij} <|x_{ij}| \le X_{ij} \text{ for } 1\le i\le k,  1\le j\le J'_i\}.
\end{equation}

We apply the circle method to the integral in \eqref{C10}. By
Lemma~\ref{Weylapprox}, when $\alpha = (a/q)+\beta$, one finds that subject to
\eqref{C11}, one has
\begin{equation*}
  W\big(b_i\alpha {\mathbf x'_i}^{\mathbf h'_i}, \mathbf X''_i; \mathbf h''_i\big)=
  E\big(q,ab_i {\mathbf x'_i}^{\mathbf h'_i};  \mathbf h''_i\big)
  I\big(\beta b_i  {\mathbf x'_i}^{\mathbf h'_i},  \mathbf X''_i; \mathbf h''_i\big)
  +O\big(\langle  \mathbf X''_i\rangle Z^{-10k\om } q(1+|b_i \beta|  \mathbf
    X_i^{\mathbf h_i})\big).
\end{equation*}
Here it is worth recalling that $ \mathbf X''_i$ is not void and has all its
components at least as large as $Z^{10k\om}$. We now apply \eqref{lam1} to confirm that for $\alpha \in \mathfrak M$,
the error in the preceding display does not exceed
$$
\langle \mathbf X''_i\rangle Z^{\om-10k\om } + 
\langle \mathbf X''_i\rangle Z^{-10k\om }|b_i| Z^{2\om-1} \mathbf X_i^{\mathbf
  h_i} \le \langle \mathbf X''_i\rangle |b_i|Z^{3\om -10k\om } \le \langle
\mathbf X''_i\rangle |b_i|Z^{-9k\om }.$$ Let ${\sf S}$ denote the integrand in
\eqref{C10}, and let ${\sf M}$ denote the product of the expressions
\begin{equation*}
 E(q,ab_i {\mathbf x'_i}^{\mathbf h'_i}, \mathbf h''_i) I(\beta b_i {\mathbf
  x'_i}^{\mathbf h'_i}, \mathbf X''_i; \mathbf h''_i), 
\end{equation*}
with $1\le i\le
k$. Then, following the discussion in the initial part of Section~\ref{sec6.3}, we obtain
\begin{equation}\label{C11a}
  {\sf S}- {\sf M}\ll \langle  \mathbf X''_1\rangle\cdots \langle  \mathbf
  X''_k\rangle |\mathbf b|_1 Z^{-9k\om}.
\end{equation}
We integrate over $\mathfrak M$ and sum over the integral points in ${\mathscr Y}'$. Then, again
as in Section~\ref{sec6.3}, this gives
\begin{equation}\label{C12}
  \mathscr N_{\mathbf b}(\mathbf X) =  \sum_{ (\mathbf x'_1,\ldots,\mathbf
    x'_k) \in  \mathscr{Y}'\cap\mathbb Z^{J'}} \mathscr E' \mathscr I' + \mathscr N^\dag +
  O(\langle  \mathbf X_1\rangle\cdots \langle  \mathbf X_k\rangle |\mathbf b|_1
  Z^{-8k\om-1}),
\end{equation}
where
\begin{align}
  \mathscr E'
  & = \sum_{q\le Z^\om}\underset{a \bmod{q}}{\left.\sum \right.^{\ast}}
    E(q,ab_1 {\mathbf x'_1}^{\mathbf h'_1},  \mathbf h''_1)\cdots
    E(q,ab_k {\mathbf x'_k}^{\mathbf h'_k},  \mathbf h''_k)\label{C13}, \\
  \mathscr I'
  &= \int_{-Z^{\om-1}}^{Z^{\om-1}}
    I(\beta b_1  {\mathbf x'_1}^{\mathbf h'_1},  \mathbf X''_1; \mathbf h''_1)\cdots
    I(\beta b_k  {\mathbf x'_k}^{\mathbf h'_k},  \mathbf X''_k; \mathbf h''_k)
    \,\mathrm d\beta, \nonumber
\end{align}
and where $\mathscr N^\dag$ is the same expression as in \eqref{C10} but with
integration over the minor arcs $\mathfrak m$. Exchanging the sum with the
integral in \eqref{C10}, we see that
$\mathscr N^\dag = \mathscr N_{\mathfrak m}$.  Note that the error in
\eqref{C12} also occurred in Section~\ref{sec6.3} and, in the display preceding \eqref{C6}, was shown to be of
acceptable size.

The difficulty now is that the moduli $q$ in \eqref{C13} are too large for the
small variables to be arranged in residue classes modulo $q$. We therefore
prune the sum over $q$. In preparation for this manoeuvre, we bound
$\mathscr I'$ uniformly in $\mathbf x'_i$. Whenever
$\mathbf x'_i \in \mathscr{Y}'$, one finds from \eqref{E3} that
\begin{equation*}
  I(\beta b_i {\mathbf x'_i}^{\mathbf h'_i}, \mathbf X''_i ; \mathbf h''_i) \ll
   \langle\mathbf X''_i\rangle (1+ {\mathbf X''_i}^{\mathbf h''_i}| {\mathbf x'_i}^{\mathbf h'_i}
   b_i\beta|)^{-1}
   \ll \langle \mathbf X''_i \rangle(1+ \mathbf X_i^{\mathbf h_i}
   |b_i\beta|)^{-1}.
 \end{equation*}
 Hence, by H\"older's inequality, 
 \begin{equation}\label{C15}
   \mathscr I' \ll \prod_{i=1}^k \langle   \mathbf X''_i\rangle
   \Big(\int_{-\infty}^\infty (1+ \mathbf X_i^{\mathbf h_i}
   |b_i\beta|)^{-1/\zeta_i}\,\mathrm d\beta\Big)^{\zeta_i}
   \ll \prod_{i=1}^k \langle   \mathbf X''_i\rangle  \mathbf X_i^{-\zeta_i \mathbf h_i}.
   \end{equation}
   Now let $\mathscr E^\dag$ be the portion of the sum defining $\mathscr E$
   where $q\le M^{1/8}$, and let $\mathscr E^\ddag$ be the portion with
   $M^{1/8}<q\le Z^\om$. Then $\mathscr E'= \mathscr E^\dag+\mathscr E^\ddag$,
   and \eqref{C13} and \eqref{C15} yield
\begin{equation}\label{C16}
  \sum_{ (\mathbf x'_1,\ldots,\mathbf x'_k) \in  \mathscr{Y}'} \mathscr E^\ddag \mathscr I'
  \ll \Big(\prod_{i=1}^k \langle   \mathbf X''_i\rangle  \mathbf X_i^{-\zeta_i
    \mathbf h_i}\Big) \sum_{M^{1/8}<q<Z^{\om}}
  \sum_{ (\mathbf x'_1,\ldots,\mathbf x'_k)\in  \mathscr{Y}' }  
  \Big|\underset{a \bmod{q}}{\left.\sum \right.^{\ast}}
  \prod_{i=1}^k E(q,ab_i {\mathbf x'_i}^{\mathbf h'_i}; \mathbf h''_i)\Big|.
\end{equation}

At this point, we require a workable upper bound for the innermost sum. In the
situation of Proposition~\ref{circle-method}, we have $k=3$, and such a bound is provided by
\eqref{innergauss}. With $h=\max h_{3j}$, this yields
\begin{equation}\label{C17}
\underset{a \bmod{q}}{\left.\sum \right.^{\ast}}
\prod_{i=1}^3 E(q,ab_i {\mathbf x'_i}^{\mathbf h'_i}; \mathbf h''_i)
\ll \frac{(q,b_1 \langle \mathbf x'_1\rangle) (q,b_2 \langle \mathbf
  x'_2\rangle)(q,b_3  {\mathbf x'_3}^{\mathbf h'_3})^{1/h} }{q^{1+1/h}}.
\end{equation}
Now
$ (q,b_1 \langle \mathbf x'_1\rangle) \le |b_1| (q,x_{11})\cdots(q,x_{1J'_1})
$ and likewise for $(q,b_2 \langle \mathbf x'_2\rangle)$.  Similarly,
$$ (q,b_3  {\mathbf x'_1}^{\mathbf h'_3})^{1/h}\le
 |b_3| (q,x_{31}^{h_{31}})^{1/h}\cdots(q,x_{3J'_3}^{h_{3J'_3}})^{1/h}
\le  |b_3| (q,x_{31})\cdots(q,x_{3J'_3}). $$
We may sum \eqref{C17} over   $\mathbf x'_i \in \mathscr{Y}'$, using the simple bound
$$ \sum_{x\le X} (q,x) \ll q^\varepsilon X. $$
It then follows that the right hand side of \eqref{C16} does not exceed
\begin{equation}\label{C17a}
  \ll  \Big(\prod_{i=1}^3 |b_i|\langle   \mathbf X'_i\rangle  \langle
  \mathbf X''_i\rangle  \mathbf X_i^{-\zeta_i \mathbf h_i}\Big)
  \sum_{M^{1/8}<q<Z^{\om}} q^{\varepsilon-1-1/h}
  \ll  M^{-1/(9h)} |b_1b_2b_3| \prod_{i=1}^3   \langle   \mathbf X_i\rangle
  \mathbf X_i^{-\zeta_i \mathbf h_i}.
\end{equation}
In the specific situation of Proposition~\ref{circle-method}, this is an
acceptable error term.

We now turn to the product $\mathscr E^\dag \mathscr I'$. Here we prune the
range of integration. Let
$$ \mathscr I^\dag = \int_{-M^{1/8}Z_0^{-1}}^{M^{1/8}Z_0^{-1}}
I(\beta b_1 \mathbf {x'}_1^{\mathbf h'_1}, \mathbf X''_1; \mathbf h''_1)\cdots
I(\beta b_k \mathbf {x'}_k^{\mathbf h'_k}, \mathbf X''_k; \mathbf
h''_k)\,\mathrm d\beta, $$ and let $\mathscr I^\ddag$ be the complementary
integral over $ M^{1/8}Z_0^{-1}< |\beta|\le Z^{\om-1}$ so that
$\mathscr I'=\mathscr I^\dag+\mathscr I^\ddag$. To obtain an upper bound for
$\mathscr I^\ddag$, choose an index $\iota$ with
$Z_0 = \mathbf X_\iota^{\mathbf h_\iota}$. Then
$$ \int_{M^{1/8}Z_0^{-1}}^\infty (1+\mathbf X_\iota^{\mathbf
  h_\iota}|b_\iota\beta|)^{-1/\zeta_\iota}\, \mathrm d\beta \ll \mathbf
X_\iota^{-\mathbf h_\iota} M^{(\zeta_\iota-1)/8}, $$ and since
$\zeta_\iota<1$, we observe that the exponent of $M$ is negative. With this
adjustment, the argument in \eqref{C15} shows that uniformly for
$\mathbf x'_i \in \mathscr{Y}'$ one has
\begin{equation}\label{C18}
  \mathscr I^\ddag \ll M^{(\zeta_\iota-1)\zeta_\iota/8} \prod_{i=1}^k \langle
  \mathbf X''_i\rangle  \mathbf X_i^{-\zeta_i \mathbf h_i}.
\end{equation}
We can now imitate the argument from \eqref{C16}--\eqref{C17a}, this time
applying \eqref{C18} and summing over $q\le M^{1/8}$. In the cases covered by
Proposition~\ref{circle-method}, this yields
$$ \sum_{(\mathbf x'_1,\ldots,\mathbf x'_3) \in  \mathscr{Y}'} \mathscr E^\dag \mathscr I^\ddag
\ll M^{(\zeta_\iota-1)\zeta_\iota/9} |b_1b_2b_3| \prod_{i=1}^3 \langle \mathbf X_i\rangle
\mathbf X_i^{-\zeta_i \mathbf h_i}, $$ which can be absorbed in the error term
when $\delta_1 <  \frac19 \min (1-\zeta_i)\zeta_i$. On collecting
together, we deduce from \eqref{C12} and the discussion above that
\begin{equation}\label{C19}
  \mathscr N_{\mathbf b}(\mathbf X) = \sum_{(\mathbf x'_1,\ldots,\mathbf x'_k)
    \in  \mathscr{Y}'} \mathscr E^\dag \mathscr I^\dag + \mathscr N_{\mathfrak
    m} +O(F),
\end{equation}
where $F$ is an acceptable error provided that $C>1$ and  $\delta_1 $ is small enough.

It would now be possible to exchange the sums over $\mathbf x'_i$ with the
summations present in the definition of $\mathscr E^\dag$, and to evaluate
these sums by arranging the $x_{ij}$ in arithmetic progressions, as suggested
earlier. However, we prefer an indirect argument that is technically
simpler. Let $\mathfrak N$ denote the union of the pairwise disjoint intervals
$|\alpha-(a/q)|\le M^{1/8}Z_0^{-1}$ with $1\le a\le q\le M^{1/8}$ and
$(a,q)=1$. Observe that $\mathfrak N \subset \mathfrak M$. Hence, integrating
\eqref{C11a} over $\mathfrak N$ we find that
\begin{equation}\label{C20}
  \sum_{ (\mathbf x'_1,\ldots,\mathbf x'_k) \in  \mathscr{Y}'}\int_{\mathfrak N} 
  W(b_1\alpha {\mathbf x'_1}^{\mathbf h'_1}, \mathbf X''_1; \mathbf h''_1)
  \cdots
  W(b_k\alpha {\mathbf x'_k}^{\mathbf h'_k}, \mathbf X''_k; \mathbf h''_k)
  \,\mathrm d\alpha
  =
  \sum_{ (\mathbf x'_1,\ldots,\mathbf x'_k) \in  \mathscr{Y}'} \mathscr E^\dag
  \mathscr I^\dag + O(F')
\end{equation}
where $F'$ is an error that certainly does not exceed the error present in
\eqref{C12} because the measure of $\mathfrak N$ is smaller than that of
$\mathfrak M$. Exchanging sum and integral, it transpires that the left hand
side of \eqref{C20} is simply the major arc contribution
$\mathscr N_{\mathfrak N}$.  To evaluate the latter, we can run an argument
from Section~\ref{sec6.3} with $\mathfrak N$ in place of $\mathfrak M$. The
bound \eqref{Sapprox} becomes
$$
W_i(b_i\alpha, \mathbf X) = E_i(q,ab_i)I_i(\beta b_i,\mathbf X_i) +
O(\langle\mathbf X_i\rangle M^{-3/4} |b_i\beta|),
$$ 
and then the result in \eqref{C6} changes to 
$$
{\mathscr N}_{\mathfrak N} = {\mathscr E}_{\mathbf b} (M^{1/8}) {\mathscr
  I}_{\mathbf b}(\mathbf X, M^{1/8})+O(\langle\mathbf X_1\rangle\cdots
\langle\mathbf X_k\rangle |\mathbf b|_1 M^{-3/8}Z_0^{-1}).
$$
We can now complete the singular series and the singular integral as in
Section~\ref{sec6.3}.  The argument that produced \eqref{C9} now delivers
exactly the same asymptotics for $\mathscr N_{\mathfrak N}$. Via \eqref{C19}
and \eqref{C20}, it follows that
$ \mathscr N_{\mathbf b}(\mathbf X) = {\mathscr E}_{\mathbf b} {\mathscr
  I}_{\mathbf b}(\mathbf X) + \mathscr N_{\mathfrak m} + O(F'') $ where $F''$
is an error acceptable to Hypothesis~\ref{H1}. Consequently, it remains to
estimate the contribution from the minor arcs.

\subsection{Minor arcs again}

The argument of Section~\ref{sec6.4} yields an acceptable bound for
$\mathscr N_{\mathfrak m}$ provided that the estimate \eqref{C9a} remains
valid in cases that are not tame. Hence we now complete the proof of
Proposition~\ref{circle-method} by showing that indeed \eqref{C9a} holds in
the wider context, uniformly for $\alpha\in\mathfrak m$ and
$1\le |b_3|\le Z^{\om/2}$. In doing so, we may suppose that $x_{31}$ is small,
for otherwise our previous argument leading to \eqref{C9a} still applies. We
write
$$ T(\alpha,\mathbf x'_3) =W(b_3\alpha {\mathbf x'_3}^{\mathbf h'_3},\mathbf X''_3; \mathbf h''_3). $$
Then
$$ W_3(b_3\alpha, \mathbf X) = \sum_{\mathbf x'_3} T(\alpha,\mathbf x'_3), $$
with the sum extending over $\frac{1}{2} X_{3j} \leq |x_{3j}| \leq X_{3j}$ $(1\le j\le J'_3)$.

We apply Weyl's inequality to $ T(\alpha,\mathbf x'_3)$. Let
$K=2^{|\mathbf h''_3|_1 - J_3+J'_3}$, and note that all entries in
$\mathbf X''_3$ are at least as large as $Z^\om$. Hence, whenever the real
number $\gamma$ and $c\in\mathbb Z$ and $t\in\mathbb N$ are such that
$|t\gamma-c|\le t^{-1}$, then by Lemma~\ref{Weyl}, one has
\begin{equation}\label{C30}
  |W(\gamma, \mathbf X''_3;\mathbf h''_3)|^K \ll \langle\mathbf
  X''_3\rangle^{K+\varepsilon} \Big(\frac1{t} + \frac1{Z^\om} +
  \frac{t}{{\mathbf X''_3}^{\mathbf h''_3}}\Big). 
\end{equation}
By Dirichlet's theorem on diophantine approximation, there are $c$ and $t$
with $t\le Z^{-\om} {\mathbf X''_3}^{\mathbf h''_3}$ and
$|t\gamma-c|\le Z^\om {\mathbf X''_3}^{-\mathbf h''_3} $. Then, on applying a
familiar transference principle (see \cite[Exercise 2.8.2]{Va}) to
\eqref{C30}, we find that
$$ |W(\gamma, \mathbf X''_3;\mathbf h''_3)|^K \ll 
\langle\mathbf X''_3\rangle^{K+\varepsilon} \Big(\frac1{Z^\om} + \frac{1}{t+
  {\mathbf X''_3}^{\mathbf h''_3}|t\gamma-c|}\Big). $$ Since there is a variable that is not small, we have
$K<H$, and hence that $K\le H/2$. Consequently, for a given $\mathbf x'_3$, we either have
$T(\alpha,\mathbf x'_3) \ll \langle\mathbf X''_3\rangle Z^{-\om/(3H)}$ or
there are $t=t(\mathbf x'_3)$ and $c=c( \mathbf x'_3)$ with $t\le Z^{\om/3}$
and
\begin{equation}\label{C31}
  \Big|b_3\alpha {\mathbf x'_3}^{\mathbf h'_3} - \frac{c}{t}\Big| \le
  \frac{Z^{\om/3}}{t{\mathbf X''_3}^{\mathbf h''_3}}.
\end{equation}
Let $\mathscr X$ be the set of all $\mathbf x'_3$ where the latter case
occurs.  Then
\begin{equation}\label{C32}
 W_3(b_3\alpha, \mathbf X) \ll \langle\mathbf X_3\rangle Z^{-\om/(3H)}
+ \langle\mathbf X''_3\rangle \sum_{\mathbf x'_3 \in\mathscr X}
\big(t+  {\mathbf X''_3}^{\mathbf h''_3}|tb_3\alpha {\mathbf x'_3}^{\mathbf h'_3}-c|\big)^{-1/H}.
\end{equation}

We write $Q={\mathbf X'_3}^{\mathbf h'_3}Z^\om$ and apply Dirichlet's theorem
again to find coprime numbers $a$, $q$ with $1\le q \le Q$ and
$|qb_3\alpha -a|\le Q^{-1}$. On comparing this approximation to $b_3\alpha$
with that given by \eqref{C31}, we find that whenever
$\mathbf x'_3\in\mathscr X$, then
\begin{equation}\label{C41} |at {\mathbf x'_3}^{\mathbf h'_3} -cq | \le QZ^{\om/3} {\mathbf
  X''_3}^{-\mathbf h''_3} + Q^{-1}t {\mathbf X'_3}^{\mathbf h'_3} .
  \end{equation}
But $t\le Z^{\om/3}$, and therefore, the second summand on the right does not
exceed $Z^{-\om/2}$. For the first summand, we 
note that
\begin{equation}
\label{C42} QZ^{\om/3} {\mathbf
  X''_3}^{-\mathbf h''_3} = Z^{4\om/3} {\mathbf X'_3}^{2\mathbf h'_3} {\mathbf X_3}^{-\mathbf h_3} \le Z^{4\om/3 -1} {\mathbf X'_3}^{2\mathbf h'_3}. 
\end{equation}
Further, by \eqref{smallvar}, we have $\langle\mathbf X'_3\rangle \le Z^{10k\om J_3}$, and hence that ${\mathbf X'_3}^{2\mathbf h'_3}\le \langle\mathbf X'_3\rangle^{2|\mathbf h|} \le Z^{20k\om J_3|\mathbf h_3|}$. However, it is immediate from \eqref{omegabound} that
$$ \frac43 \om + 20k\om J_3|\mathbf h_3| <1 , $$
so that the expression in \eqref{C42} tends to zero as $Z\to \infty$. By \eqref{C41}, we see that for large $Z$
we must have $at {\mathbf x'_3}^{\mathbf h'_3} =cq$.  Hence
$ t= q/(q, {\mathbf x'_3}^{\mathbf h'_3})$, and \eqref{C32} simplifies to
$$  W_3(b_3\alpha, \mathbf X) \ll \langle\mathbf X_3\rangle Z^{-\om/(3H)}
+ \langle\mathbf X''_3\rangle \sum_{\mathbf x'_3 \in\mathscr X} (q, {\mathbf
  x'_3}^{\mathbf h'_3})^{1/H} \big(q+ \mathbf X_3^{-\mathbf
  h_3}|b_3||qb_3\alpha-a|\big)^{-1/H}.$$ Here we can sum over {\em all}
$\mathbf x'_3$ and apply an argument paralleling that leading from \eqref{C17}
to \eqref{C17a}. This produces
$$ 
W_3(b_3\alpha, \mathbf X) \ll \langle\mathbf X_3\rangle Z^{-\om/(3H)} +
\langle\mathbf X_3\rangle q^\varepsilon \big(q+ \mathbf X_3^{-\mathbf
  h_3}|qb_3\alpha-a|\big)^{-1/H}.$$ The bound \eqref{C9a} is now evident, and
the proof of Proposition~\ref{circle-method} is complete.

\section{Upper bound estimates}

\subsection{The upper bound hypothesis}

As we mentioned in the introduction, not only asymptotic information of the
type encoded in Hypothesis~\ref{H1} is required as an input for the transition
method in Section~\ref{sec8}, but also certain upper bound estimates that are
needed, for example, to handle the contribution to the count that comes from
solutions of \eqref{torsor} where the summands are very unbalanced. Again, we
formulate the requirements as a hypothesis that can then be checked in the
particular cases at hand. We recall the definition of the block matrix
\begin{equation}\label{newA}
  \mathscr{A} = \left(
    \begin{matrix}
      \mathscr{A}_1 & \mathscr{A}_2\\
      \mathscr{A}_3 & \mathscr{A}_4
    \end{matrix}
  \right) \in \Bbb{R}^{(J+1) \times (N+k)}
\end{equation}
in \eqref{matrix}.  In the slightly simpler setup  of the torsor
equation \eqref{torsor} and the height conditions \eqref{height} we have
\begin{equation}\label{newA1}
  \mathscr{A}_1 = (\alpha_{ij}^{\nu}) \in \Rd_{\ge 0}^{J\times N}
\end{equation}
with $0 \leq i \leq k$, $1 \leq j \leq J_i$, $1 \leq \nu \leq N$ and  
\begin{equation}\label{A2new}
  \Am_2 = (e_{ij}^\mu) \in \Rd^{J\times k}
  \text{ with $e_{ij}^\mu=\begin{cases}\delta_{\mu=i}h_{ij}&\text{$i<k$, $\mu < k$,}
      \\-h_{kj}&\text{$i=k$, $\mu < k$,}\\
      -1&\text{$i<k$, $\mu = k$,}\\
      h_{kj}-1
      &\text{$i=k$, $\mu = k$.}
    \end{cases}$}
\end{equation}
This notation is more convenient for the analytic manipulations in the
following sections.

Throughout we assume that
\begin{equation}\label{1c}
  \text{rk}(\mathscr{A}_1) = \text{rk}(\mathscr{A}) = R \quad \text{(say).}
\end{equation}
In our applications, this will be satisfied by Lemma~\ref{rank}, and $R$ plays
by Lemma~\ref{lem19} the same role as in \eqref{eq:rkPic}.  We define
\begin{equation}\label{defc2}
  c_2 = J-R,
\end{equation}
so that by \eqref{eq:rkPic} this choice of $c_2$ is the expected exponent in
\eqref{manin}.  For any vector ${\bm \zeta}$ satisfying the properties
specified in \eqref{zeta1}, where we allow more generally also
$\zeta_i \geq 0$, and for arbitrary $\zeta_0 > 0$, we also assume that the
system of $J+1$ linear equations
\begin{equation}\label{1a}
  \begin{split}
    \left(\begin{matrix} \mathscr{A}_1\\ \mathscr{A}_3\end{matrix}\right) {\bm
      \sigma} = \Big(1 - h_{01}\zeta_0, \ldots, 1 - h_{kJ_k}\zeta_k,
    1\Big)^{\top}
  \end{split}
\end{equation}
in $N$ variables has a solution ${\bm \sigma} \in \Bbb{R}_{>0}^{N}$.  In our
applications, this is ensured by Lemma~\ref{pos} (whose proof also works for
$\zeta_i \geq 0$).

\begin{remark}
  The condition $\rank \Am = \rank \Am_1$ puts some restrictions on the height
  matrix $\Am_1$. For instance, no row of $\mathscr{A}_1$ can vanish
  completely (since every column of $\mathscr{A}_2$ is linearly dependent on
  the columns of $\mathscr{A}_1$). For future reference, we remark that this
  implies that the set of conditions \eqref{height} for
  $x_{ij} \in \Bbb{Z} \setminus \{0\}$ implies $|x_{ij}| \leq B$ for all
  $(i, j)$.
\end{remark}

Now let $H \geq 1$, $0 < \lambda \leq 1$ and
$\mathbf{b}, \mathbf{y} \in \Bbb{N}^{J}$.  Let
$N_{\mathbf{b}, \mathbf{y}}(B, H, \lambda)$ be the number of solutions
$\mathbf{x} \in (\Bbb{Z}\setminus \{0\})^J$ satisfying the conditions
\begin{equation}\label{equation}
  \begin{split}
    &  \sum_{i=1}^k  \prod_{j=1}^{J_i}  (b_{ij} x_{ij})^{h_{ij}} = 0,\quad
    \prod_{i=0}^k \prod_{j=1}^{J_i} | y_{ij} x_{ij}|^{\alpha^\nu_{ij}} \leq B
    \quad (1 \leq \nu \leq N),
  \end{split}
\end{equation}
and at least one of the inequalities
\begin{equation}\label{Hlambda}
  \begin{split}
    & \min_{ij} |x_{ij}| \leq H, \quad \quad \min_{1 \leq i \leq k} \prod_{j =
      1}^{J_i} |x_{ij}|^{h_{ij}} < \Bigl(\max_{1 \leq i \leq k} \prod_{j =
      1}^{J_i} |2x_{ij}|^{h_{ij}}\Bigr)^{1-\lambda}.
  \end{split}
\end{equation}
Note that for $\textbf{x} \in (\Bbb{Z}\setminus \{0\})^J$ satisfying
\eqref{equation}, the first condition in \eqref{Hlambda} is always satisfied
for $H=B$ and the second condition in \eqref{Hlambda} is never satisfied for
$\lambda = 1$.  Let $\mathscr{S}_{\textbf{y}}(B, H, \lambda)$ denote the set
of all $\mathbf{x} \in [1, \infty)^J$ that satisfy \eqref{Hlambda} and the $N$
inequalities in the second part of \eqref{equation}. As in \eqref{gcd}, we
denote by $S_{\rho}$, $1 \leq \rho \leq r$, subsets of the set of pairs
$(i, j)$ with $0 \leq i \leq k$, $1 \leq j \leq J_i$ corresponding to the
coprimality conditions.

\begin{hyp}\label{H2} 
  Let $c_2$ be the number introduced in \eqref{defc2} and let $\lambda$ 
  be as in Hypothesis~\ref{H1}.  Suppose that there
  exist
  ${\bm \eta} = (\eta_{ij}) \in \Bbb{R}_{> 0}^J$ and $ \delta_2,
  \delta_2^{\ast} > 0$ with the following properties:
 \begin{equation}\label{3}
   C_1({\bm \eta}) \colon  \quad 
   \sum_{(i, j) \in S_{\rho}}\eta_{ij} \geq 1+\delta_2 \quad \text{for all} \quad 1 \leq \rho \leq r, 
 \end{equation}
 \begin{equation}\label{2a}
  N_{\mathbf{b}, \mathbf{b} \cdot \mathbf{y}}(B, H,  \lambda)
   \ll  B (\log B)^{c_2  -1+\varepsilon} (1+\log H)  \mathbf{b}^{-{\bm  \eta}}
   \langle\mathbf{y}\rangle^{-\delta_2^{\ast}}
 \end{equation}
 and
 \begin{equation}\label{continuous}
   \int_{\mathscr{S}_{\textbf{y}}(B, H, \lambda)} \prod_{ij}
   x_{ij}^{-h_{ij}\zeta_i}
   \dd\mathbf{x} \ll B (\log B)^{c_2  -1+\varepsilon} (1+\log H)
   \langle \mathbf{y}\rangle^{-\delta_2^{\ast}}
 \end{equation} 
 for any $\varepsilon > 0$ and some ${\bm \zeta}$ satisfying \eqref{zeta1}. 
\end{hyp}

The bound \eqref{2a} is the desired upper bound $B(\log B)^{c_2+\varepsilon}$
with some saving in the coefficients $\textbf{b}$, $\textbf{y}$ and with some
extra logarithmic saving in the situation of condition \eqref{Hlambda}, \ie if
one variable is short (that is, $\log H = o((\log B)^{1+\varepsilon})$) or the
blocks $\prod_j |x_{ij}|^{h_{ij}}$ for $1 \leq i \leq k$ are unbalanced in
size (so that the second assumption in \eqref{Hlambda} holds and we may choose
$H$ very small even if all $x_{ij}$ are large).

\subsection{Reduction to linear algebra}

Our main applications involve the torsor equation \eqref{typeT}. In this case,
the verification of Hypothesis~\ref{H2} can be checked simply by a linear
program. This will be established in Proposition~\ref{propH2} below. We start
with two elementary lemmas. Here $(., ., .)$ denotes the greatest common divisor, $[., ., .]$ denotes the least common multiple and $\tau$ is the divisor function.

\begin{lemma}\label{lattice}
  Let $\mathbf{v}\in \Bbb{Z}^3$ be primitive and let $H_1, H_2, H_3 > 0$. Then
  the number of primitive ${\mathbf u} \in \Bbb{Z}^3$ that satisfy
  $u_1v_1 + u_2v_2 + u_3v_3 = 0$ and that lie in the box $|u_i| \leq H_i$
  $(1 \leq i \leq 3)$ is $O(1+H_1H_2|v_3|^{-1})$.
\end{lemma}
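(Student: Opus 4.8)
The plan is to reinterpret the count as a count of \emph{primitive vectors of a planar lattice} and then apply a geometry-of-numbers estimate.

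One may assume $v_3\neq 0$, since otherwise the asserted bound is vacuous. Let $\Lambda=\{\mathbf{u}\in\Bbb{Z}^3:u_1v_1+u_2v_2+u_3v_3=0\}$. This is a rank-$2$ sublattice of $\Bbb{Z}^3$, and it is saturated: if $n\mathbf{u}\in\Lambda$ with $\mathbf{u}\in\Bbb{Z}^3$ and $n\geq 1$, then $\mathbf{u}\in\Lambda$. Consequently a vector of $\Lambda$ is primitive in $\Bbb{Z}^3$ exactly when it is primitive as an element of the lattice $\Lambda$, so it suffices to bound the number of primitive vectors of $\Lambda$ lying in the box $|u_i|\leq H_i$.

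Since $v_3\neq 0$, the projection $\pi\colon(u_1,u_2,u_3)\mapsto(u_1,u_2)$ restricts to an isomorphism of lattices $\pi\colon\Lambda\to\Lambda'$, where $\Lambda'=\{(u_1,u_2)\in\Bbb{Z}^2:v_3\mid u_1v_1+u_2v_2\}$; indeed $u_3$ is recovered from $(u_1,u_2)$ through the equation. The homomorphism $\Bbb{Z}^2\to\Bbb{Z}/v_3\Bbb{Z}$ sending $(u_1,u_2)$ to $u_1v_1+u_2v_2$ has kernel $\Lambda'$, and is surjective because $\gcd(v_1,v_2,v_3)=1$; hence $\det\Lambda'=[\Bbb{Z}^2:\Lambda']=|v_3|$. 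A lattice isomorphism takes primitive vectors to primitive vectors, and $\pi$ sends every $\mathbf{u}\in\Lambda$ with $|u_i|\leq H_i$ into the rectangle $R=[-H_1,H_1]\times[-H_2,H_2]$ (we simply discard the constraint $|u_3|\leq H_3$). Therefore the quantity to be estimated is at most the number of primitive vectors of $\Lambda'$ contained in $R$.

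It then remains to invoke the standard fact that a bounded symmetric convex set $C\subseteq\Bbb{R}^2$ contains $O\!\left(1+\operatorname{area}(C)/\det L\right)$ primitive vectors of a given planar lattice $L$; applying it with $L=\Lambda'$, $C=R$, $\operatorname{area}(R)=4H_1H_2$ and $\det\Lambda'=|v_3|$ yields the claimed bound $O(1+H_1H_2|v_3|^{-1})$. Should one wish to prove this fact from scratch, one passes to a reduced basis $\mathbf{b}_1,\mathbf{b}_2$ of $L$ adapted to the gauge of $C$ (realizing the successive minima $\lambda_1\leq\lambda_2$), normalizes it to the standard basis of $\Bbb{Z}^2$ so that $\operatorname{area}(C)/\det L$ becomes the transformed area, and decomposes $C$ into the slices on which the second coordinate is constant: the central slice contributes only the two points $\pm(1,0)$ among primitive vectors, while if a slice with nonzero second coordinate meets $C$ then both widths of $C$ are at least $1$ and, by Minkowski's second theorem, their product is $O(\operatorname{area})$, whence a crude integer-point count on the finitely many remaining slices is again $O(\operatorname{area})$. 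I expect this last step to be the real point: for a long thin strip the number of \emph{all} lattice points vastly exceeds $O(1+\operatorname{area})$, and it is exactly the primitivity condition — collapsing the degenerate central slice to two points — that makes the estimate go through.
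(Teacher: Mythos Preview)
Your argument is correct. The reduction to counting primitive vectors of the planar lattice $\Lambda'$ of determinant $|v_3|$ inside the rectangle $R$ is exactly the right move, and the ``standard fact'' you invoke is indeed standard; a particularly clean justification is to order the primitive points in a half-plane by angle and observe that each consecutive pair spans, together with the origin, a triangle of area at least $\tfrac12\det\Lambda'$, these triangles being disjoint and contained in $C$.

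The paper itself gives no proof here: it simply cites \cite[Lemma~3]{HB} (Heath-Brown, \emph{Diophantine approximation with square-free numbers}, Math.\ Z.\ \textbf{187} (1984)). Your argument is essentially a reconstruction of Heath-Brown's, so there is no methodological difference to speak of---you have supplied the details that the paper outsources.
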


This is \cite[Lemma  3]{HB}. 

\begin{lemma}\label{gcd-lemma}
  Let $\alpha, \beta, \gamma \in \Bbb{N}$, $A, B, X_1, \ldots, X_r \geq 1$,
  $h_1, \ldots, h_r \in \Bbb{N}$ with $h_1 \leq \dots \leq h_r$.  Then
  $$\sum_{a \leq A} \sum_{b \leq B} \sum_{\substack{x_j \leq X_j\\ 1 \leq j
      \leq r}}
  (\alpha a, \beta b, \gamma \mathbf{x}^{\textbf{h}}) \ll (\alpha, \beta,
  \gamma)^{1/h_r}(\alpha, \beta)^{1-1/h_r}
  \tau(\alpha)\tau(\beta)\tau(\gamma)\tau_r(\alpha\beta\gamma) A B \langle
  \textbf{X} \rangle. $$
\end{lemma}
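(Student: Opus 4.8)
The plan is to bound the sum by first reducing the gcd to a product of prime-power local factors and then decoupling the variables. Write $g = (\alpha a, \beta b, \gamma\mathbf{x}^{\mathbf{h}})$; since $g \mid \alpha a$, $g\mid\beta b$ and $g\mid\gamma\mathbf{x}^{\mathbf{h}}$, for each prime $p$ with $p^e\|g$ we have $p^e\mid\alpha a$, $p^e\mid\beta b$, $p^e\mid\gamma\mathbf{x}^{\mathbf h}$. The key elementary observation is the pointwise bound
\begin{equation*}
  (\alpha a,\beta b,\gamma\mathbf{x}^{\mathbf h})
  \;\leq\; (\alpha,\beta,\gamma)^{1/h_r}\,(\alpha,\beta)^{1-1/h_r}\,
  \big(a,b,\mathbf{x}^{\mathbf h}\big)\cdot \big(\text{a divisor of }\alpha\beta\gamma\big),
\end{equation*}
which one proves prime by prime: at a prime $p$ write $v_p$ for the $p$-adic valuation and set $m = \min(v_p(\alpha)+v_p(a),\,v_p(\beta)+v_p(b),\,v_p(\gamma)+\sum_j h_j v_p(x_j))$. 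One splits off the contribution of $v_p(\alpha),v_p(\beta),v_p(\gamma)$ from that of $v_p(a),v_p(b),v_p(x_j)$; the former contributes at most $\min(v_p(\alpha)+v_p(\gamma)/?,\dots)$ — more precisely, after distributing, the part depending only on $\alpha,\beta,\gamma$ is dominated by $\tfrac1{h_r}v_p((\alpha,\beta,\gamma)) + (1-\tfrac1{h_r})v_p((\alpha,\beta)) + v_p(\text{divisor of }\alpha\beta\gamma)$, using that each $h_j\le h_r$ so a unit of $p$ in the $x$'s is worth at least $1$ and at most $h_r$ in $\mathbf{x}^{\mathbf h}$, exactly as in the proof of Lemma~\ref{Gaussaverage}. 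This yields the displayed pointwise inequality.

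Next I would sum over the variables. Summing $(a,b,\mathbf{x}^{\mathbf h})$ over $a\le A$, $b\le B$, $x_j\le X_j$ is handled by the standard device: write $d = (a,b,\mathbf{x}^{\mathbf h})$, so $d\mid a$ and $d\mid b$, giving
\begin{equation*}
  \sum_{a\le A}\sum_{b\le B}\sum_{\substack{x_j\le X_j\\ 1\le j\le r}} (a,b,\mathbf{x}^{\mathbf h})
  \;\le\; \sum_{d\ge 1} d\cdot \#\{a\le A: d\mid a\}\cdot\#\{b\le B: d\mid b\}\cdot\langle\mathbf{X}\rangle
  \;\le\; \langle\mathbf{X}\rangle\, AB\sum_{d\le \min(A,B)} \frac{1}{d},
\end{equation*}
which is $\ll AB\langle\mathbf{X}\rangle\log(2\min(A,B))$. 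To remove the logarithm and instead absorb it into the divisor-function factors, I would be slightly more careful: only $d$ with $d\mid (a,b)$ contribute, and $\sum_{d\mid n}1 = \tau(n)$; summing over the residue classes and using $\sum_{m\le M}\tau(m)\ll M\log M$ is wasteful, so instead one notes $\sum_{a\le A}\sum_{b\le B}(a,b)\ll AB\log(2\min(A,B))$ but the cleaner route for the stated bound is to keep the $(a,b,\mathbf x^{\mathbf h})$ sum as $\ll AB\langle\mathbf X\rangle\,\tau_r(\cdot)$-type expression; in fact the claimed inequality in the lemma carries the factor $\tau_r(\alpha\beta\gamma)$ precisely to absorb the divisor count coming from the "divisor of $\alpha\beta\gamma$" term, while the $\tau(\alpha)\tau(\beta)\tau(\gamma)$ factors absorb the count of which prime powers of $\alpha,\beta,\gamma$ get used. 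The sum $\sum_{d\le\min(A,B)}1/d \ll \log B$ is then harmless since it is $\ll B^\varepsilon$, but to get the clean statement without any $\log$ one observes that the $d$'s that occur divide $\alpha\beta\gamma$ times a common divisor, so the number of relevant $d$ is $\ll\tau_r(\alpha\beta\gamma)$ after all — this is the bookkeeping the divisor factors encode.

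The main obstacle, and the step I would spend the most care on, is the prime-by-prime estimate establishing the pointwise inequality: one must correctly track how a single factor of $p$ among the $x_j$ is "amplified" by the exponent $h_j$ when passing to $\mathbf{x}^{\mathbf h}$, so that the worst case (all the gcd coming through the variable with the largest exponent $h_r$) produces exactly the exponent $1/h_r$ on $(\alpha,\beta,\gamma)$ and $1-1/h_r$ on $(\alpha,\beta)$; getting the split between the $(\alpha,\beta,\gamma)$-part and the $(\alpha,\beta)$-part right, and verifying that the leftover is genuinely a divisor of $\alpha\beta\gamma$ counted with multiplicity $\tau_r$, is the delicate point. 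Once that pointwise bound is in hand, the summation over $a,b,\mathbf{x}$ is routine: it separates multiplicatively, the variable sum contributes $AB\langle\mathbf{X}\rangle$ up to the harmless $\sum 1/d$, and the arithmetic factors $\tau(\alpha)\tau(\beta)\tau(\gamma)\tau_r(\alpha\beta\gamma)$ collect all the divisor counts. Assembling these gives the stated estimate.
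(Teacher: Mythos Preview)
Your approach has a genuine gap, and in fact two intertwined problems.

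First, the pointwise inequality you state is not well-defined: the factor ``a divisor of $\alpha\beta\gamma$'' necessarily depends on $a,b,\mathbf{x}$, not just on $\alpha,\beta,\gamma$. For instance, with $r=h_1=1$, $\alpha=p^N$, $\beta=\gamma=1$, $a=1$, $b=x_1=p^N$, the left side is $p^N$ while $(\alpha,\beta,\gamma)=(\alpha,\beta)=(a,b,x_1)=1$, so the ``divisor'' must be $p^N$; for other $(a,b,x_1)$ it must be $1$. A variable-dependent $D(a,b,\mathbf{x})\mid\alpha\beta\gamma$ is of no use unless you can control \emph{how often} each value is taken, and your sketch never does this. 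The phrase ``the part depending only on $\alpha,\beta,\gamma$ is dominated by $\tfrac{1}{h_r}v_p((\alpha,\beta,\gamma))+\cdots$'' is not a proof of any specific inequality.

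Second, and more seriously, your summation step fails. After the (unproven) pointwise splitting you are left with $\sum_{a,b,\mathbf{x}}(a,b,\mathbf{x}^{\mathbf h})$, which you correctly bound by $AB\langle\mathbf X\rangle\sum_{d\le\min(A,B)}1/d$, picking up a logarithm. Your attempts to remove it are incorrect: the values $d=(a,b,\mathbf{x}^{\mathbf h})$ range over \emph{all} integers up to $\min(A,B)$ and have nothing whatsoever to do with $\alpha\beta\gamma$, so the claim that ``the $d$'s that occur divide $\alpha\beta\gamma$ times a common divisor'' is false, and the factor $\tau_r(\alpha\beta\gamma)$ cannot absorb this sum. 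The logarithm is genuinely there in your decomposition and cannot be removed.

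The paper avoids both problems by \emph{not} splitting pointwise. Instead it writes $(\alpha a,\beta b,\gamma\mathbf{x}^{\mathbf h})=\sum_{f\mid(\cdot)}\phi(f)\le\sum_f f\cdot[f\mid\alpha a,\,f\mid\beta b,\,f\mid\gamma\mathbf{x}^{\mathbf h}]$, interchanges the order of summation, and bounds the number of $a,b,\mathbf{x}$ divisible appropriately: $\#\{a\le A:f\mid\alpha a\}\le A(f,\alpha)/f$, similarly for $b$, and $\#\{\mathbf{x}:f/(f,\gamma)\mid\mathbf{x}^{\mathbf h}\}\le\tau_r(f)\langle\mathbf X\rangle(f,\gamma)^{1/h_r}/f^{1/h_r}$. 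This produces
\[
AB\langle\mathbf X\rangle\sum_f\frac{(f,\alpha)(f,\beta)(f,\gamma)^{1/h_r}\tau_r(f)}{f^{1+1/h_r}},
\]
a \emph{convergent} series (exponent $1+1/h_r>1$), so no logarithm appears. The sum over $f$ is then handled by restricting to $a=(f,\alpha)\mid\alpha$, $b=(f,\beta)\mid\beta$, $c=(f,\gamma)\mid\gamma$, writing $f=[a,b,c]g$, and using the elementary prime-by-prime inequality $abc^{\delta}[a,b,c]^{-1-\delta}\le(a,b)^{1-\delta}(a,b,c)^{\delta}$ with $\delta=1/h_r$; the factors $\tau(\alpha)\tau(\beta)\tau(\gamma)\tau_r(\alpha\beta\gamma)$ then count the triples $(a,b,c)$ and bound $\tau_r([a,b,c])$. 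The crucial difference is that the extra $1/h_r$ in the exponent of $f$---coming from the $\mathbf{x}$-count---is what makes the $f$-sum converge, and this is precisely what your decomposition throws away by separating $(a,b,\mathbf{x}^{\mathbf h})$ from the $\alpha,\beta,\gamma$ data too early.
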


\begin{proof}
  The left hand side of the formula is at most
  \begin{displaymath}
    \begin{split}
      & \sum_f f \sum_{\substack{a \leq A\\ f \mid \alpha a}}
      \sum_{\substack{b \leq B\\ f \mid \beta b}}  \sum_{\substack{x_j \leq
          X_j\, (1 \leq j \leq r)\\ f \mid \gamma
          \textbf{x}^{\textbf{h}}}}1\leq AB \sum_f \frac{(f, \alpha)(f,
        \beta)}{f} \sum_{f_1 \cdots f_r = f/(f, \gamma)}  \sum_{\substack{x_j
          \leq X_j\, (1 \leq j \leq r)\\ f_j \mid x_j^{h_j}  }}1 \\
      &\leq AB\langle \textbf{X} \rangle \sum_f \frac{(f, \alpha)(f, \beta)
        (f, \gamma)^{1/h_r}\tau_r(f)} {f^{1 + 1/h_r}} \leq \zeta(1 +
      1/h_r)^rAB\langle \textbf{X} \rangle \sum_{a \mid \alpha} \sum_{b \mid
        \beta} \sum_{c \mid \gamma} \frac{abc^{1/h_r}\tau_r([a, b, c]) }{[a,
        b, c]^{1 + 1/h_r}}.
    \end{split}
  \end{displaymath}
  Since
  $abc^{\delta}[a, b, c]^{-1-\delta} \leq (a, b)^{1-\delta}(a, b, c)^{\delta}$
  for $0 \leq \delta \leq 1$, the lemma follows.
\end{proof}
 
We apply the previous two lemmas to analyze the number of solutions
$\textbf{x} \in (\Bbb{Z} \setminus\{0\})^J$ to the first equation in
\eqref{equation} in the special case where $k = 3$, $J_1 = J_2 = 2$ and
$h_{11} = h_{12} = h_{21} = h_{22} = 1$, cf.\ \eqref{typeT}.  In this case,
the equation reads
\begin{equation}\label{thisequation}
  b_{11} b_{12} x_{11} x_{12} + b_{21} b_{22} x_{21} x_{22} +
  \prod_{j=1}^{J_3} (b_{3j} x_{3j})^{h_{3j}} = 0.
\end{equation}
Without loss of generality, assume
\begin{equation}\label{h3jsorted}
  \text{$h_{31} \leq  \dots \leq h_{3J_3}$, and let $\nu$ be the largest
    index with $h_{3 \nu} = 1$.}
\end{equation}
If no such index exists, we put $\nu = 0$.  For notational simplicity, we write
\begin{equation}\label{defmu}
  \mu = 1 - h_{3J_3}^{-1} \in [0, 1).
\end{equation}  
Suppose first that $\nu \geq 1$.  Let us temporarily restrict to $\textbf{x}$
satisfying
\begin{equation}\label{copr}
(x_{11}x_{12}, x_{21}x_{22}, x_{31} \cdots x_{3\nu}) = 1. 
\end{equation}
For $X_{ij} \leq |x_{ij}| \leq 2 X_{ij}$ in dyadic boxes, by
Lemma~\ref{lattice} with $x_{12}, x_{22}, x_{31}$ in the roles of
$u_1, u_2, u_3$ and
$$v_3 = \frac{ x_{31}^{-1}\prod_j (b_{3j}x_{3j})^{h_{3j}}}{ \big(b_{11}b_{12}x_{11}, b_{21}b_{22}x_{21}, x_{31}^{-1}\prod_j (b_{3j}
  x_{3j})^{h_{3j}}\big)}$$
(since $\textbf{v}$ must be primitive) 
 and Lemma~\ref{gcd-lemma}, the number of such solutions to
\eqref{thisequation} is
\begin{displaymath}
\begin{split} 
  & \ll \langle \textbf{X}_0\rangle \underset{\substack{X_{11} \leq x_{11}
      \leq 2X_{11}\\ X_{21} \leq x_{21} \leq 2 X_{21}}}{\sum\sum}
  \sum_{\substack{X_{3j} \leq x_{3j}\leq  2X_{3j}\\ 2 \leq j \leq J_3}} \Big(1
  + \frac{X_{12}X_{22} }{x_{31}^{-1}\prod_j (b_{3j}x_{3j})^{h_{3j}}}
  \Big(b_{11}b_{12}x_{11}, b_{21}b_{22}x_{21}, x_{31}^{-1}\prod_j (b_{3j}
  x_{3j})^{h_{3j}}\Big)\Big)\\
  & \ll \langle \textbf{X}_0 \rangle \Big(X_{11}X_{21} \frac{\langle
    \textbf{X}_3\rangle}{X_{31}} + |\textbf{b}|^{\varepsilon}\Big(
  \frac{(b_{11}b_{12}, b_{21}b_{22}) }{\textbf{b}_3^{\textbf{h}_3}}
  \Big)^{\mu}X_{11}X_{12}X_{21}X_{22} \prod_{j} X_{3j}^{1 - h_{3j}}\Big)
\end{split}
\end{displaymath}
for every $\varepsilon > 0$ and $\mu$ as in \eqref{defmu}. By symmetry, this improves itself to
\begin{equation}\label{sym1}
  \langle \textbf{X}_0 \rangle \Big(  \frac{\min(X_{11}, X_{12}) \min(X_{21},
    X_{22})  \langle \textbf{X}_3\rangle }{\max(X_{31}, \ldots, X_{3 \nu})} +
  |\textbf{b}|^{\varepsilon}\Big( \frac{(b_{11}b_{12}, b_{21}b_{22})
  }{\textbf{b}_3^{\textbf{h}_3}} \Big)^{\mu} X_{11}X_{12}X_{21}X_{22}
  \prod_{j} X_{3j}^{1 - h_{3j}}\Big).
 \end{equation}
Permuting the roles of $u_1, u_2, u_3$ in Lemma~\ref{lattice}, we obtain similarly the bound 
\begin{displaymath}
  \begin{split} 
    & \ll \langle \textbf{X}_0\rangle \underset{\substack{X_{11} \leq x_{11}
        \leq 2X_{11}\\ X_{21} \leq x_{21} \leq 2 X_{21}}}{\sum\sum}
    \sum_{\substack{X_{3j} \leq x_{3j}\leq 2X_{3j}\\ 2 \leq j \leq J_3}} \Big(1
    + \frac{X_{12}X_{31}}{b_{21}b_{22}x_{21}} \Big(b_{11}b_{12}x_{11},
    b_{21}b_{22}x_{21}, \prod_j (b_{3j} x_{3j})^{h_{3j}}\Big)\Big)\\
    & \ll \langle \textbf{X}_0 \rangle \Big(X_{11}X_{21} X_{32}\cdots X_{3J_3} +
    |\textbf{b}|^{\varepsilon} X_{11}X_{12}\langle \textbf{X}_3 \rangle \Big).
  \end{split}
\end{displaymath}
Again by symmetry, this improves itself to
$$ \langle \textbf{X}_0  \rangle  \Big(\frac{\min(X_{11}, X_{12}) \min(X_{21}
  ,X_{22})  \langle \textbf{X}_3\rangle }{\max(X_{31}, \ldots, X_{3 \nu})}+
|\textbf{b}|^{\varepsilon} \min(X_{11}X_{12}, X_{21}X_{22}) \langle
\textbf{X}_3 \rangle \Big).$$
Together with \eqref{sym1}, we now see that the number of
$\textbf{x} \in (\Bbb{Z} \setminus \{0\})^J$ satisfying \eqref{thisequation},
\eqref{copr} and $X_{ij} \leq |x_{ij}| \leq 2 X_{ij}$ does not exceed
\begin{equation}\label{sym2}
  \begin{split}
    |\textbf{b}|^{\varepsilon}  \langle \textbf{X}_0 \rangle
    \Big(&\frac{\min(X_{11}, X_{12}) \min(X_{21}, X_{22})  \langle
      \textbf{X}_3\rangle }{\max(X_{31}, \ldots, X_{3 \nu})} \\
    &+ \frac{X_{11}X_{12}X_{21}X_{22}\langle \textbf{X}_3\rangle }{\max(X_{11}
      X_{12}, X_{21}X_{22}, (\textbf{b}_3^{\textbf{h}_3}(b_{11}b_{12},
      b_{21}b_{22})^{-1})^{\mu} \textbf{X}_3^{\textbf{h}_3})}   \Big)
  \end{split}
\end{equation}
We now replace the minima and maxima in \eqref{sym2} by suitable geometric
means. With future applications in mind, we keep the result as general as is
possible.

For $\ell = 1, 2$ and ${\bm \tau}^{(\ell)} = (\tau^{(\ell)}_{ij}) \in \Bbb{R}_{> 0}^J$ with
\begin{equation}\label{tau1}
  \begin{split}
    &\tau^{(\ell)}_{0j} = 1, \quad  \tau^{(\ell)}_{11} + \tau^{(\ell)}_{12}  \geq
    1, \quad  \tau^{(\ell)}_{21} + \tau^{(\ell)}_{22} \geq 1, \quad
    \sum_{j=1}^{\nu}  \tau^{(\ell)}_{3j} \geq  \nu-1, \quad \tau^{(\ell)}_{3j} =
    1\, (j > \nu),\\
    & \min(\tau^{(\ell)}_{11}, \tau^{(\ell)}_{12}) +  \min(\tau^{(\ell)}_{21},
    \tau^{(\ell)}_{22}) +  \min(\tau^{(\ell)}_{31}, \ldots, \tau^{(\ell)}_{3\nu})
    > 1
  \end{split}
\end{equation}
(where $\nu$ is as in \eqref{h3jsorted}), we have
$$\frac{\langle \textbf{X}_0\rangle\min(X_{11}, X_{12}) \min(X_{21}, X_{22})
  \langle \textbf{X}_3\rangle }{\max(X_{31}, \ldots, X_{3 \nu})}  \leq
\textbf{X}^{{\bm \tau}^{(\ell)}}.$$
(The second line in \eqref{tau1} is not needed here, but will be required
later when we remove condition \eqref{copr}.) Let $\bm \zeta, \bm \zeta'$ satisfy
\eqref{zeta1} and let $\zeta_0, \zeta'_0\in \Bbb{R}$ be arbitrary. Then
$$\frac{\langle\textbf{X}_0\rangle X_{11}X_{12}X_{21}X_{22}\langle
  \textbf{X}_3\rangle }{\max(X_{11} X_{12}, X_{21}X_{22},
  (\textbf{b}_3^{\textbf{h}_3}(b_{11}b_{12}, b_{21}b_{22})^{-1})^{\mu}
  \textbf{X}_3^{\textbf{h}_3})}  \leq     \Big(\frac{(b_{11}b_{12}
  b_{21}b_{22})^{1/2}}{   \textbf{b}_3^{\textbf{h}_3} } \Big)^{\mu\zeta_3'}
\prod_{ij} X_{ij}^{1 - h_{ij} \zeta'_i}  .  $$
Thus  we can bound \eqref{sym2} by
$$ |\textbf{b}|^{\varepsilon } \Big(  \textbf{X}^{{\bm \tau}^{(1)}} +
\Big(\frac{(b_{11}b_{12} b_{21}b_{22})^{1/2}}{  \textbf{b}_3^{\textbf{h}_3} }
\Big)^{\mu\zeta'_3}   \prod_{ij} X_{ij}^{1 - h_{ij} \zeta'_i} \Big)   $$
and also by
$$ |\textbf{b}|^{\varepsilon+1}    \Big(  \textbf{X}^{{\bm \tau}^{(2)}} +
\prod_{ij} X_{ij}^{1 - h_{ij} \zeta_i}    \Big)   $$
and so, for any  $0 < \alpha\leq 1$,  by
\begin{equation}\label{before-d}
  |\textbf{b}|^{\varepsilon+\alpha} \Big(  \textbf{X}^{{\bm \tau}^{(1)}} +
  \Big(\frac{(b_{11}b_{12} b_{21}b_{22})^{1/2}}{  \textbf{b}_3^{\textbf{h}_3}
  }  \Big)^{\mu\zeta'_3}   \prod_{ij} X_{ij}^{1 - h_{ij} \zeta'_i}
  \Big)^{1-\alpha}   \Big(  \textbf{X}^{{\bm \tau}^{(2)}} + \prod_{ij}
  X_{ij}^{1 - h_{ij} \zeta_i}    \Big)^{ \alpha}. 
\end{equation}
We will apply this with $\alpha$ very small (but fixed). The idea of this
maneuver is to separate the $\textbf{b}$- and $\textbf{y}$-decay in \eqref{2a}
from the bound in $B$ and $H$. Before we proceed with the estimation, we
remove the condition \eqref{copr}. Let us therefore assume that
$(x_{11}x_{12}, x_{21}x_{22}, x_{31} \cdots x_{3\nu}) = d$. Then we can apply
the previous analysis with $X_{ij}/d_{ij}$ in place of $X_{ij}$ for numbers
$d_{ij}$ satisfying
$d_{11} d_{12} = d_{21} d_{22} = d_{31} \cdots d_{3\nu} = d$ for
$i = 1, 2, 3$.  The second line in \eqref{tau1} and \eqref{zeta1} (recall that
$h_{11} = h_{12} = h_{21} = h_{22} = h_{31} = \dots = h_{3\nu} = 1$) ensure
that summing \eqref{before-d} over all $d$ (and all such combinations of
$d_{ij}$) yields a convergent sum. Thus the bound \eqref{before-d} remains
true for the number of all $\textbf{x} \in (\Bbb{Z} \setminus \{0\})^J$
satisfying   \eqref{thisequation} and
$X_{ij} \leq |x_{ij}| \leq 2X_{ij}$.

We are currently working under the assumption $\nu \geq 1$, but this is only
for notational convenience. Indeed, if $\nu = 0$, we apply Lemma~\ref{lattice}
with one of $u_1, u_2, u_3$ equal to 1, and in \eqref{sym2} we agree on the
convention that the maximum of the empty set is 1. Condition \eqref{copr} is
automatically satisfied in this case (the empty product being defined as 1),
and hence the second line in \eqref{tau1} is not needed, so that we may define
as usual the minimum of the empty set as $\infty$. With these conventions,
\eqref{before-d} remains true also if $\nu = 0$.

We now invoke the $N$ inequalities in \eqref{equation}. We choose
$$ \bm \zeta' = (\zeta_1', \zeta_2', \zeta_3')= \Big( \frac{1}{2} - \frac{1}{5 h_{3J_3}},
\frac{1}{2} - \frac{1}{5 h_{3J_3}}, \frac{2}{5h_{3J_3}}\Big)$$ and
\begin{equation}\label{tau1final}  
   \bm \tau^{(1)} =  \big(1 -  h_{01}\zeta''_0, \ldots, 1 -  h_{kJ_k}\zeta''_k\big)
\end{equation}
where $\bm\zeta'' = (\zeta_1'', \zeta_2'', \zeta_3'')$ satisfies 
\begin{equation*}
  \bm\zeta'' = (\zeta_1'', \zeta_2'', \zeta_3'') =
  \begin{cases}
    (1/3, 1/3, 1/3), & h_{3J_3} = 1,\\
    (1/2, 1/2, 0), & h_{3J_3} > 1.
  \end{cases}
\end{equation*}
Then ${\bm \tau}^{(1)}$ satisfies \eqref{tau1}. By \eqref{1a}, there exists
${\bm \sigma}^{(1)} \in \Bbb{R}_{> 0}^N$ with
\begin{equation}\label{sigma}
  |{\bm \sigma}^{(1)}|_1  \leq 1, \quad \mathscr{A}_1 {\bm \sigma}^{(1)}  = {\bm \tau}^{(1)}. 
\end{equation}
Such a vector also exists if ${\bm \tau}^{(1)}$ is replaced by
${\bm \tau}= (1 - h_{00}\zeta_0', \ldots, 1 - h_{3J_3}\zeta'_3)$.

Now, taking suitable combinations of the $N$ inequalities of the second
condition in \eqref{equation}, we see that every $\textbf{x}$ satisfying these
also satisfies
$$\prod_{ij} |x_{ij}|^{\tau_{ij}^{(1)}} \leq B \textbf{y}^{- {\bm \tau}^{(1)}}, \quad
\prod_{ij} |x_{ij}|^{1 - h_{ij} \zeta_i'} \leq B \prod_{ij}
y_{ij}^{h_{ij}\zeta_i' - 1}.$$ Define
\begin{equation*}
  \begin{split}
    {\bm \zeta}^{\ast} & = 
    \Big(\zeta_1' - \frac{1}{2} \mu \zeta_3',    \zeta_2' - \frac{1}{2} \mu
    \zeta_3', \  \zeta_3'(1 + \mu) \Big) = \Big( \frac{1}{2} -
    \frac{1}{5(1+\mu) h_{3J_3}},  \frac{1}{2} - \frac{1}{5 (1+\mu)h_{3J_3}},
    \frac{2}{5(1+\mu)h_{3J_3}}\Big)
  \end{split}
\end{equation*}
with $\mu$ as in \eqref{defmu} and
$\tilde{\bm \tau} = (1 - h_{ij} \zeta_i^{\ast})_{ij}$.  We summarize our
findings in the following lemma.

\begin{lemma}\label{hypo1}
  In the situation of equation~\eqref{thisequation}, suppose that
  $\mathbf{b}, \mathbf{y} \in \Bbb{N}^J$, $1
  \leq H \leq B$, $0 < \alpha, \lambda \leq 1$,  $\tau_{\ast}  \coloneqq 
  \min_{ij}(\tau_{ij}^{(1)}, 1-h_{ij}\zeta_i') > 0$. Let
  ${\bm
    \zeta}$ satisfy \eqref{zeta1} and   ${\bm \tau}^{(2)} \in
  \Bbb{R}_{> 0}^J$ as in \eqref{tau1}. 
  Then    
  \begin{equation}\label{X1}
    \begin{split}
      N_{\textbf{b}, \textbf{b} \cdot \textbf{y}}(B, H, \lambda) \ll&
      |\textbf{b}|^{\varepsilon + \alpha} \Big( \langle \textbf{y}
      \rangle^{-\tau_{\ast}} \big( \textbf{b}^{-{\bm \tau}^{(1)}} +
      \textbf{b}^{-{\tilde{\bm \tau}}} \big) B\Big)^{1-\alpha}
      \left.\sum_{\textbf{X}}\right.^{\ast}\Big( \textbf{X}^{{\bm
          \tau}^{(2)}\alpha} + \prod_{ij} X_{ij}^{(1 - h_{ij} \zeta_i)\alpha}
      \Big)
    \end{split}
  \end{equation}
  where $\textbf{X} = (X_{ij})$ and the asterisk indicates that each
  $X_{ij} = 2^{\xi_{ij}}$ runs over powers of 2 and is subject to
  $\prod_{ij} X_{ij}^{\alpha^\nu_{ij}} \leq B$ for $1 \leq \nu \leq N$ and at
  least one of the inequalities
  \begin{equation*}
    \min_{ij} X_{ij} \leq H, \quad \quad \min_{1 \leq i \leq k} \prod_{j =
      1}^{J_i} X_{ij}^{h_{ij}} < \Bigl(\max_{1 \leq i \leq k} \prod_{j =
      1}^{J_i} (2X_{ij})^{h_{ij}}\Bigr)^{1-\lambda}.
  \end{equation*}
\end{lemma}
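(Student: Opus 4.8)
The plan is to combine a dyadic dissection of the variables with the box count \eqref{before-d} that is already in hand, feeding the height conditions through the linear algebra of \eqref{sigma} to extract the $B$-, $\textbf{b}$- and $\textbf{y}$-decay. First I would split each $x_{ij}$ into a dyadic range $X_{ij}\le|x_{ij}|<2X_{ij}$ with $X_{ij}=2^{\xi_{ij}}$, so that $N_{\textbf{b},\textbf{b}\cdot\textbf{y}}(B,H,\lambda)$ is at most the sum, over those $\textbf{X}$ whose box contains a point satisfying all conditions defining $N_{\textbf{b},\textbf{b}\cdot\textbf{y}}$, of the number of $\textbf{x}$ in the box solving \eqref{thisequation}. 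Since $b_{ij},y_{ij}\ge1$ and $|x_{ij}|\ge X_{ij}$, the $\nu$-th height inequality $\prod_{ij}|b_{ij}y_{ij}x_{ij}|^{\alpha^\nu_{ij}}\le B$ forces $\prod_{ij}X_{ij}^{\alpha^\nu_{ij}}\le B$, and \eqref{Hlambda} forces $\min_{ij}X_{ij}\le H$ or the unbalancedness of the blocks $\prod_jX_{ij}^{h_{ij}}$; so only $\textbf{X}$ lying in the asterisked range of \eqref{X1} occur (the powers of $2$ in \eqref{Hlambda} being harmless). For each such $\textbf{X}$, the number of $\textbf{x}$ with $X_{ij}\le|x_{ij}|<2X_{ij}$ solving \eqref{thisequation} — which \eqref{before-d} bounds for \emph{all} such $\textbf{x}$, the coprimality condition \eqref{copr} having been removed there — is $\ll|\textbf{b}|^{\varepsilon+\alpha}(A_1+A_2)^{1-\alpha}(B_1+B_2)^{\alpha}$ with $A_1=\textbf{X}^{\bm\tau^{(1)}}$, $A_2=\bigl((b_{11}b_{12}b_{21}b_{22})^{1/2}/\textbf{b}_3^{\textbf{h}_3}\bigr)^{\mu\zeta_3'}\prod_{ij}X_{ij}^{1-h_{ij}\zeta_i'}$, $B_1=\textbf{X}^{\bm\tau^{(2)}}$, $B_2=\prod_{ij}X_{ij}^{1-h_{ij}\zeta_i}$.

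Next I would process the first factor using the height inequalities in the form $\prod_{ij}X_{ij}^{\alpha^\nu_{ij}}\le B\prod_{ij}(b_{ij}y_{ij})^{-\alpha^\nu_{ij}}$. Raising the $\nu$-th to the power $\sigma^{(1)}_\nu$ and multiplying — where $\bm\sigma^{(1)}\in\Bbb{R}_{>0}^N$ is the vector of \eqref{sigma} with $\mathscr{A}_1\bm\sigma^{(1)}=\bm\tau^{(1)}$, $|\bm\sigma^{(1)}|_1\le1$ — gives $A_1\le B^{|\bm\sigma^{(1)}|_1}(\textbf{b}\cdot\textbf{y})^{-\bm\tau^{(1)}}\le B\,\textbf{b}^{-\bm\tau^{(1)}}\langle\textbf{y}\rangle^{-\tau_\ast}$, the last step using $B\ge1$, $y_{ij}\ge1$ and $\tau^{(1)}_{ij}\ge\tau_\ast$. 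The same manipulation with the weight vector $\bm\sigma$ for which $\mathscr{A}_1\bm\sigma=(1-h_{ij}\zeta_i')_{ij}$, $|\bm\sigma|_1\le1$ (also supplied by \eqref{sigma}) yields $\prod_{ij}X_{ij}^{1-h_{ij}\zeta_i'}\le B\prod_{ij}(b_{ij}y_{ij})^{-(1-h_{ij}\zeta_i')}$; multiplying by $\bigl((b_{11}b_{12}b_{21}b_{22})^{1/2}/\textbf{b}_3^{\textbf{h}_3}\bigr)^{\mu\zeta_3'}$, bounding the $\textbf{y}$-part by $\langle\textbf{y}\rangle^{-\tau_\ast}$ (since $1-h_{ij}\zeta_i'\ge\tau_\ast$) and collecting the exponents of the $b_{ij}$, one obtains $A_2\le B\,\textbf{b}^{-\tilde{\bm\tau}}\langle\textbf{y}\rangle^{-\tau_\ast}$ by the defining relations of $\bm\zeta^\ast$ and $\tilde{\bm\tau}=(1-h_{ij}\zeta_i^\ast)_{ij}$. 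Hence $(A_1+A_2)^{1-\alpha}\ll\bigl(B\langle\textbf{y}\rangle^{-\tau_\ast}(\textbf{b}^{-\bm\tau^{(1)}}+\textbf{b}^{-\tilde{\bm\tau}})\bigr)^{1-\alpha}$, a quantity that no longer depends on $\textbf{X}$.

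Then I would pull this factor out of the sum over $\textbf{X}$, bound $(B_1+B_2)^\alpha\le B_1^\alpha+B_2^\alpha=\textbf{X}^{\bm\tau^{(2)}\alpha}+\prod_{ij}X_{ij}^{(1-h_{ij}\zeta_i)\alpha}$ by subadditivity of $t\mapsto t^\alpha$ for $0<\alpha\le1$, and recognise the surviving sum over $\textbf{X}$ as exactly the asterisked sum in \eqref{X1}; collecting the numerical constants into $|\textbf{b}|^{\varepsilon+\alpha}$ then produces \eqref{X1}.

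The single point that needs care is the bookkeeping of the $b_{ij}$-exponents in $A_2$: after multiplying $\prod_{ij}b_{ij}^{-(1-h_{ij}\zeta_i')}$ by $(b_{11}b_{12}b_{21}b_{22})^{\mu\zeta_3'/2}(\textbf{b}_3^{\textbf{h}_3})^{-\mu\zeta_3'}$, the exponent of $b_{ij}$ becomes $-(1-\zeta_i')+\tfrac12\mu\zeta_3'$ for $i=1,2$ and $-(1-h_{3j}\zeta_3')-h_{3j}\mu\zeta_3'$ for $i=3$, and these have to be checked to be at most $-\tilde\tau_{ij}=-(1-h_{ij}\zeta_i^\ast)$; using $\mu=1-h_{3J_3}^{-1}\in[0,1)$ and the explicit $\bm\zeta'$, this reduces to the relations defining $\bm\zeta^\ast$ (an identity for the block-one and block-two variables, an inequality for the block-three variables). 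Everything else is either cited — the existence of $\bm\sigma^{(1)},\bm\sigma$ via \eqref{sigma} (ultimately \eqref{1a}) and the box count \eqref{before-d} — or entirely mechanical, so the dyadic dissection and this exponent verification carry the whole weight of the argument.
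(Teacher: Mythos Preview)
Your proposal is correct and follows precisely the paper's approach: the lemma is presented there as a summary of the preceding discussion, which combines the dyadic box bound \eqref{before-d} with the height-derived inequalities obtained via \eqref{sigma} (for both $\bm\tau^{(1)}$ and $(1-h_{ij}\zeta_i')_{ij}$), and then packages the resulting $\textbf{b}$-exponents from the $A_2$-term into $\textbf{b}^{-\tilde{\bm\tau}}$ through the definition of $\bm\zeta^{\ast}$. The exponent bookkeeping you single out is exactly the step the paper leaves implicit in defining $\bm\zeta^{\ast}$ and $\tilde{\bm\tau}$.
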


Similarly, but in a much simpler way, we derive the continuous analogue 
\begin{equation}\label{X2}
  \int_{\mathscr{S}_{\textbf{y}}(B, H, \lambda)} \prod_{ij}
  x_{ij}^{-h_{ij}\zeta_i} \dd\mathbf{x} \ll \big(\langle \textbf{y}
  \rangle^{-\tau^{\dag}}   B\big)^{1-\alpha}
  \left.\sum_{\textbf{X}}\right.^{\ast}\prod_{ij} X_{ij}^{(1 - h_{ij}
    \zeta_i)\alpha}
\end{equation}
with $\tau^{\dag} = \min_{ij}(1-h_{ij} \zeta_i  ) > 0$ and the sum is subject
to the same conditions. \\

As mentioned above, we will choose $\alpha$ in \eqref{X1} very small. The key
property of ${\bm \tau^{(1)}}$ and ${\tilde{\bm \tau}}$ is that all their
entries are $\geq 1/2$ where equality is only possible for ${\bm \tau^{(1)}}$
at indices $(ij)$ with $i\in \{1, 2\}$ if $h_{3J_3} \geq 2$. Since
$|S_{\rho}| \geq 2$ for all $1 \leq \rho \leq r$, we conclude that the
conditions
$$C_1\big((1-\alpha){\bm \tau^{(1)}}\big), \quad C_1\big((1-\alpha)\tilde{\bm \tau}\big)$$
in \eqref{3} hold for sufficiently small $\alpha > 0$ provided that
\begin{equation}\label{fail}
  \max_{ij} h_{ij} = 1  \,\,\text{or there exists no $\rho$ with } S_{\rho} =
  \{(i_1, j_1), (i_2, j_2)\}, i_1, i_2 \in \{1, 2\}.
\end{equation}

We now transform the $X$-sums in \eqref{X1} and \eqref{X2}. For an arbitrary
vector ${\bm \tau} \in \Bbb{R}_{\geq 0}^J$, we rewrite a sum
$\sum^{\ast}_{\textbf{X}} \textbf{X}^{{\bm \tau}\alpha}$ of the type appearing
in \eqref{X1} and \eqref{X2} as
\begin{equation}\label{typical}
  \underset{{\bm \xi} \in \Bbb{N}_0^J}{\left.\sum \right.^{\ast}} B^{\alpha \,
    \tilde{\bm \xi}^{\top} {\bm \tau}},
  \quad\quad \tilde{\bm \xi} = \frac{\log 2}{\log B} {\bm \xi},
\end{equation}
and now $\sum^{\ast}$ indicates that the sum is subject to  
\begin{equation}\label{poly1}
  \mathscr{A}_1^{\top} \tilde{\bm \xi} \leq (1, \ldots, 1)^{\top} \in \Bbb{R}^N
\end{equation}
(the inequality being understood componentwise) and at least one of the inequalities
\begin{align}\label{poly2}
  &   \tilde{\xi}_{ij} \leq \frac{\log H}{\log B} \quad \text{for some } i, j,\\
  \label{poly3}  
  & \min_{1 \leq i \leq k}\sum_{j=1}^{J_i} \tilde{\xi}_{ij} h_{ij} < \max_{1
    \leq i \leq k}\sum_{j=1}^{J_i} \Big(\tilde{\xi}_{ij} +\frac{\log 2}{\log
    B}\Big) h_{ij}(1-\lambda). 
\end{align}
For future reference, we note that 
\begin{equation}\label{ref}
\max_{1 \leq i \leq k}\sum_{j=1}^{J_i} \Big(\tilde{\xi}_{ij} +\frac{\log
  2}{\log B}\Big) h_{ij}(1-\lambda) = \max_{1 \leq i \leq k}\sum_{j=1}^{J_i}
\tilde{\xi}_{ij}   h_{ij}(1-\lambda) + O\Big(\frac{1}{\log B}\Big).
\end{equation}
For $0 \leq i \leq k$, $1 \leq j \leq J_i$, $0 < \lambda \leq 1$ and a
permutation $\pi \in S_k$, we consider the closed, convex polytopes
\begin{equation}\label{polytope}
  \begin{split}  
    \mathscr{P} & = \{ {\bm \psi} \in \Bbb{R}^J : {\bm \psi} \geq 0, \,
    \mathscr{A}_1^{\top}  {\bm \psi} \leq (1, \ldots, 1)^{\top}\},\\
    \mathscr{P}_{ij}& = \{ {\bm \psi} \in  \mathscr{P} : \psi_{ij} = 0\},\\
    \mathscr{P}(\lambda, \pi) &= \Big\{ {\bm \psi} \in \mathscr{P} :
    \sum_{j=1}^{J_{\pi(1)}} \psi_{\pi(1), j} h_{\pi(1), j} \leq \dots \leq
    \sum_{j=1}^{J_{\pi(k)}} \psi_{\pi(k), j} h_{\pi(k), j}, \\
    & \quad\quad\quad\quad\quad \sum_{j=1}^{J_{\pi(1)}} \psi_{\pi(1), j}
    h_{\pi(1), j} \leq(1-\lambda) \sum_{j=1}^{J_{\pi(k)}} \psi_{\pi(k), j}
    h_{\pi(k), j}\Big\}.
  \end{split}
\end{equation}    
We assume that
\begin{equation}\label{simplex1}
  C_2({\bm \tau}) \colon \quad
  \max\{ {\bm \psi}^{\top} {\bm \tau} :  {\bm \psi} \in \mathscr{P}\} = 1.
\end{equation}
The intersection of the hyperplane
$\mathscr{H} \colon {\bm \psi}^{\top} {\bm \tau} = 1$ with any of the above
polytopes is again a closed convex polytope, and we assume that the dimensions
satisfy
\begin{equation}\label{simplex2}
  C_3({\bm \tau}) \colon \quad
  \begin{array}{l}
    \dim(\mathscr{H} \cap \mathscr{P} ) \leq c_2,\\
    \dim(\mathscr{H} \cap \mathscr{P}_{ij} ) \leq c_2 - 1,
    \quad  0 \leq i \leq k, 1 \leq j \leq J_i,\\
    \dim(\mathscr{H} \cap \mathscr{P}(\lambda,
    \pi) ) \leq c_2 - 1, \quad \pi \in S_k.
  \end{array}
\end{equation}
With this notation and the assumptions \eqref{simplex1} and \eqref{simplex2},
we return to \eqref{typical}. Clearly the sum has $O((\log B)^J)$ terms, so
the contribution of ${\bm \xi}$ with
$$\tilde{\bm \xi}^{\top} {\bm \tau} \leq 1 - \frac{J \log\log B}{\alpha \log
  B} $$ to \eqref{typical} is $O(B^{\alpha})$. By \eqref{simplex1}, we may now
restrict to
\begin{equation}\label{restrict}
  1 - \frac{J \log\log B}{\alpha \log B}  \leq \tilde{\bm \xi}^{\top} {\bm \tau} \leq 1
\end{equation}
in the sense that
\begin{equation}\label{sensethat}
\underset{{\bm \xi} \in \Bbb{N}_0^J}{\left.\sum \right.^{\ast}}
B^{\alpha \, \tilde{\bm \xi}^{\top} {\bm \tau}} \ll B^{\alpha}\big(1 +\#\mathscr{X}_1 + \#\mathscr{X}_2\big)
\end{equation}
where
$$\mathscr{X}_1 =  \{{\bm \xi} \in \Bbb{N}_0^J : \eqref{poly1}, \eqref{poly2}, \eqref{restrict}
\} , \quad \mathscr{X}_2 =  \{{\bm \xi} \in \Bbb{N}_0^J : \eqref{poly1}, \eqref{poly3},
\eqref{restrict} \}.$$ 
We define
$$\mathscr{Y}_1 =  \{{\bm \xi} \in \Bbb{R}_{\ge 0}^J : \eqref{poly1}, \eqref{poly2}, \eqref{restrict}
\} , \quad \mathscr{Y}_2 =  \{{\bm \xi} \in \Bbb{R}_{\ge 0}^J : \eqref{poly1}, \eqref{poly3},
\eqref{restrict} \}$$ 
and bound  $\#\mathscr{X}_1$ resp.\  $\#\mathscr{X}_2$ by the Lipschitz principle, \ie  by the volume and the volume of the boundary of $\mathscr{Y}_1$ resp.\ $\mathscr{Y}_2$ (or a superset thereof).  By the third condition in \eqref{simplex2} as well as \eqref{ref} and \eqref{restrict} we see that $\mathscr{Y}_2$ is contained in an $O_{\alpha}(\log\log B)$ neighborhood of a union of polytopes of dimension at most $c_2-1$ and side lengths $O(\log B)$, so that
$$\#\mathscr{X}_2 \ll_{\alpha, \lambda} (\log B)^{c_2-1}(\log\log B)^{J-(c_2-1)}  \ll (\log B)^{c_2 - 1+\varepsilon}.$$
Similarly, by the first two conditions in \eqref{simplex2} and \eqref{restrict} we see that $\mathscr{Y}_2$ is contained in an $O_{\alpha}(\log\log B)$ neighborhood of a union of parallelepipeds of dimension at most $c_2$, where  at most $c_2-1$ of the side lengths of each parallelepiped are of size $O(\log B)$ and the remaining ones (if any) are of size $O(\log H)$.  We conclude
$$\#\mathscr{X}_1 \ll_{\alpha} (\log B)^{c_2-1}(\log H + \log\log B) (\log\log B)^{J-c_2}  \ll (\log B)^{c_2 - 1+\varepsilon} (1 + \log H).$$
We substitute the bounds for $\#\mathscr{X}_1$, $\#\mathscr{X}_2$ into
\eqref{sensethat} and use this in \eqref{X1} and \eqref{X2}. From Lemma
\ref{hypo1} we conclude the following result.
   
\begin{prop}\label{propH2}
  In the situation of equation~\eqref{thisequation}, let $\lambda$ 
   be as in Hypothesis~\ref{H1} and $\bm \zeta$ as in \eqref{zeta1}.  Define the matrix
  $\mathscr{A}_1$ as in \eqref{newA1} and the polytopes
  $\mathscr{P}, \mathscr{P}_{ij}, \mathscr{P}(\lambda, \pi)$ as in
  \eqref{polytope}. Choose ${\bm \tau}^{(2)}$ satisfying
  \eqref{tau1}.
  Suppose that \eqref{fail} holds as well as 
  the conditions 
  \begin{equation}\label{ass2}
    C_2( {\bm \tau}^{(2)}), \quad C_3( {\bm \tau}^{(2)}), \quad
    C_2((1 - h_{ij} \zeta_{i})_{ij}), \quad C_3((1 - h_{ij} \zeta_{i})_{ij})
  \end{equation}
  hold as in \eqref{simplex1}, \eqref{simplex2}. Then Hypothesis~\ref{H2} is true. 
\end{prop}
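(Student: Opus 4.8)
The plan is to verify the three requirements of Hypothesis~\ref{H2} — a strictly positive $\bm\eta$ satisfying $C_1(\bm\eta)$ as in \eqref{3}, the arithmetic bound \eqref{2a}, and the integral bound \eqref{continuous} — by combining the estimates already assembled above. For \eqref{2a} one starts from \eqref{X1} of Lemma~\ref{hypo1}; for \eqref{continuous} one starts from its continuous counterpart \eqref{X2}. In both, the only quantity still depending on $B$ and $H$ is a dyadic $X$-sum $\sum^{\ast}_{\mathbf X}\mathbf X^{\bm\tau\alpha}$ with $\bm\tau$ equal to $\bm\tau^{(2)}$ or $(1-h_{ij}\zeta_i)_{ij}$, so the first task is to estimate these.

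First I would estimate $\sum^{\ast}_{\mathbf X}\mathbf X^{\bm\tau\alpha}$. Passing to the rescaled exponents $\tilde{\bm\xi}=\frac{\log 2}{\log B}\bm\xi$ turns it into $\sum^{\ast}_{\bm\xi}B^{\alpha\tilde{\bm\xi}^{\top}\bm\tau}$ as in \eqref{typical}, the sum running over $\bm\xi\in\Nd_0^J$ subject to \eqref{poly1} and at least one of \eqref{poly2}, \eqref{poly3}. The hypotheses $C_2(\bm\tau)$ and $C_3(\bm\tau)$ — which for $\bm\tau\in\{\bm\tau^{(2)},(1-h_{ij}\zeta_i)_{ij}\}$ are precisely \eqref{ass2} — let one restrict to $\bm\xi$ with $\tilde{\bm\xi}^{\top}\bm\tau$ within $O(\log\log B/\log B)$ of $1$ as in \eqref{restrict}, and then apply the Lipschitz principle: by $C_3(\bm\tau)$ the regions $\mathscr Y_1,\mathscr Y_2$ lie in $O_{\alpha}(\log\log B)$-neighbourhoods of unions of polytopes of dimension at most $c_2$ (with at most $c_2-1$ of the side lengths of size $O(\log B)$ for $\mathscr Y_1$, and dimension at most $c_2-1$ for $\mathscr Y_2$), whence $\#\mathscr X_1\ll(\log B)^{c_2-1+\varepsilon}(1+\log H)$ and $\#\mathscr X_2\ll(\log B)^{c_2-1+\varepsilon}$. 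Substituting into \eqref{sensethat} gives $\sum^{\ast}_{\mathbf X}\mathbf X^{\bm\tau\alpha}\ll B^{\alpha}(\log B)^{c_2-1+\varepsilon}(1+\log H)$ for both relevant $\bm\tau$. Plugging this into \eqref{X1} and \eqref{X2}, using $(u+v)^{1-\alpha}\le u^{1-\alpha}+v^{1-\alpha}$, and collecting $B^{1-\alpha}B^{\alpha}=B$ yields
\[
  N_{\mathbf b,\mathbf b\cdot\mathbf y}(B,H,\lambda)\ll|\mathbf b|^{\varepsilon+\alpha}\langle\mathbf y\rangle^{-\tau_{\ast}(1-\alpha)}\bigl(\mathbf b^{-(1-\alpha)\bm\tau^{(1)}}+\mathbf b^{-(1-\alpha)\tilde{\bm\tau}}\bigr)B(\log B)^{c_2-1+\varepsilon}(1+\log H)
\]
together with the analogous integral bound carrying $\langle\mathbf y\rangle^{-\tau^{\dagger}(1-\alpha)}B(\log B)^{c_2-1+\varepsilon}(1+\log H)$.

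Then I would fix $\alpha$, and afterwards $\varepsilon$, small, absorb $|\mathbf b|^{\varepsilon+\alpha}\le\mathbf b^{(\varepsilon+\alpha)\mathbf 1}$, and set $\eta_{ij}=(1-\alpha)\min(\tau^{(1)}_{ij},\tilde\tau_{ij})-(\varepsilon+\alpha)$ and $\delta_2^{\ast}=(1-\alpha)\min(\tau_{\ast},\tau^{\dagger})>0$. Since all entries of $\mathbf b$ and $\mathbf y$ are $\ge1$, both exponent vectors on the right above are $\le-\bm\eta$ componentwise, so \eqref{2a} and, in the same way, \eqref{continuous} follow. The one step needing care — and the only place where \eqref{fail} enters — is $C_1(\bm\eta)$. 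From \eqref{tau1final} and the definition of $\tilde{\bm\tau}$, every entry of $\bm\tau^{(1)}$ and $\tilde{\bm\tau}$ is $\ge1/2$, with equality occurring only for $\bm\tau^{(1)}$ at indices $(i,j)$ with $i\in\{1,2\}$ and only when $h_{3J_3}\ge2$; hence each $\eta_{ij}\ge1/2-O(\alpha+\varepsilon)$, and $\eta_{ij}$ is bounded away from $1/2$ unless $i\in\{1,2\}$ with $h_{3J_3}\ge2$. As every coprimality set has $|S_{\rho}|\ge2$, the sum $\sum_{(i,j)\in S_{\rho}}\eta_{ij}$ exceeds $1$ for small $\alpha,\varepsilon$ except possibly when $S_{\rho}=\{(i_1,j_1),(i_2,j_2)\}$ with $i_1,i_2\in\{1,2\}$ and $h_{3J_3}\ge2$ — the configuration excluded by \eqref{fail}. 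Thus $C_1(\bm\eta)$ holds with some $\delta_2>0$, and Hypothesis~\ref{H2} follows. The analytic content has already been spent in Lemma~\ref{hypo1}, in \eqref{X2}, and in the Lipschitz-principle count of $\#\mathscr X_1,\#\mathscr X_2$; the remaining difficulty is essentially this $C_1$ bookkeeping, which is why \eqref{fail} appears among the hypotheses.
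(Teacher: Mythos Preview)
Your proposal is correct and follows essentially the same route as the paper: you invoke Lemma~\ref{hypo1} and \eqref{X2}, estimate the dyadic $X$-sums via \eqref{typical}--\eqref{sensethat} and the Lipschitz-principle bounds on $\#\mathscr X_1,\#\mathscr X_2$ (this is where $C_2$ and $C_3$ from \eqref{ass2} enter), and then verify $C_1(\bm\eta)$ using \eqref{fail}. The only cosmetic difference is that you make the single $\bm\eta$ explicit as a componentwise minimum $(1-\alpha)\min(\bm\tau^{(1)},\tilde{\bm\tau})-(\varepsilon+\alpha)\mathbf 1$, whereas the paper records $C_1((1-\alpha)\bm\tau^{(1)})$ and $C_1((1-\alpha)\tilde{\bm\tau})$ separately; your formulation is a clean way to package the same inequality.
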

    
Condition \eqref{ass2} requires a linear program. In principle this can be done
by hand (we show this in a special case in Appendix~\ref{A}), but a
straightforward computer-assisted verification is more time-efficient. We can
replace \eqref{fail} by the following condition: there exist vectors
${\bm \tau}^{(1)} \in \Bbb{R}^J$, ${\bm \sigma} \in \Bbb{R}^N$ satisfying
\eqref{tau1final} and \eqref{sigma} such that $C_1({\bm \tau}^{(1)})$ holds.

\section{The transition method}\label{sec8}

In this section, we describe a method that derives an asymptotic formula for
$N(B)$ as in \eqref{manin} from the input provided by Hypotheses~\ref{H1} and
\ref{H2}. In fact, we will only need these hypotheses for certain choices of parameters to be discussed in a moment. Our main result will be formulated at the end of the section. In the
interest of brevity, we now choose $b_1=\dots=b_k=1$ in \eqref{torsor}. No
extra difficulties arise should one wish to handle the more general case, but
a more elaborate notation would be needed. All equations that occur in the
examples treated in this paper may be interpreted to have coefficients $1$
only.

We begin with some more notation. We continue to use the vector operations
introduced in Section~\ref{dioph}. In addition, if
$\mathscr{R} \subseteq \Bbb{R}^n$ and $\mathbf{x} \in \Bbb{R}^n$, then
$\mathbf{x}\cdot \mathscr{R} = \{\mathbf{x} \cdot \mathbf{y} : \mathbf{y} \in
\mathscr{R}\} \subseteq \Bbb{R}^n$.  For
$\textbf{v}= (v_1, \ldots, v_n) \in \Bbb{R}^n$, we write
\begin{equation}\label{tilde}
  \widetilde{\textbf{v}} = (2^{v_1}, \ldots, 2^{v_n}) \in \Bbb{R}^n.
\end{equation}
For $\mathbf{g} \in \Bbb{N}^r$, we write
$\mu(\mathbf{g}) = \prod_{\rho=1}^r \mu(g_\rho)$  where $\mu$ denotes the M\"obius function. We write
$\mathbf{1} = (1, \ldots, 1)$, the dimension of the vector being understood
from the context.

For $0 < \Delta < 1$ let $f_{\Delta} \colon[0, \infty) \rightarrow [0, 1]$ be
a smooth function with
\begin{equation}\label{smooth0}
  \text{supp}(f_{\Delta}) \subseteq [0, 1 + \Delta), \quad f_{\Delta} = 1
  \text{ on } [0, 1], \quad \frac{d^j}{dx^j} f_{\Delta}(x) \ll_j \Delta^{-j}
\end{equation}
whose Mellin transform $\widehat{f}_{\Delta}$ obeys, once $\delta_3>0$ and
$A\geq 0$ are fixed, the inequality
\begin{equation}\label{smooth}
  \frac{\dd^j}{\dd s^j} \widehat{f}_{\Delta}(s) \ll_{j, A, \delta_3}
  \frac{(1+\Delta |s|)^{-A}}{|s|}
\end{equation}
for all $j \in \Bbb{N}_0$, uniformly in $\delta_3 \leq \Re s < 2$.  A
construction of $f_{\Delta}$ is given in \cite[(2.3)]{BBS1}.  From
\eqref{smooth}, we infer the useful estimate
\begin{equation}\label{useful}
  \mathscr{D}\Bigl(\mathbf{s}^{\mathbf{a}} \prod_{\nu=1}^N
  \widehat{f}_{\Delta}(s_{\nu})\Big)
  \ll  \Delta^{-\| \mathbf{a}\|_1 - c} |\mathbf{s}|^{-c} \langle \textbf{s}
  \rangle^{-1}
\end{equation}
for $\mathbf{s} = (s_1, \ldots, s_{N}) \in \Bbb{C}^N$ with
$2>\Re s_{\nu} \geq \delta_3 > 0$, $\mathbf{a} \in \Bbb{N}_0^N$, $c \geq 1$
and any linear differential operator $\mathscr{D}$ with constant coefficients
in $s_1, \ldots, s_N$, the implied constant being dependent on
$\textbf{a}, N, c, \mathscr{D}$.

We write $\int^{(n)}$ for an iterated $n$-fold Mellin--Barnes integral. The
lines of integration will be clear from the context or otherwise specified in
the text.  If all $n$ integrations are over the same line $(c)$, then we write
this as $\int_{(c)}^{(n)}$.

We continue to work subject to the conditions \eqref{1c}, \eqref{1a}. Also, we
suppose that Hypotheses~\ref{H1} and \ref{H2} are available to us.  With
$\beta_i$ as in Hypothesis~\ref{H1} and $S_{\rho}$ as in \eqref{gcd}, we
suppose that there is some $\delta_4 >0$ with
\begin{equation}\label{1b}
  \sum_{(i,j) \in S_{\rho}} (1 - \beta_i h_{ij}) \geq  1 + \delta_4
  \,\,\, (1 \leq \rho \leq r)
  \quad \text{ and } \quad \beta_ih_{ij} \leq 1 \,\, (1 \leq i \leq k, 1 \leq
  j \leq J_i).
\end{equation}

In order to efficiently work with the asymptotic formula in
Hypothesis~\ref{H1}, it is necessary to rewrite the singular integral as a
Mellin transform. With ${\bm \zeta}$ as in Hypothesis~\ref{H1} (in particular
satisfying \eqref{zeta1}), we assume that
\begin{equation}\label{J-cond}
  J_i \geq 2 \quad \text{whenever}\quad  \zeta_i \geq 1/2.
\end{equation}
We also define 
\begin{equation*}
  J^* = J_1 + \dots  +J_k
\end{equation*}
for the number of variables appearing in the torsor equation.

\begin{lemma}\label{todo}
  Let $\mathbf{b} \in (\Bbb{Z}\setminus\{0\})^k$ and
  $\mathbf{X} \in [1/2, \infty)^{J}$.  For $1 \leq i \leq k$, put
  \begin{equation}\label{Ki}
    \mathscr{K}_i(z) =
    \begin{cases}
      \Gamma(z) \cos(\pi z/2), & h_{ij} \text{ odd for some } 1 \leq j \leq J_i,\\
      \Gamma(z) \exp({\rm i} \pi z/2),  &h_{ij} \text{ even for all } 1 \leq j \leq J_i.
    \end{cases}
  \end{equation}
  Then, on writing 
  $z_k = 1 - z_1 - \dots - z_{k-1}$, one has
  \begin{equation*}
    \mathscr{I}_{\mathbf{b}}(\mathbf{X}) =\frac{2^{J^*}}{\pi}
    \langle \textbf{X}_0\rangle\int_{(\zeta_1)} \cdots \int_{(\zeta_{k-1})}
    \prod_{i=1}^k \frac{ \mathscr{K}_i(z_i) }{b_i^{z_i}}
    \prod_{j=1}^{J_i} \Bigl(X_{ij}^{1 - h_{ij} z_i }
    \frac{1 - 2^{h_{ij}z_i-1}}{1 - h_{ij} z_i  }\Bigr)
    \frac{\dd z_1 \cdots \dd z_{k-1}}{(2\pi {\rm i})^{k-1}} . 
  \end{equation*}
\end{lemma}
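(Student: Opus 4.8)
The statement is a closed-form evaluation of the singular integral $\mathscr{I}_{\mathbf{b}}(\mathbf{X})$ defined in \eqref{E5} in terms of iterated Mellin--Barnes integrals. The natural route is to open up the definition of $\mathscr{I}_{\mathbf b}(\mathbf X)$, replace each exponential integral $I_i(b_i\beta,\mathbf X)$ by a suitable Mellin integral in a variable $z_i$, and then carry out the $\beta$-integration explicitly, which introduces the delta-function constraint $z_1+\dots+z_k=1$ (equivalently, the $k$-th variable is eliminated). First I would record, for a single $1$-dimensional factor, the elementary Fourier/Mellin identity
\[
  \int_{1/2 < |y| \le Y} e(\beta y^{h})\,\dd y = \frac{1}{2\pi\mathrm i}\int_{(\sigma)} \mathscr K(z)\, |\beta|^{-z} \cdot \frac{2(Y^{1-hz}-(Y/2)^{1-hz})}{1-hz}\, \dd z
\]
for appropriate $\sigma$, where $\mathscr K(z)$ is exactly the gamma-type kernel in \eqref{Ki}, the two cases (cosine vs.\ exponential) reflecting whether $e(\beta(-y)^h)$ equals $e(\beta y^h)$ or $e(-\beta y^h)$, i.e.\ the parity of $h$. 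This follows from the Mellin transform of $x\mapsto e(\pm x)$ on $(0,\infty)$, which is $\Gamma(z)e^{\pm \mathrm i\pi z/2}$, combined by averaging over the sign of $y$. Taking a product over $j=1,\dots,J_i$ gives $I_i(b_i\beta,\mathbf X)$ as a $J_i$-fold Mellin integral, but by homogeneity one only needs a single variable $z_i$: the monomial $y_{i1}^{h_{i1}}\cdots y_{iJ_i}^{h_{iJ_i}}$ means the factor depending on $\beta$ enters as $|\beta|^{-z_i}$ with a product $\prod_j (\text{edge term in } X_{ij}^{1-h_{ij}z_i})$, up to the kernel $\mathscr K_i(z_i)$ (the parity case in \eqref{Ki} being governed by whether \emph{any} $h_{ij}$ is odd, since one odd exponent already forces the $\cos$ form).

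Second, I would substitute these representations into \eqref{E5}. After pulling all $z_i$-integrals outside, the inner integral over $\beta\in\mathbb R$ is $\int_{-\infty}^{\infty} \prod_{i=1}^k |b_i\beta|^{-z_i}\,\dd\beta$ up to the kernels; writing $b_i^{-z_i}$ outside, this is $\prod_i |b_i|^{-z_i}$ times $\int_{\mathbb R}|\beta|^{-(z_1+\dots+z_k)}\dd\beta$. The latter is a distributional identity: as a function of $\mathbf z$ it equals $2\pi\,\delta(1-z_1-\dots-z_k)$ in the appropriate sense (this is the standard fact that $\int_{\mathbb R}|\beta|^{-s}\dd\beta$, continued in $s$, acts as a delta at $s=1$; more carefully, one moves one contour and picks up the pole, or uses that $\int e(\beta t)$-type cancellation localizes). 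Carrying this out eliminates $z_k$ via $z_k = 1 - z_1 - \dots - z_{k-1}$, produces the prefactor $2\pi$ which combines with the $(2\pi\mathrm i)$'s and the factor $2$ per variable to give $2^{J^*}/\pi$ and the measure $(2\pi\mathrm i)^{-(k-1)}$, and leaves exactly the asserted $(k-1)$-fold integral over the lines $\Re z_i = \zeta_i$. The contour placement $\Re z_i=\zeta_i$ is legitimate precisely because \eqref{zeta1} guarantees $\zeta_i>0$ (so the $\beta$-integral/kernel converges, $\Gamma(z_i)$ is holomorphic) and $h_{ij}\zeta_i<1$ (so the factors $(1-h_{ij}z_i)^{-1}$ have no pole on the contour and the $X_{ij}$-powers are of the right size), while $\sum\zeta_i=1$ makes the contour of $z_k$ land on $\Re z_k = \zeta_k>0$ as well. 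Condition \eqref{J-cond} ensures enough decay so that the interchange of the $\beta$-integral with the Mellin integrals is justified (when some $\zeta_i\ge 1/2$, the single-variable decay $|\beta|^{-z_i}$ is too weak and one uses $J_i\ge 2$ to gain $|\beta|^{-2\Re z_i \cdot(\text{something})}$; more precisely one needs the total decay rate $\sum_i \min(J_i,\cdot)\zeta_i$-type bound, but \eqref{E3} already shows absolute convergence of $\mathscr I_{\mathbf b}(\mathbf X)$ for $k\ge 2$, so the real content is justifying the termwise Mellin inversion and Fubini).

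\textbf{Main obstacle.} The delicate point is the rigorous handling of the $\beta$-integral after Mellin inversion: $\int_{\mathbb R}|\beta|^{-(z_1+\dots+z_k)}\dd\beta$ does not converge for any fixed $\mathbf z$ (it diverges at $0$ or $\infty$ depending on $\Re\sum z_i$ vs.\ $1$), so one cannot naively apply Fubini. The clean way is to truncate: keep $\mathbf X$-dependent cutoffs, note that $I_i(b_i\beta,\mathbf X)$ is an honest Schwartz-type function of $\beta$ (smooth, rapidly decreasing by repeated integration by parts in the definition \eqref{E2}), write the product $\prod I_i$ as a function whose Mellin transform in a single scaling variable one computes, and then invoke Mellin inversion for that product directly — i.e.\ compute $\int_{\mathbb R}\prod_{i=1}^k I_i(b_i\beta,\mathbf X)\,\dd\beta$ by first doing the $\beta$-integral of the product of the \emph{integrands} $e(b_i\beta \mathbf y_i^{\mathbf h_i})$ against the $y$'s, which is a genuine oscillatory integral, and recognizing the result. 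Equivalently, one regularizes $\int|\beta|^{-s}$ by splitting at $|\beta|=1$ and analytically continuing each half, then recombines; the two poles at $s=1$ have residues $\pm 1$ whose difference gives the $2\pi\delta$ after accounting for the $\mathrm i$'s in the Mellin kernels. I would present it via the regularized-split argument since it keeps everything within the Mellin-Barnes formalism already set up in \cite{BBS1}, and the bookkeeping of the constant $2^{J^*}/\pi$ is then a matter of collecting the factor $2$ from each of the $J^*$ one-dimensional pieces against the $1/(2\pi\mathrm i)$ contour normalizations and the $2\pi$ from the $\beta$-residue.
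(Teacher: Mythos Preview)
Your overall strategy—representing each $I_i$ via the Mellin transform of $e(w)$ and then eliminating the $\beta$-integral—is the paper's strategy too, and your identification of the parity dichotomy behind $\mathscr K_i$ is correct. But you diverge from the paper precisely at the point you flag as the main obstacle, and the paper's maneuver there is both simpler and avoids the difficulty entirely.

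The paper does \emph{not} insert the Mellin representation for all $k$ factors. It inserts it only for $i=1,\dots,k-1$ and leaves $I_k(b_k\beta,\mathbf X)$ untouched. After pulling the $z_1,\dots,z_{k-1}$ integrals outside, the remaining $\beta$-integral is
\[
\int_{-\infty}^{\infty} I_k(b_k\beta,\mathbf X)\,|\beta|^{-(z_1+\dots+z_{k-1})}\,\dd\beta,
\]
which is nothing but the two-sided Mellin transform of $\beta\mapsto I_k(b_k\beta,\mathbf X)$ at the point $z_k=1-z_1-\dots-z_{k-1}$. This integral converges absolutely because $I_k$ decays like $|\beta|^{-1}$ (by \eqref{E3}) and $0<\Re z_k=\zeta_k<1$. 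Its value is then read off by Mellin inversion from the already-established representation \eqref{twosided} for $I_k$. No divergent integral, no distributional $\delta$, no regularized split: the ``obstacle'' simply never arises.

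A secondary point: the single-variable Mellin identity for $e(w)$ is first written on a bent contour $\mathscr C$ going to $\Re s=-1$ at infinity, because $\Gamma(z)\cos(\pi z/2)\asymp |t|^{\sigma-1/2}$ on vertical lines and the bare identity does not converge on $\Re z=\zeta_i$ when $\zeta_i\ge 1/2$. Only after integrating over $\mathscr Y$ and picking up the $J_i$ factors $(1-h_{ij}z_i)^{-1}$ does one have enough decay to straighten the contour to $\Re z_i=\zeta_i$; this is exactly where \eqref{J-cond} enters. Your proposal jumps directly to vertical lines, which would need this justification spelled out.
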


Note that \eqref{zeta1} implies that $\Re z_k = \zeta_k$.

\begin{proof}
  We start with the absolutely convergent Mellin identity
  $$e(w) = \int_{\mathscr{C}} \Gamma(s)   \exp\left(\frac{1}{2}\text{sgn}(w)
    {\rm i} \pi s\right) |2\pi w|^{-s} \frac{\dd s}{2\pi {\rm i}}$$
  for $w \in \Bbb{R} \setminus \{0\}$ and $\mathscr{C}$ the contour
  \begin{equation*}
    \textstyle(-1-{\rm i}\infty, -1-{\rm i}] \cup [-1-{\rm i}, \frac{1}{k} -
    {\rm i}] \cup [ \frac{1}{k}- {\rm i},  \frac{1}{k} + {\rm i}] \cup [
    \frac{1}{k} + {\rm i}, -1 + {\rm i}] \cup [-1 + {\rm i}] \cup [-1 + {\rm
      i} \infty),
  \end{equation*} 
  which can simply be checked by moving the contour to the left and comparing
  power series. Integrating this over $\mathscr{Y}$ as in \eqref{E2} based on
  $$\int_{\frac{1}{2} Y \leq y \leq Y} y^{-hs} \dd y = \frac{1- 2^{hs}}{1-hs} Y^{1-hs}$$
  and using the definition \eqref{E4}, we obtain
  \begin{equation}\label{twosided}
    I_i(b_i \beta, \textbf{X}_i) = 2^{J_i}  \int_{\mathscr{C}}\frac{
      \mathscr{K}_i(z_i)  }{(2\pi|b_i\beta|)^{z_i}} \prod_{j=1}^{J_i}
    \Bigl(X_{ij}^{1 - h_{ij} z_i }  \frac{1 - 2^{h_{ij}z_i-1}}{1 - h_{ij} z_i
    }\Bigr) \frac{\dd z_i}{2\pi {\rm i}}
  \end{equation}
  for every $i$. Note that $\text{sgn}(\textbf{y}_i^{\textbf{h}_i})$ is always
  1 if and only if $h_{ij}$ is even for all $1 \leq j \leq J_i$.  At this
  point, we can straighten the contour and replace it with
  $\Re z_i = \zeta_i$. The expression is still absolutely convergent, provided
  that \eqref{J-cond} holds.  We insert this formula into \eqref{E5} for
  $i = 1, \ldots, k-1$ getting
  \begin{displaymath}
    \begin{split}
      \mathscr{I}_{\textbf{b}}(\textbf{X}) = \langle\textbf{X}_0\rangle
      \int_{-\infty}^{\infty}
      &2^{J_1 + \dots + J_{k-1}} \int^{(k-1)}_{\Re z_i = \zeta_{i}}
      \prod_{i=1}^{k-1} \frac{ \mathscr{K}_i(z_i)  }{(2\pi|b_i|)^{z_i}}
      \prod_{j=1}^{J_i}  \Bigl(X_{ij}^{1 - h_{ij} z_i }  \frac{1 -
        2^{h_{ij}z_i-1}}{1 - h_{ij} z_i  }\Bigr)
      \frac{\dd \textbf{z}}{(2\pi {\rm i})^{k-1}} \\
      &\times I_{k}(b_k\beta, \textbf{X}_k) |\beta|^{-z_1 - \dots - z_{k-1}}
      d\beta .
    \end{split}
  \end{displaymath}
  The integral in $\beta$ is still absolutely convergent, by \eqref{E3} and
  \eqref{zeta1}. It is the two-sided Mellin transform of
  $ I_{k}(b_k\beta, \textbf{X}_k)$ in $\beta$ at
  $z_k = 1- z_1- \dots - z_{k-1}$. An evaluation can be read off from
  \eqref{twosided} by Mellin inversion, and the lemma follows.
\end{proof}

We are now prepared to describe our method in detail.

\subsection{Step 1: Initial manipulations}\label{sec51}

Let $\chi \colon (\Bbb{Z} \setminus \{0\})^J \rightarrow [0, 1]$ be the
characteristic function on the set of solutions to the torsor equation
 \eqref{torsor} subject to $b_1 = \dots = b_k = 1$, and let
$\psi \colon (\Bbb{Z} \setminus \{0\})^J \rightarrow [0, 1]$ be the
characteristic function on $J$-tuples of nonzero integers satisfying
the coprimality conditions \eqref{gcd}. For $1 \leq \nu \leq N$, let
\begin{equation}\label{defP}
  P_{\nu}(\textbf{x}) = \prod_{ij} |x_{ij}|^{\alpha_{ij}^{\nu}}
\end{equation}
denote the monomials appearing in the height conditions 
\eqref{height}.  We start with some smoothing. Let $0 < \Delta < 1/10$ and
define
$$  F_{\Delta, B}(\mathbf{x}) =
\prod_{\nu = 1}^{N} f_{\Delta}
\left(\frac{P_{\nu}(\mathbf{x})}{{B}}\right). $$ Then the counting function
$$N_{\Delta}({B}) =  \sum_{\mathbf{x} \in (\Bbb{Z}\setminus\{0\})^J}
\psi(\mathbf{x})
\chi(\mathbf{x}) F_{\Delta, B}(\mathbf{x}) $$
satisfies
\begin{equation}\label{sandwich}
N_{\Delta}({ B}(1 - \Delta)) \leq N(B)  \leq N_{\Delta}({B}). 
\end{equation}
We remove the coprimality conditions encoded in $\psi$ by M\"obius
inversion. As in \cite[Lemma 2.1]{BBS2}, we have
$$N_{\Delta}({B}) =  \sum_{\mathbf{g} \in \Bbb{N}^r} \mu(\mathbf{g})
\sum_{\mathbf{x} \in (\Bbb{Z}\setminus\{0\})^J}   \chi({\bm \gamma} \cdot
\mathbf{x}) F_{\Delta, B}({\bm \gamma} \cdot  \mathbf{x}),$$
where for given $\mathbf{g} \in \Bbb{N}^r$, we wrote
\begin{equation}\label{gamma}
  {\bm \gamma} = (\gamma_{ij}) \in\Bbb{N}^J, \quad
  \gamma_{ij} = {\rm lcm}\{g_{\rho} \mid (i, j) \in S_{\rho}\}.
\end{equation}
for $0 \leq i \leq k$, $1 \leq j \leq J_i$.  In the following we will need \eqref{2a} of Hypothesis \ref{H2} only for $\textbf{b} = \bm \gamma$. For later purposes, we state the
following elementary lemma.
\begin{lemma}\label{kgV}
  For ${\bm \gamma}\in \Bbb{N}^J$ as in \eqref{gamma}, $\delta > 0$,
  $1 \leq \rho \leq r$, and ${\bm \eta} = (\eta_{ij}) \in \Bbb{R}^J_{\geq 0}$,
  the series
  $$\sum_{\textbf{g} \in \Bbb{N}^r}  {\bm \gamma}^{-{\bm \eta}}  g_{\rho}^{\delta}$$
  is convergent provided that 
  $$\sum_{(i, j) \in S_{\rho}}  \eta_{ij} > 1 + \delta$$
  holds for all $1 \leq \rho \leq r$.
\end{lemma}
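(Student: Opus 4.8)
The plan is to estimate the series by bounding $\gamma_{ij}$ from below and reducing to a product of single-variable Dirichlet series. Recall from \eqref{gamma} that $\gamma_{ij} = \operatorname{lcm}\{g_\rho : (i,j)\in S_\rho\}$, so in particular $\gamma_{ij} \geq g_\rho$ whenever $(i,j)\in S_\rho$. Fix the index $\rho$ appearing in the factor $g_\rho^\delta$. The idea is to distribute the weight ${\bm\gamma}^{-{\bm\eta}} = \prod_{ij}\gamma_{ij}^{-\eta_{ij}}$ so that, for each $\rho'$, the variable $g_{\rho'}$ receives a decaying power; then the $r$-fold sum splits as a product of $r$ convergent series over $g_1,\dots,g_r\in\Bbb{N}$.

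First I would observe that for each pair $(i,j)$ with $(i,j)\in S_{\rho'}$ we have $\gamma_{ij}\geq g_{\rho'}$, hence
\begin{equation*}
  {\bm\gamma}^{-{\bm\eta}} = \prod_{ij}\gamma_{ij}^{-\eta_{ij}}
  \leq \prod_{ij} g_{\rho_{ij}}^{-\eta_{ij}}
\end{equation*}
for any choice of $\rho_{ij}$ with $(i,j)\in S_{\rho_{ij}}$ (here we use $\eta_{ij}\geq 0$ so that replacing $\gamma_{ij}$ by the smaller $g_{\rho_{ij}}$ only increases the term). In particular, grouping by the chosen $\rho_{ij}$, the right-hand side is a product over $\rho'=1,\dots,r$ of $\prod_{(i,j): \rho_{ij}=\rho'} g_{\rho'}^{-\eta_{ij}} = g_{\rho'}^{-\sum_{(i,j):\rho_{ij}=\rho'}\eta_{ij}}$. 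To exploit the hypothesis, for $\rho'\neq\rho$ I would simply pick $\rho_{ij}=\rho'$ for \emph{all} $(i,j)\in S_{\rho'}$, giving exponent $\sum_{(i,j)\in S_{\rho'}}\eta_{ij} > 1$; for $\rho'=\rho$ I need a bit more, since there is an extra factor $g_\rho^\delta$. Here I distribute carefully: keep $\rho_{ij}=\rho$ for all $(i,j)\in S_\rho$ that are not claimed by some other $\rho'$, which still yields exponent at least $\sum_{(i,j)\in S_\rho}\eta_{ij} - (\text{amount re-assigned}) $. The cleanest route is to not re-assign at all: choose $\rho_{ij}=\rho'$ only when it is the \emph{smallest} index $\rho'$ with $(i,j)\in S_{\rho'}$, which partitions the pairs; but then the exponent attached to $g_{\rho'}$ is only a partial sum. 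Since partial sums need not exceed $1$, this naive partition fails, so instead I keep the full overcount: bound ${\bm\gamma}^{-{\bm\eta}}\le \prod_{\rho'=1}^r g_{\rho'}^{-\sum_{(i,j)\in S_{\rho'}}\eta_{ij}}$ is \emph{false} in general (a pair in two sets would be counted twice). The correct statement is that ${\bm\gamma}^{-{\bm\eta}}\le \prod_{ij} g_{\sigma(i,j)}^{-\eta_{ij}}$ for any function $\sigma$ selecting $\sigma(i,j)\in\{\rho': (i,j)\in S_{\rho'}\}$, and then I choose $\sigma$ so that $\sigma(i,j)=\rho$ for every $(i,j)\in S_\rho$ and $\sigma(i,j)=(\text{anything valid})$ otherwise — giving
\begin{equation*}
  \sum_{\textbf{g}\in\Bbb{N}^r} {\bm\gamma}^{-{\bm\eta}} g_\rho^\delta
  \leq \sum_{\textbf{g}\in\Bbb{N}^r} g_\rho^{\delta - \sum_{(i,j)\in S_\rho}\eta_{ij}}
   \prod_{\rho'\neq\rho} g_{\rho'}^{-e_{\rho'}}
\end{equation*}
where $e_{\rho'} = \sum_{(i,j):\ \sigma(i,j)=\rho'}\eta_{ij} \geq 0$ but possibly $0$. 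The main obstacle is exactly this: the leftover sums over $g_{\rho'}$, $\rho'\neq\rho$, may fail to converge because their exponents $e_{\rho'}$ can be $0$.

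To fix this I would not collapse the product prematurely. Instead, I bound ${\bm\gamma}^{-{\bm\eta}}$ by splitting each exponent $\eta_{ij}$ into $r$ nonnegative pieces — formally, using $\gamma_{ij} = \operatorname{lcm}_{(i,j)\in S_{\rho'}} g_{\rho'} \geq \prod_{(i,j)\in S_{\rho'}} g_{\rho'}^{1/|\{\rho':(i,j)\in S_{\rho'}\}|}$ is again awkward; the honest approach is to write $\gamma_{ij}\geq \max_{(i,j)\in S_{\rho'}} g_{\rho'}$ and then, for a weight $\eta_{ij}>0$, pick any decomposition $\eta_{ij} = \sum_{\rho': (i,j)\in S_{\rho'}} \eta_{ij}^{(\rho')}$ with $\eta_{ij}^{(\rho')}\geq 0$ and $\sum_{\rho'}\eta_{ij}^{(\rho')} = \eta_{ij}$, whence $\gamma_{ij}^{-\eta_{ij}} \leq \prod_{\rho'} g_{\rho'}^{-\eta_{ij}^{(\rho')}}$. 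The degree of freedom here is real: I choose the $\eta_{ij}^{(\rho')}$ so that, for \emph{each} $\rho'$, the total weight landing on $g_{\rho'}$, namely $\sum_{(i,j)\in S_{\rho'}} \eta_{ij}^{(\rho')}$, exceeds $1$ (and, in the case $\rho'=\rho$, exceeds $1+\delta$). This is possible precisely because the hypothesis gives $\sum_{(i,j)\in S_{\rho'}}\eta_{ij} > 1+\delta$ for \emph{every} $\rho'$: I can afford to reserve, within each $S_{\rho'}$, a block of total weight $>1$ (resp.\ $>1+\delta$ for $\rho=\rho'$) to assign to $g_{\rho'}$, provided each $\eta_{ij}$ is not over-committed. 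Since the sets $S_{\rho'}$ overlap, I need the system of inequalities "$\sum_{(i,j)\in S_{\rho'}}\eta_{ij}^{(\rho')} > 1+\delta\cdot[\rho'=\rho]$, $\sum_{\rho'}\eta_{ij}^{(\rho')}=\eta_{ij}$, $\eta_{ij}^{(\rho')}\geq 0$" to be feasible — a small LP whose feasibility follows from the hypothesis by a direct (greedy) construction: scale by writing $\eta_{ij}^{(\rho')} = \eta_{ij}\cdot c_{ij}^{(\rho')}$ with $\sum_{\rho'} c_{ij}^{(\rho')}=1$, and simply set $c_{ij}^{(\rho')}$ proportional to... here it is cleanest to use $c_{ij}^{(\rho')} = [(i,j)\in S_{\rho'}] / |\{\rho'':(i,j)\in S_{\rho''}\}|$, i.e.\ split $\eta_{ij}$ equally among the sets containing $(i,j)$; then $\sum_{(i,j)\in S_{\rho'}}\eta_{ij}^{(\rho')} = \sum_{(i,j)\in S_{\rho'}}\eta_{ij}/|\{\rho'':(i,j)\in S_{\rho''}\}|$, which is $\le \sum_{(i,j)\in S_{\rho'}}\eta_{ij}$ but can be $<1$. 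So even the equal split fails; the honest fix is that the LP \emph{is} feasible but requires the hypothesis to hold for all $\rho'$ simultaneously, and I would cite this as the key point and verify feasibility via an explicit construction or LP-duality argument (Farkas).

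Once a valid splitting $(\eta_{ij}^{(\rho')})$ is in hand, the rest is immediate:
\begin{equation*}
  \sum_{\textbf{g}\in\Bbb{N}^r} {\bm\gamma}^{-{\bm\eta}}\, g_\rho^{\delta}
  \leq \sum_{\textbf{g}\in\Bbb{N}^r} g_\rho^{\delta}\prod_{\rho'=1}^r
    g_{\rho'}^{-\sum_{(i,j)\in S_{\rho'}}\eta_{ij}^{(\rho')}}
  = \Big(\sum_{g_\rho\geq 1} g_\rho^{\delta - \sum_{(i,j)\in S_\rho}\eta_{ij}^{(\rho)}}\Big)
    \prod_{\rho'\neq\rho}\Big(\sum_{g_{\rho'}\geq 1} g_{\rho'}^{-\sum_{(i,j)\in S_{\rho'}}\eta_{ij}^{(\rho')}}\Big),
\end{equation*}
and every factor is a convergent $\zeta$-type series because each exponent exceeds $1$ (the $\rho$-factor exponent being $< -1$ since $\sum_{(i,j)\in S_\rho}\eta_{ij}^{(\rho)} > 1+\delta$). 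This proves the lemma. The main obstacle, to repeat, is the combinatorial/LP feasibility of splitting the weights ${\bm\eta}$ across the overlapping primitive-collection-type sets $S_\rho$ so that each receives total weight $>1$ (resp.\ $>1+\delta$); everything else is the trivial factorization of a multiple Dirichlet series. I would handle feasibility either by an explicit greedy assignment exploiting that $|S_\rho|\geq 2$ and $\sum_{(i,j)\in S_\rho}\eta_{ij}>1+\delta$ for all $\rho$, or — cleanly — by noting it suffices to take $\eta_{ij}^{(\rho')}= \eta_{ij}[(i,j)\in S_{\rho'}]$ with no normalization and instead bound $\gamma_{ij}^{-\eta_{ij}}\le \prod_{\rho':(i,j)\in S_{\rho'}} g_{\rho'}^{-\eta_{ij}/M}$ with $M = \max_{(i,j)} |\{\rho':(i,j)\in S_{\rho'}\}|$, which still suffices if the hypothesis is slightly strengthened — but since the statement as given only asks for convergence under $\sum_{(i,j)\in S_\rho}\eta_{ij}>1+\delta$, the LP route is the one I would write out.
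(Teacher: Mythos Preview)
Your approach has a genuine gap: the LP you propose is not feasible in general, so the weight-splitting route cannot establish the lemma under the stated hypothesis. Here is a concrete counterexample. Take $r=2$ and $S_1=S_2=\{(1,1),(1,2)\}$, with $\eta_{11}=\eta_{12}=0.75$ and $\delta=0.4$. Then $\sum_{(i,j)\in S_{\rho'}}\eta_{ij}=1.5>1+\delta$ for both $\rho'$, so the hypothesis holds. Your LP asks for nonnegative $\eta_{ij}^{(\rho')}$ with $\eta_{11}^{(1)}+\eta_{11}^{(2)}\le 0.75$, $\eta_{12}^{(1)}+\eta_{12}^{(2)}\le 0.75$, $\eta_{11}^{(1)}+\eta_{12}^{(1)}>1$, and $\eta_{11}^{(2)}+\eta_{12}^{(2)}>1+\delta=1.4$. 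Summing the last two gives $>2.4$, while the first two give a total $\le 1.5$; contradiction. Yet the series in question does converge here (as the paper's argument shows), so the lemma is true while your LP is infeasible. The underlying issue is that the only information you extract from $\gamma_{ij}=\mathrm{lcm}\{g_{\rho'}:(i,j)\in S_{\rho'}\}$ is the pointwise bound $\gamma_{ij}\ge g_{\rho'}$, and this throws away too much when the $S_{\rho'}$ overlap heavily.

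The paper avoids this by exploiting multiplicativity. Since $\mathbf{g}\mapsto {\bm\gamma}^{-{\bm\eta}}g_\rho^\delta$ is multiplicative in $\mathbf{g}$, the sum is an Euler product, and the $p$-factor is
\[
\sum_{{\bm\alpha}\in\Bbb{N}_0^r} p^{\,\delta\alpha_\rho-\sum_{ij}\eta_{ij}\max_{t:(i,j)\in S_t}\alpha_t}.
\]
If $A=\max_t\alpha_t$ is attained at $t^\ast$, then $\sum_{ij}\eta_{ij}\max_t\alpha_t\ge A\sum_{(i,j)\in S_{t^\ast}}\eta_{ij}\ge A(1+\delta+\delta_0)$, while $\delta\alpha_\rho\le\delta A$, so each term is $\le p^{-(1+\delta_0)A}$; with at most $(A+1)^r$ tuples of given maximum $A$, the local factor is $1+O(p^{-1-\delta_0})$ and the product converges. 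The point is that the $\max$ in the exponent (which is exactly the lcm structure at a prime) lets you pick $t^\ast$ \emph{depending on ${\bm\alpha}$}, something no fixed global splitting of the $\eta_{ij}$ can mimic.
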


\begin{proof}
  Suppose that $\sum_{(i, j) \in S_{\rho}} \eta_{ij} \geq 1+\delta+ \delta_0$
  for all $\rho$ and some $\delta_0 > 0$. The sum in question can be written
  as an Euler product, and a typical Euler factor has the form
  $$\sum_{{\bm \alpha} \in \Bbb{N}^r_0}
  p^{f(\bm \alpha)}, \quad f(\bm \alpha) = \delta \alpha_\rho -\sum_{i, j} \eta_{ij}\max_{(i, j) \in
      S_t} \alpha_t.$$
This is  $$1 + O\Big(\sum_{\alpha=1}^{\infty} \frac{(1+\alpha)^r}{p^{\alpha(1+
     \delta_0)}}\Big).$$
  The statement is now clear. 
\end{proof}

For $1 \leq T \leq B$, we define
$$N_{\Delta, T}({B}) =  \sum_{|\mathbf{g}| \leq T  } \mu(\mathbf{g})
\sum_{\mathbf{x} \in (\Bbb{Z}\setminus\{0\})^J}   \chi({\bm \gamma} \cdot
\mathbf{x}) F_{\Delta, B}({\bm \gamma} \cdot  \mathbf{x}).$$
By \eqref{2a}, \eqref{3} (recall $\Delta \leq 1/10$) and Lemma~\ref{kgV}, and 
by an estimate that is often called Rankin's trick,
\begin{equation}\label{error1}
  \begin{split}
    |N_{\Delta, T}({B}) - N_{\Delta}({B}) | &\leq \sum_{|\mathbf{g}| > T }
    N_{{\bm \gamma}, {\bm \gamma}}(2B, 2B, 1)
    \ll B(\log B)^{c_2 + \varepsilon} \sum_{|\mathbf{g}| > T }    {\bm \gamma}^{-{\bm \eta}}\\
    & \leq B(\log B)^{c_2 + \varepsilon}\sum_{\mathbf{g} } {\bm \gamma}^{-{\bm
        \eta}} \Bigl(\frac{|\mathbf{g}|}{T}\Bigr)^{\delta_2 - \varepsilon}\ll
    B(\log B)^{c_2 + \varepsilon} T^{-\delta_2 }.
  \end{split}
\end{equation}
Next we write each factor $f_{\Delta}$ in the definition of $F_{\Delta, B}$ as
its own Mellin inverse, so that
$$N_{\Delta, T}({B}) =  \sum_{|\mathbf{g}| \leq T  } \mu(\mathbf{g})
\int_{(1)}^{(N)}  \sum_{\mathbf{x} \in (\Bbb{Z}\setminus\{0\})^J}  \frac{
  \chi({\bm \gamma} \cdot \mathbf{x}) }{{\bm \gamma}^\mathbf{v}  }\prod_{ij}
|x_{ij}|^{-v_{ij}} \prod_{\nu=1}^N\Bigl(
\widehat{f}_{\Delta}(s_{\nu})B^{s_{\nu}}\Bigr) \frac{\dd\mathbf{s}}{(2\pi {\rm
    i})^{N}}$$
where
\begin{equation}\label{vs}
  \mathbf{v}  = (v_{ij}) = \mathscr{A}_1 \mathbf{s} \in \Bbb{C}^J
\end{equation}
and $\mathscr{A}_1 = (\alpha_{ij}^{\nu}) \in \Bbb{R}^{J\times N}$ is as
before.  By partial summation, we obtain
\begin{displaymath}
  \begin{split}
    \sum_{\mathbf{x} \in (\Bbb{Z}\setminus\{0\})^J}  \frac{ \chi({\bm \gamma} \cdot \mathbf{x}) }{{\bm \gamma}^\mathbf{v} } \prod_{ij} |x_{ij}|^{-v_{ij}} &  = \frac{1}{{\bm \gamma}^{\mathbf{v}}} \Bigl(\prod_{i, j} v_{ij}\Bigr)\int_{[1, \infty)^J} \sum_{0 < |x_{ij}| \leq X_{ij}}  \chi({\bm \gamma} \cdot \mathbf{x}) \mathbf{X}^{-\mathbf{v} - \mathbf{1}} \dd\mathbf{X}\\
    &= \frac{1}{{\bm \gamma}^{\mathbf{v}}} \Bigl(\prod_{i, j} \frac{v_{ij}}{1 -
      2^{-v_{ij}}}\Bigr)\int_{[1, \infty)^J} \sum_{\frac{1}{2} X_{ij} < |x_{ij}|
      \leq X_{ij}} \chi({\bm \gamma} \cdot \mathbf{x}) \mathbf{X}^{-\mathbf{v} -
      \mathbf{1}} \dd\mathbf{X},
  \end{split}
\end{displaymath}
so that  
\begin{equation*}
  \begin{split}
    N_{\Delta, T}({B})  =  \sum_{|\mathbf{g}| \leq T  } \mu(\mathbf{g})
    \int_{(1)}^{(N)}  \frac{1}{{\bm \gamma}^{\mathbf{v}}} \Bigl(\prod_{i, j}
    \frac{v_{ij}}{1 - 2^{-v_{ij}}}\Bigr)\int_{[1,
      \infty)^J}\frac{\mathscr{N}_{{\bm
          \gamma}^{\ast}}(\mathbf{X})}{\mathbf{X}^{\mathbf{v} + \mathbf{1}}}
    \dd\mathbf{X}
    \prod_{\nu=1}^N\Bigl(  \widehat{f}_{\Delta}(s_{\nu})B^{s_{\nu}}\Bigr)
    \frac{\dd\mathbf{s}}{(2\pi {\rm i})^{N}}
  \end{split}
\end{equation*}
in the notation of Hypothesis~\ref{H1}, where 
\begin{equation}\label{gammaast}
  {\bm \gamma}^{\ast} = \Big(\prod_{j=1}^{J_i} \gamma_{ij}^{h_{ij}}\Big)_{1
    \leq i \leq k} \in \Bbb{N}^k.
\end{equation}
We emphasize that we need \eqref{E} of Hypothesis \ref{H1} only for $\textbf{b} = {\bm \gamma}^{\ast}$. 

\subsection{Step 2: Removing the cusps}

We would like to insert the asymptotic formula from Hypothesis~\ref{H1}. This
gives a meaningful error term only if $\min X_{ij}$ is not too small, and the
formula is only applicable if \eqref{samesize} holds. Thus, for
$0 < \delta<1, 0<\lambda \le 1$ we define the set
$$\mathscr{R}_{\delta, \lambda} = \Bigl\{\mathbf{X} = (\textbf{X}_1, \ldots,
\textbf{X}_{k}) \in [1, \infty)^J : \min_{i, j} X_{ij} \geq \max
X_{ij}^{\delta}, \, \min_{1 \leq i \leq k}   \textbf{X}_i^{\textbf{h}_i}  \geq
\big(\max_{1 \leq i \leq k}
\textbf{X}_i^{\textbf{h}_i}\big)^{1-\lambda}\Bigr\}.$$
Correspondingly we put 
\begin{equation}\label{defNDeltaTdeltalambda}
  N_{\Delta, T, \delta, \lambda} =  \sum_{|\mathbf{g}| \leq T  }
  \mu(\mathbf{g}) \int_{(1)}^{(N)}  \frac{1}{{\bm \gamma}^{\mathbf{v}}}
  \Bigl(\prod_{i, j} \frac{v_{ij}}{1 -
    2^{-v_{ij}}}\Bigr)\int_{\mathscr{R}_{\delta,
      \lambda}}\frac{\mathscr{N}_{{\bm
        \gamma}^{\ast}}(\mathbf{X})}{\mathbf{X}^{\mathbf{v} + \mathbf{1}}}
  \dd\mathbf{X}
  \prod_{\nu=1}^N\Bigl(  \widehat{f}_{\Delta}(s_{\nu})B^{s_{\nu}}\Bigr)
  \frac{\dd\mathbf{s}}{(2\pi {\rm i})^{N}}.
\end{equation}
While $ \lambda $ is fixed, $\delta$ is allowed to depend on $B$ and will
later be chosen as a negative power of $\log B$. In particular, all subsequent
estimates will be uniform in $\delta$.

\begin{lemma}\label{lemma2}
  We have
  $$N_{\Delta, T}({B})  - N_{\Delta, T, \delta, \lambda} \ll
  T^r B(\log B)^{c_2 + \varepsilon} (\delta + (\log B)^{-1}).$$
\end{lemma}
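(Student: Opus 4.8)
The plan is to bound the difference $N_{\Delta,T}(B)-N_{\Delta,T,\delta,\lambda}$ by the contribution to $N_{\Delta,T}(B)$ coming from those $\mathbf X$ that lie outside $\mathscr R_{\delta,\lambda}$, and then to recognise this contribution as essentially an instance of the upper bound supplied by Hypothesis~\ref{H2}. The point is that both $N_{\Delta,T}(B)$ and $N_{\Delta,T,\delta,\lambda}$ arise from the same Mellin--Barnes representation; the only difference is that the inner integral over $[1,\infty)^J$ in $N_{\Delta,T}(B)$ is restricted to $\mathscr R_{\delta,\lambda}$ in $N_{\Delta,T,\delta,\lambda}$. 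The complement $[1,\infty)^J\setminus\mathscr R_{\delta,\lambda}$ is precisely the set where at least one of the two conditions $\min_{ij}X_{ij}\ge (\max X_{ij})^\delta$ or $\min_i \mathbf X_i^{\mathbf h_i}\ge(\max_i\mathbf X_i^{\mathbf h_i})^{1-\lambda}$ fails. So I would first undo the smoothing and the Mellin inversion: estimate $|N_{\Delta,T}(B)-N_{\Delta,T,\delta,\lambda}|$ directly on the counting side rather than keeping it in Mellin--Barnes form. Concretely, $N_{\Delta,T}(B)-N_{\Delta,T,\delta,\lambda}$ is at most $\sum_{|\mathbf g|\le T}|\mu(\mathbf g)|$ times the number of $\mathbf x\in(\Bbb Z\setminus\{0\})^J$ with $\Phi(\bm\gamma\cdot\mathbf x)=0$, with $P_\nu(\bm\gamma\cdot\mathbf x)\le (1+\Delta)B$ for all $\nu$ (using $\operatorname{supp}f_\Delta\subseteq[0,1+\Delta)$), and with $\bm\gamma\cdot\mathbf x$ landing in the complement of $\mathscr R_{\delta,\lambda}$.

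Next I would translate "outside $\mathscr R_{\delta,\lambda}$'' into the language of Hypothesis~\ref{H2}. Dyadically decomposing the variables $x_{ij}$ (replacing the continuous $\mathbf X$ by powers of $2$), the condition that a point lies outside $\mathscr R_{\delta,\lambda}$ means that for its dyadic box $\mathbf X$, either $\min_{ij}X_{ij}<(\max_{ij}X_{ij})^\delta\le B^\delta$ — so setting $H=B^\delta$, the first alternative in \eqref{Hlambda} holds — or the second alternative in \eqref{Hlambda} holds with the given $\lambda$ (up to the harmless factor of $2$ inside the maximum, which only changes constants). Hence the relevant count is bounded by $N_{\bm\gamma,\bm\gamma\cdot\mathbf 1}\big((1+\Delta)B,\,B^\delta,\,\lambda\big)$ in the notation of \eqref{2a}, with $\mathbf b=\bm\gamma$ and $\mathbf y=\mathbf 1$ (so that $\mathbf b\cdot\mathbf y=\bm\gamma$ as required there). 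Applying \eqref{2a} with $H=B^\delta$ gives a bound
$$N_{\bm\gamma,\bm\gamma}\big((1+\Delta)B, B^\delta, \lambda\big)\ll B(\log B)^{c_2-1+\varepsilon}(1+\delta\log B)\,\bm\gamma^{-\bm\eta}\ll B(\log B)^{c_2+\varepsilon}\big(\delta+(\log B)^{-1}\big)\bm\gamma^{-\bm\eta},$$
using $1+\delta\log B\ll (\log B)(\delta+(\log B)^{-1})$. It remains to sum over $\mathbf g$ with $|\mathbf g|\le T$: since $\bm\gamma^{-\bm\eta}\le 1$ and there are $O(T^r)$ such $\mathbf g$, the total is $\ll T^r B(\log B)^{c_2+\varepsilon}(\delta+(\log B)^{-1})$, which is exactly the claimed bound.

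I expect the only real subtlety is bookkeeping at the boundary: verifying that the dyadic decomposition faithfully identifies "$\mathbf X\notin\mathscr R_{\delta,\lambda}$'' with "$\mathbf X$ satisfies \eqref{Hlambda} for $H=B^\delta$'' — one must check that $(\max_{ij}X_{ij})^\delta$ versus $B^\delta$ and the factor-of-$2$ discrepancy between $\max\mathbf X_i^{\mathbf h_i}$ and $\max(2\mathbf X_i)^{\mathbf h_i}$ cause at most a constant-factor loss, and that the $(1+\Delta)$-inflation of $B$ is absorbed since $\Delta<1/10$ and \eqref{2a} is uniform. One should also note that the argument needs only the \emph{upper} bound \eqref{2a} (not the asymptotic in Hypothesis~\ref{H1}), and that the factor $\bm\gamma^{-\bm\eta}$ together with $|S_\rho|\ge 2$ in \eqref{3} is what keeps the $\mathbf g$-sum under control even though here we crudely bound it by $T^r$ rather than summing the tail. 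A clean way to organise this is: (i) peel off smoothing and Möbius inversion to reduce to counting $\mathbf x$ outside $\mathscr R_{\delta,\lambda}$; (ii) dyadically split and match with \eqref{Hlambda}; (iii) invoke \eqref{2a}; (iv) sum trivially over $|\mathbf g|\le T$.
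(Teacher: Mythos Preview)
Your overall strategy is the paper's: revert the manipulations of Section~\ref{sec51} and feed the resulting count into Hypothesis~\ref{H2} with $H\asymp B^\delta$. The gap is in your step~(i). You assert that
\[
\bigl|N_{\Delta,T}(B)-N_{\Delta,T,\delta,\lambda}\bigr|
\le \sum_{|\mathbf g|\le T}\#\bigl\{\mathbf x:\chi(\bm\gamma\cdot\mathbf x)=1,\ P_\nu(\bm\gamma\cdot\mathbf x)\le(1+\Delta)B,\ \bm\gamma\cdot\mathbf x\notin\mathscr R_{\delta,\lambda}\bigr\},
\]
but this does not follow by ``undoing the smoothing and Mellin inversion''. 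The quantity $N_{\Delta,T,\delta,\lambda}$ is defined only through the integral representation~\eqref{defNDeltaTdeltalambda}, and that representation carries the factors $v_{ij}/(1-2^{-v_{ij}})$ produced by the partial summation that turned the full count into the dyadic count $\mathscr N_{\bm\gamma^\ast}(\mathbf X)$. The continuous variable $\mathbf X\in[1,\infty)^J$ is a dyadic-box parameter, not the integer point $\mathbf x$; restricting $\mathbf X$ to $\mathscr R_{\delta,\lambda}$ does \emph{not} correspond to restricting $\bm\gamma\cdot\mathbf x$ to $\mathscr R_{\delta,\lambda}$.

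The paper carries out the reversal honestly: first an inclusion--exclusion over $\bm\sigma\in\{0,1\}^J$ to pass from the dyadic count to a full count, then partial summation, then a geometric-series expansion of each $(1-2^{-v_{ij}})^{-1}$, and finally Mellin inversion. What comes out is
\[
\bigl|N_{\Delta,T}(B)-N_{\Delta,T,\delta,\lambda}\bigr|
\le 2^J\sum_{|\mathbf g|\le T}\ \sum_{\mathbf k\in\Bbb N_0^J}
N_{\bm\gamma,\,\bm\gamma\cdot\widetilde{\mathbf k}}\bigl((1+\Delta)B,\ ((1+\Delta)B)^\delta,\ \lambda\bigr),
\]
i.e.\ an \emph{extra infinite sum over $\mathbf k$} that your simplified bound omits. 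This $\mathbf k$-sum is what forces the $\langle\mathbf y\rangle^{-\delta_2^\ast}$ decay in~\eqref{2a} into play (with $\mathbf y=\widetilde{\mathbf k}$), not the choice $\mathbf y=\mathbf 1$ you make. Once the $\mathbf k$-sum is controlled by $\delta_2^\ast>0$, the rest of your argument (apply~\eqref{2a} with $H\asymp B^\delta$, then bound $\sum_{|\mathbf g|\le T}\bm\gamma^{-\bm\eta}$ trivially by $T^r$) goes through exactly as you describe.
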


\begin{proof}
  This is essentially \cite[Lemma 5.1]{BBS2}. The idea is to revert all steps
  from Section~\ref{sec51} and apply the bound \eqref{2a}. By a change of
  variables, we have
  \begin{displaymath}
    \begin{split}
      N_{\Delta, T, \delta, \lambda}  = &  \sum_{|\mathbf{g}| \leq T  }
      \mu(\mathbf{g}) \int_{(1)}^{(N)}  \frac{1}{{\bm \gamma}^{\mathbf{v}}}
      \Bigl(\prod_{i, j} \frac{v_{ij}}{1 - 2^{-v_{ij}}}\Bigr)\sum_{{\bm
          \sigma} \in \{0, 1\}^J} (-1)^{|{\bm \sigma}|_1} \\
      &\times \int_{ \widetilde{-{\bm \sigma}} \cdot \mathscr{R}_{\delta,
          \lambda}} \sum_{0< |x_{ij}| \leq X_{ij}} \chi({\bm \gamma} \cdot
      \mathbf{x})(\widetilde{{\bm \sigma}} \cdot \mathbf{X})^{-\mathbf{v}}
      \frac{\dd\mathbf{X}}{\langle \textbf{X} \rangle} \prod_{\nu=1}^N\Bigl(
      \widehat{f}_{\Delta}(s_{\nu})B^{s_{\nu}}\Bigr)
      \frac{\dd\mathbf{s}}{(2\pi {\rm i})^{N}},\end{split}
  \end{displaymath}
  where we recall the notation \eqref{tilde}. By partial summation, this equals
  \begin{displaymath}
    \begin{split}
      & \sum_{|\mathbf{g}| \leq T } \mu(\mathbf{g}) \int_{(1)}^{(N)}
      \Bigl(\prod_{i, j} \frac{1}{1 - 2^{-v_{ij}}}\Bigr)\sum_{{\bm \sigma} \in
        \{0, 1\}^J} (-1)^{|{\bm \sigma}|_1} 2^{-\sum_{ij} \sigma_{ij} v_{ij}}
      \sum_{ \mathbf{x} \in \widetilde{-{\bm \sigma}} \cdot \mathscr{R}_{\delta,
          \lambda}} \frac{ \chi({\bm \gamma} \cdot \mathbf{x})}{{\bm
          \gamma}^{\mathbf{v}}\mathbf{x}^{\mathbf{v}}} \prod_{\nu=1}^N\Bigl(
      \widehat{f}_{\Delta}(s_{\nu})B^{s_{\nu}}\Bigr) \frac{\dd\mathbf{s}}{(2\pi
        {\rm i})^{N}}.
    \end{split}
  \end{displaymath}
  We conclude that
  \begin{displaymath}
    \begin{split}
      |N_{\Delta, T}({B})  - N_{\Delta, T, \delta, \lambda} |   \leq &
      \sum_{|\mathbf{g}| \leq T  } \sum_{{\bm \sigma} \in \{0, 1\}^J}
      \Big|\int_{(1)}^{(N)}    \Bigl(\prod_{i, j} \frac{1}{1 -
        2^{-v_{ij}}}\Bigr)    \\
      &\times 2^{-\sum_{ij} \sigma_{ij} v_{ij}} \sum_{ \mathbf{x} \in (\Bbb{Z}
        \setminus\{0\})^J \setminus \widetilde{-{\bm \sigma}} \cdot
        \mathscr{R}_{\delta, \lambda}} \frac{ \chi({\bm \gamma} \cdot
        \mathbf{x})}{{\bm \gamma}^{\mathbf{v}}\mathbf{x}^{\mathbf{v}}}
      \prod_{\nu=1}^N\Bigl( \widehat{f}_{\Delta}(s_{\nu})B^{s_{\nu}}\Bigr)
      \frac{\dd\mathbf{s}}{(2\pi {\rm i})^{N}}\Big|.
    \end{split}
  \end{displaymath}
  Finally we write each factor $(1 - 2^{-v_{ij}})$ as a geometric series and
  apply Mellin inversion to recast the right hand side as
  \begin{displaymath}
    \sum_{|\mathbf{g}| \leq T  } \sum_{{\bm \sigma} \in \{0, 1\}^J}
    \sum_{\mathbf{k} \in \Bbb{N}_0^J} \sum_{ \mathbf{x} \in (\Bbb{Z}
      \setminus\{0\})^J \setminus \widetilde{-{\bm \sigma}} \cdot
      \mathscr{R}_{\delta, \lambda}}  \chi({\bm \gamma} \cdot
    \mathbf{x})F_{\Delta, B}({\bm \gamma} \cdot (\widetilde{\textbf{k} +
      {\bm \sigma}} ) \cdot \mathbf{x}).      
  \end{displaymath}
  Note that any
  $\mathbf{x} \not\in \widetilde{-{\bm \sigma}} \cdot \mathscr{R}_{\delta,
    \lambda}$ in the support of
  $F_{\Delta, B}({\bm \gamma} \cdot (\widetilde{\textbf{k} + {\bm \sigma}} )
  \cdot \mathbf{x})$ satisfies
  $$\min_{ij} |x_{ij}| \leq ( (1+\Delta)B)^{\delta}\quad \text{or} \quad
  \min_{1 \leq i \leq k} \prod_{j = 1}^{J_i} |x_{ij}|^{h_{ij}} \leq
  \Bigl(\max_{1 \leq i \leq k} \prod_{j = 1}^{J_i}
  |2x_{ij}|^{h_{ij}}\Bigr)^{1-\lambda},$$ so that
  $$|N_{\Delta, T}({B})  - N_{\Delta, T, \delta, \lambda} |   \leq 2^J
  \sum_{|\mathbf{g}| \leq T  } \sum_{\mathbf{k} \in \Bbb{N}_0^J}  N_{{\bm
      \gamma},   {\bm \gamma } \cdot \widetilde{\textbf{k}}}((1+\Delta)B,
  ((1+\Delta)B)^{\delta}, \lambda)$$
  by \eqref{Hlambda}.  The lemma follows from \eqref{2a}. Note that
  $\delta_2^{\ast} > 0$ in \eqref{2a} ensures that the $\textbf{k}$-sum
  converges.
\end{proof}

\subsection{Step 3: The error term in the asymptotic formula}
 
We insert Hypothesis~\ref{H1} into \eqref{defNDeltaTdeltalambda}.  For
convenience, we now write
$\Psi_{\mathbf b}(\mathbf X) = N_{\mathbf b}(\mathbf X)- \mathscr E_{\mathbf
  b}\mathscr I_{\mathbf b}(\mathbf X)$.  In this section, we estimate the
contribution of the error $\Psi_{\mathbf b}(\mathbf X)$, which amounts to
bounding
$$E_{\Delta, T, \delta, \lambda} =  \sum_{|\mathbf{g}| \leq T  }  \Bigl|
\int_{(1)}^{(N)} \frac{1}{{\bm \gamma}^{\mathbf{v}}} \Bigl(\prod_{i, j}
\frac{v_{ij}}{1 - 2^{-v_{ij}}}\Bigr)\int_{\mathscr{R}_{\delta, \lambda}}
\frac{\Psi_{{\bm \gamma}^{\ast}}(\mathbf{X})}{ \mathbf{X}^{\mathbf{v}
    +\mathbf{1}}} \dd\mathbf{X} \prod_{\nu=1}^N\Bigl(
\widehat{f}_{\Delta}(s_{\nu})B^{s_{\nu}}\Bigr) \frac{\dd\mathbf{s}}{(2\pi {\rm
    i})^{N}}\Bigr|. $$ For $\mathbf{X} \in \mathscr{R}_{\delta, \lambda}$, we
use \eqref{errorterm} and
$\min X_{ij}^{-\delta \delta_1} \leq \prod_{ij} X_{ij}^{-\delta\delta_1/J}$ to
conclude that
$$\Psi_{{\bm \gamma}^{\ast}}(\mathbf{X}) \ll {\bm \gamma}^{C\textbf{h}}
\Big(\prod_{i=0}^k \prod_{j=1}^{J_i} X_{ij}^{1- h_{ij}\zeta_i + \varepsilon -
  \delta\delta_1/J}\Big).$$
Thus the $\mathbf{X}$-integral is absolutely convergent provided that
\begin{equation}\label{provided}
  \Re v_{ij} > 1 - h_{ij}\zeta_i - \delta\delta_1/J
\end{equation}  
holds for each $i, j$. We now choose appropriate contours for the
$\mathbf{s}$-integral. By \eqref{vs}, the choice
$\Re \mathbf{s} = {\bm \sigma} = (\sigma_{\nu}) \in\Bbb{R}_{>0}^N$ as in
\eqref{1a} is admissible to ensure \eqref{provided}. These contours stay also
to the right of the poles of $\widehat{f}_{\Delta}$ at $s = 0$ (and in fact
inside the validity of \eqref{smooth} and \eqref{useful} if $\delta_3$ is
sufficiently small) and to the right of the poles of $(1 - 2^{-v_{ij}})^{-1}$
at $\Re v_{ij} = 0$ by \eqref{zeta1} if $\delta$ is sufficiently small. By
\eqref{1a}, this ${\bm \sigma}$ satisfies $\sum \sigma_{\nu} = 1$.  We now
shift each $s_{\nu}$-contour to
$\Re s_{\nu} = \sigma_{\nu} - \delta\delta_1/(2JA)$, where
$$A = \max_{ij} \sum_\nu \alpha^\nu_{ij}.$$ Then $ \Re v_{ij} \geq 1 -
h_{ij}\zeta_i - \delta\delta_1/(2J)$ in
accordance with \eqref{provided}, and poles of any $ (1 - 2^{-v_{ij}})^{-1}$
or $\widehat{f}_{\Delta}(s_{\nu})$ remain on the left of the lines of
integration provided that $\delta$ is less than a sufficiently small constant
(it will later tend to zero as $B \rightarrow \infty$). Having shifted the
$\mathbf{s}$-contour in this way, we estimate trivially. The
$\mathscr{R}_{\delta, \lambda}$-integral is $\ll \delta^{-J}$, so that
\begin{equation}\label{error2}
  \begin{split}
    E_{\Delta, T, \delta, \lambda}& \ll \delta^{-J} B^{1
      -\frac{\delta\delta_1N}{2JA} }\sum_{|\mathbf{g}| \leq T  }  {\bm
      \gamma}^{C\textbf{h}}    \int^{(N)} \Big| \langle \textbf{v} \rangle
    \prod_{\nu} \widehat{f}_{\Delta}(s_{\nu}) \Big| \, |\dd\mathbf{s}|\\
    & \ll T^{CS+r} \delta^{-J} B^{1 -\frac{\delta\delta_1N}{2JA} }
    \Delta^{-J+\varepsilon}
  \end{split}
\end{equation}
by \eqref{useful} (which is still applicable if $\delta_3$ is sufficiently
small) with $\mathscr{D} = {\rm id}$, $c = \varepsilon$,
$\| \mathbf{a} \|_1 = J$, where
\begin{equation}\label{S}
  S = \sum_{\rho=1}^r \sum_{(i, j) \in S_{\rho}} h_{ij}. 
\end{equation}

\subsection{Step 4: Inserting the asymptotic formula}\label{54}

We now insert the main term in Hypothesis~\ref{H1} into
\eqref{defNDeltaTdeltalambda}. In order to compute this properly, we re-insert
the cuspidal contribution and replace the range
$\mathscr{R}_{\delta, \lambda}$ of integration with $[1,\infty)^J$. In this
section, we estimate the error
\begin{displaymath}
  E^{\ast}_{\Delta, T, \delta, \lambda} = \sum_{|\mathbf{g}| \leq T }\Bigl|
  \int_{(1)}^{(N)} \frac{1}{{\bm \gamma}^{\mathbf{v}}} \Bigl(\prod_{i, j}
  \frac{v_{ij}}{1 - 2^{-v_{ij}}}\Bigr)\int_{[1, \infty)^J \setminus
    \mathscr{R}_{\delta, \lambda}} \frac{\mathscr{E}_{{\bm \gamma}^{\ast}}
    \mathscr{I}_{{\bm \gamma}^{\ast}}(\mathbf{X}) }{\mathbf{X}^{\mathbf{v} +
      \mathbf{1}}} \dd\mathbf{X} \prod_{\nu=1}^N\Bigl(
  \widehat{f}_{\Delta}(s_{\nu})B^{s_{\nu}}\Bigr) \frac{\dd\mathbf{s}}{(2\pi
    {\rm i})^{N}}\Bigr|.
\end{displaymath}
We interchange the $\mathbf{s}$- and $\mathbf{X}$-integral and compute the
$\mathbf{s}$-integral first. Writing as before each $(1 - 2^{-v_{ij}})^{-1}$
as a geometric series, we obtain
$$ \int_{(1)}^{(N)}  \frac{1}{{\bm \gamma}^{\mathbf{v}}\mathbf{X}^{\mathbf{v}
  } }\Bigl(\prod_{i, j} \frac{v_{ij}}{1 - 2^{-v_{ij}}}\Bigr)
\prod_{\nu=1}^N\Bigl( \widehat{f}_{\Delta}(s_{\nu})B^{s_{\nu}}\Bigr)
\frac{\dd\mathbf{s}}{(2\pi {\rm i})^{N}} = \sum_{\mathbf{k} \in
  \Bbb{N}_0^J}\int_{(1)}^{(N)} (\widetilde{\textbf{k}} \cdot {\bm \gamma}\cdot
\mathbf{X})^{-\mathbf{v}} \langle \textbf{v} \rangle \prod_{\nu=1}^N\Bigl(
\widehat{f}_{\Delta}(s_{\nu})B^{s_{\nu}}\Bigr) \frac{\dd\mathbf{s}}{(2\pi {\rm
    i})^{N}}, $$ and
$\langle \textbf{v} \rangle \prod_{\nu} (
\widehat{f}_{\Delta}(s_{\nu})B^{s_{\nu}} ) $ is a linear combination of terms
of the form
$\prod_{\nu=1}^N s_{\nu}^{a_{\nu}} \widehat{f}_{\Delta}(s_{\nu})B^{s_{\nu}}$
for vectors $\mathbf{a} = (a_{\nu}) \in \Bbb{N}_0^N$ with
$\| \mathbf{a} \|_1 = J$. The inverse Mellin transform of
$s^a \widehat{f}_{\Delta}(s)$ is ${\tt D}^af_{\Delta}$ where ${\tt D}$ is the
differential operator $f(x) \mapsto -x f'(x)$. Hence defining
$$F^{(\mathbf{a})}_{\Delta, B}(\mathbf{x}) =  \prod_{\nu = 1}^{N} {\tt
  D}^{a_{\nu}} f_{\Delta} \left(\frac{|P_{\nu}(\mathbf{x})|}{{B}}\right)$$
with $P_{\nu}$ as in \eqref{defP}, we see that
$E^{\ast}_{\Delta, T, \delta, \lambda}$ is bounded by a linear combination of
terms of the form
\begin{displaymath}
  \begin{split}
    & \sum_{|\mathbf{g}| \leq T  } \int_{[1, \infty)^J \setminus
      \mathscr{R}_{\delta, \lambda}}  \frac{|\mathscr{E}_{{\bm \gamma}^{\ast}}
      \mathscr{I}_{{\bm \gamma}^{\ast}}(\mathbf{X})|}{\langle \textbf{X} \rangle}
    \sum_{\mathbf{k} \in \Bbb{N}_0^J} |F^{(\mathbf{a})}_{\Delta,
      B}(\widetilde{\textbf{k}} \cdot {\bm \gamma}\cdot
    \mathbf{X})|\dd\mathbf{X}\\
    & \ll   \Delta^{-J} \sum_{|\mathbf{g}| \leq T  }{\bm \gamma}^{\textbf{h}}
    \sum_{\mathbf{k} \in \Bbb{N}_0^J}  \int_{[1, \infty)^J \setminus
      \mathscr{R}_{\delta, \lambda}}  \Bigl(\prod_{ij}
    X_{ij}^{-h_{ij}\zeta_i}\Bigl)F_{0, B(1+\Delta)}(\widetilde{\textbf{k}} \cdot
    {\bm \gamma}\cdot \mathbf{X})\dd\mathbf{X}
  \end{split}
\end{displaymath}
by Lemma~\ref{singint}, \eqref{E} and \eqref{smooth0}. By \eqref{continuous}
with $\textbf{b} = (1, \ldots, 1)$,
$\textbf{y} = \widetilde{\textbf{k}} \cdot {\bm \gamma}$ and
$H = ((1+\Delta) B)^{\delta}$, we obtain
\begin{equation}\label{error3}
  E^{\ast}_{\Delta, T, \delta, \lambda} \ll
  T^{S+r} \Delta^{-J}B(\log B)^{c_2+\varepsilon}(\delta + (\log B)^{-1})
\end{equation}
with $S$ as in \eqref{S}. Again $\delta_2^{\ast} > 0$ in \eqref{continuous}
ensures that the $\textbf{k}$-sum converges. Combining Lemma~\ref{lemma2},
\eqref{error2} and \eqref{error3} and choosing
$\delta = (\log B)^{-1+\varepsilon}$, we have shown
\begin{equation}\label{step5}
    N_{\Delta, T}({B}) = N^{(1)}_{\Delta, T}({B}) + O(T^{S+r} \Delta^{-J}B(\log B)^{c_2-1+\varepsilon})
\end{equation}
where
$$N^{(1)}_{\Delta, T}({B}) =  \sum_{|\mathbf{g}| \leq T  } \mu(\mathbf{g})   \int_{(1)}^{(N)}  \frac{1}{{\bm \gamma}^{\mathbf{v}}} \Bigl(\prod_{i, j} \frac{v_{ij}}{1 - 2^{-v_{ij}}}\Bigr)\int_{[1, \infty)^J  }  \frac{\mathscr{E}_{{\bm \gamma}^{\ast}} \mathscr{I}_{{\bm \gamma}^{\ast}}(\mathbf{X}) }{\mathbf{X}^{\mathbf{v} + \mathbf{1}}} \dd\mathbf{X}
\prod_{\nu=1}^N\Bigl( \widehat{f}_{\Delta}(s_{\nu})B^{s_{\nu}}\Bigr)
\frac{\dd\mathbf{s}}{(2\pi {\rm i})^{N}}.$$ We insert Lemma~\ref{todo} and
integrate over $\mathbf{X}$. This gives
\begin{displaymath}
  \begin{split}
    N^{(1)}_{\Delta, T}({B}) = \frac{2^{J^{\ast}}}{\pi} &\sum_{|\mathbf{g}| \leq T  } \mu(\mathbf{g})   \int_{(1)}^{(N)} \int_{\Re z_i = \zeta_i}^{(k-1)}  \frac{ \mathscr{E}_{{\bm \gamma}^{\ast}}}{{\bm \gamma}^{\mathbf{v}} ({\bm \gamma}^{\ast})^{\mathbf{z}}} \Bigl(\prod_{i=1}^{k}  \mathscr{K}_i(z_i)  \prod_{j=1}^{J_i}  \frac{1 - 2^{h_{ij}z_i-1}}{1 - h_{ij} z_i  } \Bigr)   \\
    &\times \Bigl(\prod_{i=0}^k \prod_{j=1}^{J_i} \frac{v_{ij}}{(1 - 2^{-v_{ij}})w_{ij}}\Bigr)    \prod_{\nu=1}^N\Bigl(  \widehat{f}_{\Delta}(s_{\nu})B^{s_{\nu}}\Bigr) \frac{\dd\mathbf{z}}{(2\pi {\rm i})^{k-1}}
    \frac{\dd\mathbf{s}}{(2\pi {\rm i})^{N}}.
  \end{split}
\end{displaymath}
where $w_{ij} = v_{ij} + h_{ij} z_i -1$ and we recall our convention
$z_k = 1 - z_1 - \dots - z_{k-1}$. If we write
$\mathbf{w} = (w_{ij}) \in \Bbb{C}^J$, then by \eqref{vs} and \eqref{A2new},
we have
\begin{equation}\label{zast}
  \mathbf{w} = \mathscr{A}_1 \mathbf{s} + \mathscr{A}_2 \mathbf{z}^{\ast}, \quad \mathbf{z}^{\ast} = (z_1, \ldots, z_{k-1}, 1).
\end{equation}
This explains the seemingly artificial definition of $\mathscr{A}_2$.  We can
simplify this first by recalling the definition \eqref{gammaast} of
${\bm \gamma}^{\ast}$, which implies
${\bm \gamma}^{\mathbf{v}} ({\bm \gamma}^{\ast})^{\mathbf{z}} = {\bm
  \gamma}^{\mathbf{w} + \mathbf{1}}.$ Next we use our convention $h_{0j} = 0$
and insert a redundant factor
$2^{J_0} \prod_{j=1}^{J_0} (1 - 2^{h_{0j}z_0 - 1})$. We also write
$\kappa = k-1$. In this way, we can recast $ N^{(1)}_{\Delta, T}({B})$ as
\begin{displaymath}
  \frac{2^J}{\pi} \sum_{|\mathbf{g}| \leq T } \mu(\mathbf{g})
  \int_{(1)}^{(N)} \int_{\Re z_i = \zeta_i}^{(\kappa)} \frac{
    \mathscr{E}_{{\bm \gamma}^{\ast}}}{{\bm \gamma}^{\mathbf{w} +
      \mathbf{1}} } \Bigl(\prod_{i=1}^{k} \mathscr{K}_i(z_i)\Bigr)
  \frac{1}{\langle \textbf{w}\rangle }
  \frac{\phi(\mathbf{v})}{\phi(\mathbf{v} - \mathbf{w})}
  \prod_{\nu=1}^N\Bigl( \widehat{f}_{\Delta}(s_{\nu})B^{s_{\nu}}\Bigr)
  \frac{\dd\mathbf{z}}{(2\pi {\rm i})^{\kappa}} \frac{\dd\mathbf{s}}{(2\pi
    {\rm i})^{N}}
\end{displaymath}
where 
\begin{equation}\label{defphi}
  \phi(\mathbf{v}) = \prod_{i=0}^k \prod_{j=1}^{J_i}  \frac{v_{ij}}{1 - 2^{-v_{ij}}}.
\end{equation}

\subsection{Step 5: Contour shifts}\label{peyre}

In this section, we evaluate asymptotically $N^{(1)}_{\Delta, T}({B})$ by
contour shifts. Let ${\bm \sigma} = (\sigma_{\nu}) \in \Bbb{R}_{>0}^N$ be as
in \eqref{1a}. For some small $\varepsilon > 0$, we shift the
$\mathbf{s}$-contour to $\Re s_{\nu} = \sigma_{\nu} + \varepsilon$ without
crossing any poles. Shifting a little further to the left will pick up the
poles at $\mathbf{w} = 0$, whose residues produce the main term for $N(B)$. To
make this transparent, we make a change of variables as follows.

By \eqref{1c} we have
$\text{rk}(\mathscr{A}) =\text{rk}(\mathscr{A}_1\, \mathscr{A}_2) = R$, so we
can choose $R$ linearly independent members of the linear forms $w_{ij}$ in
$\mathbf{s}$ and $\mathbf{z}^{\ast} = (z_1, \ldots, z_{k-1}, 1)$, say
$w^{(1)}, \ldots, w^{(R)}$, and then the remaining $w_{ij}$ are linearly
dependent. Since also $\text{rk}(\mathscr{A}_1) = R$, we may, for fixed
$\mathbf{z}$, change variables in the $\mathbf{s}$-integral by completing the
$R$ functions $w^{(1)} , \ldots, w^{(R)} $ to a basis in any way such that the
determinant of the Jacobian is $\pm 1$. We call the new variables
$\mathbf{y} = (y_1, \ldots, y_N)$.

We can describe this also in terms of matrices. We pick a maximal linearly
independent set of $R$ rows $Z_{1}, \ldots, Z_{R}$ of the matrix
$(\mathscr{A}_1\, \mathscr{A}_2)$.  Let $Z_{R+1}, \ldots, Z_{J}$ denote the
remaining rows of $(\mathscr{A}_1\, \mathscr{A}_2)$ and let
$\mathscr{B} = (b_{kl}) \in \Bbb{R}^{(J-R) \times R}$ be the unique matrix
satisfying
\begin{equation}\label{beta}
  \mathscr{B}  \left(\begin{smallmatrix}  Z_{1}  \\ \vdots \\   Z_{R} 
    \end{smallmatrix}\right) = \left(\begin{smallmatrix} Z_{R+1} \\ \vdots \\
      Z_{J} \end{smallmatrix}\right).
\end{equation}
That is, $\mathscr{B}$ expresses the remaining $w_{ij}$ in terms of the
selected linearly independent set. Again by \eqref{1c}, we can also write the
last row $(\mathscr{A}_3\, \mathscr{A}_4)$ of $\mathscr{A}$ as a linear
combination of $Z_1, \ldots, Z_R$, say
\begin{equation}\label{beta0}
 \sum_{\ell = 1}^R   b_{\ell} Z_{\ell} =  (\mathscr{A}_3\, \mathscr{A}_4).
\end{equation}
The coefficients $b_{kl}$ and $b_{\ell}$ play the same role as in Lemma~\ref{lem19}. 
Choose a matrix  
\begin{equation}\label{mathcalC}
  \mathscr{C} = (\mathscr{C}_1\, \mathscr{C}_2) = \left(\begin{smallmatrix}
      Z_{1}  \\ \vdots \\   Z_{R}  \\  \boxed{  \,\,\,\,
        \begin{smallmatrix} \\ \ast \\  \\\end{smallmatrix}
        \,\,\,\, }\boxed{  \,\,\,\,
        \begin{smallmatrix} \\ 0 \\  \\\end{smallmatrix}
        \,\,\,\,  } \end{smallmatrix}\right)
  \in \Bbb{R}^{N \times (N+k)} ,  \quad
  (\mathscr{C}_1 \in \Bbb{R}^{N \times N}, \mathscr{C}_2 \in \mathscr{R}^{N \times k}),  
\end{equation}
with $\boxed{\ast} \in \Bbb{R}^{(N-R)\times N}$ chosen such that
$\mathscr{C}_1 \in \Bbb{R}^{N \times N}$ satisfies $\det \mathscr{C}_1 =
1$. This is possible since $\text{rk}(\mathscr{A}_1) = R$ by \eqref{1c}. Given
$\mathbf{s} \in \Bbb{C}^N$, $\mathbf{z} \in \Bbb{C}^{k-1}$, we define the
vector
\begin{equation}\label{defy}
  (y_1, \ldots, y_N)^{\top} 
  =  \mathbf{y} = \mathbf{y}(\mathbf{s}, \mathbf{z}^{\ast}) = \mathscr{C}
  (\mathbf{s}, \mathbf{z}^{\ast})^{\top}  = \mathscr{C}_1\mathbf{s} ^{\top}+
  \mathscr{C}_2 {\mathbf{z}^{\ast}}^{\top} .
\end{equation} 
We write 
\begin{equation*}
  {\bm \eta} = \mathbf{y}({\bm \sigma}, (\zeta_1, \ldots, \zeta_{k-1}, 1)) \in
  \Bbb{R}^N, \quad {\bm \eta}^{\ast} = \mathbf{y}({\bm \sigma} +\varepsilon
  \cdot \mathbf{1}, (\zeta_1, \ldots, \zeta_{k-1}, 1)) \in \Bbb{R}^N
\end{equation*}
with ${\bm \sigma}$ as in \eqref{1a} and some fixed $\varepsilon > 0$. In the
new variables $\mathbf{y}$, the path of integration
$\Re s_{\nu} = \sigma_{\nu} + \varepsilon$ becomes
$\Re y_{\nu} = \eta^{\ast}_{\nu}$.  Moreover, by \eqref{beta} and
\eqref{beta0}, we have
\begin{equation}\label{linear1}
  \langle \textbf{w} \rangle  =y_1 \cdots y_{R} \prod_{\iota=1}^{J-R}
  \mathscr{L}_\iota(\mathbf{y}), \quad 
  \mathscr{L}_{\iota}(\mathbf{y}) = \sum_{\ell=1}^Rb_{\iota\ell} y_{\ell}
\end{equation}
and 
\begin{equation}\label{linear2}
  -1 + \sum_{\nu=1}^N s_{\nu} = \mathscr{L}(\mathbf{y}), \quad 
  \mathscr{L}(\mathbf{y}) = \sum_{\ell=1}^Rb_{\ell} y_{\ell}.
\end{equation}
Thus  we can recast $ N^{(1)}_{\Delta, T}({B})$ as 
\begin{equation}\label{reca}
  \begin{split}
    \frac{2^J}{\pi} \sum_{|\mathbf{g}| \leq T  } \mu(\mathbf{g}) \int_{\Re z_i
      = \zeta_i}^{(\kappa)} &   \int_{\Re y_{\nu} = \eta_{\nu}^{\ast}}^{(N)}
    \frac{ \mathscr{E}_{{\bm \gamma}^{\ast}}}{{\bm \gamma}^{\mathbf{w} +
        \mathbf{1}}  }\frac{\phi(\mathbf{v})}{\phi(\mathbf{v}  - \mathbf{w})}
    \Bigl(  \prod_{\nu=1}^N
    \widehat{f}_{\Delta}(s_{\nu})\Bigr)\Bigl(\prod_{i=1}^{k}
    \mathscr{K}_i(z_i)\Bigr) \\
    & \times \frac{B^{1+\mathscr{L}(\mathbf{y})}}{y_1 \cdots y_{R}
      \prod_{\iota=1}^{J-R} \mathscr{L}_\iota(\mathbf{y}) }
    \frac{\dd\mathbf{y}}{(2\pi {\rm i})^{N}} \frac{\dd\mathbf{z}}{(2\pi {\rm
        i})^{\kappa}},
  \end{split}
\end{equation}
where now $\mathbf{s}, \mathbf{v}, \mathbf{w}$ are linear forms in
$\mathbf{y}, \mathbf{z}^{\ast}$ given by \eqref{vs}, \eqref{zast},
\eqref{beta} and \eqref{defy}. We now shift the $y_1, \ldots, y_{R}$-contours
appropriately within a sufficiently small $\varepsilon$-neighborhood of
${\bm \eta}$ (in which in particular
$ \phi(\mathbf{v})/\phi(\mathbf{v} - \mathbf{w}) \prod_{\nu}
\widehat{f}_{\Delta}(s_{\nu}) $ is holomorphic), always keeping
$\Re z_i = \zeta_i$. Recalling definitions \eqref{defphi} and \eqref{Ki} as
well as $\textbf{v} - \textbf{w} = (1 - h_{ij} z_{ij})_{ij}$, we record the
bound
\begin{equation}\label{residuebound}
  \begin{split}
    \mathscr{D}\Bigg( \frac{ \mathscr{E}_{{\bm \gamma}^{\ast}}}{{\bm
        \gamma}^{\mathbf{w}+\mathbf{1}} } \Bigl(\phi(\mathbf{v})
    \prod_{\nu=1}^N \widehat{f}_{\Delta}(s_{\nu})\Bigr)&
    \Bigl(\frac{1}{\phi(\mathbf{v} - \mathbf{w})}\prod_{i=1}^{k}
    \mathscr{K}_i(z_i)\Bigr) \Bigg)
    \ll T^S \Delta^{-J-c}|\mathbf{s}|^{-c}_{\infty}
    \Bigl(\prod_{i=1}^k |z_i|^{\zeta_i-\frac{1}{2} -J_i+\varepsilon} \Bigr)\\
    & = T^S\Delta^{-J-c} \Bigl(\prod_{i=1}^k |z_i|^{\zeta_i-\frac{1}{2}
      -J_i+\varepsilon} \Bigr)\big|\mathscr{C}_1^{-1}\mathbf{y} -
    \mathscr{C}_1^{-1}(\mathscr{C}_2\mathbf{z}^{\ast} )\big|^{-c}_{\infty}
  \end{split}
\end{equation}
that holds for any fixed linear differential operator $\mathscr{D}$ with
constant coefficients in $s_1, \ldots, s_{N}, z_1, \ldots, z_{k-1}$ and any
fixed $c > 0$. This follows from Stirling's formula, \eqref{useful},
\eqref{E} and \eqref{S}. In particular, choosing $c > N$ and recalling
\eqref{J-cond}, this expression is absolutely integrable over $\mathbf{z}$ and
$\mathbf{y}$. We return to \eqref{reca} and evaluate the
$(y_1, \ldots, y_R)$-integral asymptotically by appropriate contour
shifts. The integrals that arise are of the form
\begin{equation*}
  B (\log B)^{\alpha_0} \int^{(R)} \frac{B^{\ell(\tilde{\textbf{y}})}
    H(\tilde{\textbf{y}})}{\ell_1(\tilde{\textbf{y}}) \cdots
    \ell_{J_0}(\tilde{\textbf{y}}) }
  \frac{{\rm d}\tilde{\textbf{y}}}{(2\pi i)^{R_0}}
\end{equation*}
where $\alpha_0 \in \Bbb{N}_0$, $\ell_1, \ldots, \ell_{J_0}$ are linear forms
in $R_0$ variables spanning a vector space of dimension $R_0$, $\ell$ is a
linear form, the contours of integration are in an $\varepsilon$-neighborhood
of $\Re y_{\nu} = 0$ and $H$ is a holomorphic function in this region
satisfying the bound \eqref{residuebound}; initially we have $R_0 = R$,
$J_0 = J$, $\alpha_0 = 0$. As long as $\Re \ell(\tilde{\textbf{y}}) > 0$, we
can shift one of the variables to the left (if appearing with positive
coefficient) or to the right (if appearing with negative coefficient), getting
a small power saving in $B$ in the remaining integral and picking up the
residues on the way. Inductively we see that in each step
$J_0 - R_0 +\alpha_0$ is nonincreasing. Recalling the definition of $c_2$ in
\eqref{defc2}, we obtain eventually
\begin{equation}\label{N1}
 \begin{split}
   N^{(1)}_{\Delta, T}({B}) = & c^{\ast} c_{\text{fin}}(T) c_{\infty}(\Delta)B
   (\log B)^{c_2} + O(T^{S+r+\varepsilon} \Delta^{-J-N-\varepsilon}B(\log
   B)^{c_2-1})
 \end{split}
\end{equation}
for  some constant $c^{\ast} \in \Bbb{Q}$ (to be computed in a moment) and 
\begin{equation}\label{defcinf}
  \begin{split}
    &c_{\text{fin}}(T) =   \sum_{|\mathbf{g}| \leq T  } \mu(\mathbf{g})
    \frac{ \mathscr{E}_{{\bm \gamma}^{\ast}}}{\langle {\bm \gamma} \rangle  }
    , \\
    &c_{\infty}(\Delta) =\frac{2^J}{\pi} \int_{\Re z_i = \zeta_i}^{(\kappa)}
    \int_{\Re y_{\nu} = \eta^{\ast}_{\nu}}^{(N-R)} \Bigl( \prod_{\nu=1}^N
    \widehat{f}_{\Delta}(s_{\nu})|_{y_1 = \dots = y_R = 0}\Bigr)
    \Bigl(\prod_{i=1}^{k} \mathscr{K}_i(z_i)\Bigr) \frac{\dd y_{R+1} \cdots
      \dd y_N}{(2\pi {\rm i})^{N-R}} \frac{\dd\mathbf{z}}{(2\pi {\rm
        i})^{\kappa}}.
  \end{split}
\end{equation}
That the multiple integral in the formula for $c_{\infty}(\Delta)$ is
absolutely convergent follows again from \eqref{residuebound}.  Combining
\eqref{N1} with \eqref{error1} and \eqref{step5}, we have shown
\begin{equation}\label{eventually}
  N_{\Delta }(B) =  c^{\ast}  c_{\text{fin}} (T) c_{\infty}(\Delta) B (\log B)^{c_2}
  + O\big(B(\log B)^{c_2-1+\varepsilon}
  (T^{S+r} \Delta^{-J-N-\varepsilon} + T^{-\delta_2}\log B)\big)
\end{equation}
for any $1 < T  < B$.  

\subsection{Step 6: Computing the leading constant}

We proceed to compute explicitly the leading constant in
\eqref{eventually}. In this subsection, we consider $c^{\ast}$ and
$c_{\text{fin}}(T)$, and we start with the former. To this end, we observe
that in the course of the contour shifts, only the polar behavior at
$\textbf{w}=0$ is relevant, so that
$$c^{\ast} = \lim_{B \rightarrow \infty} \frac{1}{(\log B)^{c_2}}
\int^{(R)} B^{\mathscr{L}(y)} \prod_{\ell=1}^R F(y_\ell) \prod_{\iota=1}^{J-R}
\mathscr{L}_{\iota}(\textbf{y})^{-1} \frac{\dd \textbf{y}}{(2\pi {\rm i})^R}$$
for any function $F$ that is holomorphic except for a simple pole at $0$ with
residue 1, provided the integral is absolutely convergent. We choose
$F = \widehat{f}_{\Delta_0}$ for some $\Delta_0 > 0$ as in
\eqref{smooth0}--\eqref{smooth}, recall the notation
\eqref{linear1}--\eqref{linear2}, and insert the formula
$s^{-1} = \int_0^1 t^{s-1} \dd t$ for $\Re s > 0$. In this way we get the
absolutely convergent expression
\begin{displaymath}
  \begin{split}
    c^{\ast} & = \lim_{B \rightarrow \infty} \frac{1}{(\log B)^{c_2}} \int^{(R)}
    B^{\mathscr{L}(y)} \prod_{\ell=1}^R \widehat{f}_{\Delta_0}(y_\ell) \int_{[0,
      1]^{J-R}} \prod_{ \iota=1}^{J-R}
    t_{\iota}^{\mathscr{L}_{\iota}(\textbf{y})-1} \dd \textbf{t} \,
    \frac{\dd \textbf{y}}{(2\pi {\rm i})^R}\\
    & = \lim_{B \rightarrow \infty} \int^{(R)} B^{\mathscr{L}(y)}
    \prod_{\ell=1}^R \widehat{f}_{\Delta_0}(y_\ell) \int_{[0, \infty]^{J-R}}
    \prod_{\iota=1}^{J-R} B^{ -r_{\iota} \mathscr{L}_{\iota}(\textbf{y})} \dd
    \textbf{r}
    \, \frac{\dd \textbf{y}}{(2\pi {\rm i})^R}\\
    & = \lim_{B \rightarrow \infty} \int_{[0, \infty]^{J-R}} \int^{(R)} \Big(
    \prod_{\ell=1}^R \widehat{f}_{\Delta_0}(y_\ell) \Big)B^{\sum_\ell (b_{\ell}
      -\sum_{\iota} r_{\iota} b_{\iota \ell} )y_{\ell} } \frac{\dd
      \textbf{y}}{(2\pi {\rm i})^R}\, \dd \textbf{r} \\
    & = \lim_{B \rightarrow \infty} \int_{[0, \infty]^{J-R}} \prod_{\ell=1}^R
    f_{\Delta_0}\big(B^{ - b_{\ell} +\sum_{\iota} r_{\iota} b_{\iota \ell} }
    \big) \dd \textbf{r}.
  \end{split}
\end{displaymath}
Here we used a change of variables along with $c_2 = J-R$ in the first step,
cf.\ \eqref{defc2}, and Mellin inversion in the last step.  This formula holds
for every $\Delta_0 > 0$, so we can take the limit $\Delta_0\rightarrow 0$
getting
\begin{equation}\label{cast-final}
  c^{\ast}  =   \text{vol}\Big\{ \textbf{r} \in [0, \infty]^{J-R}  :
  b_{\ell}  -\sum_{\iota=1}^{J-R}  r_{\iota}  b_{\iota \ell}   \geq 0
  \text{ for all }  1 \leq \ell \leq R \Big\} . 
\end{equation}

Next we investigate $c_{\text{fin}}(T)$.  We can complete the $\mathbf{g}$-sum
at the cost of an error
$$\sum_{|\mathbf{g}| > T  }\Bigl|  \frac{ \mathscr{E}_{{\bm
      \gamma}^{\ast}}}{\langle {\bm \gamma} \rangle   } \Bigr| \ll
\sum_{\mathbf{g}  }  \Bigl( \prod_{ij} \gamma_{ij}^{-1+h_{ij}\beta_i}
\Bigr)\Bigl(\frac{|\mathbf{g}|}{T}\Bigr)^{\delta_4-\varepsilon } \ll
T^{-\delta_4+\varepsilon} $$
by \eqref{E}, \eqref{gamma}, \eqref{gammaast}, \eqref{1b} and Lemma~\ref{kgV},
so that
\begin{equation}\label{cfin-final}
  c_{\text{fin}}(T) =   c_{\text{fin}} + O(T^{-\delta_4+\varepsilon }),
  \quad c_{\text{fin}} = \sum_{\mathbf{g}   } \mu(\mathbf{g})
  \frac{ \mathscr{E}_{{\bm \gamma}^{\ast}}}{\langle {\bm \gamma} \rangle }.
\end{equation}
Using \eqref{localdensities}, we can rewrite $ c_{\text{fin}} $ in terms of
local densities (note that the sum is absolutely convergent). Recall that
$\textbf{g} = (g_1, \ldots, g_r)$ is indexed by the coprimality conditions
$S_1, \ldots, S_r$ in \eqref{gcd}. For a given choice of
$\alpha_1, \ldots, \alpha_r \in \{0, 1\}$, let
$$S(\bm \alpha) = \bigcup _{\alpha_{\rho} = 1} S_{\rho}, \quad \delta(ij, {\bm
  \alpha}) = \begin{cases} 1, & (i, j) \in S(\bm \alpha),\\ 0, & (i, j)
  \not\in S(\bm \alpha). \end{cases}$$ Then
$$c_{\text{fin}} = \prod_p \sum_{{\bm \alpha} \in \{0, 1\}^r} \frac{
  (-1)^{|{\bm \alpha}|_1} }{ p^{\#S(\bm \alpha) }} \cdot  \lim_{L \rightarrow
  \infty}\frac{1}{p^{L (J-1)}} \#\Big\{\textbf{x}  \bmod{ p^{L}} :
\sum_{i=1}^k \prod_{j=1}^{J_i} (p^{\delta(ij, {\bm \alpha})} x_{ij})^{h_{ij}}
\equiv 0 \bmod{p^{L}}\Big\}.$$
By inclusion-exclusion, this equals
\begin{equation}\label{cfin-final-euler}
  c_{\text{fin}}=   \prod_p \lim_{L \rightarrow \infty}\frac{1}{p^{L (J-1)}}
  \#\Bigg\{\textbf{x}  \bmod{ p^{L}} :
  \begin{array}{l}
    \displaystyle \sum_{i=1}^k \prod_{j=1}^{J_i}
    x_{ij}^{h_{ij}} \equiv 0 \bmod{p^{L}},\\
    (\{x_{ij} : (i, j) \in S_{\rho}\}, p) = 1 \text{ for } 1 \leq \rho \leq r
  \end{array}\Bigg\}.
\end{equation}
Combining \eqref{eventually} and \eqref{cfin-final}, we conclude 
\begin{equation*}
  N_{\Delta}(B) =  c^{\ast}  c_{\text{fin}}  c_{\infty}(\Delta) B (\log B)^{c_2}
  + O\Big(B(\log B)^{c_2-1-\delta_0} \Delta^{-J-N-\varepsilon}\Big)
\end{equation*}
for $\delta_0 = \min(\delta_2, \min(\delta_4, 1)(S+r+1)^{-1})> 0$, upon
choosing $T = (\log B)^{1/(S + r + 1)}$.  Since $N_{\Delta}(B)$ is obviously
nonincreasing in $\Delta$, we conclude from \eqref{sandwich} and the previous
display that
$N(B) =(1+o(1)) c^{\ast} c_{\text{fin}} c_{\infty} B (\log B)^{c_2}$ as
$B\rightarrow \infty$ with
\begin{equation}\label{cinftylimit}
  c_{\infty} = \lim_{\Delta \rightarrow 0}c_{\infty}(\Delta),
\end{equation}   
and this limit must exist.  We have proved

\begin{theorem}\label{analytic-theorem}
  Suppose that we are given a diophantine equation \eqref{torsor} with
  $b_1=\dots=b_k=1$ and height conditions \eqref{height} whose variables are
  restricted by coprimality conditions \eqref{gcd}. Suppose that
  Hypotheses~\ref{H1} and \ref{H2} and \eqref{1c}, \eqref{1a}, \eqref{1b},
  \eqref{J-cond} hold. Then we have the asymptotic formula
  \begin{equation}\label{rough}
    N(B) =(1+o(1)) c^{\ast}  c_{\text{{\rm fin}}}  c_{\infty}  B (\log B)^{c_2},
    \quad B \rightarrow \infty.
  \end{equation}
  Here $c^{\ast}$ is given in \eqref{cast-final} (using the notation
  \eqref{linear1}--\eqref{linear2}), $c_{\text{{\rm fin}}}$ in
  \eqref{cfin-final-euler}, $c_{\infty}$ in \eqref{cinftylimit} and
  \eqref{defcinf}, and $c_2$ in \eqref{defc2}.
\end{theorem}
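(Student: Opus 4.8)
The theorem is the consolidation of the six-step transition method carried out in Sections~\ref{sec51}--\ref{peyre}, so the proof is essentially a matter of assembling the intermediate estimates already established and verifying that the hypotheses guarantee every intermediate step is licit. The plan is to trace through the chain of displays \eqref{sandwich}, \eqref{error1}, Lemma~\ref{lemma2}, \eqref{error2}, \eqref{error3}, \eqref{step5}, \eqref{N1}, \eqref{eventually} and \eqref{cfin-final}, noting at each stage precisely which part of Hypotheses~\ref{H1} and \ref{H2} and of \eqref{1c}, \eqref{1a}, \eqref{1b}, \eqref{J-cond} is being invoked, and then optimizing the two free parameters $T$ and $\Delta$.

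\textbf{The steps, in order.} First I would recall the smoothing sandwich \eqref{sandwich}, which reduces the count $N(B)$ to the smooth count $N_\Delta(B)$ up to a multiplicative $(1-\Delta)$ distortion of $B$; since the main term is $\asymp B(\log B)^{c_2}$, replacing $B$ by $B(1-\Delta)$ costs only $O(\Delta\,B(\log B)^{c_2})$, which will be absorbed once $\Delta\to 0$. Second, I would invoke M\"obius inversion to remove the coprimality conditions and truncate the $\mathbf g$-sum at $|\mathbf g|\le T$: here \eqref{2a}, the condition $C_1(\bm\eta)$ from \eqref{3}, and Lemma~\ref{kgV} (via Rankin's trick) give the error $O(B(\log B)^{c_2+\varepsilon}T^{-\delta_2})$ as in \eqref{error1}. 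Third, Lemma~\ref{lemma2} excises the cuspidal region, introducing the parameter $\delta$ and contributing $O(T^rB(\log B)^{c_2+\varepsilon}(\delta+(\log B)^{-1}))$; this again uses \eqref{2a} and the decay $\delta_2^\ast>0$. Fourth, inserting Hypothesis~\ref{H1} splits the integrand into $\mathscr E_{\bm\gamma^\ast}\mathscr I_{\bm\gamma^\ast}(\mathbf X)$ plus the error $\Psi_{\bm\gamma^\ast}(\mathbf X)$; the error is bounded by \eqref{errorterm} together with the admissible contour $\Re\mathbf s=\bm\sigma$ from \eqref{1a} and the bound \eqref{useful}, yielding \eqref{error2}. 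Fifth, re-inserting the cuspidal contribution to the main term and extending the $\mathbf X$-integral back to $[1,\infty)^J$ costs \eqref{error3}, which relies on Lemma~\ref{singint}, the bound \eqref{E} from Hypothesis~\ref{H1}, and \eqref{continuous} from Hypothesis~\ref{H2}. Combining these and choosing $\delta=(\log B)^{-1+\varepsilon}$ produces \eqref{step5}. Sixth, I would insert Lemma~\ref{todo}, change variables via the matrix $\mathscr C$ as in \eqref{defy}--\eqref{linear2} (legitimate because $\rank\mathscr A_1=\rank\mathscr A=R$ by \eqref{1c}), perform the residue calculus of Step~5 keeping $J_0-R_0+\alpha_0$ nonincreasing, and arrive at \eqref{N1} and then \eqref{eventually}. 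Finally, the computation of the leading constant: $c^\ast$ is evaluated by the $s^{-1}=\int_0^1 t^{s-1}\,\mathrm dt$ trick and Mellin inversion giving \eqref{cast-final}, $c_{\mathrm{fin}}(T)$ is completed to $c_{\mathrm{fin}}$ using \eqref{E}, \eqref{1b} and Lemma~\ref{kgV} as in \eqref{cfin-final}, rewritten via \eqref{localdensities} and inclusion--exclusion into the Euler product \eqref{cfin-final-euler}, and $c_\infty=\lim_{\Delta\to 0}c_\infty(\Delta)$ in \eqref{cinftylimit} (the limit existing because $N_\Delta(B)$ is monotone in $\Delta$). Optimizing $T=(\log B)^{1/(S+r+1)}$ then balances the errors and gives \eqref{rough}.

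\textbf{The main obstacle.} The genuinely delicate point is Step~5 (Section~\ref{peyre}): verifying that the iterated contour shifts of the $(y_1,\dots,y_R)$-integral produce exactly a pole of order $c_2=J-R$ in $\log B$, with leading coefficient $c^\ast c_{\mathrm{fin}}(T)c_\infty(\Delta)$ and a genuine power saving $(\log B)^{-1}$ in the remainder. This requires that the absolute convergence furnished by \eqref{residuebound} (which in turn needs $c>N$ in Stirling's bound and the constraint \eqref{J-cond} to control the $z_i$-decay when $\zeta_i\ge 1/2$) survives every shift, and that the combinatorial bookkeeping of which linear forms $\mathscr L_\iota$ are being crossed keeps the quantity $J_0-R_0+\alpha_0$ from increasing; this is where the structural input $\rank\mathscr A=\rank\mathscr A_1$ is indispensable. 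Everything else is an exercise in collecting error terms and applying the stated lemmas, but this inductive residue analysis is the heart of the argument and the place where all the hypotheses conspire.
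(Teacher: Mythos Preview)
Your proposal is correct and follows essentially the same approach as the paper: the theorem is precisely the consolidation of Steps~1--6 of Section~\ref{sec8}, and you have traced the chain \eqref{sandwich}, \eqref{error1}, Lemma~\ref{lemma2}, \eqref{error2}, \eqref{error3}, \eqref{step5}, \eqref{N1}, \eqref{eventually}, \eqref{cfin-final}, \eqref{cinftylimit} accurately, with the right hypotheses invoked at each stage and the correct parameter choices $\delta=(\log B)^{-1+\varepsilon}$ and $T=(\log B)^{1/(S+r+1)}$. Your identification of the residue calculus in Step~5 as the heart of the argument, and of the roles of \eqref{1c}, \eqref{J-cond} and \eqref{residuebound} therein, matches the paper exactly.
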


More precisely, we need \eqref{E} of Hypothesis \ref{H1} only for $\textbf{b} = {\bm \gamma}^{\ast}$ and \eqref{2a} of Hypothesis \ref{H2} only for $\textbf{b} = {\bm \gamma}$. 

\section{The Manin--Peyre conjecture}\label{sec9}

In Sections~\ref{dioph}--\ref{sec8}, we established an asymptotic formula for
a certain counting problem, subject to several hypotheses. By design, we
presented this in an axiomatic style without recourse to the underlying
geometry. In the section, we relate the asymptotic formula in
Theorem~\ref{analytic-theorem} to the Manin--Peyre conjecture.  In particular,
we compute $c_{\infty}$ explicitly, and we will show (under conditions that
are easy to check) that the leading constant
$ c^{\ast} c_{\text{{\rm fin}}} c_{\infty} $ agrees with Peyre's constant for
almost Fano varieties as in Part~\ref{part1}. This applies in
particular to the spherical Fano varieties in Part~\ref{part3} of the paper.

\subsection{Geometric interpretation of $c_{\infty}$}

In this subsection, we establish the following alternative formulation of the
constant $c_{\infty}$.  Recall -- cf.\ \eqref{mathcalC} -- that the first $R$
rows of $\mathscr{C} = (\mathscr{C}_1 \mathscr{C}_2)$ are $R$ linearly
  independent  rows of $(\mathscr{A}_1 \mathscr{A}_2)$, let's say indexed by
a set $I$ of pairs $(i, j)$ with $0 \leq i \leq k$, $1 \leq j \leq J_i$ with
$|I| = R$. Let
\begin{equation}\label{def-cap-Phi}
\Phi^{\ast}(\textbf{t}) =  \sum_{i = 1}^k\prod_{(i, j) \in I}  t_{ij}^{h_{ij}},
\end{equation}
and let $\mathscr{F}$ be the affine $(R-1)$-dimensional hypersurface
$\Phi^{\ast}(\textbf{t}) = 0$ over $\Rd$. Let $\chi_I$ be the characteristic
function on the set
$$\prod_{(i, j) \in I} |t_{ij}|^{\alpha^\mu_{ij}} \leq 1, \quad 1 \leq \mu \leq N.$$
In order to avoid technical difficulties that are irrelevant for the
applications we have in mind, we make the simplifying assumption that
\begin{equation}\label{simplifying}
  \text{one of the $k$ monomials in $\Phi^{\ast}$ consists of only one
    variable,  which has   exponent 1.}
\end{equation}  
Without loss of generality, we can assume that this is the first
monomial. (Assumption \eqref{simplifying} can be removed if necessary and
follows from assumption \eqref{eq:assumption_real_density_strong}.)

\begin{lemma}
  Suppose that $\{(1, j) \in I\} = \{(1, 1)\}$ and $h_{11} = 1$.  Then
  $c_{\infty}$ is given by the surface integral
  \begin{equation}\label{cinf-final}
    c_{\infty}    = 2^{J-R} \int_{\mathscr{F}} \frac{ \chi_I(\textbf{t}) }{ \|
      \nabla \Phi^{\ast}(\textbf{t})\|}\, {\rm d}\mathscr{F}\textbf{t}.
  \end{equation}
\end{lemma}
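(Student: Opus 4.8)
The plan is to start from the Mellin–Barnes expression for $c_\infty$ in \eqref{defcinf} and evaluate the $y$-integrals over $y_{R+1},\dots,y_N$ and the $z$-integrals by reversing the steps taken in the passage from the local measure to the Mellin representation. The guiding principle is that $c_\infty$ should be the archimedean density, which by Section~\ref{sec:tamagawa_cox} (especially Proposition~\ref{prop:omega_infty-c_infty} and the definition \eqref{cinf-first}) is the surface integral over $\{\Phi^\ast=0\}$ against the Leray measure ${\rm d}\mathscr{F}\textbf{t}/\|\nabla\Phi^\ast\|$. So the real content is to recognize the iterated contour integral as exactly this surface integral written in Fourier-analytic form.

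Concretely, the first step is to undo the Mellin identity for $f_\Delta$: since the $y_1,\dots,y_R$ variables have been set to $0$ in \eqref{defcinf}, the remaining $N-R$ variables $y_{R+1},\dots,y_N$ parametrize (via $\mathscr{C}_1^{-1}$ and the linear forms in \eqref{defy}) the arguments $s_\nu$ of the $\widehat f_\Delta(s_\nu)$, and by Mellin inversion $\prod_\nu \widehat f_\Delta(s_\nu)$ transforms back into $\prod_\nu f_\Delta(P_\nu(\textbf{t})/1)$-type factors; in the limit $\Delta\to 0$ (using \eqref{cinftylimit}) these become the indicator $\chi_I(\textbf{t})$ of the region $\prod_{(i,j)\in I}|t_{ij}|^{\alpha^\mu_{ij}}\le 1$. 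Simultaneously, the $z$-integrals with the kernels $\mathscr{K}_i(z_i)$ from \eqref{Ki} are precisely the Mellin representations (cf.\ the proof of Lemma~\ref{todo}, running \eqref{twosided} in reverse) of the oscillatory factors $e(\beta\,\textbf{t}_i^{\textbf{h}_i})$; reassembling them reconstructs the delta-distribution $\int_{-\infty}^\infty e(\beta\,\Phi^\ast(\textbf{t}))\,\mathrm d\beta$ on the hypersurface $\Phi^\ast(\textbf{t})=0$. The factor $2^{J-R}$ accounts for the sign choices of the $J-R$ variables not in $I$, exactly as in \eqref{cinf-first}, and the factor $2^J/\pi$ and the $y_1\cdots y_R$ in the denominator are absorbed when one recognizes that taking residues at $y_1=\dots=y_R=0$ corresponds to integrating the distribution against the constant function, i.e.\ to picking up the full mass of the Leray form. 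The hypothesis $\{(1,j)\in I\}=\{(1,1)\}$, $h_{11}=1$ is used here exactly as \eqref{eq:assumption_real_density_strong} was used in Section~\ref{sec:tamagawa_cox}: it lets us solve $\Phi^\ast=0$ for $t_{11}$, express the surface integral as an ordinary integral over the other $t_{ij}$ with $(i,j)\in I$, and identify $\|\nabla\Phi^\ast\|^{-1}\,{\rm d}\mathscr{F}$ with $|\partial\Phi^\ast/\partial t_{11}|^{-1}$ times Lebesgue measure — matching the integrand of $c_\infty$ obtained after the contour shifts.

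The cleanest way to organize the argument is to reverse-engineer the chain Lemma~\ref{todo} $\to$ Step~4 $\to$ Step~5, i.e.\ to show that the residue extracted at $\textbf{w}=0$ of the integrand in \eqref{reca}, with the $\mathbf g$-sum stripped off, equals (a constant times) $B(\log B)^{c_2}$ multiplied by the surface integral; since we already know from \eqref{N1} that this residue equals $c^\ast c_{\rm fin}(T)c_\infty(\Delta)B(\log B)^{c_2}$, matching the $c_\infty(\Delta)$ factor and letting $\Delta\to0$ gives \eqref{cinf-final}. Equivalently — and this is probably the more transparent route — one can go directly: write the singular integral $\mathscr{I}_{\mathbf 1}(\mathbf X)$ of \eqref{E5} as $\langle\mathbf X_0\rangle\int_{-\infty}^\infty\prod_i I_i(\beta,\mathbf X)\,\mathrm d\beta$, recognize that up to normalization this \emph{is} the truncated real density of the region cut out by the height conditions, and then track through the contour-shift bookkeeping of Steps~4--5 which part of it survives as the coefficient $c_\infty$ of $B(\log B)^{c_2}$. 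Either way one lands on a Leray/coarea identity: for a smooth function $\Phi^\ast$ with $\Phi^\ast=0$ a smooth hypersurface,
\begin{equation*}
  \int_{\mathbf t} g(\mathbf t)\Big(\int_{-\infty}^\infty e(\beta\,\Phi^\ast(\mathbf t))\,\mathrm d\beta\Big)\,\mathrm d\mathbf t
  = \int_{\Phi^\ast=0} \frac{g(\mathbf t)}{\|\nabla\Phi^\ast(\mathbf t)\|}\,{\rm d}\mathscr{F}\mathbf t,
\end{equation*}
interpreted in the sense of oscillatory integrals / distributions, applied with $g=\chi_I$.

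The main obstacle is making the distributional manipulation rigorous: the inner $\beta$-integral of $e(\beta\Phi^\ast)$ does not converge absolutely, and one has to justify reconstructing the surface delta either by appealing to the absolute convergence established in \eqref{residuebound} (which controls the $z$- and $y$-integrals uniformly) and then passing to the limit, or by keeping the truncation $\mathscr{I}_{\mathbf b}(\mathbf X,Q)$ with $Q\to\infty$ via Lemma~\ref{singint} so that all steps are with honest integrals and the surface measure emerges in the limit. A secondary but real bookkeeping difficulty is tracking the precise constant: one must verify that the powers of $2$ (namely $2^{J-R}$), the $1/\pi$ from the Mellin identity $e(w)=\int\Gamma(s)\exp(\tfrac12{\rm sgn}(w){\rm i}\pi s)|2\pi w|^{-s}\,\mathrm ds/(2\pi{\rm i})$ used in Lemma~\ref{todo}, and the normalization of the Leray form all cancel to give exactly \eqref{cinf-final}; this is essentially the same computation as in the proof of Lemma~\ref{todo} read backwards, plus the sign/parity analysis already done in Lemma~\ref{lem:transform} and Proposition~\ref{prop:omega_infty-c_infty}, so I would cite those rather than redo them. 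With the distributional limit handled and the constants checked, the lemma follows.
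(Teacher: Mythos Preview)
Your overall picture is correct: $c_\infty(\Delta)$ is, morally, the Fourier representation of a Leray measure on $\{\Phi^\ast=0\}$, and the task is to undo the Mellin transforms to recover the surface integral. But what you have written is a plan, not a proof, and the plan defers precisely the step where the real difficulty lies.

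The concrete gap is your handling of the convergence problem. After Mellin-inverting the $\widehat f_\Delta(s_\nu)$ back to $t$-space, the paper explicitly notes that the combined $(\mathbf t,\mathbf z)$-integral is \emph{not} absolutely convergent, so you cannot simply swap the order of integration to assemble the oscillatory factor $e(\beta\Phi^\ast(\mathbf t))$ and then invoke the distributional identity $\int e(\beta\Phi^\ast)\,\mathrm d\beta=\delta(\Phi^\ast)$. Your two proposed fixes do not resolve this: the bound \eqref{residuebound} controls the $(\mathbf y,\mathbf z)$-integrand before inversion, not the $(\mathbf t,x)$-integral after; and the truncation $\mathscr I_{\mathbf b}(\mathbf X,Q)$ lives in a different stage of the argument and has no direct bearing on the expression \eqref{defcinf}. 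The paper's actual device is a Gaussian regularization $K_i^{(\varepsilon)}(x)=K_i(x)e^{-(\varepsilon x)^2}$ of the kernels, which forces absolute convergence throughout; one then computes the $x$-integral explicitly as a Gaussian, obtains a factor $\varepsilon^{-1}\exp(-(\Phi^\ast)^2/(\varepsilon^2\tilde\Phi))$ approximating the surface delta, and extracts the limit $\varepsilon\to 0$ by solving $\Phi^\ast/\sqrt{\tilde\Phi}=u$ for $t_{11}$ (this is where the hypothesis $h_{11}=1$ is genuinely used, to make the equation quadratic in $t_{11}$) and Taylor-expanding around $u=0$. This is a nontrivial piece of analysis that your outline does not supply.

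A secondary issue: citing Lemma~\ref{lem:transform} and Proposition~\ref{prop:omega_infty-c_infty} for the constant bookkeeping is off target. Those results compare Peyre's $\mu_\infty(X(\Bbb R))$ with the surface-integral definition \eqref{cinf-first} of $c_\infty$; they say nothing about the Mellin--Barnes representation \eqref{defcinf}. The constant tracking here (the $2^J/\pi$, the $2^{J^\ast}$ from symmetrizing the $t$-integral to all of $\Bbb R^R$, the Jacobian from $s\leftrightarrow y$, and the change $x\mapsto 2\pi x\prod t_\nu^{1+c_{\nu,N+k}}$) has to be done from scratch inside this lemma.
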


\begin{proof}  
  We return to the definition \eqref{defcinf} of $c_{\infty}(\Delta)$ and
  compute the $\textbf{y}$-integral for fixed $\textbf{z}$.  Let us write
  $\widehat{F}(\textbf{y}) = \prod_{\nu=1}^N
  \widehat{f}_{\Delta}(s_{\nu})$. We recall from \eqref{defy} that
  $ \mathbf{y} =\mathscr{C}_1\mathbf{s} + \mathscr{C}_2 \mathbf{z}^{\ast} $
  with $\det \mathscr{C}_1 = 1$, and we view $\textbf{s}$ as a function of
  $\textbf{y}$ (for fixed $\textbf{z}$).  By Mellin inversion one confirms the
  formula
  $$  \int_{\Re y_{\nu} = \eta^{\ast}_{\nu}}^{(N-R)}   \widehat{F}(0, \ldots,
  0, y_{R+1},\ldots y_N) \frac{\dd y_{R+1} \cdots \dd y_N}{(2\pi {\rm
      i})^{N-R}} = \int_{\Bbb{R}_{>0}^R} \int^{(N)}_{\Re y_{\nu} =
    \eta^{\ast}_{\nu}} \widehat{F}(\textbf{y}) t_1^{y_1} \cdots t_R^{y_R}
  \frac{\dd \textbf{y}}{(2\pi {\rm i})^{N}} \frac{\dd\textbf{t}}{\langle
    \textbf{t} \rangle} .$$ Note that by Mellin inversion, the
  $\textbf{t}$-integral on the right hand side is absolutely convergent, even
  though the combined $\textbf{y}, \textbf{t}$-integral is not. (This formula
  is a distributional version of the ``identity''
  $\int_0^{\infty} t^{y-1} \dd t = \delta_{y=0}$.)  Let us write
  $\mathscr{C} = (\mathscr{C}_1\, \mathscr{C}_2) = (c_{\nu \mu})\in \Bbb{R}^{N
    \times (N+k)}$ and
  $\mathscr{C}_2\textbf{z}^{\ast} = \tilde{\textbf{z}} \in \Bbb{C}^N$. We
  change back to $\textbf{s}$-variables and compute the $\textbf{s}$-integral
  in the preceding display by Mellin inversion, getting
  $$ \int_{\Bbb{R}_{>0}^R}  \prod_{\mu= 1}^Nf_{\Delta} \Big(\prod_{\ell =
    1}^Rt_{\ell}^{-c_{\ell, \mu} } \Big) t_1^{\tilde{z}_1} \cdots
  t_R^{\tilde{z}_R} \frac{\dd\textbf{t}}{\langle \textbf{t} \rangle} .$$ By
  construction this integral is absolutely convergent for every fixed
  $\textbf{z}$ with $\Re z_i = \zeta_i$. Plugging back into the definition, we
  obtain
  $$c_{\infty}(\Delta) = \frac{2^J}{\pi}  \int^{(\kappa)}_{\Re z_i = \zeta_i}
  \prod_{i=1}^{k} \mathscr{K}_i(z_i) \int_{\Bbb{R}_{>0}^R}\prod_{\mu=
    1}^Nf_{\Delta} \Big(\prod_{\ell = 1}^Rt_{\ell}^{-c_{\ell, \mu} } \Big)
  t_1^{\tilde{z}_1} \cdots t_R^{\tilde{z}_R} \frac{\dd\textbf{t}}{\langle
    \textbf{t} \rangle} \frac{\dd\mathbf{z}}{(2\pi {\rm i})^{\kappa}} .$$ Here
  the $\textbf{z}$-integral is absolutely convergent since the multiple
  integral in \eqref{defcinf} was absolutely convergent. The combined
  $\textbf{t}, \textbf{z}$-integral, however, is not absolutely convergent.
  Recall that $\kappa = k-1$, $z_k = 1 - z_1 - \dots - z_{\kappa}$ and
  $\mathscr{K}_i(z)$ was defined in \eqref{Ki} with inverse Mellin transform
  $x \mapsto K_i(x)$, say, where $K_i(x) = \cos(x)$ or $\exp({\rm i} x)$.  In
  order to avoid convergence problems, we define, for $\varepsilon > 0$, the
  function
  \begin{equation}\label{defKiepsilon}
    K_i^{(\varepsilon)}(x) = K_i(x) e^{-(\varepsilon x)^2} =
    \begin{cases}
      \cos(x)e^{-(\varepsilon x)^2}  , & h_{ij} \text{ odd for some } 1 \leq j \leq J_i,\\
      e^{ix}e^{-(\varepsilon x)^2} ,  &h_{ij} \text{ even for all } 1 \leq j
      \leq J_i.
    \end{cases}
  \end{equation}
  and its Mellin transform
  $\mathscr{K}^{(\varepsilon)}_i(z) = \int_0^{\infty} K^{(\varepsilon)}_i(x)
  x^{z-1} \dd x$. This can be expressed explicitly in terms of confluent
  hypergeometric functions by \cite[3.462.1]{GR}, but we do not need this. It
  suffices to know that $\mathscr{K}^{(\varepsilon)}_i(z)$ is holomorphic in
  $\Re z > 0$, rapidly decaying on vertical lines, and we have the pointwise
  limit
  $\lim_{\varepsilon \rightarrow 0} \mathscr{K}^{(\varepsilon)}_i(z) =
  \mathscr{K}_i(z)$ for $0 < \Re z < 1$. The latter follows elementarily with
  one integration by parts by writing
  $$\int_0^{\infty} (K_i(x) - K_i^{(\varepsilon)}(x)) x^{z-1} {\dd x} =
  \int_0^{\varepsilon^{-1/2}} + \int_{{\varepsilon^{-1/2}}}^{\infty} \ll
  \varepsilon^{1/2} + \varepsilon^{1/2} \rightarrow 0$$ for
  $\varepsilon \rightarrow 0$. Correspondingly we write
  \begin{equation*}
    c^{(\varepsilon)}_{\infty}(\Delta) = \frac{2^J}{\pi}  \int^{(\kappa)}_{\Re
      z_i = \zeta_i} \prod_{i=1}^{k}  \mathscr{K}^{(\varepsilon)}_i(z_i)
    \int_{\Bbb{R}_{>0}^R}\prod_{\mu= 1}^Nf_{\Delta} \Big(\prod_{\ell =
      1}^Rt_{\ell}^{-c_{\ell, \mu} }  \Big)  t_1^{\tilde{z}_1} \cdots
    t_R^{\tilde{z}_R} \frac{\dd\textbf{t}}{\langle \textbf{t} \rangle}
    \frac{\dd\mathbf{z}}{(2\pi {\rm i})^{\kappa}}.
  \end{equation*}
  This multiple integral is now absolutely convergent, and by dominated convergence we have 
  \begin{equation}\label{dominatedc}
    c_{\infty}(\Delta) = \lim_{\varepsilon \rightarrow 0} c^{(\varepsilon)}_{\infty}(\Delta).
  \end{equation}
  We interchange the $\textbf{t}$- and $\textbf{z}$-integral, fix $\textbf{t}$
  and compute the $\textbf{z}$-integral.  Mellin inversion yields
  \begin{equation*}
    \mathscr{K}^{(\varepsilon)}_k(1 - z_1 - \dots - z_{\kappa}) =
    \int_0^{\infty} \int_{(\frac{1}{2}\zeta_k)}
    \mathscr{K}^{(\varepsilon)}_k(z_k) x^{ - z_1 - \dots - z_{k}}  \frac{\dd
      z_k}{2\pi {\rm i}} \dd x
  \end{equation*}
  for $\Re z_i = \zeta_i$, $1\leq i \leq \kappa$. Note that on the right hand
  side $\Re(z_1 + \dots + z_k) < 1$ (which is why we chose
  $\Re z_k = \frac{1}{2} \zeta_k$).  Again the double integral is not
  absolutely convergent, but the $x$-integral is absolutely convergent. In
  particular, after substituting this into the definition of
  $c^{(\varepsilon)}_{\infty}(\Delta)$, we may interchange the $x$-integral
  and the $z_1, \ldots, z_{\kappa}$-integral to conclude
  $$c^{(\varepsilon)}_{\infty}(\Delta) = \frac{2^J}{\pi} \int_{\Bbb{R}_{>0}^R}
  \int_0^{\infty} \int^{(k)} \prod_{i=1}^{k}
  \mathscr{K}^{(\varepsilon)}_i(z_i)\prod_{\mu= 1}^Nf_{\Delta}
  \Big(\prod_{\ell = 1}^Rt_{\ell}^{-c_{\ell, \mu} } \Big) t_1^{\tilde{z}_1}
  \cdots t_R^{\tilde{z}_R}x^{ - z_1 - \dots - z_{k}}
  \frac{\dd\mathbf{z}}{(2\pi {\rm i})^{k}} \dd x \frac{\dd\textbf{t}}{\langle
    \textbf{t} \rangle},$$ where $\Re z_i = \zeta_i$, $1 \leq i \leq \kappa$,
  $\Re z_k = \frac{1}{2}\zeta_k$.  By Mellin inversion, we can now compute
  each of the $z_1, \ldots, z_{\kappa}$-integrals. We recall our notation
  $\tilde{\textbf{z}} = \mathscr{C}_2 \textbf{z}^{\ast}$, so
  $$\tilde{z}_j = \sum_{i=1}^{\kappa} c_{j, N+i} z_i + c_{j, N+k}.$$ This gives
  $$c^{(\varepsilon)}_{\infty}(\Delta)   = \frac{2^J}{\pi}\int_{\Bbb{R}_{>0}^R}
  \int_0^{\infty} \Big[\prod_{\mu= 1}^Nf_{\Delta} \Big(\prod_{\ell =
    1}^Rt_{\ell}^{-c_{\ell, \mu} } \Big) \Big] \Big[K^{(\varepsilon)}_k(x)
  \prod_{i=1}^{\kappa} K^{(\varepsilon)}_{i}\Big(x \prod_{\nu=1}^R
  t_{\nu}^{-c_{\nu, N+i}}\Big)\Big] \prod_{\nu=1}^R t_{\nu}^{c_{\nu, N+k}} \dd
  x \frac{\dd\textbf{t}}{\langle \textbf{t} \rangle}.  $$ Changing variables
  $t_{\nu} \mapsto t_{\nu}^{-1}$ and then
  $x \mapsto 2 \pi x \prod_{\nu=1}^R t_{\nu}^{1+c_{\nu, N+k}}$, this becomes
  $$2^J  \int_{\Bbb{R}_{>0}^R}  \int_{-\infty}^{\infty}  \Big[\prod_{\mu=
    1}^Nf_{\Delta} \Big(\prod_{\ell = 1}^Rt_{\ell}^{c_{\ell, \mu} } \Big)
  \Big] \Big[K^{(\varepsilon)}_k(2\pi x\prod_{\nu=1}^R t_{\nu}^{1+c_{\nu,
      N+k}}) \prod_{i=1}^{\kappa} K^{(\varepsilon)}_{i}\Big(2\pi x
  \prod_{\nu=1}^R t_{\nu}^{c_{\nu, N+i}+1 + c_{\nu, N+k}}\Big)\Big] \dd x\,
  \dd\textbf{t}. $$ We re-index the variables $t_{\nu}$ as $t_{ij}$ with
  $(i, j) \in I$, as described prior to the statement of the lemma.  By the
  definition of $(\mathscr{A}_1 \mathscr{A}_2)$ in \eqref{matrix}, we then
  have
  $$   \prod_{\nu=1}^R t_{\nu}^{c_{\nu, N+i} + 1 + c_{\nu, N+k}} = \prod_{(i,
    j) \in I} t_{ij}^{h_{ij}} \quad (1 \leq i \leq \kappa), \quad\quad
  \prod_{\nu=1}^R t_{\nu}^{1+c_{\nu, N+k}} = \prod_{(k, j) \in I}
  t_{kj}^{h_{kj}}, $$ so that
  \begin{displaymath}
    \begin{split}
      c^{(\varepsilon)}_{\infty}(\Delta) & = 2^J \int_{-\infty}^{\infty}
      \int_{\Bbb{R}_{>0}^R} \Big[\prod_{\mu= 1}^Nf_{\Delta} \Big(\prod_{(i, j)
        \in I} t_{ij}^{\alpha^\mu_{ij} } \Big) \Big] \Big[ \prod_{i=1}^{k}
      K^{(\varepsilon)}_{i}\Big(2\pi x \prod_{(i, j) \in I} t_{ij}^{h_{ij}}
      \Big)\Big] \dd x\, \dd\textbf{t} .
    \end{split}
  \end{displaymath}
  By symmetry, we may extend $\textbf{t}$-integral to all of $\Bbb{R}^R$,
  recall \eqref{defKiepsilon} and write
  \begin{displaymath}
    \begin{split}
      c^{(\varepsilon)}_{\infty}(\Delta)  & = 2^{J-R}
      \int_{-\infty}^{\infty}  \int_{\Bbb{R}^R} \Psi_{\Delta}(\textbf{t})
      e\big(x \Phi^{\ast}(\textbf{t})\big) \exp\big(-(\pi \varepsilon x)^2
      \tilde{\Phi}(\textbf{t})\big)   \dd x
      \, \dd\textbf{t}     
    \end{split}
  \end{displaymath}
  with $\Phi^{\ast}$ as in \eqref{def-cap-Phi} and
  $$\Psi_{\Delta}(\textbf{t}) = \prod_{\mu= 1}^Nf_{\Delta} \Big(\prod_{(i, j)
    \in I}|t_{ij}|^{\alpha^\mu_{ij} } \Big), \quad \ \tilde{\Phi}(\textbf{t}) =
  4\sum_{i = 1}^k\prod_{(i, j) \in I} t_{ij}^{2h_{ij}}.$$ We compute the
  $x$-integral, getting
  \begin{displaymath}
    \begin{split}
      c^{(\varepsilon)}_{\infty}(\Delta) & = \frac{2^{J-R}}{\sqrt{\pi}
        \varepsilon} \int_{\Bbb{R}^R} \Psi_{\Delta}(\textbf{t}) \exp\Big(-
      \frac{(\Phi^{\ast})^2(\textbf{t})}{\varepsilon^2
        \tilde{\Phi}(\textbf{t})}\Big) \frac{ \dd\textbf{t}
      }{\sqrt{\tilde{\Phi}(\textbf{t})}}.
    \end{split}
  \end{displaymath}
  By construction, this is absolutely convergent for every fixed
  $\varepsilon > 0$, and the limit as $\varepsilon \rightarrow 0$ exists by
  \eqref{dominatedc}. Let $\mathscr{U} \coloneqq \{\textbf{t} \in
  \Bbb{R}^R : |(\Phi^{\ast})^2(\textbf{t})/\tilde{\Phi}(\textbf{t}) | \leq 1/25\}$.
  Writing
  $$ \exp\Big(- \frac{(\Phi^{\ast})^2(\textbf{t})}{\varepsilon^2
    \tilde{\Phi}(\textbf{t})}\Big) = \exp\Big(-
  \frac{(\Phi^{\ast})^2(\textbf{t})}{ \tilde{\Phi}(\textbf{t})}\Big) \exp\Big(
  (1 - \varepsilon^{-2}) \frac{(\Phi^{\ast})^2(\textbf{t})}{
    \tilde{\Phi}(\textbf{t})}\Big),$$ we obtain
  \begin{equation*}
    c^{(\varepsilon)}_{\infty}(\Delta) =  \frac{2^{J-R}}{\sqrt{\pi}
      \varepsilon}   \int_{\mathscr{U}} \Psi_{\Delta}(\textbf{t})    \exp\Big(-
    \frac{(\Phi^{\ast})^2(\textbf{t})}{\varepsilon^2
      \tilde{\Phi}(\textbf{t})}\Big)  \frac{ \dd\textbf{t}
    }{\sqrt{\tilde{\Phi}(\textbf{t})}} + O\Big(\frac{1}{\varepsilon}e^{(1 -
      \varepsilon^{-2})/25}\Big).
  \end{equation*}
  We consider now the equation
  \begin{equation}\label{implicit}
    \Phi^{\ast}(\textbf{t})/\sqrt{\tilde{\Phi}(\textbf{t})} - u = 0
  \end{equation}
  for $|u| \leq 1/5$. It is only at this point that we use
  \eqref{simplifying}. We write $\textbf{t} = (t_{11}, \textbf{t}')$ and
  $$\Phi^{\ast}(\textbf{t}) = t_{11} + (\Phi^{\ast})'(\textbf{t}'), \quad
  \tilde{\Phi}(\textbf{t}) = 4t_{11}^2 + \tilde{\Phi}'(\textbf{t}').$$
  Then for $u = 0$, the equation \eqref{implicit} has the unique solution
  $t_{11} = -(\Phi^{\ast})'(\textbf{t}')$, while for $0 < |u| \leq 1/5$, both
  $u$ and $-u$ lead to two solutions
  $$t_{11}  = \frac{-(\Phi^{\ast})'(\textbf{t}') \pm |u|
    \sqrt{4(\Phi^{\ast})'(\textbf{t}')^2 + \tilde{\Phi}'(\textbf{t}')(1 -
      4u^2)}}{1 - 4u^2} \eqqcolon \phi^{\pm}_u(\textbf{t}').$$
  For $u=0$, we have $\phi_0^+ = \phi_0^-$, and for notational simplicity we
  write $\phi_0^{\pm} = \phi = -(\Phi^{\ast})'$.  Changing variables, we
  obtain
  $$\frac{2^{J-R}}{\sqrt{\pi} \varepsilon}   \int_{\mathscr{U}}
  \Psi_{\Delta}(\textbf{t})  \exp\Big(-
  \frac{(\Phi^{\ast})^2(\textbf{t})}{\varepsilon^2
    \tilde{\Phi}(\textbf{t})}\Big)  \frac{ \dd\textbf{t}
  }{\sqrt{\tilde{\Phi}(\textbf{t})}} = \frac{2^{J-R}}{\sqrt{\pi} \varepsilon}
  \int_{-1/5}^{1/5}  \exp\Big(- \frac{u^2}{\varepsilon^2 }\Big) \Theta(u)
  {\dd}u, $$
  where
  $$ \Theta(u) =\int_{\Bbb{R}^{R-1}} \Xi (\phi^{+}_u(\textbf{t}'), \textbf{t}')
  {\dd}\textbf{t}' , \quad \Xi = \frac{2 \tilde{\Phi} \Psi_{\Delta}
  }{|2\tilde{\Phi} \Phi^{\ast}_{t_{11}} - \Phi^{\ast} \tilde{\Phi}_{t_{11}} |
  }.$$ By a Taylor expansion, we have $\Theta(u) = \Theta(0) + O(|u|)$ for
  $|u| \leq 1/5$, so that
  \begin{equation*}
    \begin{split}
      c_{\infty}(\Delta) = \lim_{\varepsilon \rightarrow
        0}\frac{2^{J-R}}{\sqrt{\pi} \varepsilon}   \int_{-\eta}^{\eta}
      \exp\Big(- \frac{u^2}{\varepsilon^2 }\Big) \Theta(u)  {\dd}u &=
      2^{J-R}\Theta(0) =2^{J-R} \int_{\Bbb{R}^{R-1}} \Xi (\phi(\textbf{t}'),
      \textbf{t}') {\dd}\textbf{t}'  \\
      &= 2^{J-R} \int_{\Bbb{R}^{R-1}} \frac{ \Psi_{\Delta} (\phi(\textbf{t}'),
        \textbf{t}') }{ | \Phi^{\ast}_{t_{11}} (\phi(\textbf{t}'),
        \textbf{t}')| } {\dd}\textbf{t}' .
    \end{split}
  \end{equation*}
  Here we can let $\Delta \rightarrow 0$, obtaining 
  \begin{equation}\label{eq:c_infty}
    \begin{split}
      c_{\infty} = 2^{J-R} \int_{\Bbb{R}^{R-1}} \frac{ \chi_I
        (\phi(\textbf{t}'), \textbf{t}') }{ | \Phi^{\ast}_{t_{11}}
        (\phi(\textbf{t}'), \textbf{t}')| } {\dd}\textbf{t}' .
    \end{split}
  \end{equation}
  (Note that the denominator is $1$ by \eqref{simplifying}, but that this
  formula should also hold without this assumption.)  We write this more
  symmetrically as follows. If $t_{ij}$ is any component of $\textbf{t}'$,
  then by implicit differentiation, we have
  $$\phi_{t_{ij}}(\textbf{t}) =
  -\frac{\Phi^{\ast}_{t_{ij}}(\phi(\textbf{t}'),\textbf{t}')}{\Phi^{\ast}_{t_{11}}(\phi(\textbf{t}'),
    \textbf{t}')},$$
  so that we can write $c_{\infty}$ as a surface integral
  \begin{displaymath}
    \begin{split}
      2^{J-R} \int_{\Bbb{R}^{R-1}} \frac{ \chi_I(\phi(\textbf{t}'),
        \textbf{t}') }{ | \Phi^{\ast}_{t_{11}} (\phi(\textbf{t}'),
        \textbf{t}')| } d\textbf{t}' = 2^{J-R}\int_{\mathscr{F}}
      \frac{\chi_I(\textbf{t}) }{\| \nabla \Phi^{\ast}(\textbf{t})\|}
      d\mathscr{F}(\textbf{t})
    \end{split}
  \end{displaymath}
  as claimed.  
\end{proof}

\subsection{Comparison with the Manin--Peyre conjecture}

\begin{theorem}\label{thm:manin-peyre}
  Let $X,H$ be as in Proposition~\ref{prop:peyre}.  Suppose that the
  corresponding counting problem for $U \subset X$ given by
  Proposition~\ref{prop:countingproblem_abstract} satisfies all assumptions of
  Theorem~\ref{analytic-theorem}.  Then the Manin--Peyre conjecture holds for
  $X$ with respect to $H$, that is,
  \begin{equation*}
    N_{X,U,H}(B) =(1+o(1)) c B(\log B)^{\rank \Pic X - 1}
  \end{equation*}
  with Peyre's constant $c$.
\end{theorem}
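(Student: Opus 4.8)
The plan is to combine the explicit asymptotic formula from Theorem~\ref{analytic-theorem} with the explicit description of Peyre's constant from Proposition~\ref{prop:peyre}, showing that the three factors match term by term. First I would invoke Theorem~\ref{analytic-theorem}: under the stated hypotheses it yields
\begin{equation*}
  N_{X,U,H}(B) = (1+o(1))\, c^\ast\, c_{\mathrm{fin}}\, c_\infty\, B(\log B)^{c_2},
\end{equation*}
where by Proposition~\ref{prop:countingproblem_abstract} the counting problem attached to $U\subseteq X$ has torsor equation $\Phi$ written as in \eqref{torsor} (with all $b_i=1$, since $\Phi$ has integral coefficients and after the relabeling of Part~\ref{part2}), height exponents the $\alpha^\sigma_\rho$ of \eqref{eq:height_monomials}, and coprimality data the primitive collections \eqref{eq:primitive_collections}; moreover there is a global factor $2^{-\rank\Pic X}$ in front. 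So the first routine step is to record that $N_{X,U,H}(B)=2^{-\rank\Pic X}N(B)$ with $N(B)$ the quantity governed by Theorem~\ref{analytic-theorem}.

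Next I would match exponents and the three constants. The exponent: by Lemma~\ref{rank} we have $\rank\Am_1=\rank\Am=R=J-\rank\Pic X+1$, hence $c_2=J-R=\rank\Pic X-1$ by \eqref{defc2}, which is exactly Manin's predicted power of $\log B$. For the finite part, one compares \eqref{cfin-final-euler} with the Euler product $c_{\mathrm{fin}}=\prod_p c_p$ of \eqref{eq:c_p}: the congruence description of $\tX_0(\Zd/p^\ell\Zd)$ in Proposition~\ref{prop:lift_to_torsor} together with Proposition~\ref{prop:p-adic_density} shows $c_p=(1-p^{-1})^{\rank\Pic X}\mu_p(X(\Qd_p))$ and that \eqref{cfin-final-euler} is precisely this same local count, so $c_{\mathrm{fin}}=\prod_p(1-p^{-1})^{\rank\Pic X}\mu_p(X(\Qd_p))$. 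For the archimedean part, the key is the surface-integral formula \eqref{cinf-final} for $c_\infty$ proved in the lemma of Section~\ref{sec9}: here one must check that the hypotheses of that lemma are met, namely that the chosen maximal independent set $I$ of rows of $(\Am_1\,\Am_2)$ can be taken to contain a row indexed by a variable $x_{\rho_0}$ occurring with exponent $1$ in $\Phi$, together with the compatibility \eqref{simplifying}; this is exactly guaranteed by assumption \eqref{eq:assumption_real_density_strong} (and Lemma~\ref{lem19}, which identifies $\sigma(1)'$ with such an $I$). Then \eqref{eq:c_infty}/\eqref{cinf-final} reads $c_\infty=2^{J-R}\int_{\Bbb R^{R-1}}\chi_I(\phi(\mathbf t'),\mathbf t')\,\dd\mathbf t'$, and comparing with \eqref{cinf-first} and Proposition~\ref{prop:omega_infty-c_infty} gives $\mu_\infty(X(\Rd))=\frac{|\alpha^\sigma_{\rho_1}|}{2^{\rank\Pic X}}c_\infty$.

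Then I would assemble Peyre's constant. By Proposition~\ref{prop:peyre}, $c=2^{-\rank\Pic X}c^\ast c_\infty c_{\mathrm{fin}}$, where the same $c^\ast$ of \eqref{cast-final-first}/\eqref{cast-final} appears — one checks that the description of $c^\ast$ in \eqref{cast-final} via the coefficients $b_{\iota\ell},b_\ell$ of \eqref{beta}--\eqref{beta0} coincides, under the dictionary of Lemma~\ref{lem19}, with the polytope volume in \eqref{cast-final-first} (the rows $Z_\rho$ for $\rho\notin\sigma(1)'$ play the role of the $J-R$ "free" indices $\iota$, and the rows $Z_{\rho'}$ for $\rho'\in\sigma(1)'$ the role of $1\le\ell\le R$). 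Combining: $c^\ast c_{\mathrm{fin}}c_\infty = c^\ast c_\infty\prod_p(1-p^{-1})^{\rank\Pic X}\mu_p(X(\Qd_p))$, and using $\alpha(X)\mu_\infty(X(\Rd))=c^\ast c_\infty$ from Lemma~\ref{lem:alpha} and Proposition~\ref{prop:omega_infty-c_infty} together with $\beta(X)=1$, this is exactly $2^{\rank\Pic X}$ times Peyre's constant — which cancels against the global $2^{-\rank\Pic X}$ from Proposition~\ref{prop:countingproblem_abstract}. Hence $N_{X,U,H}(B)=(1+o(1))cB(\log B)^{\rank\Pic X-1}$.

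The main obstacle is the bookkeeping in the archimedean comparison: one has to be careful that the maximal independent row set $I$ chosen in the change of variables \eqref{mathcalC} of Section~\ref{peyre} can be aligned with the set $\sigma(1)'$ distinguished by \eqref{eq:assumption_real_density_strong}, so that the formula \eqref{cinf-final} (which requires hypothesis \eqref{simplifying}) is actually applicable and its output literally matches $c_\infty$ in the definition \eqref{cinf-first} used in Proposition~\ref{prop:peyre}; the matching of the combinatorial data (height matrix rows versus the $\alpha^\sigma_\rho$, primitive collections versus the $S_\rho$) is routine but must be spelled out so that Hypotheses~\ref{H1},~\ref{H2} and conditions \eqref{1c},~\eqref{1a},~\eqref{1b},~\eqref{J-cond} are seen to be the hypotheses actually assumed in the theorem statement. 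Everything else is a direct substitution.
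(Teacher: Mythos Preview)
Your proposal is correct and follows essentially the same route as the paper: invoke Proposition~\ref{prop:countingproblem_abstract} for the factor $2^{-\rank\Pic X}$, apply Theorem~\ref{analytic-theorem} for the asymptotic $c^\ast c_{\mathrm{fin}} c_\infty B(\log B)^{c_2}$, identify $c_2=\rank\Pic X-1$ via Lemma~\ref{rank} and \eqref{defc2}, and then match the three constants against the expression in Proposition~\ref{prop:peyre} by comparing \eqref{eq:c_p} with \eqref{cfin-final-euler}, \eqref{cast-final-first} with \eqref{cast-final} (via Lemma~\ref{lem19}), and \eqref{cinf-first} with \eqref{eq:c_infty} (noting that \eqref{eq:assumption_real_density_strong} supplies \eqref{simplifying}). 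Your write-up is somewhat more explicit about the intermediate identities $\alpha(X)\mu_\infty(X(\Rd))=c^\ast c_\infty$ and the alignment of $I$ with $\sigma(1)'$, but the argument is the same.
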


\begin{proof}
  By Proposition~\ref{prop:countingproblem_abstract},
  \begin{equation*}
    N_{X,U,H}(B) = 2^{-\rank \Pic X} N(B)
  \end{equation*}
  for $N(B)$ as in \eqref{manin}. Formula \eqref{rough} in
  Theorem~\ref{analytic-theorem} states that
  \begin{equation*}
    N(B) =(1+o(1)) c^{\ast}  c_{\text{{\rm fin}}}  c_{\infty}  B (\log B)^{c_2}.
  \end{equation*}
  Comparing definition \eqref{eq:c_p} with expression \eqref{cfin-final-euler}
  for $c_{\mathrm{fin}}$, the definitions \eqref{cast-final-first} and
  \eqref{cast-final} of $c^\ast$, and definition \eqref{cinf-first} with
  expression \eqref{eq:c_infty} for $c_\infty$ (which are both valid since
  assumption \eqref{eq:assumption_real_density_strong} implies
  \eqref{simplifying}), then Proposition~\ref{prop:peyre} shows that the
  leading constant for $N_{X,U,H}(B)$ is Peyre's constant, and
  $c_2 = J-R = \rank \Pic X - 1$ by \eqref{eq:rkPic}, \eqref{defc2} and
  Lemma~\ref{rank}. Therefore, Proposition~\ref{prop:countingproblem_abstract}
  combined with \eqref{rough} agrees with the Manin--Peyre conjecture.
\end{proof}

The following part provides numerous applications and shows how to apply this
in practice.

\part{Application to spherical varieties}\label{part3}

Having established the relevant theory in Part~\ref{part1} and
Part~\ref{part2} of the paper, we are now prepared to prove Manin's conjecture
for concrete families of varieties. In particular, as a consequence of
Theorem~\ref{manin-cor}, we obtain Manin's conjecture for all smooth spherical
Fano threefolds of semisimple rank one and type $T$.

\section{Spherical varieties}\label{sec2}
 
\subsection{Luna--Vust invariants}

Let $G$ be a connected reductive group over $\Qbar$.  Let $\Qbar(X)$ be the
function field of a spherical $G$-variety $X$ over $\Qbar$. Only in this
section and in Section~\ref{sec22}, let $B$ denote a Borel subgroup of $G$
with character group $\Xf(B)$. The \emph{weight lattice} is defined as
\begin{align*}
  \Mm = \rleft\{\chi \in \Xf(B) :
  \begin{aligned}
    \text{there exists $f_\chi \in \Qbar(X)^\times$ such that}\\
    \text{$b\cdot f_\chi = \chi(b)\cdot {f_\chi}$ for every $b \in B$}
  \end{aligned}
  \rright\}\text{,}
\end{align*}
Note that for every $\chi\in\Mm$, the function $f_\chi$ is uniquely determined
up to a constant factor because of the dense $B$-orbit in $X$. The \emph{set
  of colors} $\Dm$ is the set of $B$-invariant prime divisors on $X$ that are
not $G$-invariant. Moreover, we have the \emph{valuation cone}
$\Vm \subseteq \Nm_{\Qd} = \Hom(\Mm, \Qd)$, which can be identified with the
$\Qd$-valued $G$-invariant discrete valuations on $\Qbar(X)^\times$. By
Losev's uniqueness theorem \cite[Theorem~1]{los09a}, the combinatorial
invariants $(\Mm, \Vm, \Dm)$ uniquely determine the birational class of (\ie
the open $G$-orbit in) the spherical $G$-variety $X$ over $\Qbar$.

Now let $\Delta$ be the set of all $B$-invariant prime divisors on $X$. There
is a map $\mathfrak{c} \colon \Delta \to \Nm_\Qd$ defined by
$\langle \mathfrak{c}(D), \chi \rangle = \nu_D(f_\chi)$, where $\nu_D$ is the
valuation on $\Qbar(X)^\times$ induced by the prime divisor $D$.  For every
$G$-orbit $Z \subseteq X$, we define
$\Wm_Z = \{D \in \Delta : Z \subseteq D\}$. Then the collection
\begin{align*}
  \CF X = \{(\cone(\mathfrak{c}(\Wm_Z)), \Wm_Z \cap \Dm) :
  Z \subseteq X \text{ is a $G$-orbit}\}
\end{align*}
is called the \emph{colored fan} of $X$. According to the Luna--Vust
theory of spherical embeddings \cite{lv83, kno91}, the colored fan
$\CF X$ uniquely determines the spherical $G$-variety~$X$ over
$\Qbar$ among those in the same birational class.

The divisor class group $\Cl X$ can be computed from $\CF X$: by
\cite[Proposition~4.1.1]{bri07}, the maps $\Mm \to \Zd^\Delta$,
$\chi \mapsto \Div f_\chi$ and $\Zd^\Delta \to \Cl X$,
$D \mapsto [D]$ fit into the exact sequence
$\Mm \to \Zd^\Delta \to \Cl X \to 0$.

Spherical varieties with $\Vm = \Nm_{\Qd}$ are called
\emph{horospherical}.  These include flag varieties and toric
varieties. In the latter case, $G=B=T$ is a torus, and we have
$\Vm = \Nm_{\Qd}$ and $\Dm = \emptyset$.

\subsection{Semisimple rank one}\label{sec:ssr1}

Let $X$ be a spherical $G$-variety over $\Qbar$. If the connected reductive
group~$G$ has semisimple rank one, we may assume
$G = \SL_2\times \Gd_\mathrm{m}^r$ by passing to a finite cover. As a further
simplification, we replace the action by a \emph{smart action} as introduced
in \cite[Definition~4.3]{ab04}. As before, let
$G/H = (\SL_2 \times \Gd_\mathrm{m}^r)/H$ be the open orbit in $X$. Let
$H'\times \Gd_\mathrm{m}^r = H\cdot \Gd_\mathrm{m}^r \subseteq \SL_2 \times
\Gd_\mathrm{m}^r$. Then the homogeneous space $\SL_2/H'$ is spherical, and
hence either $H'$ is a maximal torus in $\SL_2$ (\emph{the case $T$}) or $H'$
is the normalizer of a maximal torus in $\SL_2$ (\emph{the case $N$}) or the
homogeneous space $\SL_2/H'$ is horospherical.  Since the action is smart, in
the horospherical case $H'$ is either a Borel subgroup in $\SL_2$ (\emph{the
  case~$B$}) or the whole group $\SL_2$ (\emph{the case $G$}).

Now let $T \subset G = \SL_2 \times \Gd_\mathrm{m}^r$ be a maximal torus, and
let $\alpha \in \Xf(T) \cong \Xf(B)$ be the simple root with respect to a
Borel subgroup $B \subset G$. It follows from the general theory of spherical
varieties that in the cases $T$ and $N$, we always have
$\Vm = \{v \in \Nm_\Qd : \langle v, \alpha \rangle \le 0\}$. The colored cones
of the form $(\Q_{\ge0}\cdot u, \emptyset) \in \CF X$, where
$u \in \Mm \cap \Vm$ is a primitive element, correspond to the $G$-invariant
prime divisors in $X$. Let $(\Q_{\ge0} \cdot u_{0j}, \emptyset) \in \CF X$ for
$j = 1, \dots, J_0$ be those with $u \in \Vm \cap (-\Vm)$, and let
$(\Q_{\ge0} \cdot u_{3j}, \emptyset) \in \CF X$ for $j = 1, \dots, J_3$ be
those with $u \notin \Vm \cap (-\Vm)$.  We denote by $D_{ij}$ the
$G$-invariant prime divisor in $X$ corresponding to
$(\Q_{\ge0} \cdot u_{ij}, \emptyset) \in \CF X$.  Then we have
$\cfr(D_{ij}) = u_{ij}$.
  
We define $h_{3j} = -\langle u_{3j}, \alpha \rangle$. The following
descriptions of the Cox rings in the different cases can be explicitly
obtained from \cite[Theorem~4.3.2]{bri07} or \cite[Theorem~3.6]{gag14}.

\smallskip Case $T$: There are two colors $D_{11}, D_{12} \in \Dm$, and we
have $\cfr(D_{11}) + \cfr(D_{12}) = \alpha^\vee|_{\Mm}$.
The Cox ring is given by
\begin{equation}\label{eq:coxring_type_T}
  \Rm(X) = \Qbar[x_{01}, \dots, x_{0J_0}, x_{11}, x_{12}, x_{21}, x_{22}, x_{31}, \dots, x_{3J_3}]
  /(x_{11}x_{12}-x_{21}x_{22}-x_{31}^{h_{31}} \cdots x_{3J_3}^{h_{3J_3}}), 
\end{equation}
cf.\ \eqref{typeT}, with
\begin{align*}
  \deg(x_{11}) &= \deg(x_{21}) = [D_{11}] \in \Cl X\text{,}\quad
  \deg(x_{12}) = \deg(x_{22}) = [D_{12}] \in \Cl X\text{, and} \\
  \deg(x_{ij}) &= [D_{ij}] \in \Cl X \text{ for $i \in \{0,3\}$.}
\end{align*}

\smallskip Case $N$: There is one color $D_{11} \in \Dm$, and we have
$\cfr(D_{11}) = \tfrac{1}{2}\alpha^\vee|_{\Mm}$.
The Cox ring is given by
\begin{equation*}
  \Rm(X) = \Qbar[x_{01}, \dots, x_{0J_0}, x_{11}, x_{12}, x_{21}, x_{31}, \dots, x_{3J_3}]
  /(x_{11}x_{12}-x_{21}^2-x_{31}^{h_{31}} \cdots x_{3J_3}^{h_{3J_3}})
\end{equation*}
with
\begin{equation*}
  \deg(x_{11}) = \deg(x_{12}) = \deg(x_{21}) = [D_{11}] \in \Cl X\text{,}\quad
  \deg(x_{ij}) = [D_{ij}] \in \Cl X \text{ for $i \in \{0,3\}$.}
\end{equation*}

\smallskip

Case $B$: We mention this case only for completeness since $X$ is isomorphic
to a toric variety here (as an abstract variety with a different group
action). There is one color $D_{11} \in \Dm$, and we have
$\cfr(D_{11}) = \alpha^\vee|_{\Mm}$.
The Cox ring is given by
$ \Rm(X) = \Qbar[x_{01}, \dots, x_{0J_0}, x_{11}, x_{12}]$ with
\begin{equation*}
  \deg(x_{11}) = \deg(x_{12}) = [D_{11}] \in \Cl X\text{,}\quad
  \deg(x_{0j}) = [D_{0j}] \in \Cl X \text{.}
\end{equation*}

\smallskip

Case $G$: We mention this case only for completeness since $X$ is a toric
$\Gd^r_m$-variety here. We have $\Dm = \emptyset$.  The Cox ring is given by
$\Rm(X) = \Qbar[x_{01}, \dots, x_{0J_0}]$ with
$ \deg(x_{0j}) = [D_{0j}] \in \Cl X. $

\subsection{Ambient toric varieties}\label{sec:ambient}

Every quasiprojective variety $X$ with finitely generated Cox ring may
be embedded into a toric variety $Y^\circ$ with nice properties, as
described in \cite[3.2.5]{adhl15}.

For a spherical variety $X$, this is explicitly described in
\cite{gag17}. According to \cite[Theorem~4.3.2]{bri07}, the Cox ring of $X$ is
generated by the union of sets $x_{D1}, \dots, x_{Dr_D} \in \Rm(X)$ for every
$D \in \Delta$. We have $r_D = 1$ if $D \notin \Dm$ and $r_D \ge 2$ if
$D \in \Dm$. Each $x_{Di}$ corresponds to a ray $\rho_{Di}$ in the fan
$\Sigma^\circ$ of the ambient toric variety $Y^\circ$.

Even if $X$ is projective, the quasiprojective toric variety $Y^\circ$ might
not be projective.  This is the case if and only if the colored cones in
$\CF X$ do not cover $\Nm_\Qd$.

Any $\Wm \subseteq \Delta$ defines a pair
$(\cone(\mathfrak{c}(\Wm)), \Wm \cap \Dm)$. If $\cone(\mathfrak{c}(\Wm))$ is
strictly convex, we call the pair a \emph{supported colored cone} if
$\cone(\mathfrak{c}(\Wm))^\circ \cap \Vm \ne \emptyset$ and an
\emph{unsupported colored cone} if
$\cone(\mathfrak{c}(\Wm))^\circ \cap \Vm = \emptyset$. If we can extend
$\CF X$ by some of these unsupported colored cones to a collection
$(\CF X)_{\rm ext}$ such that every face (in the sense of
\cite[Definition~15.3]{tim11}) of a colored cone is again in
$(\CF X)_{\rm ext}$, such that different colored cones intersect in faces, and
such that the colored cones cover the whole space $\Nm_\Qd$, then
$(\CF X)_{\rm ext}$ yields a toric variety $Y$ that completes $Y^\circ$.

We recall
here how to obtain the fan $\Sigma$ of the toric variety $Y$ from the
(possibly extended) colored fan $(\CF X)_{\rm ext}$. Let
$\Psi_D = \{\rho_{D1}, \dots, \rho_{Dr_D}\}$, and define
$\Psi_D^j = \Psi_D \setminus \{\rho_{Dj}\}$ for every
$1 \le j \le r_D$.
For every subset $\Wm \subseteq \Delta$, consider the sets of cones
\begin{align*}
  \Phi(\Wm) = \bigg\{\cone\bigg(\bigcup_{D \in \Wm}\Psi_D \cup
  \bigcup_{D \in \Delta\setminus \Wm}\Psi_D^{j(D)}\bigg) : j \in
  \Nd^{\Delta\setminus \Wm},\ 1 \le j(D) \le r_D \bigg\}\text{.}
\end{align*}
Then we have
\begin{equation}\label{eq:Sigma_max}
  \Sigma = \bigcup_{(\cone(\mathfrak{c}(\Wm)),
  \Wm \cap \Dm) \in (\CF X)_{\rm ext}} \Phi(\Wm)
\quad\text{and}\quad
  \Sigmamax = \bigcup_{(\cone(\mathfrak{c}(\Wm)),
  \Wm \cap \Dm) \in (\CF X)_{\rm ext,max}} \Phi(\Wm)\text{.}
\end{equation}

\subsection{Manin's conjecture}

We present now the main result of this paper, which implies all theorems
stated in the introduction.

\begin{theorem}\label{manin-cor}
  Let $X$ be a smooth split spherical almost Fano variety of semisimple rank
  one and type $T$ over $\Qd$ with semiample $\omega_X^\vee$ satisfying
  \eqref{eq:toric_smooth} whose colored fan $\CF X$ contains a maximal cone
  without colors.
 
  The corresponding counting problem as in
  Proposition~\ref{prop:countingproblem_abstract} features a torsor equation
  \eqref{typeT} with exponents $h_{ij}$, a height matrix $\mathscr{A}$ as in
  \eqref{newA} and coprimality conditions $S_1, \ldots S_r$ as in
  \eqref{gcd}. Choose $\bm \zeta $ satisfying \eqref{zeta1} and \eqref{J-cond}, let $\lambda$ be
  as in \eqref{lambdabeta} and choose $\bm \tau^{(2)}$ as in \eqref{tau1}.
  
  With these data, assume that \eqref{fail} and \eqref{ass2} hold.
  Then the Manin--Peyre conjecture holds for $X$ with respect to the
  anticanonical height function \eqref{eq:height_definition}.
\end{theorem}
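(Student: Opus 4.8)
The plan is to verify that the counting problem attached to $X$ by Proposition~\ref{prop:countingproblem_abstract} satisfies \emph{all} hypotheses of Theorem~\ref{analytic-theorem} and of Theorem~\ref{thm:manin-peyre}, and then invoke the latter. The key observation is that everything reduces to checking the algebraic and combinatorial inputs, since the analytic work is already done. First I would recall from Section~\ref{sec:ssr1} that in the case $T$ the Cox ring has the shape \eqref{eq:coxring_type_T}, so the torsor equation is exactly of the form \eqref{typeT}: it is the ``$2\times 2$ determinant equals a monomial'' equation with $k=3$, $J_1=J_2=2$ and $h_{11}=h_{12}=h_{21}=h_{22}=1$. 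Thus Proposition~\ref{circle-method} applies verbatim and yields Hypothesis~\ref{H1} with the explicit $\lambda$ and $\bm\beta$ of \eqref{lambdabeta}; one checks the side condition $J_i\ge 2$ whenever $\zeta_i\ge 1/2$ (this is \eqref{J-cond}), which holds because $J_1=J_2=2$ and $\zeta_3<1/2$ can be arranged in \eqref{zeta1}. Similarly, the assumption \eqref{fail}, together with \eqref{ass2}, feeds directly into Proposition~\ref{propH2} to give Hypothesis~\ref{H2}.

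Next I would address the linear-algebra side conditions \eqref{1c}, \eqref{1a}, \eqref{1b}, which are needed both for Theorem~\ref{analytic-theorem} and as hypotheses of Theorem~\ref{thm:manin-peyre}. Condition \eqref{1c}, that $\rank\Am=\rank\Am_1=R$, is exactly Lemma~\ref{rank}; moreover that lemma identifies $R=J-\rank\Pic X+1$, which via \eqref{eq:rkPic} gives $c_2=J-R=\rank\Pic X-1$, the correct Manin exponent. Condition \eqref{1a} -- solvability of $\bigl(\begin{smallmatrix}\Am_1\\\Am_3\end{smallmatrix}\bigr)\bm\sigma=\bm\tau$ in $\Rd^N_{>0}$ -- is Lemma~\ref{pos} (whose proof, as noted in the text, works for $\zeta_i\ge 0$ as well). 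Condition \eqref{1b}, the coprimality-type inequality $\sum_{(i,j)\in S_\rho}(1-\beta_ih_{ij})\ge 1+\delta_4$, follows from the explicit $\bm\beta$ in \eqref{lambdabeta} and the geometry of the primitive collections: because $J_1=J_2=2$ and the variables in the ``linear'' blocks $i=1,2$ have $h_{ij}=1$, one has $1-\beta_ih_{ij}\ge 1/2$ for each such index, and for the block $i=3$ the exponent of $\mu$ can be taken small; since every primitive collection has $|S_\rho|\ge 2$, this is essentially the same bookkeeping already carried out after \eqref{X2} in the discussion preceding Proposition~\ref{propH2}, and it is precisely guaranteed by \eqref{fail}.

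It then remains to check the hypotheses specific to Proposition~\ref{prop:peyre} and the passage to Peyre's constant: that $X$ is a split almost Fano variety with semiample $\omega_X^\vee$, finitely generated Cox ring with one relation, and that assumptions \eqref{eq:toric_smooth} and \eqref{eq:assumption_real_density_strong} hold. The first items are hypotheses of Theorem~\ref{manin-cor} or standard facts about smooth spherical Fano varieties; \eqref{eq:toric_smooth} is assumed. For \eqref{eq:assumption_real_density_strong} I would use the hypothesis that $\CF X$ contains a \emph{maximal cone without colors}: in the notation of Section~\ref{sec:ambient}, a colorless maximal cone $\sigma\in\Sigmamax$ contains no ray $\rho_{D1},\dots,\rho_{Dr_D}$ coming from a color $D\in\Dm$, hence among the basis $\{\deg(x_\rho):\rho\notin\sigma(1)\}$ of $\Pic X$ one finds the classes of the two colors $D_{11},D_{12}$ and of the $G$-invariant divisors; one can then take $\rho_0$ to be a ray indexing one of $x_{11},x_{21}$ (which appears with exponent $1$ in the relation \eqref{eq:coxring_type_T}) and $\rho_1$ a ray with $\alpha^\sigma_{\rho_1}\ne 0$, while the remaining clauses -- that no other variable of the chart shares the monomial of $x_{\rho_0}$ -- hold because in \eqref{eq:coxring_type_T} each of $x_{11},x_{21}$ appears in a monomial of length $\le 2$ whose other factor lies in $\sigma(1)$ for a suitable choice of $\sigma$. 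With \eqref{eq:assumption_real_density_strong} verified, \eqref{simplifying} follows (as remarked in the text), so Lemma~\ref{prop:omega_infty-c_infty}, Lemma~\ref{lem:alpha}, and the surface-integral formula \eqref{eq:c_infty} are all available, and Proposition~\ref{prop:peyre} identifies $c^\ast c_{\mathrm{fin}}c_\infty/2^{\rank\Pic X}$ with Peyre's constant. Putting all of this together, Theorem~\ref{thm:manin-peyre} applies and gives the asymptotic $N_{X,U,H}(B)=(1+o(1))cB(\log B)^{\rank\Pic X-1}$ with Peyre's $c$.

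The main obstacle I anticipate is not any single step but the bookkeeping needed to confirm \eqref{eq:assumption_real_density_strong} uniformly from the spherical combinatorics -- i.e.\ translating ``colorless maximal cone'' into the existence of a usable triple $(\sigma,\rho_0,\rho_1)$ with all four clauses of \eqref{eq:assumption_real_density_strong} -- together with verifying, once and for all, that the polytope conditions $C_2,C_3$ of \eqref{ass2} (which are case-by-case linear programs in the applications) are compatible with the general setup; the latter is exactly why Theorem~\ref{manin-cor} keeps \eqref{fail} and \eqref{ass2} as explicit hypotheses rather than deriving them. Everything else is an invocation of results already established in Parts~\ref{part1} and~\ref{part2}.
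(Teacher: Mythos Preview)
Your proof follows the paper's own argument: check that $X$ satisfies the hypotheses of Proposition~\ref{prop:peyre}, then verify the inputs to Theorem~\ref{analytic-theorem} via Propositions~\ref{circle-method} and~\ref{propH2} together with Lemmas~\ref{rank} and~\ref{pos}, and conclude by Theorem~\ref{thm:manin-peyre}. Two small corrections to your exposition are in order. First, a colorless maximal cone in $\CF X$ does not yield a single $\sigma\in\Sigmamax$ containing no color-rays; rather, by \eqref{eq:Sigma_max} it produces \emph{four} cones $\sigma\in\Sigmamax$, each of which contains exactly one of the two rays for the color $D_{11}$ (i.e.\ one of $x_{11},x_{21}$) and exactly one of the two rays for $D_{12}$ (one of $x_{12},x_{22}$), and one selects among these four the $\sigma$ verifying \eqref{eq:assumption_real_density_strong} (for instance the one with $x_{11},x_{22}$ in $\sigma(1)$, taking $\rho_0$ to be the ray of $x_{11}$). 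Second, \eqref{1b} is not a consequence of \eqref{fail}; it holds for the $\bm\beta$ of \eqref{lambdabeta} directly---the paper takes the specific value $\mu=2/(5\max_{ij}h_{ij})$, which makes every $1-\beta_ih_{ij}>1/2$---whereas \eqref{fail} is used only to secure the $C_1$-conditions inside Proposition~\ref{propH2}.
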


\begin{proof}
  It is enough to check all assumptions of Theorem~\ref{thm:manin-peyre}.
  
  We observe that $X$ is as in Proposition~\ref{prop:peyre} by our
  assumptions. In particular by \eqref{eq:coxring_type_T}, its Cox ring is as
  required.  By \eqref{eq:Sigma_max}, a maximal cone without colors in $\CF X$
  gives four maximal cones $\sigma \in \Sigmamax$ such that the variables
  corresponding to the rays of $\sigma$ include precisely one of
  $x_{11},x_{21}$ and precisely one of $x_{12},x_{22}$ in
  \eqref{eq:coxring_type_T}; it is not hard to see that one of these four
  cones satisfies \eqref{eq:assumption_real_density_strong}.

  Next we check that Theorem~\ref{analytic-theorem} applies. The counting
  problem is of the required form by
  Proposition~\ref{prop:countingproblem_abstract} and
  \eqref{eq:coxring_type_T}. Hypothesis~\ref{H1} holds by
  Proposition~\ref{circle-method}, whose assumptions are satisfied by
  \eqref{eq:coxring_type_T} and which allows us to choose 
  $${\bm \beta} = \Big( \frac{1}{2} - \frac{1}{5\max_{ij} h_{ij}},
  \frac{1}{2} - \frac{1}{5\max_{ij} h_{ij}}, \frac{2}{5\max_{ij}
    h_{ij}}\Big), $$ so that \eqref{1b} holds. Condition \eqref{J-cond} means $\zeta_3 < 1/2$ which is consistent with \eqref{zeta1}.  
  Hypothesis~\ref{H2}
  holds by Proposition~\ref{propH2}.  The conditions \eqref{1c}, \eqref{1a}
  hold by Lemmas~\ref{rank} and \ref{pos}.
\end{proof}

The assumption \eqref{eq:toric_smooth} can be read off of the colored fan
$\CF X$, using the method described in Section~\ref{sec:ambient}. The
existence of a maximal cone without colors in $\CF X$ is straightforward to
check and clearly holds in all our examples below; alternatively,
\eqref{eq:assumption_real_density_strong} can be checked directly.  As
mentioned after Proposition \ref{propH2}, if \eqref{fail} fails, we can apply
an alternative, but slightly more complicated criterion.  Assumption
\eqref{ass2} requires elementary linear algebra (and can be checked quickly by
computer if desired).

\begin{remark}\label{simpl} 
  If the torsor equation is
  $x_{11} x_{12} + x_{21} x_{22} + x_{31} x_{33} = 0$, we can use
  \cite[Proposition 1.2]{BBS2} instead of Proposition~\ref{circle-method} to
  verify Hypothesis~\ref{H1}, which conveniently yields again
  ${\bm \beta} = (1/3 + \varepsilon, 1/3 + \varepsilon, 1/3 + \varepsilon)$
  and more importantly
  \begin{equation*}
    \lambda = 1. 
  \end{equation*}
  The advantage is that the third line of \eqref{simplex2} is trivially
  satisfied (the polytope is empty), so that checking \eqref{ass2} requires a
  little less computational effort.
\end{remark}
 
\section{Spherical Fano threefolds}\label{sec:fano3folds}

\subsection{Geometry}\label{sec22}

According to \cite[\S 6.3]{hofscheier}, all horospherical smooth Fano
threefolds are either toric or flag varieties. Furthermore, there are nine
smooth Fano threefolds over $\Qbar$ that are spherical, but not horospherical;
they are equipped with an action of $G=\SL_2\times \Gd_\mathrm{m}$.  The
notation $T$ and $N$ in \cite[Table~6.5]{hofscheier} and in our
Table~\ref{tab:classification_spherical} refers to the cases in
Section~\ref{sec:ssr1}.

\begin{table}[ht]
  \centering
  \begin{tabular}[ht]{ccccccc}
    \hline
    rk Pic & Hofscheier & Mori--Mukai & torsor equation & remark\\
    \hline\hline
    2 & $T_1 12$ & II.31 &  {$x_{11}x_{12}-x_{21}x_{22}-x_{31}x_{32}^2$} & eq. $\Gd_\mathrm{a}^3$-cpct.\\
    2 & $N_1 6, N_1 7$ & II.30 & {$x_{11}x_{12}-x_{21}^2-x_{31}x_{32}$} & eq. $\Gd_\mathrm{a}^3$-cpct.\\
    2 & $N_1 8$ & II.29 &  {$x_{11}x_{12}-x_{21}^2-x_{31}x_{32}^2x_{33}$} & \\
    \hline
    3 & $T_1 18$ & III.24 & {$x_{11}x_{12}-x_{21}x_{22}-x_{31}x_{32}$} & variety $X_1$ \\
    3 & $T_1 21$ & III.20  & {$x_{11}x_{12}-x_{21}x_{22}-x_{31}x_{32}x_{33}^2$} &  variety $X_2$\\
    3 & $N_0 3$ & III.22  &  $x_{11}x_{12}-x_{21}^2-x_{31}x_{32}$ & \\
    3 & $N_1 9$ & III.19 &  $x_{11}x_{12}-x_{21}^2-x_{31}x_{32}$ & \\
    \hline
    4 & $T_0 3$ &  IV.8  &  $x_{11}x_{12}-x_{21}x_{22}-x_{31}x_{32}$ &  variety $X_3$ \\
    4 & $T_1 22$ & IV.7  & $x_{11}x_{12}-x_{21}x_{22}-x_{31}x_{32}$ &  variety  $X_4$\\
    \hline
  \end{tabular}
  \caption{Smooth Fano threefolds that are spherical, but not horospherical}
  \label{tab:classification_spherical}
\end{table}

We proceed to describe the four $T$ cases $X_1, \dots, X_4 $ in Table
\ref{tab:classification_spherical} that are not equivariant
$\Gd_\mathrm{a}^3$-compactifications \cite{arXiv:1802.08090} in more
detail. In each case, we first construct a split form over $\Qd$ following the
elementary description from the Mori--Mukai classification, and then we give
the description using the Luna--Vust theory of spherical embeddings from
Hofscheier's list. Finally we describe in each case an ambient toric variety
$Y_i$ satisfying \eqref{eq:toric_smooth} that can be used with
Sections~\ref{sec:charts_torsors}--\ref{sec:tamagawa_cox}.
  
Let $\varepsilon_1 \in \Xf(B)$ be a primitive character of $\Gd_\mathrm{m}$
composed with the natural inclusion $\Xf(\Gd_\mathrm{m}) \to \Xf(B)$.

\smallskip
  
\subsubsection{$X_1$ of type III.24 and $X_4$ of type IV.7}\label{IV.7}

Consider $\Pd^2_\Qd \times \Pd^2_\Qd$ with coordinates
$(z_{11}:z_{21}:z_{31})$ and $(z_{12}:z_{22}:z_{32})$, and the hypersurface
$W_4 = \Vd(z_{11}z_{12}-z_{21}z_{22}-z_{31}z_{32}) \subset \Pd^2_\Qd \times
\Pd^2_\Qd$ of bidegree $(1,1)$. This is a smooth Fano threefold of type
II.32. It contains the curves
\begin{align*}
  C_{01} &= \Vd(z_{11},z_{21},z_{32}) =
           \{(0:0:1)\}\times\Vd(z_{32})\text{,}\\
  C_{02} &= \Vd(z_{12},z_{22},z_{31}) = 
           \Vd(z_{31})\times\{(0:0:1)\}
\end{align*}
of bidegrees $(0,1)$ and $(1,0)$, respectively.  Let $X_1$ be the blow-up of
$W_4$ in the curve $C_{01}$. This is a smooth Fano threefold of type
III.24. Moreover, let $X_4$ be the further blow-up in the curve $C_{02}$
(which is disjoint from the curve $C_{01}$ in $W_4$). This is a smooth Fano
threefold of type IV.7.  We may define an action of
$G = \SL_2 \times \Gd_\mathrm{m}$ on $W_4$ by
\begin{equation*}
  (A,t)\cdot \rleft(
  \begin{pmatrix}
    z_{11} & z_{22} \\
    z_{21} & z_{12}
  \end{pmatrix}
  ,z_{31},z_{32}\rright) = \rleft(A\cdot \begin{pmatrix}
    z_{11} & z_{22} \\
    z_{21} & z_{12}
  \end{pmatrix} \cdot
  \begin{pmatrix}
    t^{-1} & 0 \\
    0 & t
  \end{pmatrix},z_{31},z_{32}\rright),
\end{equation*}
which turns $W_4$ into a spherical variety. The following description using
the Luna--Vust theory of spherical embeddings can be easily verified.  The
lattice $\Mm$ has basis
$(\frac{1}{2}\alpha + \varepsilon_1, \frac{1}{2}\alpha - \varepsilon_1)$. We
denote the corresponding dual basis of the lattice $\Nm$ by $(d_1, d_2)$. Then
there are two colors with valuations $d_1$ and $d_2$, and the valuation cone
is given by $\Vm = \{v \in \Nm_\Qd : \langle v, \alpha \rangle \le 0\}$. Since
the curves $C_{01}$ and $C_{02}$ are $G$-invariant, the varieties $X_1$ and
$X_4$ are spherical $G$-varieties, and the blow-up morphisms
$X_4 \to X_1 \to W_4$ can be described by maps of colored fans. The following
figure illustrates this.
\begin{align*}
  \begin{tikzpicture}[scale=0.7]
    \clip (-2.24, -2.24) -- (2.24, -2.24) -- (2.24, 2.24) -- (-2.24, 2.24) -- cycle;
    \fill[color=gray!30] (-3, 3) -- (3, -3) -- (-3, -3) -- cycle;
    \foreach \x in {-3,...,3} \foreach \y in {-3,...,3} \fill (\x, \y) circle (1pt);
    \draw (1, 0) circle (2pt);
    \draw (0, 1) circle (2pt);
    \draw (-1, 0) circle (2pt);
    \draw (0, -1) circle (2pt);
    \draw (-1, 1) circle (2pt);
    \draw (1, -1) circle (2pt);
    \draw (0,0) -- (3,0);
    \draw (0,0) -- (0,3);
    \draw (0,0) -- (-4,4);
    \draw (0,0) -- (4,-4);
    \draw (0,0) -- (-3, 0);
    \draw (0,0) -- (0, -3);
    \path (-1, 0) node[anchor=north] {{\tiny{$u_{31}$}}};
    \path (0, -1) node[anchor=east] {{\tiny{$u_{32}$}}};
    \path (0, 1) node[anchor=south west] {{\tiny{$d_{2}$}}};
    \path (1, 0) node[anchor=south west] {{\tiny{$d_{1}$}}};
    \path (-1, 1) node[anchor=south west] {{\tiny{$u_{02}$}}};
    \path (1, -1) node[anchor=south west] {{\tiny{$u_{01}$}}};
    \begin{scope}
      \clip (0,0) -- (-1,1) -- (-1,0) -- cycle; \draw (0,0) circle (9pt);
    \end{scope}
    \begin{scope}
      \clip (0,0) -- (-1, 0) -- (0, -1) -- cycle; \draw (0,0) circle (13pt);
    \end{scope}
    \begin{scope}
      \clip (0,0) -- (0,-1) -- (1,-1) -- cycle; \draw (0,0) circle (9pt);
    \end{scope}
    \begin{scope}
      \clip (1,-1) -- (1,0) -- (0,0) -- cycle; \draw[densely dotted,thick] (0,0) circle (13pt);
    \end{scope}
    \begin{scope}
      \clip (1,0) -- (0,1) -- (0,0) -- cycle; \draw[densely dotted,thick] (0,0) circle (9pt);
    \end{scope}
    \begin{scope}
      \clip (0,1) -- (-1,1) -- (0,0) -- cycle; \draw[densely dotted,thick] (0,0) circle (13pt);
    \end{scope}
  \end{tikzpicture}
  &&
  \begin{tikzpicture}[scale=0.7]
    \clip (-0.5, -2.24) -- (0.5, -2.24) -- (0.5, 2.24) -- (-0.5, 2.24) -- cycle;
    \path (0,0) node {{$\longrightarrow$}};
  \end{tikzpicture}
  &&
  \begin{tikzpicture}[scale=0.7]
    \clip (-2.24, -2.24) -- (2.24, -2.24) -- (2.24, 2.24) -- (-2.24, 2.24) -- cycle;
    \fill[color=gray!30] (-3, 3) -- (3, -3) -- (-3, -3) -- cycle;
    \foreach \x in {-3,...,3} \foreach \y in {-3,...,3} \fill (\x, \y) circle (1pt);
    \draw (1, 0) circle (2pt);
    \draw (0, 1) circle (2pt);
    \draw (-1, 0) circle (2pt);
    \draw (0, -1) circle (2pt);
    \draw (1, -1) circle (2pt);
    \draw (0,0) -- (3,0);
    \draw (0,0) -- (0,3);
    \draw (0,0) -- (4,-4);
    \draw (0,0) -- (-3, 0);
    \draw (0,0) -- (0, -3);
    \path (-1, 0) node[anchor=north] {{\tiny{$u_{31}$}}};
    \path (0, -1) node[anchor=east] {{\tiny{$u_{32}$}}};
    \path (0, 1) node[anchor=south west] {{\tiny{$d_{2}$}}};
    \path (1, 0) node[anchor=south west] {{\tiny{$d_{1}$}}};
    \path (1, -1) node[anchor=south west] {{\tiny{$u_{01}$}}};
    \begin{scope}
      \clip (0,0) -- (-1, 0) -- (0, -1) -- cycle; \draw (0,0) circle (13pt);
    \end{scope}
    \begin{scope}
      \clip (0,0) -- (0,-1) -- (1,-1) -- cycle; \draw (0,0) circle (9pt);
    \end{scope}
    \begin{scope}
      \clip (0,0) -- (0,1) -- (-1,0) -- cycle; \draw (0,0) circle (9pt);
    \end{scope}
    \begin{scope}
      \clip (1,-1) -- (1,0) -- (0,0) -- cycle; \draw[densely dotted,thick] (0,0) circle (13pt);
    \end{scope}
    \begin{scope}
      \clip (1,0) -- (0,1) -- (0,0) -- cycle; \draw[densely dotted,thick] (0,0) circle (17pt);
    \end{scope}
  \end{tikzpicture}
  &&
  \begin{tikzpicture}[scale=0.7]
    \clip (-0.5, -2.24) -- (0.5, -2.24) -- (0.5, 2.24) -- (-0.5, 2.24) -- cycle;
    \path (0,0) node {{$\longrightarrow$}};
  \end{tikzpicture}
  &&
  \begin{tikzpicture}[scale=0.7]
    \clip (-2.24, -2.24) -- (2.24, -2.24) -- (2.24, 2.24) -- (-2.24, 2.24) -- cycle;
    \fill[color=gray!30] (-3, 3) -- (3, -3) -- (-3, -3) -- cycle;
    \foreach \x in {-3,...,3} \foreach \y in {-3,...,3} \fill (\x, \y) circle (1pt);
    \draw (1, 0) circle (2pt);
    \draw (0, 1) circle (2pt);
    \draw (-1, 0) circle (2pt);
    \draw (0, -1) circle (2pt);
    \draw (0,0) -- (3,0);
    \draw (0,0) -- (0,3);
    \draw (0,0) -- (-3, 0);
    \draw (0,0) -- (0, -3);
    \path (-1, 0) node[anchor=north] {{\tiny{$u_{31}$}}};
    \path (0, -1) node[anchor=west] {{\tiny{$u_{32}$}}};
    \path (0, 1) node[anchor=east] {{\tiny{$d_{2}$}}};
    \path (1, 0) node[anchor=south] {{\tiny{$d_{1}$}}};
    \begin{scope}
      \clip (0,0) -- (0,1) -- (-1,0) -- cycle; \draw (0,0) circle (9pt);
    \end{scope}
    \begin{scope}
      \clip (0,0) -- (-1, 0) -- (0, -1) -- cycle; \draw (0,0) circle (13pt);
    \end{scope}
    \begin{scope}
      \clip (0,0) -- (0,-1) -- (1,0) -- cycle; \draw (0,0) circle (9pt);
    \end{scope}
  \end{tikzpicture}
\end{align*}
Here the elements $u_{31} = -d_1$ and $u_{32} = -d_2$ are the
valuations of the $G$-invariant prime divisors $\Vd(z_{31})$ and
$\Vd(z_{32})$, respectively, while the elements $u_{01} = d_1-d_2$
and $u_{02} = -d_1+d_2$ are the valuations of the exceptional divisors
$E_{01}$ and $E_{02}$ over $C_{01}$ and $C_{02}$, respectively. In
particular, we see that $X_1$ is the fourth line and that $X_4$ is the
last line of Hofscheier's list.

The dotted circles in the colored fans of $X_1$ and $X_4$ specify
projective ambient toric varieties $Y_1$ and $Y_4$, respectively. From
the description of $\Sigmamax$ in Section~\ref{sec:ambient}, we deduce
that $Y_1$ and $Y_4$ are smooth, that $-K_{X_1}$ is ample on $Y_1$,
and that $-K_{X_4}$ is ample on $Y_4$. Hence
assumption~\eqref{eq:toric_smooth} holds.

\subsubsection{$X_2$ of  type III.20}
  Consider $\Pd^4_\Qd$ with coordinates
  $(z_{11} : z_{12} : z_{21} : z_{22} : z_{33})$ and the hypersurface
  $Q = \Vd(z_{11}z_{12} - z_{21}z_{22} - z_{33}^2) \subset \Pd^4_\Qd$. It
  contains the lines
  \begin{align*}
    C_{31} = \Vd(z_{12},z_{22},z_{33}), \quad  C_{32} = \Vd(z_{11},z_{21},z_{33})\text{.}
  \end{align*}
  Let $X_2$ be the blow-up of $Q$ in the lines $C_{31}$ and $C_{32}$.
  This is a smooth Fano threefold of type III.20. We may define an
  action of $G = \SL_2 \times \Gd_\mathrm{m}$ on $Q$ by
  \begin{equation*}
    (A,t)\cdot \rleft(
    \begin{pmatrix}
      z_{11} & z_{22} \\
      z_{21} & z_{12}
    \end{pmatrix}
    ,z_{33}\rright) = \rleft(A\cdot \begin{pmatrix}
      z_{11} & z_{22} \\
      z_{21} & z_{12}
    \end{pmatrix} \cdot
    \begin{pmatrix}
      t^{-1} & 0 \\
      0 & t
    \end{pmatrix},z_{33}\rright),
  \end{equation*}
  which turns $Q$ into a spherical variety. Since the lines $C_{31}$
  and $C_{32}$ are $G$-invariant, the variety $X_2$ is a spherical
  $G$-variety. Since $X_2$ is also the blow-up of $W_4$ in the curve
  $C_{33} = \Vd(z_{31}, z_{32})$, it has the same birational
  invariants as $W_4$, and the blow-up morphisms
  $Q \leftarrow X_2 \to W_4$ can be described by maps of colored fans
  as illustrated in the following picture.
  \begin{align*}
    \begin{tikzpicture}[scale=0.7]
    \clip (-2.24, -2.24) -- (2.24, -2.24) -- (2.24, 2.24) -- (-2.24, 2.24) -- cycle;
    \fill[color=gray!30] (-3, 3) -- (3, -3) -- (-3, -3) -- cycle;
    \foreach \x in {-3,...,3} \foreach \y in {-3,...,3} \fill (\x, \y) circle (1pt);
    \draw (1, 0) circle (2pt);
    \draw (0, 1) circle (2pt);
    \draw (-1, -1) circle (2pt);
    \draw (0,0) -- (3,0);
    \draw (0,0) -- (0,3);
    \draw (0,0) -- (-4,-4);
    \path (0, 1) node[anchor=south west] {{\tiny{$d_{2}$}}};
    \path (1, 0) node[anchor=south west] {{\tiny{$d_{1}$}}};
    \path (-1, -1) node[anchor=north west] {{\tiny{$u_{33}$}}};
    \begin{scope}
      \clip (0,0) -- (0,1) -- (-1,-1) -- cycle; \draw (0,0) circle (9pt);
    \end{scope}
    \begin{scope}
      \clip (0,0) -- (-2, -2) -- (2, 0) -- cycle; \draw (0,0) circle (13pt);
    \end{scope}
  \end{tikzpicture}
  &&
  \begin{tikzpicture}[scale=0.7]
    \clip (-0.5, -2.24) -- (0.5, -2.24) -- (0.5, 2.24) -- (-0.5, 2.24) -- cycle;
    \path (0,0) node {{$\longleftarrow$}};
  \end{tikzpicture}
  &&
  \begin{tikzpicture}[scale=0.7]
    \clip (-2.24, -2.24) -- (2.24, -2.24) -- (2.24, 2.24) -- (-2.24, 2.24) -- cycle;
    \fill[color=gray!30] (-3, 3) -- (3, -3) -- (-3, -3) -- cycle;
    \foreach \x in {-3,...,3} \foreach \y in {-3,...,3} \fill (\x, \y) circle (1pt);
    \draw (1, 0) circle (2pt);
    \draw (0, 1) circle (2pt);
    \draw (-1, 0) circle (2pt);
    \draw (0, -1) circle (2pt);
    \draw (-1, -1) circle (2pt);
    \draw (0,0) -- (3,0);
    \draw (0,0) -- (0,3);
    \draw (0,0) -- (-4,-4);
    \draw (0,0) -- (-3, 0);
    \draw (0,0) -- (0, -3);
    \path (-1, 0) node[anchor=north] {{\tiny{$u_{31}$}}};
    \path (0, -1) node[anchor=west] {{\tiny{$u_{32}$}}};
    \path (0, 1) node[anchor=south west] {{\tiny{$d_{2}$}}};
    \path (1, 0) node[anchor=south west] {{\tiny{$d_{1}$}}};
    \path (-1, -1) node[anchor=north west] {{\tiny{$u_{33}$}}};
    \begin{scope}
      \clip (0,0) -- (0,1) -- (-1,0) -- cycle; \draw (0,0) circle (9pt);
    \end{scope}
    \begin{scope}
      \clip (0,0) -- (-1, 0) -- (-1, -1) -- cycle; \draw (0,0) circle (13pt);
    \end{scope}
    \begin{scope}
      \clip (0,0) -- (-1,-1) -- (0,-1) -- cycle; \draw (0,0) circle (9pt);
    \end{scope}
    \begin{scope}
      \clip (0,0) -- (0,-1) -- (1,0) -- cycle; \draw (0,0) circle (13pt);
    \end{scope}
    \begin{scope}
      \clip (1,0) -- (0,1) -- (0,0) -- cycle; \draw[densely dotted,thick] (0,0) circle (17pt);
    \end{scope}
  \end{tikzpicture}
  &&
  \begin{tikzpicture}[scale=0.7]
    \clip (-0.5, -2.24) -- (0.5, -2.24) -- (0.5, 2.24) -- (-0.5, 2.24) -- cycle;
    \path (0,0) node {{$\longrightarrow$}};
  \end{tikzpicture}
  &&
  \begin{tikzpicture}[scale=0.7]
    \clip (-2.24, -2.24) -- (2.24, -2.24) -- (2.24, 2.24) -- (-2.24, 2.24) -- cycle;
    \fill[color=gray!30] (-3, 3) -- (3, -3) -- (-3, -3) -- cycle;
    \foreach \x in {-3,...,3} \foreach \y in {-3,...,3} \fill (\x, \y) circle (1pt);
    \draw (1, 0) circle (2pt);
    \draw (0, 1) circle (2pt);
    \draw (-1, 0) circle (2pt);
    \draw (0, -1) circle (2pt);
    \draw (0,0) -- (3,0);
    \draw (0,0) -- (0,3);
    \draw (0,0) -- (-3, 0);
    \draw (0,0) -- (0, -3);
    \path (-1, 0) node[anchor=north] {{\tiny{$u_{31}$}}};
    \path (0, -1) node[anchor=west] {{\tiny{$u_{32}$}}};
    \path (0, 1) node[anchor=east] {{\tiny{$d_{2}$}}};
    \path (1, 0) node[anchor=south] {{\tiny{$d_{1}$}}};
    \begin{scope}
      \clip (0,0) -- (0,1) -- (-1,0) -- cycle; \draw (0,0) circle (9pt);
    \end{scope}
    \begin{scope}
      \clip (0,0) -- (-1, 0) -- (0, -1) -- cycle; \draw (0,0) circle (13pt);
    \end{scope}
    \begin{scope}
      \clip (0,0) -- (0,-1) -- (1,0) -- cycle; \draw (0,0) circle (9pt);
    \end{scope}
  \end{tikzpicture}
\end{align*}
In particular, we see that $X_2$ is the fifth line of Hofscheier's list.

As before, the dotted circle in the colored fan of $X_2$ specifies a
projective ambient toric variety $Y_2$, which satisfies
\eqref{eq:toric_smooth}.

\subsubsection{$X_3$ of type IV.8} Consider
$W_3=\Pd^1_\Qd \times \Pd^1_\Qd \times \Pd^1_\Qd$ with coordinates
$(z_{01}:z_{02})$, $(z_{11}:z_{21})$ and $(z_{12}:z_{22})$. This is a smooth
Fano threefold of type III.27. Let $C_{31}$ be the curve
$\Vd(z_{02},z_{11}z_{12}-z_{21}z_{22})$ of tridegree $(0,1,1)$ on $W_3$. Let
$X_3$ be the blow-up of $W_3$ in $C_{31}$. This is a smooth Fano threefold of
type IV.8. We may define an action of $G = \SL_2 \times \Gd_\mathrm{m}$ on
$W_3$ by
\begin{equation*}
  (A,t)\cdot \rleft(z_{01},z_{02},
  \begin{pmatrix}
    z_{11} & z_{22} \\
    z_{21} & z_{12}
  \end{pmatrix}
  \rright) = \rleft(t\cdot z_{01}, z_{02}, A\cdot \begin{pmatrix}
    z_{11} & z_{22} \\
    z_{21} & z_{12}
  \end{pmatrix}\rright)\text{,}
\end{equation*}
which turns $W_3$ into a spherical variety.  Its Luna--Vust description is a
follows.  The lattice $\Mm$ has basis $(\alpha, \varepsilon_1)$. We denote the
corresponding dual basis of the lattice $\Nm$ by $(d, \varepsilon_1^*)$. Then
there are two colors with the same valuation $d = \frac{1}{2}\alpha^\vee$, and
the valuation cone is given by
$\Vm = \{v \in \Nm_\Qd : \langle v, \alpha \rangle \le 0\}$. Since the curve
$C_{31}$ is $G$-invariant, the variety $X_3$ is a spherical $G$-variety, and
the blow-up morphism $X_3 \to W_3$ can be described by the map of colored fans
in the figure below.

\begin{align*}
  \begin{tikzpicture}[scale=0.7]
    \clip (-2.24, -2.24) -- (2.24, -2.24) -- (2.24, 2.24) -- (-2.24, 2.24) -- cycle;
    \fill[color=gray!30] (0, 3) -- (0, -3) -- (-3, -3) -- (-3, 3) -- cycle;
    \foreach \x in {-3,...,3} \foreach \y in {-3,...,3} \fill (\x, \y) circle (1pt);
    \draw (1, 0) circle (2pt);
    \draw (0, 1) circle (2pt);
    \draw (-1, 0) circle (2pt);
    \draw (0, -1) circle (2pt);
    \draw (-1, 1) circle (2pt);
    \draw (0,0) -- (0,3);
    \draw (0,0) -- (-4,4);
    \draw (0,0) -- (-3, 0);
    \draw (0,0) -- (0, -3);
    \path (-1, 0) node[anchor=north] {{\tiny{$u_{32}$}}};
    \path (0, -1) node[anchor=west] {{\tiny{$u_{01}$}}};
    \path (0, 1) node[anchor=west] {{\tiny{$u_{02}$}}};
    \path (1, 0) node[anchor=north west] {{\tiny{$d$}}};
    \path (-1, 1) node[anchor=east] {{\tiny{$u_{31}$}}};
    \begin{scope}
      \clip (0,0) -- (0,1) -- (-1,1) -- cycle; \draw (0,0) circle (9pt);
    \end{scope}
    \begin{scope}
      \clip (0,0) -- (-1,1) -- (-1,0) -- cycle; \draw (0,0) circle (13pt);
    \end{scope}
    \begin{scope}
      \clip (0,0) -- (-1, 0) -- (0, -1) -- cycle; \draw (0,0) circle (9pt);
    \end{scope}
    \draw[densely dotted, thick] (0,0) -- (4,0);
    \begin{scope}
      \clip (1,0) -- (0,1) -- (0,0) -- cycle; \draw[densely dotted,thick] (0,0) circle (13pt);
    \end{scope}
    \begin{scope}
      \clip (0,-1) -- (1,0) -- (0,0) -- cycle; \draw[densely dotted,thick] (0,0) circle (17pt);
    \end{scope}
  \end{tikzpicture}
  &&
  \begin{tikzpicture}[scale=0.7]
    \clip (-0.5, -2.24) -- (0.5, -2.24) -- (0.5, 2.24) -- (-0.5, 2.24) -- cycle;
    \path (0,0) node {{$\longrightarrow$}};
  \end{tikzpicture}
  &&
    \begin{tikzpicture}[scale=0.7]
    \clip (-2.24, -2.24) -- (2.24, -2.24) -- (2.24, 2.24) -- (-2.24, 2.24) -- cycle;
    \fill[color=gray!30] (0, 3) -- (0, -3) -- (-3, -3) -- (-3, 3) -- cycle;
    \foreach \x in {-3,...,3} \foreach \y in {-3,...,3} \fill (\x, \y) circle (1pt);
    \draw (1, 0) circle (2pt);
    \draw (0, 1) circle (2pt);
    \draw (-1, 0) circle (2pt);
    \draw (0, -1) circle (2pt);
    \draw (0,0) -- (0,3);
    \draw (0,0) -- (-3, 0);
    \draw (0,0) -- (0, -3);
    \path (-1, 0) node[anchor=north] {{\tiny{$u_{32}$}}};
    \path (0, -1) node[anchor=west] {{\tiny{$u_{01}$}}};
    \path (0, 1) node[anchor=west] {{\tiny{$u_{02}$}}};
    \path (1, 0) node[anchor=west] {{\tiny{$d$}}};
    \begin{scope}
      \clip (0,0) -- (0,1) -- (-1,0) -- cycle; \draw (0,0) circle (9pt);
    \end{scope}
    \begin{scope}
      \clip (0,0) -- (-1, 0) -- (0, -1) -- cycle; \draw (0,0) circle (13pt);
    \end{scope}
  \end{tikzpicture}
\end{align*}
Here the elements $u_{01} = -\varepsilon_1^*$ and $u_{02} = \varepsilon_1^*$
are the valuations of the $G$-invariant prime divisors $\Vd(z_{01})$ and
$\Vd(z_{02})$, respectively, the element $u_{32} = -d$ is the valuation of the
$G$-invariant prime divisor $\Vd(z_{11}z_{12}-z_{21}z_{22})$, and
$u_{31} = -d + \varepsilon_1^*$ is the valuation of the exceptional divisor
$E_{31}$ over $C_{31}$.  This is the penultimate line of Hofscheier's list.

The dotted circles in the colored fan of $X_3$ are meant to specify a
projective ambient toric variety~$Y_3$, but since there are two colors with
the same valuation $d$, the picture is ambiguous. There are three
possibilities for which unsupported colored cones could be added to the
colored cone of $X_3$ to obtain an ambient toric variety:
\begin{enumerate}
\item $(\cone(u_{01}, d), \{D_{11}\})$ and $(\cone(u_{02}, d), \{D_{11}\})$,
\item $(\cone(u_{01}, d), \{D_{12}\})$ and $(\cone(u_{02}, d), \{D_{12}\})$, or
\item $(\cone(u_{01}, d), \{D_{11}, D_{12}\})$ and $(\cone(u_{02}, d), \{D_{11}, D_{12}\})$.
\end{enumerate}
From the description of $\Sigmamax$ in Section~\ref{sec:ambient}, we deduce
that the ambient toric variety in case $(3)$ is singular. On the other hand,
in cases $(1)$ and $(2)$, the ambient toric variety is smooth, and $-K_{X_3}$
not ample but semiample on it.  We fix $Y_3$ to be as in case $(1)$,
satisfying \eqref{eq:toric_smooth}.
  
\subsection{Cox rings and torsors}\label{sec:crt}

We proceed to compute explicitly the Cox rings $\Rm(X)$ in the examples from
Section~\ref{sec22} using Section~\ref{sec:ssr1} together with
\cite{arXiv:1408.5358} since we work over $\Qd$ here. To obtain the universal
torsor $\Tm=X_0$, we compute the set $Z_Y$ as in
Section~\ref{sec:torsors_models}. Moreover, we give simplified expressions for
$Z_X = Z_Y \cap \Spec \Rm(X)$, which can be verified using the equation
$\Phi$. Finally the anticanonical class is computed using \cite[4.1 and
4.2]{bri97} or \cite[Proposition~3.3.3.2]{adhl15}. In the case of a spherical
variety of semisimple rank one of type $T$ or $N$, this is simply the sum of
all $B$-invariant divisors.

\subsubsection{Type III.24}

We have
\begin{align*}
  \Rm(X_1) = \Qd[x_{01},x_{11},x_{12},x_{21},x_{22},x_{31},x_{32}]
  /(x_{11}x_{12}-x_{21}x_{22}-x_{31}x_{32})
\end{align*}
with $\Pic X_1 \cong \Z^3$, where
\begin{align*}
  & \deg(x_{01})=(0,0,1), \quad  \deg(x_{11})=\deg(x_{21})=(0,1,-1),\\
  &\deg(x_{12})=\deg(x_{22})=(1,0,0), \quad  \deg(x_{31})=(0,1,0),
    \quad  \deg(x_{32})=(1,0,-1)\text{.}
\end{align*}
Note that each generator $x_{ij}$ of the Cox ring corresponds to the strict
transform of $\Vd(z_{ij})$ or to the element $u_{ij}$ in
Section~\ref{IV.7}. The anticanonical class is $-K_{X_1}=(2,2,-1)$. A
universal torsor over $X_1$ is
\begin{equation*}
  \Tm_1 = \Spec\Rm(X_1) \setminus Z_{Y_1} = \Spec\Rm(X_1) \setminus Z_{X_1}\text{,}
\end{equation*}
where
\begin{align*}
  Z_{Y_1} &= \Vd(x_{11},x_{21},x_{31}) \cup \Vd(x_{11},x_{21},x_{32})
            \cup \Vd(x_{12},x_{22},x_{01}) \cup \Vd(x_{12},x_{22},x_{32})
            \cup \Vd(x_{01},x_{31})\text{,}\\
  Z_{X_1} &= \Vd(x_{11},x_{21}) \cup \Vd(x_{12},x_{22},x_{32}) \cup \Vd(x_{01},x_{31})\text{.}
\end{align*}

\subsubsection{Type III.20}

The Cox ring is
\begin{align*}
  \Rm(X_2) = \Qd[x_{11},x_{12},x_{21},x_{22},x_{31},x_{32},x_{33}]
  /(x_{11}x_{12}-x_{21}x_{22}-x_{31}x_{32}x_{33}^2)
\end{align*}
with $\Pic X_2 \cong \Z^3$, where
\begin{align*}
  & \deg(x_{11})=\deg(x_{21})=(0,1,0)\text{,}\quad  \deg(x_{12})=\deg(x_{22})=(1,0,0)\text{,}\\
  & \deg(x_{31})=(0,1,-1), \quad  \deg(x_{32})=(1,0,-1), \quad \deg(x_{33}) = (0,0,1)\text{.}
\end{align*}
The anticanonical class is $-K_{X_2}=(2,2,-1)$.
A universal torsor over $X_2$ is
\begin{equation*}
  \Tm_2 = \Spec\Rm(X_2) \setminus Z_{Y_2} = \Spec\Rm(X_2) \setminus Z_{X_2}\text{,}
\end{equation*}
where
\begin{align*}
  Z_{Y_2} &= \Vd(x_{11},x_{21},x_{31}) \cup \Vd(x_{11},x_{21},x_{33}) \cup
            \Vd(x_{12},x_{22},x_{32}) \cup \Vd(x_{12},x_{22},x_{33}) \cup \Vd(x_{31},x_{32}),\\
  Z_{X_2} &= \Vd(x_{11},x_{21},x_{31}) \cup \Vd(x_{11},x_{21},x_{33}) \cup
            \Vd(x_{12},x_{22},x_{32}) \cup \Vd(x_{12},x_{22},x_{33}) \cup \Vd(x_{31},x_{32}).
\end{align*}

\subsubsection{Type IV.8}

The Cox ring is
\begin{align*}
  \Rm(X_3) = \Qd[x_{01},x_{02},x_{11},x_{12},x_{21},x_{22},x_{31},x_{32}]
  /(x_{11}x_{12}-x_{21}x_{22}-x_{31}x_{32})
\end{align*}
with $\Pic X_3 \cong \Z^4$, where
\begin{align*}
  & \deg(x_{01})=(1,0,0,0), \quad \deg(x_{02})=(1,0,0,-1)\text{,}\\
  & \deg(x_{11})=\deg(x_{21})=(0,0,1,0)\text{,}\quad  \deg(x_{12})=\deg(x_{22})=(0,1,0,0)\text{,}\\
  & \deg(x_{31})=(0,0,0,1), \quad  \deg(x_{32})=(0,1,1,-1)\text{.}
\end{align*}
The anticanonical class is $-K_{X_3}=(2,2,2,-1)$. A universal torsor over $X_3$ is
\begin{equation*}
  \Tm_3 = \Spec\Rm(X_3) \setminus Z_{Y_3} = \Spec\Rm(X_3) \setminus Z_{X_3}\text{,}
\end{equation*}
where
\begin{align*}
  Z_{Y_3} &= \Vd(x_{11},x_{21},x_{31}) \cup \Vd(x_{11},x_{21},x_{32}) \cup \Vd(x_{12},x_{22})
            \cup \Vd(x_{02},x_{32}) \cup \Vd(x_{01},x_{02}) \cup \Vd(x_{01},x_{31}),\\
  Z_{X_3} &= \Vd(x_{11},x_{21}) \cup \Vd(x_{12},x_{22}) \cup
            \Vd(x_{02},x_{32}) \cup \Vd(x_{01},x_{02}) \cup \Vd(x_{01},x_{31}).
\end{align*}

\subsubsection{Type IV.7} 

The Cox ring is
\begin{align*}
  \Rm(X_4) = \Qd[x_{01},x_{02},x_{11},x_{12},x_{21},x_{22},x_{31},x_{32}]
  /(x_{11}x_{12}-x_{21}x_{22}-x_{31}x_{32})
\end{align*}
with $\Pic X_4 \cong \Z^4$, where
\begin{align*}
  &  \deg(x_{01})=(0,0,0,1), \quad   \deg(x_{02})=(0,0,1,0)\text{,}\\
  &  \deg(x_{11})=\deg(x_{21})=(0,1,0,-1)\text{,}\quad  \deg(x_{12})=\deg(x_{22})=(1,0,-1,0)\text{,}\\
  & \deg(x_{31})=(0,1,-1,0), \quad  \deg(x_{32})=(1,0,0,-1)\text{.}
\end{align*}
The anticanonical class is $-K_{X_4} = (2,2,-1,-1)$.  A universal torsor is
over $X_4$ is
\begin{equation*}
  \Tm_4 = \Spec\Rm(X_4) \setminus Z_{Y_4} = \Spec\Rm(X_4) \setminus Z_{X_4}\text{,}
\end{equation*}
where
\begin{align*}
  Z_{Y_4} &= \Vd(x_{11},x_{21},x_{01}) \cup \Vd(x_{11},x_{21},x_{31}) \cup \Vd(x_{11},x_{21},x_{32})\\
          &\qquad \cup \Vd(x_{12},x_{22},x_{02}) \cup
            \Vd(x_{12},x_{22},x_{31}) \cup \Vd(x_{12},x_{22},x_{32})\\
          &\qquad \cup \Vd(x_{02},x_{32}) \cup \Vd(x_{01},x_{02}) \cup \Vd(x_{01},x_{31}),\\
  Z_{X_4} &= \Vd(x_{11},x_{21}) \cup \Vd(x_{12},x_{22}) \cup
            \Vd(x_{02},x_{32}) \cup \Vd(x_{01},x_{02}) \cup \Vd(x_{01},x_{31}).
\end{align*}
Note that this is the same variety as $\Tm_3$, but with a different action of
$\Gd_{\mathrm{m},\Qd}^4$.

\subsection{Counting problems}

Applying Proposition~\ref{prop:countingproblem_abstract} to the Cox
rings of the previous section gives the following counting problems,
in which $U$ is always the subset where all Cox coordinates are
nonzero. To lighten the notation, we generally write $\{x, y\}$ to
mean $x$ or $y$, and as in the introduction, we write $N_j(B)$ for
$N_{X_j, U_j, H_j}(B)$.

\begin{cor}\label{prop:countingproblem_line_11} 
 {\rm (a)}  We have 
 \begin{align*}
  N_1(B) =   \frac{1}{8} \#\left\{\xv \in \Zd^7_{\ne 0} :
    \begin{aligned}
      &x_{11}x_{12}-x_{21}x_{22}-x_{31}x_{32}=0, \quad \max|\Pm_1(\xv)| \le B,\\
    &(x_{11},x_{21})=(x_{12},x_{22}, x_{32})=(x_{01},x_{31}) =1\\
    \end{aligned}
        \right\}\text{,}
  \end{align*}
  where
  \begin{equation*}
    \Pm_1(\xv) =\left\{
      \begin{aligned}
        & x_{31}^2x_{32}^2x_{01}, x_{32}^2x_{01}^3\{x_{11}, x_{21}\}^2,
        x_{31}^2x_{32}\{x_{12}, x_{22}\},\\
        &   x_{31} \{x_{11}, x_{21}\}  \{x_{12}, x_{22}\}^2, x_{01}  \{x_{11},
        x_{21}\}^2  \{x_{12}, x_{22}\}^2
      \end{aligned}
    \right\}.
  \end{equation*}
 \\
 {\rm (b)}  We have
   \begin{align*}
   N_2(B) =  \frac{1}{8} \#\left\{\xv \in \Zd^7_{\ne 0} :
    \begin{aligned}
      &x_{11}x_{12}-x_{21}x_{22}-x_{31}x_{32}x_{33}^2=0, \quad \max|\Pm_2(\xv)| \le B,\\
      &(x_{11},x_{21},x_{31})=(x_{11},x_{21},x_{33})=1\\
      &(x_{12},x_{22},x_{32})=(x_{12},x_{22},x_{33})=(x_{31}, x_{32})=1\\
    \end{aligned}
        \right\}\text{,}
  \end{align*}
  where
  \begin{equation*}
    \Pm_2(\xv) =\left\{
      \begin{aligned}
        & x_{32}\{x_{11}, x_{21}\}^2  \{x_{12}, x_{22}\},
        x_{32}^2x_{33}\{x_{11}, x_{21}\}^2, x_{31}\{x_{11}, x_{21}\}
        \{x_{12}, x_{22}\}^2, \\
        & x_{31}^2x_{33}\{x_{12}, x_{22}\}^2, x_{31}^2x_{32}^2x_{33}^3 \end{aligned}
    \right\}.
  \end{equation*}
 \\
 {\rm (c)}  We have 
  \begin{align*}
   N_3(B) =  \frac{1}{16} \#\left\{\xv \in \Zd^8_{\ne 0} :
    \begin{aligned}
      &x_{11}x_{12}-x_{21}x_{22}-x_{31}x_{32}=0, \quad \max|\Pm_3(\xv)| \le B,\\
    &(x_{11},x_{21})=(x_{12},x_{22})=(x_{02},x_{32})=(x_{01},x_{02})=(x_{01},x_{31})=1\\
    \end{aligned}
        \right\}\text{,}
  \end{align*}
  where
  \begin{equation*}
    \Pm_3(\xv) =\left\{
      \begin{aligned}
        &x_{02}^2x_{31}^3 x_{32}^2 , x_{01}^2x_{31}x_{32}^2,
        x_{02}^2\{x_{11},x_{21}\}^2\{x_{12},x_{22}\}^2x_{31}\\
        &x_{01}^2\{x_{11},x_{21}\}\{x_{12},x_{22}\}x_{32},
        x_{01}x_{02}\{x_{11},x_{21}\}^2\{x_{12},x_{22}\}^2
      \end{aligned}
    \right\}.
  \end{equation*}
 \\
 {\rm   (d)} We have   \begin{align*}
   N_4(B) =  \frac{1}{16} \#\left\{\xv \in \Zd^8_{\ne 0} :
    \begin{aligned}
      &x_{11}x_{12}-x_{21}x_{22}-x_{31}x_{32}=0, \quad \max|\Pm_4(\xv)| \le B,\\
    &(x_{11},x_{21})=(x_{12},x_{22})=(x_{02},x_{32})=(x_{01},x_{02})=(x_{01},x_{31})=1\\
    \end{aligned}
        \right\}\text{,}
  \end{align*}
  where
\begin{equation*}
  \Pm_4(\xv) =\left\{
    \begin{aligned}
      &x_{01} x_{02} x_{31}^2 x_{32}^2 ,
      x_{01}^2\{x_{11},x_{21}\}x_{31}x_{32}^2,
      x_{02}^2\{x_{12},x_{22}\}x_{31}^2x_{32},\\ 
      &x_{01}^2\{x_{11},x_{21}\}^2\{x_{12},x_{22}\}x_{32},
      x_{02}^2\{x_{11},x_{21}\}\{x_{12},x_{22}\}^2x_{31} ,
      x_{01}x_{02}\{x_{11},x_{21}\}^2\{x_{12},x_{22}\}^2
    \end{aligned}
  \right\}. 
\end{equation*}
 \end{cor}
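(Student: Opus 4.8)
The statement to be proved is Corollary~\ref{prop:countingproblem_line_11}, which writes out the counting problems for the four threefolds $X_1,\dots,X_4$ explicitly. The plan is to apply Proposition~\ref{prop:countingproblem_abstract} to each of the four Cox rings recorded in Section~\ref{sec:crt}, so the work is entirely bookkeeping: for each $j$ we must (i) read off the torsor equation $\Phi_j$ from the relation in $\Rm(X_j)$, which in every case is the ``2-by-2 determinant equals a monomial'' equation already displayed; (ii) extract the coprimality conditions from the primitive collections, which by Proposition~\ref{prop:lift_to_torsor} are given by the irreducible components of $Z_{Y_j}$, although we will use the simplified description of $Z_{X_j}=Z_{Y_j}\cap\Spec\Rm(X_j)$ — one must check that passing from $Z_{Y_j}$ to $Z_{X_j}$ does not change the set of integral points satisfying $\Phi_j=0$, which is exactly the content of the remark in Section~\ref{sec:crt} that $Z_{X_j}$ ``can be verified using the equation $\Phi$''; and (iii) compute the anticanonical monomials $\xv^{D(\sigma)}$ of Lemma~\ref{lem:monomials_degree_L} for the maximal cones $\sigma\in\Sigmamax$ of the chosen ambient toric variety $Y_j$, and then simplify via Lemma~\ref{lemma:lpp} to the list $\Pm_j$ of vertex monomials of the polytope $Q_j=p^{-1}(-K_{X_j})\cap\Qd_{\ge0}^{\Sigma(1)}$.

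Concretely, I would proceed as follows. First, fix for each $j$ the smooth projective ambient toric variety $Y_j$ specified by the dotted circles in the colored fans of Section~\ref{sec22} (case $(1)$ for $X_3$), so that \eqref{eq:toric_smooth} holds and the hypotheses of Proposition~\ref{prop:countingproblem_abstract} are met; the grading data $\deg(x_{ij})\in\Pic X_j$ are those tabulated in Section~\ref{sec:crt}, and $-K_{X_j}$ is recorded there as well. Second, for the coprimality conditions, observe that the components of $Z_{Y_j}$ listed in Section~\ref{sec:crt} are precisely the vanishing loci of the primitive collections $S_1,\dots,S_r$; for instance for $X_1$ one reads off $S_1=\{x_{11},x_{21},x_{31}\}$, $S_2=\{x_{11},x_{21},x_{32}\}$, $S_3=\{x_{12},x_{22},x_{01}\}$, $S_4=\{x_{12},x_{22},x_{32}\}$, $S_5=\{x_{01},x_{31}\}$, and then note that on the torsor $\Phi_1=x_{11}x_{12}-x_{21}x_{22}-x_{31}x_{32}=0$ the conditions $\gcd\{x_{11},x_{21},x_{31}\}=\gcd\{x_{11},x_{21},x_{32}\}=1$ together are equivalent to $\gcd\{x_{11},x_{21}\}=1$ (if a prime $p$ divides both $x_{11}$ and $x_{21}$, then $p\mid x_{31}x_{32}$, so $p$ divides one of $x_{31},x_{32}$, contradicting one of the two conditions; conversely $\gcd\{x_{11},x_{21}\}=1$ implies both), and similarly $\gcd\{x_{12},x_{22},x_{01}\}=\gcd\{x_{12},x_{22},x_{32}\}=1$ collapse to $\gcd\{x_{12},x_{22},x_{32}\}=1$. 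This yields the three stated conditions for $N_1(B)$. The analogous simplifications for $X_3,X_4$ are identical, while for $X_2$ the description of $Z_{X_2}$ coincides with $Z_{Y_2}$, so no collapse occurs and all five conditions survive. Third, the factor $2^{-\rank\Pic X_j}$ in front is $1/8$ for $j=1,2$ (where $\rank\Pic=3$) and $1/16$ for $j=3,4$ (where $\rank\Pic=4$), matching the claimed normalizations. Finally, the height monomials: by Lemma~\ref{lem:height}, \ref{lem:height_torsor} and \ref{lemma:lpp}, $\max_\sigma|\xv^{D(\sigma)}|=\max_{\vv\in\Vm}|\xv^\vv|$ where $\Vm$ is the vertex set of $Q_j$; one computes $Q_j$ as the bounded polytope of lattice points $\vv\in\Zd_{\ge0}^{\Sigma(1)}$ with $\sum_\rho v_\rho\deg(x_\rho)=-K_{X_j}$, enumerates its vertices, and checks that they are exactly the monomials listed in $\Pm_j$ — this is a finite linear-algebra computation in $\Zd^3$ (for $j=1,2$) or $\Zd^4$ (for $j=3,4$), using the explicit degrees and anticanonical class of Section~\ref{sec:crt}.

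\textbf{Main obstacle.} The only genuinely laborious step is step (iii): determining the vertices of the polytopes $Q_j$ and verifying that they coincide with the displayed lists $\Pm_j$. For $X_3$ and $X_4$ these live in a four-dimensional grading group with eight variables, so the vertex enumeration, while routine, is the part most prone to arithmetic slips; I would organize it by first solving $\sum v_\rho\deg(x_\rho)=-K_{X_j}$ for a particular spanning subset of the $v_\rho$ using \eqref{eq:sigma-basis} (the maximal cones $\sigma$ give coordinate subsets whose degrees form a basis of $\Pic X_j$), thereby obtaining the monomials $\xv^{D(\sigma)}$ directly from Lemma~\ref{lem:monomials_degree_L}, and then discarding those that are non-vertices (i.e. lie in the convex hull of others). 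A minor secondary point, already addressed above, is the justification that the simplified coprimality sets $Z_{X_j}$ may be substituted for $Z_{Y_j}$; this is a short $\gcd$ argument using $\Phi_j=0$ and presents no difficulty, but it should be stated so that the passage from Proposition~\ref{prop:countingproblem_abstract} (phrased via the primitive collections of $Y_j$) to the clean form above is airtight.
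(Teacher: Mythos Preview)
Your proposal is correct and follows exactly the paper's approach: apply Proposition~\ref{prop:countingproblem_abstract} with the Cox ring data of Section~\ref{sec:crt}, and justify replacing the primitive collections from $Z_{Y_j}$ by the simplified list from $Z_{X_j}$ via a short $\gcd$ argument on $\Phi_j=0$ over $\Zd$. One small point of exposition: in your collapse for $X_1$, dropping $\gcd\{x_{12},x_{22},x_{01}\}=1$ requires not just $\gcd\{x_{12},x_{22},x_{32}\}=1$ but also $(x_{01},x_{31})=1$ (to rule out the case $p\mid x_{31}$), so the reductions use all surviving conditions together rather than pairing off.
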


 \begin{proof}
   This is a special case of
   Proposition~\ref{prop:countingproblem_abstract}.  Note that the
   coprimality conditions are derived from the expressions for $Z_X$
   (instead of $Z_Y$) from Section~\ref{sec:crt}. It can be explicitly
   verified using the equation $\Phi$ that this is correct even over
   $\Zd$ as required here.
 \end{proof}

\subsection{Application:  Proof of Theorem~\ref{dim3}}\label{appl1}

We now show how to use Theorem~\ref{manin-cor} in practice and complete the
proof of Theorem~\ref{dim3} for the varieties $X_1, \ldots, X_4$.

\subsubsection{The variety $X_4$}\label{X4}

By Corollary~\ref{prop:countingproblem_line_11}(d), we have $J=8$ torsor
variables $x_{ij}$ with $0 \leq i \leq 3$, $1 \leq j \leq 2$ satisfying the
equation
\begin{equation}\label{particular}
x_{11}x_{12} + x_{21}x_{22} + x_{31}x_{32} = 0
\end{equation}
(after changing the signs of $x_{22},x_{32}$)
with $k=3$ and $h_{ij} = 1$ for $i \geq 1$, $h_{0j} = 0$. In particular,
Remark~\ref{simpl} applies.  We have $N=17$ height conditions
with corresponding exponent matrix
\begin{equation*}
  \mathscr{A}_1 = \left(
    \begin{smallmatrix}
      1  & 2& 2& & & 2&2 &2 & 2& & & & &1 &1 & 1& 1\\
      1& & &2 &2 & & & & &2 & 2&2 &2 &1 & 1& 1& 1\\
      & & 1& & & &2 & &2 & &1 & &1 & & &2 &2 \\
      & & & 1& & 1&1 & & & 2& 2& & &2 & & 2& \\
      &1 & & & &2 & &2 & & 1& & 1& &2 &2 & & \\
      & & & &1 & & &1 &1 & & &2 &2 & &2 & & 2\\
      2 & 1& 1&2 &2 & & & & &1 & 1& 1&1 & & & & \\
      2  &2 & 2& 1& 1& 1&1 & 1& 1& & & & & & & & \\
    \end{smallmatrix}
  \right)\in \Bbb{R}_{\geq 0}^{8 \times 17}, \quad
  \mathscr{A}_2 = \left(
    \begin{smallmatrix}
      &&-1\\&&-1\\1&&-1\\1&&-1\\&1&-1\\&1&-1\\-1&-1&\\-1&-1&
    \end{smallmatrix}
  \right) \in \Bbb{R}^{8 \times 3}.
\end{equation*}
As usual, missing entries indicate zeros. We have $r=5$ coprimality conditions
with 
\begin{equation}\label{gcd1}
\begin{split}
  &S_1 = \{(1, 1), (2, 1)\}, \quad S_2 = \{(1, 2), (2, 2)\}, \quad S_3 = \{(0, 2), (3, 2)\},\\
  & S_4 = \{(0, 1), (0, 2)\}, \quad S_5 = \{(0, 1), (3, 1)\}.
\end{split}
\end{equation}
We choose 
\begin{equation}\label{tauzeta}
  {\bm \tau}^{(2)}   = (\underbrace{1, \ldots , 1}_{J_0}, \tfrac{2}{3}, \ldots,  \tfrac{2}{3}), \quad \bm \zeta = (\tfrac{1}{3}, \tfrac{1}{3}, \tfrac{1}{3}).  
\end{equation}  
(In our case $J_0 = 2$, but we will use the same definition also in other
cases later.)  Using a computer algebra system, we confirm
$C_2({\bm \tau}^{(2)} )$, $C_2((1 - h_{ij}/3)_{ij})$, and with $c_2 = 3$, we
find
$$\dim (\mathscr{H} \cap \mathscr{P}) = 3, \quad    \dim (\mathscr{H} \cap
\mathscr{P}_{ij}) = 2 \ \text{for all} \ (i,j),$$ confirming \eqref{ass2}.  We
have now checked all assumptions of Theorem~\ref{manin-cor}.
   
We show in Appendix~\ref{A} how to derive Hypothesis~\ref{H2} without computer
help and how to compute the Peyre constant in explicit algebraic terms.
  
\subsubsection{The variety $X_3$}

This is very similar to the previous case, so we can be brief. By
Corollary~\ref{prop:countingproblem_line_11}(c), we have the same torsor
variables as in the previous application satisfying \eqref{particular}.  The
corresponding exponent matrix is given by
\begin{equation*}
\mathscr{A}_1 = \left(\begin{smallmatrix}
    & 2& & & & &2 &2 &2 &2 & 1& 1&1 &1 \\
    2 & & 2& 2&2 &2 & & & & &1 & 1& 1&1 \\
    & & &2 & &2 & &1 & &1 & &2 & &2 \\
    & &2 &2 & & & 1&1 & & &2 &2 & & \\
    & &2 & & 2& &1 & &1 & & 2& &2 & \\
    & & & &2 & 2& & &1 &1 & & &2 &2 \\
    3 &1 & 1& 1&1 & 1& & & & & & & & \\
    2 &2 & & & & &1 &1 &1 &1 & & & & \\
  \end{smallmatrix}\right) \in \Bbb{R}_{\geq 0}^{8 \times 14}.
\end{equation*}
We choose ${\bm \tau}^{(2)}$ and ${\bm \zeta}$ as before and confirm \eqref{ass2} in the same
way with
$$\dim (\mathscr{H} \cap \mathscr{P}) = 3, \quad    \dim (\mathscr{H} \cap
\mathscr{P}_{ij}) = 1 \ \text{for} \ (i,j) = (0,1) \ \text{and} \ \dim
(\mathscr{H} \cap \mathscr{P}_{ij}) = 2 \ \text{otherwise}.$$
 
\subsubsection{The variety $X_1$}

Again the computations are a minor variation on the previous two cases. By
Corollary~\ref{prop:countingproblem_line_11}(a), the height matrix is
$$\mathscr{A}_1 = \left(\begin{smallmatrix} 
    1 & 3 & 3 &    &    & & & & & 1 & 1 & 1 & 1\\  
    & 2 & & &  & 1 & 1& &  &2&2&& \\
    & & & &1 & & 2& & 2& &2 & &2\\
    & & 2& & & & &1 & 1& & &2 & 2\\
    & & &1 & & 2& &2 & &2 & &2 &  \\
    2 & & & 2& 2&1 &1 &1 &1 & & & & \\
    2  & 2&2 &1 & 1& & & & & & & &  
  \end{smallmatrix}\right) \in \Bbb{R}^{7 \times 13}_{\ge 0}.$$
We make the same choice \eqref{tauzeta} for ${\bm \tau}^{(2)}$ and ${\bm \zeta}$, and confirm
\eqref{ass2} with $c_2 = 2$ and
$$\dim (\mathscr{H} \cap \mathscr{P}) = 2, \quad  \dim (\mathscr{H} \cap
\mathscr{P}_{ij}) = 0 \ \text{for} \ (i,j) =(1,2),(2,2),(3,1),\ \dim
(\mathscr{H} \cap \mathscr{P}_{ij}) = 1 \ \text{otherwise}.$$
 
\subsubsection{The variety $X_2$}

This case has some new features, as the torsor equation has a slightly
different shape. By Corollary~\ref{prop:countingproblem_line_11}(b), we have
$J_0 = 0$ and $J = 7$ torsor variables satisfying the more complicated torsor
equation
$$x_{11}x_{12} + x_{21} x_{22} + x_{31} x_{32} x_{33}^2 = 0.$$
The height matrix is given by
$$\mathscr{A}_1 = \left(\begin{smallmatrix}
    2& 2& & &2 &  &1 &1 & & & & & \\
    &1 & &1 & & & &2 & &2 & &2 &\\
    & &2 &2 & &2 & & &1 &1 & & &\\
    1& & 1& & & & 2& & 2& & 2& &\\
    & & & & & &1 &1 & 1& 1& 2& 2&2\\
    1& 1& 1& 1& 2& 2& & & & & & &2\\
    & & & & 1& 1& & & & &1 &1 & 3
  \end{smallmatrix}\right) \in \Bbb{R}^{7 \times 13}_{\geq 0},
\quad \mathscr{A}_2 = \left(\begin{smallmatrix}
    1&&-1\\1&&-1\\&1&-1\\&1&-1\\-1&-1&\\-1&-1&\\
    -2&-2&1
  \end{smallmatrix}\right) \in \Bbb{R}^{7 \times 3}.$$
Proposition~\ref{circle-method} ensures the validity of Hypothesis~\ref{H1}
with $\lambda = 1/45000$.  We have $r=5$ coprimality conditions
\begin{displaymath}
  \begin{split}
    & S_1 = \{(1, 1), (2, 1), (3, 1)\}, \quad S_2 = \{(1, 1), (2, 1), (3,
    3)\}, \quad S_3 = \{(1, 2), (2, 2), (3, 2)\},\\
    & S_4 = \{(1, 2), (2, 2), (3, 3)\}, \quad S_5 = \{(3, 1), (3, 2)\}. 
  \end{split}
\end{displaymath}
We see that \eqref{fail} holds. We  choose
$${\bm \tau}^{(2)} = (\tfrac{1}{2}, \tfrac{1}{2}, \tfrac{1}{2},
\tfrac{1}{2}, \tfrac{1}{2}, \tfrac{1}{2}, 1)$$
satisfying \eqref{tau1} and confirm $C_2({\bm \tau}^{(2)} )$,
$C_2((1 - h_{ij}/3)_{ij})$. Finally we note that $c_2 = 2$ and
compute\footnote{Dimension $-1$ indicates that the set is empty.}
\begin{displaymath}
\begin{split}
\dim(\mathscr{H} \cap \mathscr{P}) &= 2, \\
\dim(\mathscr{H} \cap \mathscr{P}_{ij}) &=
\begin{cases}
  1, &(i,j)=(3,1),(3,2),(3,3),\\
  0, &\text{otherwise},
\end{cases}
\\
\dim(\mathscr{H} \cap\mathscr{P}(1/44800, \pi)) &= -1
\end{split}
\end{displaymath}
 for the vector $(1 - h_{ij}/3)_{ij}$, and
 \begin{displaymath}
\begin{split}
\dim(\mathscr{H} \cap \mathscr{P}) &= 0, \\
\dim(\mathscr{H} \cap \mathscr{P}_{ij}) &=
\begin{cases}
  0, &(i,j)=(3,1),(3,2),\\
  -1, &\text{otherwise},
\end{cases}
\\
\dim(\mathscr{H} \cap\mathscr{P}(1/44800, \pi)) &= -1
\end{split}
\end{displaymath}
for the vector ${\bm \tau}^{(2)}$. This confirms \eqref{ass2}.

\section{Higher-dimensional examples}\label{sec:geometry_X5_X6}

\subsection{Geometry}

Consider $G = \SL_2 \times \Gd_m^r$ and, for $i = 1, \dots, r$, let
$\varepsilon_i\in \Xf(B)$ be a primitive character of $\Gd_\mathrm{m}$
composed with the natural inclusion $\Xf(\Gd_\mathrm{m}) \to \Xf(B)$ into the $i$-th
factor $\Gd_m$ of $G$. Let $T_{\SL_2} \subset \SL_2$ be a maximal torus, and let
$\chi\colon T_{\SL_2} \to \Gd_m$ be a primitive character. We consider the
subgroup
\begin{align*}
  H = \{(\lambda, \chi(\lambda), 1, \dots, 1) : \lambda \in T_{\SL_2}\}
  \subset G\text{.}
\end{align*}
Then $G/H$ is a spherical homogeneous space of semisimple rank one and type
$T$. The lattice $\Mm$ has basis
$(\frac{1}{2}\alpha + \varepsilon_1, \frac{1}{2}\alpha - \varepsilon_1,
\varepsilon_2, \dots, \varepsilon_{r})$. We denote the corresponding dual
basis of the lattice $\Nm$ by $(d_1, d_2, e_3, \dots, e_{r+1})$. There 
are two colors $D_{11}$ and $D_{12}$ with valuations $d_1$ and $d_2$,
respectively. The valuation cone is given by
$\Vm = \{v \in \Nm_\Qd : \langle v, \alpha \rangle \le 0\}$.

 \subsubsection{The fourfold $X_5$}
 Let $r = 2$, and consider the polytope in $\Nm_\Qd$ spanned by the vectors
 \begin{align*}
   d_1 &= (1, 0, 0), &d_2 &= (0, 1,0 ), & u_{31}&= (0, -1, 0), &u_{32}&= (-1, 0, 0),\\
   u_{33}&= (-1, 0, -1), &u_{01}&= (1, -1, 1), &u_{02}&= (1, -1, 0),& u_{03}&= (-1, 1, 0).
 \end{align*}
The colored spanning fan of this polytope, as defined in \cite[Remark~2.6]{gh15},
contains the following maximal colored cones:
\begin{align*}
  &(\cone(d_{1}, d_{2}, u_{33}), \{D_{11}, D_{12}\}),
  &&(\cone(d_{1}, u_{02}, u_{33}), \{D_{11}\}),
  &&(\cone(d_{2}, u_{03}, u_{33}), \{D_{12}\}),\\
  &(\cone(u_{01}, u_{02}, u_{31}), \emptyset),
  &&(\cone(u_{01}, u_{03}, u_{32}), \emptyset),
  &&(\cone(u_{01}, u_{31}, u_{32}), \emptyset),\\
  &(\cone(u_{31}, u_{32}, u_{33}), \emptyset),
  &&(\cone(u_{03}, u_{32}, u_{33}),  \emptyset),
  &&(\cone(u_{02}, u_{31}, u_{33}), \emptyset).
\end{align*}
It can be verified that each colored cone satisfies the conditions of
the smoothness criterion \cite[Th\'eor\`eme~A]{cam01}; see also
\cite[Theorem~1.2]{gag15}. Let $X_5$ be the spherical embedding of
$G/H$ corresponding to this colored fan. Then $X_5$ is a smooth Fano
fourfold with Picard number $5$.

The unsupported colored spanning fan of the polytope above (\ie
including the unsupported colored cones) specifies a projective 
ambient toric variety $Y_5$. From the description of $\Sigmamax$ in
Section~\ref{sec:ambient}, we deduce that $Y_5$ is smooth and that
$-K_{X_5}$ is ample on $Y_5$;  hence \eqref{eq:toric_smooth} holds.

\subsubsection{The fivefold $X_6$}
Let $r = 3$, and consider the polytope in $\Nm_\Qd$ spanned by the vectors
\begin{align*}
  d_1 &= (1, 0, 0, 0), & d_2 &= (0, 1, 0, 0), & u_{31} = (-1,  0,  1,  0),
  & &   u_{32} = (-1, -1,  1,  0),\\
  u_{01} &= (-1,  1, -1, -1), & u_{02} &= ( 1, -1,  0,  1), & u_{03} = (0 ,  0,  -1,  0).
\end{align*}
The colored spanning fan of this polytope contains the following maximal
colored cones:
\begin{align*}
  &(\cone(d_1, d_2, u_{01}, u_{31}), \{D_{11}, D_{12}\}),
  &&(\cone(d_1, d_2, u_{02}, u_{31}), \{D_{11}, D_{12}\}),\\
  &(\cone(d_1, u_{01}, u_{31}, u_{32}), \{D_{11}\}),
  &&(\cone(d_1, u_{02}, u_{31}, u_{32}), \{D_{11}\}),\\
  &(\cone(d_1, u_{02}, u_{03}, u_{32}), \{D_{11}\}),
  &&(\cone(d_1, u_{01}, u_{03}, u_{32}), \{D_{11}\}),\\
  &(\cone(d_2, u_{01}, u_{03}, u_{31}), \{D_{12}\}),
  &&(\cone(d_2, u_{02}, u_{03}, u_{31}), \{D_{12}\}),\\
  &(\cone(u_{02}, u_{03}, u_{31}, u_{32}), \emptyset),
  &&(\cone(u_{01}, u_{03}, u_{31}, u_{32}), \emptyset).
\end{align*}
As in the previous example, we obtain a smooth spherical Fano fivefold $X_6$
with Picard number $3$ in a smooth projective ambient toric variety $Y_6$ on
which $-K_{X_6}$ is ample.

\subsubsection{The sixfold $X_7$}

Let $r = 4$, and consider the polytope in $\Nm_\Qd$ spanned by the vectors
\begin{align*}
d_1 &= (1, 0, 0, 0, 0), &
d_2 &= (0, 1, 0, 0, 0), &
  u_{01} &= (0, 0, 1, 0, 0), &
u_{02} &= (0, 0, 0, 1, 0),\\
u_{03} &= (0, 0, 0, 0, 1), &
u_{31} &= (0, -1, 0, 0, 0), &
u_{32} &= (-1, 0, 0, 0, 1), &
u_{33} &= (-1, 0, 0, 0, 0), \\
u_{34} &= (-1, 0, -1, -1, -1), &
  u_{35} &= (-1, -1, -1, -1, -1).
\end{align*}
As above, we obtain a smooth spherical Fano sixfold $X_7$ with Picard number
$5$ in a smooth projective ambient toric variety $Y_7$ on which $-K_{X_7}$ is
ample.

\subsubsection{The sevenfold $X_8$}

Let $r = 5$, and consider the polytope in $\Nm_\Qd$ spanned by the vectors
\begin{align*}
d_1 &= (1, 0, 0, 0, 0, 0), &
d_2 &= (0, 1, 0, 0, 0, 0), &
u_{01} &= (0, 0, 1, 0, 1, 0), \\
u_{02} &= (0, 0, 0, 1, 0, 1), &
u_{03} &= (0, 0, 0, 0, 0, 1), &
u_{04} &= (0, 0, 1, 0, 0, -1), \\
u_{05} &= (0, 0, 0, 1, 0, 0), &
u_{06} &= (0, 0, 0, 0, 1, 1), &
u_{31} &= (0, -1, 0, 0, 0, 0), \\
u_{32} &= (-1, 0, -1, -1, -1, -1), &
u_{33} &= (-1, -1, 0, 0, 0, 0), &
u_{34} &= (-1, -1, -1, -1, -1, -1).
\end{align*}
As above, we obtain a smooth spherical Fano sevenfold $X_8$ with Picard number $6$ in a smooth
projective ambient toric variety $Y_8$ on which $-K_{X_8}$ is ample.

\subsection{Cox rings and torsors}

We argue as in Section~\ref{sec:crt}.
  
\subsubsection{The fourfold $X_5$}

The Cox ring is
\begin{align*}
  \Rm(X_5) = \Qd[x_{01}, x_{02}, x_{03}, x_{11}, x_{12}, x_{21}, x_{22}, x_{31}, x_{32}, x_{33}]
  /(x_{11}x_{12}-x_{21}x_{22}-x_{31}x_{32}x_{33})
\end{align*}
with $\Pic X_5 \cong \Cl X_5 \cong \Z^5$, where
\begin{align*}
&\deg(x_{01})=\deg(x_{33})= (1, 0, 0, 0, 0),\
\deg(x_{02})= (0, 1, 0, 1, 0),\
\deg(x_{03})= (0, 1, 0, 0, 0),\\
&\deg(x_{11})=\deg(x_{21})= (0, 0, 1, 0, 0),\
\deg(x_{12})=\deg(x_{22})= (0, 0, 0, 0, 1),\\
&\deg(x_{31})= (-1, 0, 0, -1, 1),\
\deg(x_{32})= (0, 0, 1, 1, 0)
\end{align*}
The anticanonical class is $ -K_{X_5} = \rleft(1, 2, 2, 1, 2\rright). $
A universal torsor over $X_5$ is
\begin{equation*}
  \Tm_5 = \Spec\Rm(X_5) \setminus Z_{X_5}\text{,}
\end{equation*}
where
\begin{align*}
  Z_{X_5} &=\Vd(x_{31}, x_{11},x_{21}) \cup
  \Vd(x_{02}, x_{12},x_{22}) \cup
  \Vd(x_{12},x_{22}, x_{31}) \cup
  \Vd(x_{32}, x_{11},x_{21})\\
  &{} \cup \Vd(x_{31}, x_{03}) \cup
  \Vd(x_{02}, x_{32}) \cup
  \Vd(x_{02}, x_{03}) \cup
  \Vd(x_{33}, x_{01}) \cup
  \Vd(x_{12},x_{22}, x_{32}) \cup
  \Vd(x_{03}, x_{11},x_{21}).
\end{align*}

\subsubsection{The fivefold $X_6$}\label{sec:cox_X6}

The Cox ring is
\begin{align*}
  \Rm(X_6) =
  \Qd[x_{01},x_{02},x_{03},x_{11},x_{12},x_{21},x_{22},x_{31},x_{32}]
  /(x_{11}x_{12}-x_{21}x_{22}-x_{31}x_{32}^2)
\end{align*}
with $\Pic X_6 \cong \Cl X_6 \cong \Z^3$, where
\begin{align*}
&\deg(x_{01}) = 
\deg(x_{02}) = (0, 0, -1),\ 
\deg(x_{03}) = (1, 0, 1),\ 
\deg(x_{11}) = \deg(x_{21}) = (1, 0, 0),\\
&\deg(x_{12}) = \deg(x_{22}) = (0, 1, 0),\ 
\deg(x_{31}) = (1, -1, 0),\ 
\deg(x_{32}) = (0, 1, 0).
\end{align*}
The anticanonical class is $-K_{X_6} = \rleft(3,1,-1\rright)$.
A universal torsor over $X_6$ is
\begin{equation*}
  \Tm_6 = \Spec\Rm(X_6) \setminus Z_{X_6}\text{,}
\end{equation*}
where
\begin{align*}
  Z_{X_6} &= \Vd(x_{01},x_{02})\cup \Vd(x_{32}, x_{12}, x_{22})\cup \Vd(x_{03}, x_{31}, x_{11}, x_{21}).
\end{align*}

\subsubsection{The sixfold $X_7$}\label{sec:cox_X7}

The Cox ring is
\begin{align*}
  \Rm(X_7) = \Qd[x_{01}, x_{02}, x_{03}, x_{11}, x_{12}, x_{21}, x_{22}, x_{31}, \dots, x_{35}]
  /(x_{11}x_{12}-x_{21}x_{22}-x_{31}x_{32}x_{33}x_{34}x_{35}^2)
\end{align*}
with $\Pic X_7 \cong \Cl X_7 \cong \Z^5$, where
\begin{align*}
&\deg(x_{01}) = \deg(x_{02}) = (-1, -1, 0, 1, 0),\
\deg(x_{03}) = (-2, -1, 0, 1, 0),\\
&\deg(x_{11}) = \deg(x_{21}) = (0, 0, 0, 1, 0),\
\deg(x_{12}) = \deg(x_{22}) = (0, 0, 0, 0, 1),\\
&\deg(x_{31}) = (1, 1, 1, -1, 1),\
\deg(x_{32}) = (1, 0, 0, 0, 0),\
\deg(x_{33}) = (0, 1, 0, 0, 0),\\
&\deg(x_{34}) = (0, 0, 1, 0, 0),\
\deg(x_{35}) = (-1, -1, -1, 1, 0).
\end{align*}
The anticanonical class is $-K_{X_7} = \rleft(-3, -2, 1, 4, 2\rright)$.
A universal torsor over $X_7$ is
\begin{equation*}
  \Tm_7 = \Spec\Rm(X_7) \setminus Z_{X_7}\text{,}
\end{equation*}
where
\begin{align*}
  Z_{X_7} &= \Vd(x_{01}, x_{02}, x_{03}, x_{34}) \cup \Vd(x_{01}, x_{02},
            x_{03}, x_{35}) \cup \Vd(x_{01}, x_{02}, x_{32}, x_{34})\\
          &\cup \Vd(x_{01}, x_{02}, x_{32}, x_{35}) \cup \Vd(x_{03}, x_{33})
            \cup \Vd(x_{11}, x_{21}, x_{32})\\
          & \cup \Vd(x_{11}, x_{21}, x_{33}) \cup \Vd(x_{12}, x_{22},
            x_{31})\cup \Vd(x_{12}, x_{22}, x_{35}) \cup \Vd(x_{31}, x_{34}). 
\end{align*}

\subsubsection{The sevenfold $X_8$}\label{sec:cox_X8}

The Cox ring is
\begin{align*}
  \Rm(X_8) = \Qd[x_{01}, \dots, x_{06}, x_{11}, x_{12}, x_{21}, x_{22},
  x_{31}, \dots, x_{34}]
  /(x_{11}x_{12}-x_{21}x_{22}-x_{31}x_{32}x_{33}^2x_{34}^2)
\end{align*}
with $\Pic X_8 \cong \Cl X_8 \cong \Z^6$, where
\begin{align*}
  &\deg(x_{01}) = (1, 1, 0, -1, 0, 0),\
    \deg(x_{02}) = (1, 1, -1, 0, 0, 0),\\
  &\deg(x_{03}) = \deg(x_{05}) = (0, 0, 1, 0, 0, 0),\
    \deg(x_{04}) = \deg(x_{06}) = (0, 0, 0, 1, 0, 0),\\
  &\deg(x_{11}) = \deg(x_{21}) = (0, 0, 0, 0, 1, 0),\
    \deg(x_{12}) = \deg(x_{22}) = (0, 0, 0, 0, 0, 1),\\
  &\deg(x_{31}) = (0, 1, 0, 0, -1, 1),\
    \deg(x_{32}) = (0, 1, 0, 0, 0, 0),\\
  &\deg(x_{33}) = (-1, -1, 0, 0, 1, 0),
    \deg(x_{34}) = (1, 0, 0, 0, 0, 0).
\end{align*}
The anticanonical class is $-K_{X_8} = \rleft(2, 3, 1, 1, 1, 2\rright)$.  A
universal torsor over $X_8$ is
\begin{equation*}
  \Tm_8 = \Spec\Rm(X_8) \setminus Z_{X_8}\text{,}
\end{equation*}
where
\begin{align*}
  Z_{X_8} &= \Vd( x_{01}, x_{02}, x_{32} ) \cup
            \Vd( x_{01}, x_{02}, x_{34}) \cup 
            \Vd( x_{03}, x_{05} ) \cup 
            \Vd( x_{04}, x_{06} )\\ & \cup
            \Vd( x_{11}, x_{21}, x_{33} ) \cup 
            \Vd( x_{12}, x_{22}, x_{31} ) \cup 
            \Vd( x_{12}, x_{22}, x_{34} )\cup 
            \Vd( x_{31}, x_{32} ).
\end{align*}

\subsection{Counting problems}

\begin{cor}\label{dim4cor}
 {\rm (a)}  We have 
  \begin{equation*}
  N_5(B) =   \frac{1}{32}\#\left\{\xv \in \Zd_{\ne 0}^{10} :
     \begin{aligned}
       &x_{11}x_{12}-x_{21}x_{22}-x_{31}x_{32}x_{33}=0, \quad \max|\Pm_5(\xv)| \le B\\
&(x_{31}, x_{11},x_{21})=
(x_{02}, x_{12},x_{22})=
(x_{12},x_{22}, x_{31})=1\\
&(x_{32}, x_{11},x_{21})=
(x_{31}, x_{03})=
(x_{02}, x_{32})=1\\
&(x_{02}, x_{03})=
(x_{33}, x_{01})=
(x_{12},x_{22}, x_{32})=
(x_{03}, x_{11},x_{21})=1
    \end{aligned}
        \right\},
  \end{equation*}
  with 
  {\begin{equation*}
    \Pm_5(\xv) =\left\{
      \begin{aligned}
        &\{x_{01},x_{33}\}^2 x_{02}^2 \{x_{12},x_{22}\} x_{31} \{x_{11},x_{21}\}^2,
        x_{32} \{x_{01},x_{33}\}^3 x_{02}^2 x_{31}^2 \{x_{11},x_{21}\} ,\\
        &x_{03} \{x_{01},x_{33}\} x_{02} \{x_{12},x_{22}\}^2 \{x_{11},x_{21}\}^2 ,
        x_{03} x_{32}^2 \{x_{01},x_{33}\}^3 x_{02} x_{31}^2 ,\\
        &x_{03}^2 x_{32} \{x_{01},x_{33}\} \{x_{12},x_{22}\}^2 \{x_{11},x_{21}\} ,
        x_{03}^2 x_{32}^2 \{x_{01},x_{33}\}^2 \{x_{12},x_{22}\} x_{31}
      \end{aligned}
    \right\}.
  \end{equation*}}\\
 {\rm (b)}  We have 
  \begin{equation*}
  N_6(B) =   \frac{1}{8}\#\left\{\xv \in \Zd_{\ne 0}^9 :
     \begin{aligned}
      &x_{11}x_{12}-x_{21}x_{22}-x_{31}x_{32}^2=0, \quad \max|\Pm_6(\xv)| \le B\\
      &(x_{01},x_{02})=(x_{32}, x_{12}, x_{22}) = (x_{03}, x_{31}, x_{11}, x_{21}) =1
    \end{aligned}
        \right\},
  \end{equation*}
  with 
  {\begin{equation*}
    \Pm_6(\xv) =\left\{
      \begin{aligned}
        &\{x_{01},x_{02}\}\{x_{12},x_{22},x_{32}\}^4x_{31}^3,
        \{x_{01},x_{02}\}\{x_{11},x_{21}\}^3\{x_{12},x_{22},x_{32}\},\\
        &\{x_{01},x_{02}\}^4x_{03}^3\{x_{12},x_{22},x_{32}\}
      \end{aligned}
    \right\}.
\end{equation*}}
 {\rm (c)}  We have 
  \begin{equation*}
  N_7(B) =   \frac{1}{32}\#\left\{\xv \in \Zd_{\ne 0}^{12} :
     \begin{aligned}
      &x_{11}x_{12}-x_{21}x_{22}-x_{31}x_{32}x_{33}x_{34}x_{35}^2=0, \quad \max|\Pm_7(\xv)| \le B\\
      &(x_{01}, x_{02}, x_{03}, x_{34}) = (x_{01}, x_{02}, x_{03}, x_{35}) =
      (x_{01}, x_{02}, x_{32}, x_{34}) =1\\
      & (x_{01}, x_{02}, x_{32}, x_{35}) = (x_{03}, x_{33}) = (x_{11}, x_{21}, x_{32}) = 1\\
      &(x_{11}, x_{21}, x_{33}) = (x_{12}, x_{22}, x_{31}) = (x_{12}, x_{22},
      x_{35}) = (x_{31}, x_{34}) = 1
    \end{aligned}
  \right\},
  \end{equation*}
  with 
  \begin{equation*}
    \Pm_7(\xv) =\left\{
      \begin{aligned}
        &x_{31}^2 x_{32} x_{33}^2 x_{34}^5 x_{35}^6 , \{x_{12},x_{22}\}^2 x_{32} x_{33}^2 x_{34}^5 x_{35}^4 ,
        \{x_{11},x_{21}\} x_{31}^2 x_{33} x_{34}^4 x_{35}^5 , \\
        &\{x_{11},x_{21}\} \{x_{12},x_{22}\}^2 x_{33} x_{34}^4 x_{35}^3 ,
        x_{03} \{x_{11},x_{21}\}^2 x_{31}^2 x_{34}^2 x_{35}^3 ,\\
        &x_{03} \{x_{11},x_{21}\}^2 \{x_{12},x_{22}\}^2 x_{34}^2 x_{35} , 
        x_{03}^2 \{x_{11},x_{21}\}^2 \{x_{12},x_{22}\}^2 x_{32} x_{34} , \\
        &x_{03}^3 \{x_{11},x_{21}\}^2 x_{31}^2 x_{32}^2 x_{35} , 
        x_{03}^3 \{x_{11},x_{21}\}^2 \{x_{12},x_{22}\} x_{31} x_{32}^2 , x_{03}^4 \{x_{12},x_{22}\}^2 x_{32}^5 x_{33}^2 x_{34} , \\
        &x_{03}^5 x_{31}^2 x_{32}^6 x_{33}^2 x_{35} , x_{03}^5 \{x_{12},x_{22}\} x_{31} x_{32}^6 x_{33}^2 , 
        \{x_{01},x_{02}\} x_{03} \{x_{11},x_{21}\}^2 \{x_{12},x_{22}\}^2 x_{34} , \\
        &\{x_{01},x_{02}\}^2 x_{03} \{x_{11},x_{21}\}^2 x_{31}^2 x_{35} , 
        \{x_{01},x_{02}\}^2 x_{03} \{x_{11},x_{21}\}^2 \{x_{12},x_{22}\} x_{31} , \\
        &\{x_{01},x_{02}\}^3 \{x_{11},x_{21}\} \{x_{12},x_{22}\}^2 x_{33} x_{34} , 
        \{x_{01},x_{02}\}^4 \{x_{12},x_{22}\}^2 x_{32} x_{33}^2 x_{34} , \\
        &\{x_{01},x_{02}\}^4 \{x_{11},x_{21}\} x_{31}^2 x_{33} x_{35} , 
        \{x_{01},x_{02}\}^4 \{x_{11},x_{21}\} \{x_{12},x_{22}\} x_{31} x_{33} , \\
        &\{x_{01},x_{02}\}^5 x_{31}^2 x_{32} x_{33}^2 x_{35} , 
        \{x_{01},x_{02}\}^5 \{x_{12},x_{22}\} x_{31} x_{32} x_{33}^2
      \end{aligned}
    \right\}.    
  \end{equation*}
  {\rm (d)}  We have 
  \begin{equation*}
    N_8(B) =   \frac{1}{64}\#\left\{\xv \in \Zd_{\ne 0}^{14} :
      \begin{aligned}      
        &x_{11}x_{12}-x_{21}x_{22}-x_{31}x_{32}x_{33}^2x_{34}^2=0, \quad \max|\Pm_8(\xv)| \le B\\
        &( x_{01}, x_{02}, x_{32} ) = ( x_{01}, x_{02}, x_{34}) = ( x_{03},
        x_{05} ) = ( x_{04}, x_{06} ) = 1\\
        &( x_{11}, x_{21}, x_{33} ) = ( x_{12}, x_{22}, x_{31} ) = ( x_{12},
        x_{22}, x_{34} ) = ( x_{31}, x_{32} ) = 1
    \end{aligned}
        \right\},
  \end{equation*}
  where $\Pm_8(\xv)$ is
  {\begin{equation*}
     \left\{
      \begin{aligned}
        &\{x_{03},x_{05}\} \{x_{04},x_{06}\} x_{31}^2 x_{32}^4 x_{33}^3 x_{34}^5 ,
        \{x_{03},x_{05}\} \{x_{04},x_{06}\} \{x_{12},x_{22}\}^2 x_{32}^4 x_{33} x_{34}^3 ,\\
        &\{x_{03},x_{05}\} \{x_{04},x_{06}\} \{x_{11},x_{21}\} \{x_{12},x_{22}\}^2 x_{32}^3 x_{34}^2 , 
        \{x_{03},x_{05}\} \{x_{04},x_{06}\} \{x_{11},x_{21}\}^3 x_{31}^2 x_{32} x_{34}^2 ,\\
        &x_{02} \{x_{03},x_{05}\}^2 \{x_{04},x_{06}\} \{x_{11},x_{21}\}^3 x_{31}^2 x_{34} ,
        x_{02}^2 \{x_{03},x_{05}\}^3 \{x_{04},x_{06}\} \{x_{11},x_{21}\} \{x_{12},x_{22}\}^2 x_{32} ,\\
        &x_{02}^2 \{x_{03},x_{05}\}^3 \{x_{04},x_{06}\} \{x_{11},x_{21}\}^2 \{x_{12},x_{22}\} x_{31} ,
        x_{02}^3 \{x_{03},x_{05}\}^4 \{x_{04},x_{06}\} \{x_{12},x_{22}\}^2 x_{32} x_{33} ,\\
        &x_{02}^4 \{x_{03},x_{05}\}^5 \{x_{04},x_{06}\} x_{31}^2 x_{33}^3 x_{34} ,
        x_{02}^4 \{x_{03},x_{05}\}^5 \{x_{04},x_{06}\} \{x_{12},x_{22}\} x_{31} x_{33}^2 ,\\
        &x_{01} \{x_{03},x_{05}\} \{x_{04},x_{06}\}^2 \{x_{11},x_{21}\}^3 x_{31}^2 x_{34} ,
        x_{01}^2 \{x_{03},x_{05}\} \{x_{04},x_{06}\}^3 \{x_{11},x_{21}\} \{x_{12},x_{22}\}^2 x_{32} ,\\
        &x_{01}^2 \{x_{03},x_{05}\} \{x_{04},x_{06}\}^3 \{x_{11},x_{21}\}^2 \{x_{12},x_{22}\} x_{31} ,
        x_{01}^3 \{x_{03},x_{05}\} \{x_{04},x_{06}\}^4 \{x_{12},x_{22}\}^2 x_{32} x_{33} ,\\
        &x_{01}^4 \{x_{03},x_{05}\} \{x_{04},x_{06}\}^5 x_{31}^2 x_{33}^3 x_{34} ,
        x_{01}^4 \{x_{03},x_{05}\} \{x_{04},x_{06}\}^5 \{x_{12},x_{22}\} x_{31} x_{33}^2
      \end{aligned}
    \right\}.    
  \end{equation*}}
\end{cor}

\begin{proof}
  This is analogous to Corollary~\ref{prop:countingproblem_line_11}.
\end{proof}

\subsection{Application: Proof of Theorem~\ref{dim4}}\label{appl2}

All cases can be proved exactly as in Section \ref{appl1}. 

\subsubsection{The variety $X_5$}  
 
By Corollary~\ref{dim4cor}(a), we have $J=10$ torsor variables $x_{ij}$
satisfying the equation
$$x_{11}x_{12} + x_{21}x_{22}  + x_{31} x_{32} x_{33} = 0.$$
We have $N=34$ height conditions with corresponding exponent matrix
$$\mathscr{A}_1 = \left(
  \begin{smallmatrix}
    & & & & & & & & & & & & & & & & &1&1&1&1&1&1&1&1&2&2&2&2&2&2&3&3&3\\
    & & & & & &1&1&1&1&1&2&2&2&2&2&2& & & & &1&1&1&1& & &2&2&2&2&1&2&2\\
    2&2&2&2&2&2&1&1&1&1&1& & & & & & &2&2&2&2&1&1&1&1&2&2& & & & &1& & \\
    & & & &1&1& & & &2&2& & & &1&2&2& & &1&1& & &2&2& & & & &2&2& & &1\\
    & &1&2& &2& & &2& &2& & &1& & &1& &2& &2& &2& &2& &1& &1& &1& & & \\
    &1& &1& & & &2&2& & &1&2&2& & & &1&1& & &2&2& & & & &2&2& & & &1& \\
    1&2& & &2& & &2& &2& & &1& & &1& &2& &2& &2& &2& &1& &1& &1& & & & \\
    1& &1& & & &2& & & & &2&1&1&2&1&1& & & & & & & & &1&1&1&1&1&1&2&2&2\\
    2&1&2&1&1&1&2& & & & &1& & &1& & &1&1&1&1& & & & &2&2& & & & &2&1&1\\
    2&1&2&1&1&1&3&1&1&1&1&3&2&2&3&2&2& & & & & & & & & & & & & & & & &
  \end{smallmatrix}\right)
, \quad \mathscr{A}_2  = \left(
  \begin{smallmatrix}
    & & -1\\
    & & -1\\
    & & -1\\
    1 & & -1\\
    1 & & -1\\
    & 1 & -1\\
    & 1 &-1\\
    -1 & -1& \\
    -1& -1& \\
    -1& -1&
  \end{smallmatrix}
\right).$$
Proposition~\ref{circle-method} gives us $\lambda = 1/34300$.  We have $r=10$
coprimality conditions, and we see immediately in this and all other cases that
\eqref{fail} holds.  We choose
\begin{equation*}
  {\bm \tau}^{(2)}   = (1, 1, 1, \tfrac{2}{3}, \ldots,  \tfrac{2}{3}) = (1 - h_{ij}/3)_{ij}. 
\end{equation*}  
We verify $C_2( {\bm \tau}^{(2)})$ and $C_2((1 - h_{ij}/3)_{ij})$ and compute and  confirm  \eqref{ass2} by 
$$\dim (\mathscr{H} \cap \mathscr{P}) = 4, \quad    \dim (\mathscr{H} \cap \mathscr{P}_{ij}) = 3,\quad \dim(\mathscr{H} \cap \mathscr{P}(1/34300,\pi))=0.$$

\subsubsection{The variety $X_6$}

By  Corollary~\ref{dim4cor}(b), we have $J= 9$ torsor variables $x_{ij}$
satisfying the equation 
$$x_{11}x_{12} + x_{21}x_{22}  + x_{31} x_{32}^2 = 0.$$  
We have $N=24$ height conditions with corresponding exponent matrix
$$\mathscr{A}_1 = \left(\begin{smallmatrix}
 & & & & & & & & & & & &1&1&1&1&1&1&1&1&1&4&4&4\\
1&1&1&1&1&1&1&1&1&4&4&4& & & & & & & & & & & & \\
 & & & & & & & & &3&3&3& & & & & & & & & &3&3&3\\
 & & & & & &3&3&3& & & & & & & & & &3&3&3& & & \\
 & & & &1&4& & &1& & &1& & & & &1&4& & &1& & &1\\
 & &3&3&3& & & & & & & & & &3&3&3& & & & & & & \\
 &4& &1& & & &1& & &1& & &4& &1& & & &1& & &1& \\
3&3& & & &3& & & & & & &3&3& & & &3& & & & & & \\
4& &1& & & &1& & &1& & &4& &1& & & &1& & &1& & 
\end{smallmatrix}\right),
 \quad \mathscr{A}_2  = \left(\begin{smallmatrix}
  & & -1\\
  & & -1\\
  & & -1\\
  1 & & -1\\
  1 & & -1\\
  & 1 & -1\\
  & 1 & -1\\
  -1 & -1& \\
  -2 & -2& 1
\end{smallmatrix}\right).$$
Proposition~\ref{circle-method} yields $\lambda = 1/34300$.  We choose
$${\bm \tau}^{(2)} = (1, 1, 1, \tfrac{1}{2}, \tfrac{1}{2}, \tfrac{1}{2},
\tfrac{1}{2}, \tfrac{1}{2}, 1)$$
satisfying \eqref{tau1}. We verify $C_2( {\bm \tau}^{(2)})$ and
$C_2((1 - h_{ij}/3)_{ij})$ and compute
\begin{displaymath}
  \begin{split}
    &\dim(\mathscr{H}\cap \mathscr{P})=2, \\
    &\dim(\mathscr{H}\cap \mathscr{P}_{ij}) = -1, (i, j) = (1,1),(2,1), \quad
    \dim(\mathscr{H}\cap \mathscr{P}_{ij}) = 1\text{ otherwise},\\
    &\dim(\mathscr{H} \cap \mathscr{P}(1/34300, \pi)) = -1 \text{ for all } \pi
  \end{split}
\end{displaymath}
for the vector $ (1 - h_{ij}/3)_{ij}$ and
\begin{displaymath}
\begin{split}
  & \dim(\mathscr{H}\cap \mathscr{P})=1, \\
  &  \dim(\mathscr{H}\cap \mathscr{P}_{ij}) =
  \begin{cases}
    1,&(i, j) = (3,1),\\
    0, &(i,j)=(0,1),(0,2),(0,3),\\
    -1, &\text{ otherwise},
  \end{cases}
  \\
  & \dim(\mathscr{H} \cap \mathscr{P}(1/34300, \pi)) = -1 \text{ for all } \pi
\end{split}
\end{displaymath}
for the vector ${\bm \tau}^{(2)}$. This confirms \eqref{ass2}.

\subsubsection{The variety $X_7$}

By  Corollary~\ref{dim4cor}(c), we have $J=12$ torsor variables $x_{ij}$
satisfying the equation 
$$x_{11}x_{12} + x_{21}x_{22}  + x_{31}x_{32}x_{33}x_{34}x_{35}^2 = 0.$$  
We have $N=80$ height conditions; the corresponding matrix
$\mathscr{A}_1$ is {\tiny
$$\left(\begin{smallmatrix}
 & & & & & & & & & & & & & & & & & & & & & & & & & & & & & & & & & & & & & & & & & & & & & & & & & & & & & & &1&1&1&1&2&2&2&2&2&2&3&3&3&3&4&4&4&4&4&4&4&4&5&5&5\\
 & & & & & & & & & & & & & & & & & & & & & & & & & & & & & &1&1&1&1&2&2&2&2&2&2&3&3&3&3&4&4&4&4&4&4&4&4&5&5&5& & & & & & & & & & & & & & & & & & & & & & & & & \\
 & & & & & & & & &1&1&1&1&1&1&2&2&2&2&3&3&3&3&3&3&4&4&5&5&5&1&1&1&1&1&1&1&1&1&1& & & & & & & & & & & & & & & &1&1&1&1&1&1&1&1&1&1& & & & & & & & & & & & & & & \\
 & & & & & &1&1&1& & & &2&2&2& & &2&2& & & &2&2&2& & & & & & & &2&2& & & &2&2&2& & &1&1& & & & & &1&1&1& & & & & &2&2& & & &2&2&2& & &1&1& & & & & &1&1&1& & & \\
 & & & &2&2& & &2& & &2& & &2& &2& &2& & &1& & &1& &2& & &1& &2& &2& & &1& & &1& &2& &2& & & &1&2& & &1& & &1& &2& &2& & &1& & &1& &2& &2& & & &1&2& & &1& & &1\\
 & &1&1& &1& & & &2&2&2& & & &2&2& & &2&2&2& & & & & & & & &2&2& & &2&2&2& & & &1&1& & & &1&1&1& & & & & & & &2&2& & &2&2&2& & & &1&1& & & &1&1&1& & & & & & & \\
 &2& &2& & & &2& & &2& & &2& &2& &2& & &1& & &1& &2& & &1& &2& &2& & &1& & &1& &2& &2& &2& &1& & & &1& & &1& &2& &2& & &1& & &1& &2& &2& &2& &1& & & &1& & &1& \\
2& &2& & & &2& & &2& & &2& & & & & & &2&1&1&2&1&1& & &2&1&1& & & & &2&1&1&2&1&1& & & & & &2&1&1& &2&1&1&2&1&1& & & & &2&1&1&2&1&1& & & & & &2&1&1& &2&1&1&2&1&1\\
1&1& & &1& & & & & & & & & & &1&1&1&1&2&2&2&2&2&2&5&5&6&6&6& & & & & & & & & & & & & & &1& & & &1& & & &1&1&1& & & & & & & & & & & & & & &1& & & &1& & & &1&1&1\\
2&2&1&1&2&1&1&1&1& & & & & & & & & & & & & & & & &2&2&2&2&2& & & & & & & & & & &1&1&1&1&2&1&1&1&2&1&1&1&2&2&2& & & & & & & & & & &1&1&1&1&2&1&1&1&2&1&1&1&2&2&2\\
5&5&4&4&5&4&4&4&4&2&2&2&2&2&2&1&1&1&1& & & & & & &1&1& & & &1&1&1&1& & & & & & &1&1&1&1&1& & & &1& & & & & & &1&1&1&1& & & & & & &1&1&1&1&1& & & &1& & & & & & \\
6&4&5&3&4&3&5&3&3&3&1&1&3&1&1& & & & &1& & &1& & & & &1& & & & & & &1& & &1& & & & & & & &1& & & &1& & &1& & & & & & &1& & &1& & & & & & & &1& & & &1& & &1& & 
\end{smallmatrix}\right),
$$}
Proposition~\ref{circle-method} yields $\lambda = 1/70000$. 
We choose 
$${\bm \tau}^{(2)} = (1, 1, 1, \tfrac{1}{2}, \tfrac{1}{2}, \tfrac{1}{2}, \tfrac{1}{2}, \tfrac{3}{4}, \tfrac{3}{4}, \tfrac{3}{4}, \tfrac{3}{4}, 1)$$
satisfying \eqref{tau1}. We verify $C_2( {\bm \tau}^{(2)})$ and $C_2((1 - h_{ij}/3)_{ij})$ and compute 
 \begin{displaymath}
\begin{split}
  &\dim(\mathscr{H}\cap \mathscr{P})=4, \\
  &\dim(\mathscr{H}\cap \mathscr{P}_{ij})= \begin{cases}
  1,&(i, j) = (0,1), (0, 2),\\
  0, &(i,j)=(1, 1), (2, 1),\\
  2, & (i, j) = (1, 2), (2, 2),\\
  3, &\text{ otherwise},
\end{cases}\\
  &\dim(\mathscr{H} \cap \mathscr{P}(1/70000, \pi)) = -1 \text{ for all } \pi
\end{split}
\end{displaymath}
for the vector $ (1 - h_{ij}/3)_{ij}$ and
\begin{displaymath}
\begin{split}
& \dim(\mathscr{H}\cap \mathscr{P})=0, \\
&  \dim(\mathscr{H}\cap \mathscr{P}_{ij}) = \begin{cases}
  0,&(i, j) = (3,1), (3, 2), (3,3), (3,4),\\
  -1, &\text{ otherwise},
\end{cases}\\
& \dim(\mathscr{H} \cap \mathscr{P}(1/70000, \pi)) = -1 \text{ for all } \pi
\end{split}
\end{displaymath}
for the vector ${\bm \tau}^{(2)}$. This confirms \eqref{ass2}. 
 
\subsubsection{The variety $X_8$}

By Corollary~\ref{dim4cor}(d), we have $J=14$ torsor variables $x_{ij}$ with
$0 \leq i \leq 3$, $J_0 = 6$, $J_1 = J_2 = 2$, $J_3 = 4$ satisfying the
equation
$$x_{11}x_{12} + x_{21}x_{22}  + x_{31}x_{32}x_{33}^2x_{34}^2 = 0$$  
with $k=3$. We have $N=156$ height conditions; it is straightforward to
extract the corresponding matrices $\mathscr{A}_1$, $\mathscr{A}_2$ from
Corollary~\ref{dim4cor}(d), which we do not spell out for obvious space
reasons.  Proposition~\ref{circle-method} yields $\lambda = 1/70000$.  We
choose
$${\bm \tau}^{(2)} = (1, 1, 1, 1, 1, 1, \tfrac{1}{2}, \tfrac{1}{2}, \tfrac{1}{2}, \tfrac{1}{2}, \tfrac{1}{2}, \tfrac{1}{2}, 1, 1)$$
satisfying \eqref{tau1}. We verify $C_2( {\bm \tau}^{(2)})$ and
$C_2((1 - h_{ij}/3)_{ij})$ and compute
\begin{displaymath}
  \begin{split}
    &\dim(\mathscr{H}\cap \mathscr{P})=5, \\
    &\dim(\mathscr{H}\cap \mathscr{P}_{ij}) = \begin{cases}
      0,&(i, j) = (1, 1), (2, 1)\\
      2, &(i,j)=(1,2),(2, 2),\\
      4, &\text{ otherwise},
    \end{cases}\\
    &\dim(\mathscr{H} \cap \mathscr{P}(1/34300, \pi)) = -1 \text{ for all } \pi
  \end{split}
\end{displaymath}
for the vector $ (1 - h_{ij}/3)_{ij}$ and
\begin{displaymath}
  \begin{split}
    & \dim(\mathscr{H}\cap \mathscr{P})=3, \\
    &  \dim(\mathscr{H}\cap \mathscr{P}_{ij}) =\begin{cases}
      -1,&(i, j) = (1, 1), (1,2),(2, 1), (2, 2),\\
      0, &(i,j)=(3,4)\\
      3, &(i,j)=(3,1), (3,2),\\
      2, &\text{ otherwise},
    \end{cases}\\
    & \dim(\mathscr{H} \cap \mathscr{P}(1/70000, \pi)) = -1 \text{ for all } \pi
  \end{split}
\end{displaymath}
for the vector ${\bm \tau}^{(2)}$. This confirms \eqref{ass2}. 

\section{A singular example}\label{sec:Xdagger}

As in Section~\ref{IV.7}, we consider the spherical $G$-variety
$W_4 = \Vd(z_{11}z_{12}-z_{21}z_{22}-z_{31}z_{32}) \subset \Pd^2_\Qd \times
\Pd^2_\Qd$.  Let $\stX^\dag \to W_4$ be the blow-up in the two disjoint
$G$-invariant curves
\begin{align*}
  C_{01} &= \Vd(z_{12},z_{22},z_{31}) = 
           \Vd(z_{31})\times\{(0:0:1)\}\text{,}\quad C_{33} = \Vd(z_{31}, z_{32})\text{.}
\end{align*}

The anticanonical divisor $-K_{\stX^\dag}$ is not ample but
semiample. Moreover,
$H^1(\stX^\dag,\Om_{\stX^\dag}) = H^2(\stX^\dag,\Om_{\stX^\dag}) = 0$ since
$\stX^\dag$ is smooth and rational. Hence $\stX^\dag$ is an almost Fano
variety. We obtain an anticanonical contraction
$\pi\colon \stX^\dag \to X^\dag$. Here $X^\dag$ is a singular Fano variety
with desingularization $\stX^\dag$. The sequence of morphisms
$W_4 \leftarrow \stX^\dag \to X^\dag$ corresponds to the following sequence of
maps of colored fans.

\begin{align*}
  \begin{tikzpicture}[scale=0.7]
    \clip (-2.24, -2.24) -- (2.24, -2.24) -- (2.24, 2.24) -- (-2.24, 2.24) -- cycle;
    \fill[color=gray!30] (-3, 3) -- (3, -3) -- (-3, -3) -- cycle;
    \foreach \x in {-3,...,3} \foreach \y in {-3,...,3} \fill (\x, \y) circle (1pt);
    \draw (1, 0) circle (2pt);
    \draw (0, 1) circle (2pt);
    \draw (-1, 0) circle (2pt);
    \draw (0, -1) circle (2pt);
    \draw (0,0) -- (3,0);
    \draw (0,0) -- (0,3);
    \draw (0,0) -- (-3, 0);
    \draw (0,0) -- (0, -3);
    \path (-1, 0) node[anchor=north] {{\tiny{$u_{31}$}}};
    \path (0, -1) node[anchor=west] {{\tiny{$u_{32}$}}};
    \path (0, 1) node[anchor=east] {{\tiny{$d_{2}$}}};
    \path (1, 0) node[anchor=south] {{\tiny{$d_{1}$}}};
    \begin{scope}
      \clip (0,0) -- (0,1) -- (-1,0) -- cycle; \draw (0,0) circle (9pt);
    \end{scope}
    \begin{scope}
      \clip (0,0) -- (-1, 0) -- (0, -1) -- cycle; \draw (0,0) circle (13pt);
    \end{scope}
    \begin{scope}
      \clip (0,0) -- (0,-1) -- (1,0) -- cycle; \draw (0,0) circle (9pt);
    \end{scope}
  \end{tikzpicture}
  &&
  \begin{tikzpicture}[scale=0.7]
    \clip (-0.5, -2.24) -- (0.5, -2.24) -- (0.5, 2.24) -- (-0.5, 2.24) -- cycle;
    \path (0,0) node {{$\longleftarrow$}};
  \end{tikzpicture}
     &&
    \begin{tikzpicture}[scale=0.7]
    \clip (-2.24, -2.24) -- (2.24, -2.24) -- (2.24, 2.24) -- (-2.24, 2.24) -- cycle;
    \fill[color=gray!30] (-3, 3) -- (3, -3) -- (-3, -3) -- cycle;
    \foreach \x in {-3,...,3} \foreach \y in {-3,...,3} \fill (\x, \y) circle (1pt);
    \draw (1, 0) circle (2pt);
    \draw (0, 1) circle (2pt);
    \draw (-1, 0) circle (2pt);
    \draw (-1, -1) circle (2pt);
    \draw (-1, 1) circle (2pt);
    \draw (0, -1) circle (2pt);
    \draw (0,0) -- (0,3);
    \draw (0,0) -- (-3, 0);
    \draw (0,0) -- (-3, -3);
    \draw (0,0) -- (-3, 3);
    \draw (0,0) -- (0, -3);
    \path (-1, 0) node[anchor=north] {{\tiny{$u_{31}$}}};
    \path (-1, -1) node[anchor=north west] {{\tiny{$u_{33}$}}};
    \path (0, -1) node[anchor=north west] {{\tiny{$u_{32}$}}};
    \path (-1, 1) node[anchor=north east] {{\tiny{$u_{01}$}}};
    \path (0, 1) node[anchor=east] {{\tiny{$d_{2}$}}};
    \path (1, 0) node[anchor=south] {{\tiny{$d_{1}$}}};
    \begin{scope}
      \clip (0,0) -- (1,0) -- (0,-1) -- cycle; \draw (0,0) circle (9pt);
    \end{scope}
    \begin{scope}
      \clip (0,0) -- (0, -1) -- (-1, -1) -- cycle; \draw (0,0) circle (13pt);
    \end{scope}
    \begin{scope}
      \clip (0,0) -- (-1,-1) -- (-1,0) -- cycle; \draw (0,0) circle (9pt);
    \end{scope}
    \begin{scope}
      \clip (0,0) -- (-1, 0) -- (-1, 1) -- cycle; \draw (0,0) circle (13pt);
    \end{scope}
    \draw[densely dotted, thick] (0,0) -- (4,0);
    \begin{scope}
      \clip (0,1) -- (1,0) -- (0,0) -- cycle; \draw[densely dotted,thick] (0,0) circle (13pt);
    \end{scope}
    \begin{scope}
      \clip (-1,1) -- (0,1) -- (0,0) -- cycle; \draw[densely dotted,thick] (0,0) circle (17pt);
    \end{scope}
  \end{tikzpicture}
  &&
  \begin{tikzpicture}[scale=0.7]
    \clip (-0.5, -2.24) -- (0.5, -2.24) -- (0.5, 2.24) -- (-0.5, 2.24) -- cycle;
    \path (0,0) node {{$\longrightarrow$}};
  \end{tikzpicture}
  &&
  \begin{tikzpicture}[scale=0.7]
    \clip (-2.24, -2.24) -- (2.24, -2.24) -- (2.24, 2.24) -- (-2.24, 2.24) -- cycle;
    \fill[color=gray!30] (-3, 3) -- (3, -3) -- (-3, -3) -- cycle;
    \foreach \x in {-3,...,3} \foreach \y in {-3,...,3} \fill (\x, \y) circle (1pt);
    \draw (1, 0) circle (2pt);
    \draw (0, 1) circle (2pt);
    \draw (0, -1) circle (2pt);
    \draw (-1, -1) circle (2pt);
    \draw (-1, 1) circle (2pt);
    \draw (0,0) -- (3,0);
    \draw (0,0) -- (0,-3);
    \draw (0,0) -- (-3, -3);
    \draw (0,0) -- (-3, 3);
    \path (0,-1) node[anchor=west] {{\tiny{$u_{32}$}}};
    \path (-1, -1) node[anchor=north west] {{\tiny{$u_{33}$}}};
    \path (-1, 1) node[anchor=north east] {{\tiny{$u_{01}$}}};
    \path (0, 1) node[anchor=east] {{\tiny{$d_{2}$}}};
    \path (1, 0) node[anchor=south] {{\tiny{$d_{1}$}}};
    \begin{scope}
      \clip (0,0) -- (1,0) -- (0,-1) -- cycle; \draw (0,0) circle (9pt);
    \end{scope}
    \begin{scope}
      \clip (0,0) -- (0,-1) -- (-1, -1) -- cycle; \draw (0,0) circle (13pt);
    \end{scope}
    \begin{scope}
      \clip (0,0) -- (-1,-1) -- (-1,1) -- cycle; \draw (0,0) circle (9pt);
    \end{scope}
  \end{tikzpicture}
\end{align*}

We denote by $E_{31}$ the $G$-invariant exceptional divisor contracted
by $\pi$. The singular locus of $X^\dag$ is $\pi(E_{31})$.  The dotted
circles in the colored fan of $\stX^\dag$ specify a smooth projective
ambient toric variety  $Y^\dag$ such that $-K_{\tX^\dag}$ is ample on
$Y^\dag$.

In the same way as before, a universal torsor of $\stX^{\dag}$ can be
obtained. The straightforward computations are omitted. This leads to the
following counting problem.

\begin{cor}\label{prop:countingproblem_line_8}
  We have
  \begin{align*}
    N^{\dag}(B) =   \frac{1}{16} \#\left\{\xv \in \Zd_{\ne 0}^8 :
    \begin{aligned}
      &x_{11}x_{12}-x_{21}x_{22}-x_{31}x_{32}x_{33}^2=0, \quad \max|\Pm^\dag(\xv)| \le B\\
      &(x_{11},x_{21},x_{33})=(x_{11},x_{21},x_{31})=(x_{01},x_{11},x_{21})=1\\
      &(x_{12},x_{22})=(x_{01}, x_{32})=(x_{01}, x_{33})=(x_{31}, x_{32})=1\\
    \end{aligned}
    \right\},
  \end{align*}
  with 
  \begin{equation*}
    \Pm^\dag(\xv) =\left\{
      \begin{aligned}
        &x_{01}x_{31}^2x_{32}^2x_{33}^3,
        \{x_{11},x_{21}\}x_{31}x_{32}^2x_{33}^2,
        \{x_{11},x_{21}\}^2\{x_{12},x_{22}\}x_{32},\\
        &x_{01}^3 \{x_{12},x_{22}\}^2 x_{31}^2x_{33} ,
        x_{01}^2 \{x_{11},x_{21}\} \{x_{12},x_{22}\}^2 x_{31}
      \end{aligned}
    \right\}.
  \end{equation*}
\end{cor}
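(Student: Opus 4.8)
The plan is to derive Corollary~\ref{prop:countingproblem_line_8} as an application of Proposition~\ref{prop:countingproblem_abstract} to the crepant resolution $\tX^\dag \to X^\dag$, in exactly the same way Corollary~\ref{prop:countingproblem_line_11} was obtained for $X_1,\dots,X_4$. First I would record the Cox ring of $\tX^\dag$. Since $\tX^\dag$ is a smooth split spherical almost Fano variety of semisimple rank one and type $T$, the description in Section~\ref{sec:ssr1}, specifically \eqref{eq:coxring_type_T}, gives
\[
  \Rm(\tX^\dag) = \Qd[x_{01},x_{11},x_{12},x_{21},x_{22},x_{31},x_{32},x_{33}]/(x_{11}x_{12}-x_{21}x_{22}-x_{31}x_{32}x_{33}^2),
\]
with $\rank \Pic \tX^\dag = 4$; the gradings $\deg(x_{ij}) = [D_{ij}] \in \Cl \tX^\dag$ are read off from the colored fan $\CF \tX^\dag$ drawn in Section~\ref{sec:Xdagger} via the exact sequence $\Mm \to \Zd^\Delta \to \Cl \tX^\dag \to 0$, and $-K_{\tX^\dag}$ is the sum of all $B$-invariant divisors. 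The exponents are $h_{31}=h_{32}=1$, $h_{33}=2$, matching the torsor equation in the corollary.

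Next I would make the ambient toric variety explicit. As noted in Section~\ref{sec:Xdagger}, the dotted circles in the colored fan of $\tX^\dag$ select a smooth projective ambient toric variety $Y^\dag$ on which $-K_{\tX^\dag}$ is ample, so assumption~\eqref{eq:toric_smooth} holds. Using the recipe of Section~\ref{sec:ambient} I would list the $N=13$ maximal cones of the fan $\Sigma$ of $Y^\dag$; for each $\sigma \in \Sigmamax$, Lemma~\ref{lem:monomials_degree_L} produces the unique effective divisor $D(\sigma) = \sum_{\rho \notin \sigma(1)} \alpha^\sigma_\rho D_\rho$ of class $-K_{\tX^\dag}$, and the corresponding monomials $\mathbf{x}^{D(\sigma)}$ collapse, after grouping the cones that differ only by interchanging $x_{11}\leftrightarrow x_{21}$ or $x_{12}\leftrightarrow x_{22}$, into the five families listed in $\Pm^\dag(\xv)$. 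In parallel I would compute $Z_{Y^\dag}$ from the primitive collections as in Proposition~\ref{prop:lift_to_torsor}, and then verify directly using the torsor equation that the induced coprimality conditions are equivalent over $\Zd$ to the seven conditions
\[
  (x_{11},x_{21},x_{33})=(x_{11},x_{21},x_{31})=(x_{01},x_{11},x_{21})=(x_{12},x_{22})=(x_{01},x_{32})=(x_{01},x_{33})=(x_{31},x_{32})=1
\]
coming from the simpler set $Z_{X^\dag} = Z_{Y^\dag} \cap \Spec \Rm(\tX^\dag)$, exactly as in the proof of Corollary~\ref{prop:countingproblem_line_11}.

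With these data in hand, Proposition~\ref{prop:countingproblem_abstract} applied to $\tX^\dag$ and the open set $U^\dag \subset \tX^\dag$ where all Cox coordinates are nonzero yields $N_{\tX^\dag, U^\dag, H}(B) = \tfrac{1}{16}\#\{\ldots\}$, which is the right-hand side of the corollary. It remains to identify $N^\dag(B)$ with $N_{\tX^\dag, U^\dag, H}(B)$: since $\pi \colon \tX^\dag \to X^\dag$ is the anticanonical contraction, one has $\pi^*(-K_{X^\dag}) = -K_{\tX^\dag}$, so the anticanonical height of Theorem~\ref{thm2} on $X^\dag$ pulls back to the height \eqref{eq:height_definition} on $\tX^\dag$; moreover $\pi$ restricts to an isomorphism from $U^\dag$ onto an open subvariety of the smooth locus of $X^\dag$, so the two counting functions coincide. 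The main obstacle I anticipate is purely bookkeeping: correctly enumerating $\Sigmamax$ from the colored fan (there is the same kind of ambiguity in the choice of unsupported colored cones as in the $X_3$ case, which must be resolved so that $Y^\dag$ is smooth with $-K_{\tX^\dag}$ ample), then checking that the thirteen anticanonical monomials genuinely reduce to the five families listed and that the $Z_Y$-coprimality conditions may be replaced by the $Z_X$-conditions over $\Zd$. These are precisely the ``straightforward computations'' the paper defers, and they are where all the content of the corollary resides.
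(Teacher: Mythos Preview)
Your proposal is correct and follows exactly the paper's approach: the paper states that ``the straightforward computations are omitted'' and that the corollary ``is analogous to Corollary~\ref{prop:countingproblem_line_11}'', i.e., it is a direct application of Proposition~\ref{prop:countingproblem_abstract} to $\tX^\dag$ once the Cox ring, the anticanonical monomials $\mathbf{x}^{D(\sigma)}$, and the $Z_X$-coprimality conditions have been worked out from the colored fan. One small correction: the ambiguity you anticipate in choosing unsupported colored cones does not arise here (unlike the $X_3$ case), since the two colors of $\tX^\dag$ have distinct valuations $d_1 \ne d_2$.
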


By the same type of computations as before, one concludes Theorem~\ref{thm2}
from Corollary~\ref{prop:countingproblem_line_8} and Theorem~\ref{manin-cor}
applied to the almost Fano variety $\stX^\dag$.

\appendix

\section{Some explicit computations}\label{A}

We return to the variety $X_4$ discussed in Section~\ref{X4} and explain how
to obtain Hypothesis~\ref{H2} by ``bare hands'' and how to compute Peyre's
constant explicitly. We use $X_4$ as a showcase, the computations are similar
(and similarly uninspiring) in the other cases.\\
    
Recall from \eqref{X1} and \eqref{tauzeta} that for Hypothesis~\ref{H2}, we
need to show
\begin{equation}\label{analysis}
  \left.\sum_{\textbf{X}}\right.^{\ast} (X_{01}X_{02}
  (X_{11}X_{12}X_{21}X_{22}X_{31}X_{32})^{2/3})^{\alpha} \ll  B^{\alpha} (\log
  B)^2(1 + \log H)
\end{equation}
for fixed $0 < \alpha < 1$, where each $X_{ij}$ is restricted to a power of
$2$ and subject to
$$\min(X_{ij}) \leq H \quad \text{and} \quad \prod_{ij}
X_{ij}^{\alpha^\nu_{ij}} \leq B.$$ By symmetry, we can assume without loss of
generality that
\begin{equation*}
  X_{12} \geq X_{22},\quad  X_{21} \geq X_{11}.
\end{equation*}
The columns $\nu = 4, 5 $ and $\nu = 2, 3 $ of in the matrix $\mathscr{A}_1$
yield
\begin{equation}\label{restr2raw}
  X_{31}  X_{12}  \max( X_{31}X_{32},    X_{12}  X_{21}) X_{02}^2 \leq B
  ,\quad   X_{32}  X_{21} \max(  X_{31}X_{32},  X_{12}  X_{21} ) X_{01}^2 \leq
  B,
\end{equation}
respectively.  Let us first assume that
$\min(X_{ij}) \asymp \min(X_{11}, X_{22}, X_{31}, X_{32})$, \ie
$X_{01}, X_{02}$ are not the smallest parameters.  Summing over
$X_{01}, X_{02}$, we bound the $\mathbf{X}$-sum in \eqref{analysis} by
\begin{equation*}
  \sum_{\mathbf{X}} \Bigl(
  \frac{B(X_{11}X_{12}X_{21}X_{22}X_{31}X_{32})^{2/3}}{(X_{12}X_{21}X_{31}X_{32})^{1/2}\max(X_{31}X_{32},
    X_{12}X_{21})}\Bigr)^{\alpha} \leq  \sum_{\mathbf{X}}   \Bigl(
  \frac{B(X_{31}X_{32})^{1/6} (X_{21}X_{22})^{2/3}}{\max(X_{31}X_{32},
    X_{12}X_{21})^{5/6}} \Bigr)^{\alpha}   . 
\end{equation*}
Here and in similar situations, the precise summation conditions on
$\mathbf{X}$ and the variables involved will always be clear from the context.
Suppose that the minimum is taken at $X_{11}$ or $X_{22}$.  We glue together
the variables $X_{31}X_{32} = X_{3}$, say, where $X_3$ runs over powers of $2$
with multiplicity $O(\log B)$. Summing over $X_3$, the $\mathbf{X}$-sum
becomes
$$\log B \sum_{\substack{ X_{22} \leq X_{12} \leq B\\ X_{11} \leq X_{21} \leq
    B\\ \min(X_{11}, X_{22}) \leq H}}  \left(\frac{B(X_{22}
    X_{11})^{2/3}}{(X_{12}X_{21})^{2/3}}\right)^{\alpha} \ll B^{\alpha} (\log
B)^2 (1+\log H).$$
If the minimum is taken at $X_{31}$ or $X_{32}$, there are only $O(1+\log H)$
possibilities for the value of $X_3$, and we can argue in the same way.
   
Finally we treat the case where the minimum is taken at $X_{01}$ or $X_{02}$.
Without loss of generality (by symmetry), assume $X_{01} \leq X_{02}$.  We use
\eqref{restr2raw} to sum over $X_{02}$ and then sum over $X_{11} \leq X_{21}$
and $X_{22} \leq X_{12}$. In this way, we bound the $\mathbf{X}$-sum in
\eqref{analysis} by
\begin{displaymath}
  \sum_{\mathbf{X}}\Bigl(  \frac{B^{1/2}X_{01}
    (X_{11}X_{12}X_{21}X_{22}X_{31}X_{32})^{2/3}}{(X_{31}X_{12}  \max(
    X_{31}X_{32},   X_{12} X_{21}  ))^{1/2}}\Bigr)^{\alpha} \ll
  \sum_{\mathbf{X}}\Bigl(  \frac{B^{1/2}X_{01}  (X_{12}^2
    X_{21}^2X_{31}X_{32})^{2/3}}{(X_{31}X_{12}  \max(  X_{31}X_{32},   X_{12}
    X_{21}  ))^{1/2}}\Bigr)^{\alpha},
\end{displaymath}
where the sum is restricted to $X_{01}, X_{12}, X_{21}, X_{31}, X_{32}$ powers
of 2 satisfying $X_{01} \leq H$ and the second bound in \eqref{restr2raw}. We
now distinguish two cases. If $X_{31}X_{32} \geq X_{12}X_{21}$, we sum over
$X_{12} \leq X_{31}X_{32}/X_{21}$, getting
\begin{displaymath}
  \sum_{\substack{X_{01} \leq H\\ X_{32}^2X_{21}X_{31} X_{01}^2 \leq B}}
  \Bigl(B^{1/2} X_{01}X_{32} (X_{31}X_{21})^{1/2}\Bigr)^{\alpha} \ll
  \sum_{\substack{X_{01} \leq H, X_{21}, X_{31} \leq    B}}  B^{\alpha} \ll
  B^{\alpha} (\log B)^2 (1+\log H). 
\end{displaymath}
If $X_{31}X_{32} \leq X_{12}X_{21}$, we sum over $X_{31} \leq
X_{12}X_{21}/X_{32}$ instead, obtaining the same result. \\
  
Now we compute the Peyre constant. We start with the computation of the Euler
product $c_{\text{fin}}$. By \eqref{gcd1}, \eqref{gamma} and \eqref{gammaast},
we have
\begin{displaymath}
  {\bm \gamma} = ([g_4, g_5], [g_3, g_4], g_1, g_2,   g_1, g_2, g_5, g_3])\in
  \Bbb{N}^8, \quad {\bm \gamma}^{\ast} = ( g_1 g_2, g_1 g_2,  g_3g_5)\in
  \Bbb{N}^3.
\end{displaymath}
A simple computation (cf.\ Lemma~\ref{Gauss}) shows
\begin{equation*}
  \mathscr{E}_{\mathbf{b}} = \sum_{q=1}^{\infty} q^{-6} \underset{a
    \bmod{q}}{\left.\sum\right.^{\ast}} \prod_{i=1}^3 \Bigl(\sum_{x, y
    \bmod{q}} e\Bigl(\frac{a}{q}b_i xy\Bigr)\Bigr) = \sum_{q=1}^{\infty}
  \frac{\phi(q) (q, b_1)(q, b_2)(q, b_3)}{q^3}
\end{equation*}
for $\mathbf{b} \in \Bbb{N}^3$, so that
$$c_{\text{fin}} = \sum_{\mathbf{g}\in \Bbb{N}^5} \frac{\mu(\mathbf{g})}{g_1^2
  g_2^2g_3g_5[g_4, g_5][g_3, g_4]} \sum_{q=1}^{\infty} \frac{\phi(q) (q, g_1
  g_2)^2 (q, g_3g_5)}{q^3}.$$ We expand this into an Euler product, and by
brute force computation one verifies
\begin{equation*}
  c_{\text{fin}} =\prod_p \left( 1 - \frac{1}{p}\right)^4
  \left( 1 + \frac{1}{p}\right) \left( 1 + \frac{3}{p} + \frac{1}{p^2}\right).
\end{equation*}
In order to compute $c^{\ast}$ and $c_{\infty}$, we follow the argument in
Section~\ref{peyre}. We can take the rows $3, 4, 5, 6$ (\ie corresponding to
$(ij) = (11), (12), (21), (22)$) of $(\mathscr{A}_1\, \mathscr{A}_2)$ as
$Z_1, \ldots, Z_4$ in \eqref{beta}, so that
\begin{displaymath}
\begin{split}
& y_1 = w_{11} =  s_3 + 2s_7 + 2s_9 + s_{11} + s_{13} + 2s_{16} + 2s_{17} + z_1-1,\\
& y_2 = w_{12} = s_4+s_6+s_7+2s_{10} + 2s_{11}+ 2s_{14} + 2s_{16} + z_1-1,\\ 
& y_3 = w_{21} = s_2 + 2s_6 + 2s_8+s_{10}+s_{12}+2s_{14} + 2s_{15}+ z_2-1,\\
& y_4 = w_{22}  = s_5 + s_8 + s_9 + 2s_{12} + 2s_{13} + 2s_{15} + 2s_{17} + z_2-1,\\
& y_5 = s_1 + \dots + s_{17}-1.
\end{split}
\end{displaymath}
An explicit choice for a vector ${\bm \sigma}$ satisfying \eqref{1a} is for instance
$${\bm \sigma} = (\tfrac{1}{18},\tfrac{1}{18}, \tfrac{1}{18}, \tfrac{1}{18}, \tfrac{1}{18}, \tfrac{1}{18}, \tfrac{1}{18}, \tfrac{1}{18}, \tfrac{1}{18}, \tfrac{1}{18}, \tfrac{1}{18}, \tfrac{1}{18}, \tfrac{1}{18}, \tfrac{1}{18}, \tfrac{1}{12}, \tfrac{1}{12}, \tfrac{1}{18} )\in \Bbb{R}_{> 0}^{17}.$$
The linear forms $\mathscr{L}_{\iota}(\textbf{y})$ in \eqref{linear1}
containing the entries of the matrix $\mathscr{B} \in \Bbb{R}^{4 \times 5}$
are given by
\begin{displaymath}
\begin{split}
&w_{31} = y_5 + y_3 - y_2+ y_1 - y_4,\quad  w_{32} =y_5 - y_3 + y_2- y_1 + y_4 ,\\
& w_{01} = 2y_5 - y_2 - y_4, \quad w_{02} = 2y_5 - y_3 - y_1. 
\end{split}
\end{displaymath}
By contour shifts as in Section~\ref{peyre} or by the explicit formula
\eqref{cast-final}, we compute
\begin{equation*}
  c^{\ast} = \frac{1}{3!} \cdot \frac{1}{12}.
\end{equation*}
 
To compute $c_{\infty}$, we need to choose a matrix $\mathscr{C}$ as in
\eqref{mathcalC}, \ie variables $y_6, \ldots, y_{17}$ as functions of
$\mathbf{s}$. A simple possible choice is $y_{\nu} = s_{\nu}$,
$6 \leq \nu \leq 17$ (Jacobi-Determinant $-1$).  In these variables, we have
\begin{equation*}
\begin{split}
  \Bigl( \prod_{\nu=1}^{17}& s_{\nu}\Bigr)|_{y_1 = \dots = y_5 = 0} =
  \Bigl(\prod_{\nu=6}^{17} y_{\nu} \Bigr) (  2(y_6 + \dots + y_{13}) +
  3(y_{14} +y_{15} +  y_{16} + y_{17}) - 3+ 2z_1 + 2z_2)\\
  &\times   (2 y_6 + 2 y_8 + y_{10} + y_{12} + 2 y_{14 }+ 2 y_{15} + z_2 - 1)
  ( 2 y_7 + 2 y_9 + y_{11} + y_{13} + 2 y_{16} + 2 y_{17} + z_1 - 1)\\
  & \times ( y_6 + y_7 + 2 y_{10} + 2 y_{11} + 2 y_{14} + 2 y_{16} + z_1 - 1)
  (y_8 + y_9 + 2 y_{12} + 2 y_{13} + 2 y_{15} + 2 y_{17} + z_2 - 1).
\end{split}
\end{equation*}
For fixed $z_1, z_2$, the integrand is a rational function in
$y_6, \ldots, y_{17}$, and we simply shift each contour to $+\infty$ or
$-\infty$ (again it does not matter which direction we choose) and pick up the
poles. After a long computation (or a quick application of a computer algebra
system), we obtain
$$c_{\infty}  =\frac{2^8}{\pi} \int_{(1/3)}^{(2)} \mathscr{K}(z_1)
\mathscr{K}(z_2) \mathscr{K}(z_3)
\frac{2(3-z_3^2)}{(z_1-1)^2(z_2-1)^2(z_3-1)^2} \frac{\dd z_1\, \dd z_2}{(2\pi
  {\rm i})^2},$$
with $\mathscr{K}(z) = \Gamma(z) \cos(\pi z/2)$, $z_3 = 1 - z_1 - z_2$. Let us
define
\begin{displaymath}
  {\tt K}(z) = \frac{\Gamma(z) \cos(\pi z/2)}{(z-1)^2},\quad  {\tt
      K}^{\ast}(z) = \frac{2\Gamma(z) \cos(\pi z/2)(3-z^2)}{(z-1)^2}
\end{displaymath}
and let us denote by
$$\check{{\tt K}}(x) = \int_{(1/3)} {\tt K}(z) x^{-z} \frac{\dd z}{2\pi {\rm
    i}}, \quad x > 0,$$ and similarly by $\check{{\tt K}}^{\ast}$ the
corresponding inverse Mellin transforms. By \cite[6.246]{GR}, we have
$\check{{\tt K}}(x) = {\rm Si}(x)/{x}$ where
$ {\rm Si}(x) = \int_0^x \sin t \, dt/t$ is the integral sine.  To deal with
convergence issues, let
$$\mathscr{C} = (-10 - i\infty, -10 - i] \cup [-10 - i, 1/3] \cup [1/3, -10 +
i]\cup [-10 + i, -10 + i\infty).$$ Then
\begin{equation*}
  \begin{split}
    \frac{\pi}{2^8}c_{\infty} &=   \int^{(2)}_{(1/3)}{\tt K}(z_1){\tt K}(z_2)
    {\tt K}^{\ast}(1-z_1-z_2)  \frac{\dd z_1\, \dd z_2}{(2 \pi i)^2}  =
    \int^{(2)}_{(1/3)}{\tt K}(z_1){\tt K}(1-z_1-z_2) {\tt K}^{\ast}(z_2)
    \frac{\dd z_1\, \dd z_2}{(2 \pi i)^2} \\
    & = \int_0^{\infty} \check{{\tt K}}(x) \int_{(1/3)} {\tt K}(z_1) x^{-z_1}
    \frac{\dd z_1}{2\pi {\rm i}} \int_{\mathscr{C}} {\tt K}^{\ast}(z_2)
    x^{-z_2} \frac{\dd z_2}{2\pi {\rm i}} \dd x = \int_0^{\infty} \check{{\tt
        K}}(x)^2 \int_{\mathscr{C}} {\tt K}^{\ast}(z_2) x^{-z_2} \frac{\dd
      z_2}{2\pi {\rm i}} \dd x.
  \end{split}
\end{equation*}
The $z_2$-integral is also an inverse Mellin transform, but in order to avoid
convergence issues, we compute it directly by shifting the contour to the far
left and collect the poles. Comparing power series (cf.\ \cite[8.232,
8.253]{GR}), we obtain
$$\int_{\mathscr{C}}{\tt K}^{\ast}(z) x^{-z} \frac{\dd z}{(2\pi {\rm i})} =
\frac{4{\rm Si}\,x + 4 \sin x - 2x\cos x}{x}. $$ 
For this and related expressions appearing in the computation of the Peyre
constant of the varieties $X_1, \ldots, X_4$, the following lemma can be
used. Let
\begin{equation*}
  \textbf{F}(x) = \int_0^x \cos\Big(\frac{\pi t^2}{2}\Big) {\rm d}t. 
\end{equation*}

\begin{lemma}\label{sin} 
  We have
  \begin{displaymath}
    \begin{split}
      & \int_0^{\infty} \left(\frac{{\rm Si}\,x}{x}\right)^3 \dd x =
      \frac{33}{32} \pi - \frac{1}{32} \pi^3, \quad\quad \int_0^{\infty}
      \left(\frac{{\rm Si}\,x}{x}\right)^2 \frac{\sin x}{x} \dd x  =
      \frac{1}{4}\pi + \frac{\pi}{48}(21 - \pi^2),\\
      & \int_0^{\infty} \left(\frac{{\rm Si}\,x}{x}\right)^2 \cos(x) \dd x =
      \frac{\pi (12 - \pi^2)}{24}.
    \end{split}
  \end{displaymath}
  Moreover, 
  \begin{displaymath}
    \begin{split}
      &\int_{0}^{\infty} \frac{({\rm Si}\, x)^2
      }{x^2}\Bigl(\frac{\pi}{2x}\Bigr)^{1/2}
      \mathbf{F}\left(\Bigl(\frac{2x}{\pi}\Bigr)^{1/2}\right) \dd x=
      -\frac{\pi^3}{72} + \pi\left( \frac{59}{54} - \frac{4}{9}\log
        2\right),\\
      &\int_{0}^{\infty} \frac{({\rm Si}\,x)\sin x
      }{x^2}\Bigl(\frac{\pi}{2x}\Bigr)^{1/2}
      \mathbf{F}\left(\Bigl(\frac{2x}{\pi}\Bigr)^{1/2}\right) \dd x=
      \frac{\pi}{36}(25 - 12\log 2).
    \end{split}
  \end{displaymath}
\end{lemma}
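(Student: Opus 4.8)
The plan is to prove all five identities by a single elementary mechanism: reduce each of the functions ${\rm Si}(x)/x$ and $\sqrt{\pi/(2x)}\,\mathbf{F}(\sqrt{2x/\pi})$ to a one-parameter average of an elementary trigonometric kernel, interchange the order of integration, evaluate the inner integral in $x$ by a classical Dirichlet-type formula, and finish with an elementary integration over the auxiliary parameter(s). Substituting $t=ux$ in ${\rm Si}(x)=\int_0^x t^{-1}\sin t\,\mathrm dt$ gives
\[
\frac{{\rm Si}\,x}{x}=\int_0^1\frac{\sin(ux)}{ux}\,\mathrm du,
\]
and in $\mathbf{F}(y)=\int_0^y\cos(\pi t^2/2)\,\mathrm dt$ the substitutions $\pi t^2/2=s$ and then $s=ux$ give
\[
\sqrt{\tfrac{\pi}{2x}}\,\mathbf{F}\!\left(\sqrt{\tfrac{2x}{\pi}}\right)=\frac{1}{2\sqrt x}\int_0^x\frac{\cos s}{\sqrt s}\,\mathrm ds=\frac12\int_0^1\frac{\cos(ux)}{\sqrt u}\,\mathrm du .
\]

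First I would insert these representations into the five integrals and bring the $x$-integration inside. The resulting integrands are jointly absolutely integrable (for fixed parameters they are $O(1)$ near $x=0$ and $O(x^{-2})$ or $O(x^{-3})$ near $x=\infty$, while the parameter weights $1/u$, $1/(uv)$, $1/(uvw)$, $u^{-1/2}$ are compensated by the vanishing of the inner $x$-integral as the corresponding parameter tends to $0$), so Fubini applies without any regularization. The inner integrals are then evaluated using
\[
\int_0^\infty\frac{\sin(ax)\sin(bx)\cos(cx)}{x^2}\,\mathrm dx=\frac{\pi}{8}\bigl(|a+b+c|+|a+b-c|-|a-b+c|-|a-b-c|\bigr),
\]
which for $a,b\in(0,1]$, $c=1$ collapses to $\tfrac{\pi}{4}\max(a+b-1,0)$, and, writing $\sin(cx)/x=\int_0^c\cos(tx)\,\mathrm dt$ and integrating this once more, the companion formula for $\int_0^\infty x^{-3}\sin(ax)\sin(bx)\sin(cx)\,\mathrm dx$, a continuous piecewise-quadratic function of $(a,b,c)$ that is again explicit on $(0,1]^3$.

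Carrying this out, the third identity becomes $\tfrac{\pi}{4}\iint_{[0,1]^2}(ab)^{-1}\max(a+b-1,0)\,\mathrm da\,\mathrm db$, and the substitution $u=1-a$ together with $\int_0^1\frac{u\log u}{1-u}\,\mathrm du=-\sum_{m\ge2}m^{-2}=1-\zeta(2)$ yields $\tfrac{\pi}{4}(2-\zeta(2))=\tfrac{\pi(12-\pi^2)}{24}$, the $\pi^2$ entering precisely through this zeta value. The first and second identities are handled identically over $[0,1]^3$ and $[0,1]^2$ using the three-sine formula, with again only $\zeta(2)$ arising. For the Fresnel identities the inner integral is once more the two-sine-one-cosine formula; for instance the second one equals $\tfrac12\iint_{[0,1]^2}(v\sqrt u)^{-1}I(u,v)\,\mathrm du\,\mathrm dv$ with $I(u,v)=\tfrac\pi4(v-u+1)$ on $\{u+v\ge1\}$ and $\tfrac{\pi v}{2}$ on $\{u+v<1\}$; after the substitution $w=\sqrt{1-v}$ the integrand becomes rational, and $\int_0^1\frac{w}{1+w}\,\mathrm dw=1-\log2$ is the source of the $\log 2$ term, giving $\tfrac{\pi}{36}(25-12\log2)$. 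The first Fresnel identity is the same computation with one further parameter from ${\rm Si}(x)/x$.

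The work is essentially bookkeeping rather than anything conceptual: the classical $x$-integrals are piecewise polynomials, so one must track the relevant sub-regions of the parameter cube (several more in the three-sine case, according to the ordering of $a,b,c,t$), and in the Fresnel cases the weight $u^{-1/2}$ forces the square-root substitution before the parameter integrals linearize. A cleaner but computationally heavier alternative is to argue through Mellin--Parseval, $\int_0^\infty h_1h_2\,\mathrm dx=\frac1{2\pi i}\int_{(c)}\widetilde h_1(s)\widetilde h_2(1-s)\,\mathrm ds$, using that the Mellin transforms of ${\rm Si}(x)/x$, $\cos x$, $\sin(x)/x$ are $\Gamma(s)\cos(\pi s/2)/(s-1)^2$ (the function ${\tt K}$ of the appendix, cf.\ \cite[6.246]{GR}), $\Gamma(s)\cos(\pi s/2)$, and $-\Gamma(s)\cos(\pi s/2)/(s-1)$ respectively, together with the analogous transform of the Fresnel function, and then evaluating the resulting Mellin--Barnes integrals by contour shifts and residues; on that route the $\log2$ comes from $\psi(\tfrac12)=-\gamma-2\log2$ at a double pole, and the $\pi^2$ from the double pole of $\Gamma(s)\cos(\pi s/2)/(s-1)^2$ at $s=1$.
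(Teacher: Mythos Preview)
Your proposal is correct and follows essentially the same route as the paper: the paper also writes ${\rm Si}(x)/x$ as $\int_0^1 \tfrac{\sin(sx)}{sx}\,\mathrm ds$, evaluates the resulting inner $x$-integral (via the residue theorem, yielding the same piecewise-polynomial formulas you quote), and then integrates over the parameter square, citing \cite{BB0} for the first identity and dispatching the rest with ``Similarly, one computes the other integrals.'' Your write-up is in fact more explicit than the paper's sketch, especially for the two Fresnel integrals, where the paper gives no details at all.
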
   

\begin{proof}
  The first integral is computed in \cite[Theorem 3]{BB0}. To compute the
  second, we observe that
  $$\int_0^{\infty} \left(\frac{{\rm Si}(x)}{x}\right)^2 \frac{ \sin(x)  }{x}
  \dd x = \int_0^1 \int_0^1 \int_0^{\infty} \frac{\sin(x) \sin(tx)
    \sin(sx)}{x^3} \dd x \frac{\dd t\, \dd s}{ts}.$$ By the residue theorem, it
  is readily seen that the inner integral equals
  \begin{displaymath}
    \begin{split}
      &\frac{\pi}{16} ((s+t+1)^2 - (s+t-1)^2\text{sgn}(s+t-1) - (s-t+1)^2
      \text{sgn}(s-t+1) - (t-s+1)^2\text{sgn}(t-s+1))\\
      & = \frac{\pi}{16}
      \begin{cases}
        -2 + 4 s - 2 s^2 + 4 t + 4 s t - 2 t^2 ,
        &  s + t \geq 1\\
        8st, & s + t \leq 1
      \end{cases}
    \end{split}
  \end{displaymath}
  for $0 \leq s, t \leq 1$, and a straightforward computation gives the
  desired result. Similarly, one computes the other integrals.
\end{proof}

The previous lemma  confirms the evaluation 
\begin{equation*}
  c_{\infty} = 32(  47 - \pi^2).
\end{equation*}

\section{Final remarks}\label{rem:not}

Here we show that $X_3, \dots, X_8, X^\dagger,\stX^\dagger$ do not belong to
any of the families of varieties described in the introduction for which
Manin's conjecture is already known. Whether or not $X_1$, $X_2$ are
biequivariant compactifications of a unipotent group, is not obvious to us,
but it is not hard to see that they are certainly neither horospherical nor
equivariant compactifications of $\Gd_\mathrm{a}^d$ nor wonderful
compactification of a semisimple group of adjoint type.

\begin{prop}\label{prop:nonsimp1}
  None of the varieties $X_3,\dots, X_8, X^\dagger,\stX^\dagger$ is isomorphic
  to a biequivariant compactification of a unipotent group.
\end{prop}

\begin{proof}
  By \cite[Proposition~1.1]{MR1906155}, the effective cone of every
  equivariant compactification of $\Gd_\mathrm{a}^3$ is simplicial.
  More generally, by \cite[Proposition~7.2]{MR3465086}, the same is true for
  biequivariant compactifications of unipotent groups.
  However, the effective cones of
  $X_3, \dots, X_8, X^\dagger,\stX^\dagger$ are not simplicial.
\end{proof}

\begin{prop}
  Neither $X_1$ nor $X_2$ is isomorphic to an equivariant
  compactification of $\Gd_\mathrm{a}^3$.
\end{prop}

\begin{proof}
  By \cite{arXiv:1802.08090}, only the first two entries of
  Table~\ref{tab:classification_spherical} are equivariant
  compactifications of $\Gd_\mathrm{a}^3$.
\end{proof}

\begin{prop}
  None of the varieties $X_1,\dots, X_8, X^\dagger,\stX^\dagger$ is
  isomorphic to a wonderful compactification of a semisimple group of
  adjoint type or to a wonderful variety covered by
  \cite[Corollary~1.5]{MR2795511}.
\end{prop}

\begin{proof}
  Over $\Qbar$, the only wonderful variety of dimension $3$ and Picard
  rank $3$ is $\Pd^1 \times \Pd^1 \times \Pd^1$; see, for instance,
  \cite{bl11}. Hence $X_1$ and $X_2$ are not wonderful {\color{blue}
    varieties}.

  Moreover, by \cite[Example~2.3.5]{bri07}, the effective cone of a
  wonderful compactification of a semisimple group of adjoint type is
  simplicial. Similarly, by \cite[Section~3.3]{MR2795511}, the
  effective cone of a wonderful variety covered by
  \cite[Corollary~1.5]{MR2795511} is simplicial. Hence the result for
  $X_3, \dots, X_8, X^\dagger,\stX^\dagger$ follows as in
  Proposition~\ref{prop:nonsimp1}.
\end{proof}

\begin{prop}
  None of the varieties $X_1,\dots, X_8, X^\dagger,\stX^\dagger$ is isomorphic
  to a horospherical variety.
\end{prop}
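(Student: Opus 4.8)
The plan is to argue by contradiction: assume that some $X$ on the list is (abstractly isomorphic to) a horospherical variety, i.e.\ that $X$ admits an action of a connected reductive group $G'$ over $\Qbar$ with a dense Borel orbit and full valuation cone $\Vm_{G'}=\Nm_{G',\Qd}$. Equivalently, the open $G'$-orbit is $G'/H'$ with $H'$ containing a maximal unipotent subgroup of $G'$, so that $G'/H'\cong\Gd_\mathrm{m}^s\times(G'/P')$ for a parabolic $P'\subseteq G'$ (a torus bundle over a flag variety), and $X$ is a $G'$-equivariant completion of such a homogeneous space. First I would reduce everything to ruling out this configuration; the geometric input already collected in the paper (the explicit $\SL_2\times\Gd_\mathrm{m}^r$-structures and their colored fans, and the Cox rings) should then suffice.

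For the threefolds $X_1,\dots,X_4$ the statement is immediate from Hofscheier's classification \cite{hofscheier}: every smooth horospherical Fano threefold is toric or a flag variety. By Table~\ref{tab:classification_spherical} none of $X_1,\dots,X_4$ is toric, and every smooth flag threefold of Picard rank at least $3$ is forced to be the toric variety $(\Pd^1)^3$ (the nontoric flag threefolds $\Pd^3$, the smooth quadric, $\Pd^1\times\Pd^2$ and the complete $\SL_3$-flag variety have Picard rank at most $2$), whereas $\rank\Pic X_j\ge 3$ for $j=1,\dots,4$. The same dichotomy rules out an abstract isomorphism of any $X_j$ with a horospherical smooth Fano threefold, so this case is finished.

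For the remaining $X_5,\dots,X_8,X^\dagger,\stX^\dagger$ no such classification is available, and — in contrast with Proposition~\ref{prop:nonga} and its companion for wonderful compactifications — the effective-cone criterion is useless here, since already smooth toric (hence horospherical) surfaces may have non-simplicial effective cones. Instead I would argue intrinsically. Each of these varieties carries the spherical action of $G=\SL_2\times\Gd_\mathrm{m}^r$ of type $T$ constructed in Sections~\ref{sec:geometry_X5_X6} and~\ref{sec:Xdagger} — for the singular $X^\dagger$ one transports it along the $G$-equivariant anticanonical contraction $\stX^\dagger\to X^\dagger$ — and its colored fan, as computed there, has valuation cone $\Vm_G=\{v:\langle v,\alpha\rangle\le 0\}\subsetneq\Nm_{G,\Qd}$; thus $X$ is \emph{not} horospherical \emph{as a $G$-variety}. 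The remaining, and genuinely hard, step is to upgrade this to: $X$ is not horospherical as a variety. The obstacle is that horosphericity is a priori group-dependent — $\Pd^2$ is horospherical under $\PGL_3$ but, via the $\mathrm{Sym}^2$-embedding, a non-horospherical spherical $\SL_2$-variety — so one cannot naively compare the fixed $G$-structure with a hypothetical $G'$-structure. I would resolve this by passing to the full connected automorphism group $\widetilde G=\mathrm{Aut}^\circ(X)$, which is reductive and under which $X$ is again spherical (as $G\subseteq\widetilde G$), and by establishing the structural fact that a horospherical structure on $X$ propagates to $\widetilde G$: the open $\widetilde G$-orbit contains the horospherical orbit $\Gd_\mathrm{m}^s\times(G'/P')$ as a dense subvariety, and since $\mathrm{Aut}^\circ(\Gd_\mathrm{m}^s\times F)=\Gd_\mathrm{m}^s\times\mathrm{Aut}^\circ(F)$ for a flag variety $F$ and this group still acts horospherically, $\widetilde G$ must act horospherically too. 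Then $\mathrm{Aut}^\circ(X)$ has semisimple rank at most one — this is where one uses that our $X$ are not flag varieties (they are blow-ups carrying genuine divisorial contractions, which homogeneous varieties do not admit), excluding the $\Pd^2$-phenomenon — so horosphericity would make $X$ horospherical under a group of the form $\SL_2\times\Gd_\mathrm{m}^s$, hence toric by the dichotomy recalled in Section~\ref{sec:ssr1}; this contradicts that the total coordinate space $\Spec\Rm(X)=\{\Phi=0\}$ is singular, so $\Rm(X)$ is not a polynomial ring and $X$ is not toric (for $X^\dagger$ use that $\Rm(X^\dagger)\cong\Rm(\stX^\dagger)$, the two being crepant-birational). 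The main work, and the only part not already in the paper, is this careful handling of the change of acting group; everything else is bookkeeping.
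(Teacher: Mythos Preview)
Your treatment of $X_1,\dots,X_4$ via Hofscheier's classification matches the paper. The trouble is the second half.

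Your key inputs for $X_5,\dots,X_8,X^\dagger,\stX^\dagger$ are: (1) if $X$ is horospherical under some $G'$ then it is horospherical under $\widetilde G=\mathrm{Aut}^\circ(X)$; (2) $\widetilde G$ has semisimple rank at most one. Neither is established. For (1), the identity $\mathrm{Aut}^\circ(\Gd_\mathrm{m}^s\times F)=\Gd_\mathrm{m}^s\times\mathrm{Aut}^\circ(F)$ is false: since $F$ is projective, every automorphism of $\Gd_\mathrm{m}^s\times F$ respects the projection to $\Gd_\mathrm{m}^s$, so one gets $\mathrm{Aut}^\circ(\Gd_\mathrm{m}^s\times F)\cong \Gd_\mathrm{m}^s\ltimes\mathrm{Mor}(\Gd_\mathrm{m}^s,\mathrm{Aut}^\circ(F))^\circ$, which is strictly larger as soon as $\mathrm{Aut}^\circ(F)$ is nontrivial. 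And even granting an embedding $\widetilde G\hookrightarrow\mathrm{Aut}^\circ(\text{open orbit})$, a \emph{subgroup} of a horospherically acting group need not act horospherically (your own $\Pd^2$ example shows this). For (2), ``$X$ is not a flag variety'' does not bound the semisimple rank of $\mathrm{Aut}^\circ(X)$: take $X=\Pd_Q(\Om\oplus\Om(1))$ over a smooth $3$-dimensional quadric $Q$. This is horospherical, not toric (its Cox ring is $\Rm(Q)[y_0,y_1]$, not polynomial), carries an honest divisorial contraction (the negative section), yet $\mathrm{Aut}^\circ(X)$ contains $\mathrm{SO}_5$ and so has semisimple rank $2$. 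Thus even if (1) held, your dichotomy ``horospherical of semisimple rank one $\Rightarrow$ toric'' cannot be invoked without first computing $\mathrm{Aut}^\circ(X_j)$ for each $j$, which you have not done.

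The paper avoids automorphism groups entirely. It uses the structural fact that a complete horospherical $G'$-variety, after removing a closed subset of codimension $\ge 2$, fibers $G'$-equivariantly over a flag variety $G'/P'$ with toric fibre, and that consequently $\Rm(X)\cong\Rm(G'/P')[X_1,\dots,X_r]$ with $r=\rank\Cl X-\rank\Cl(G'/P')+\dim X-\dim(G'/P')$. One then enumerates all flag varieties $G'/P'$ with $\dim(G'/P')\le\dim X\le 7$ (a finite list) and checks case by case that this polynomial-extension structure is incompatible with the known Cox ring: for $X^\dagger,\stX^\dagger$ and $X_5$ a simple count of Picard ranks and simpliciality of $\Eff(G'/P')$ suffices; for $X_6,X_7,X_8$ one compares the tuple of dimensions of the graded pieces $\Rm(X)_{[D]}$ over the extremal rays $[D]$ of $\Eff X$ with what $\Rm(G'/P')[X_1,\dots,X_r]$ can produce (and, for $X_7$, uses that a polynomial extension of a UFD is a UFD while a suitable quotient of $\Rm(X_7)$ is not). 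This is more laborious but requires nothing beyond the Cox ring data already tabulated in the paper.
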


\begin{proof}
  By \cite[\S 6]{hofscheier} and \cite{bl11}, the varieties in
  Table~\ref{tab:classification_spherical} are not horospherical; hence
  $X_1, \dots, X_4$ are not horospherical.
    
  Now let $X$ be a complete horospherical $G$-variety. After possibly removing
  a set of codimension at least $2$, we obtain a surjective $G$-equivariant
  morphism $X \to G/P$, where $P \subseteq G$ is a parabolic subgroup and the
  fiber $Y$ is a toric variety. The fan of $Y$ is obtained from the colored
  fan of $X$ by ignoring the colors. For details, we refer to
  \cite[Section~2]{bm13}. The generators of the effective cone $\Eff G/P$ are
  a basis of the divisor class group $\Cl G/P$. Moreover, we have
  $\Rm(X) = \Rm(G/P)[X_1, \dots, X_r]$ where
  \begin{align*}
    r = \rank \Cl X - \rank \Cl G/P + \dim X - \dim G/P
    = \text{the number of rays in the fan of $Y$;}
  \end{align*}
  this follows from \cite[Theorem~4.3.2]{bri07}, see also
  \cite[Theorem~3.8]{gag14}.
    
  \begin{table}[ht]
    \centering
    \begin{tabular}[ht]{cccccc}
      \hline
      root system & parabolic subgroup & $\dim G/P$ & $\mathscr{Z}$ & $\rank \Cl G/P$ & remark\\
      \hline \hline
      $A_1$ & $\alpha_1$ & $1$ & $(2)$ & 1 & toric \\
      $A_2$ & $\alpha_1$ & $2$ & $(3)$ & 1 & toric \\
      $A_2$ & $\alpha_1, \alpha_2$ & $3$ & $(3, 3)$ & 2 & \\
      $A_3$ & $\alpha_1$ & $3$ & $(4)$ & 1 & toric \\
      $A_3$ & $\alpha_2$ & $4$ & $(6)$ & 1 & \\
      $A_3$ & $\alpha_1,\alpha_2$ & $5$ & $(4, 6)$ & 2 & \\
      $A_3$ & $\alpha_1,\alpha_3$ & $5$ & $(4, 4)$ & 2 & \\
      $A_3$ & $\alpha_1,\alpha_2,\alpha_3$ & $6$ & $(4, 4, 6)$ & 3 & \\
      $A_4$ & $\alpha_1$ & $4$ & $(5)$ & 1 & toric \\
      $A_4$ & $\alpha_2$ & $6$ & $(10)$ & 1 & \\
      $A_5$ & $\alpha_1$ & $5$ & $(6)$ & 1 & toric \\
      $A_6$ & $\alpha_1$ & $6$ & $(7)$ & 1 & toric \\

      $B_2$ & $\alpha_1$ & $3$ & $(5)$ & 1 & \\
      $B_2$ & $\alpha_2$ & $3$ & $(4)$ & 1 & toric \\
      $B_2$ & $\alpha_1,\alpha_2$ & $4$ & $(4,5)$ & 2 & \\

      $B_3$ & $\alpha_1$ & $5$ & $(7)$ & 1 & \\
      $B_3$ & $\alpha_3$ & $6$ & $(8)$ & 1 & \\
      
      $C_3$ & $\alpha_1$ & $5$ & $(6)$ & 1 & toric \\
      $C_3$ & $\alpha_3$ & $6$ & $(14)$ & 1 & \\
      
      $D_4$ & $\alpha_1$ & $6$ & $(8)$ & 1 & \\
      
      $G_2$ & $\alpha_1$ & $5$ & $(7)$ & 1 & \\
      $G_2$ & $\alpha_2$ & $5$ & $(14)$ & 1 & \\
      $G_2$ & $\alpha_1,\alpha_2$ & $6$ & $(7,14)$ & 2 & \\
      \hline
    \end{tabular}
    \caption{Flag varieties of simple groups and of dimension up to $6$}
    \label{tab:classification_flag5}
  \end{table}

  \begin{table}[ht]
    \centering
    \begin{tabular}[ht]{cccccccc}
      \hline
      root system & parabolic subgroup & $\dim G/P$ & $\mathscr{Z}$ & $\rank \Cl G/P$ & $r_{X_6}$ & $r_{X_7}$ & $r_{X_8}$\\
      \hline \hline
      $A_2$ & $\alpha_1, \alpha_2$ & $3$ & $(3, 3)$ & $2$ & $3$ & $6$ & $8$\\
      $B_2$ & $\alpha_1$ & $3$ & $(5)$ & $1$ & $4$ & $7$ & $9$\\
      
      \hline
      $A_3$ & $\alpha_2$ & $4$ & $(6)$ & $1$ & $3$ & $6$ & $8$\\
      $B_2$ & $\alpha_1,\alpha_2$ & $4$ & $(4,5)$ & $2$ & $2$ & $5$ & $7$\\
      $A_2 \times A_1$ & $\alpha_1, \alpha_2, \beta_1$ & $4$ & $(2, 3, 3)$ & $3$ & $1$ & $4$ & $6$\\
      $B_2 \times A_1$ & $\alpha_1, \beta_1$ & $4$ & $(2, 5)$ & $2$ & $2$ & $5$ & $7$\\
      
      \hline
      $A_3$ & $\alpha_1,\alpha_2$ & $5$ & $(4, 6)$ & $2$ & & $1$ & $3$ \\
      $A_3$ & $\alpha_1,\alpha_3$ & $5$ & $(4, 4)$ & $2$ & & $1$ & $3$ \\
      $B_3$ & $\alpha_1$ & $5$ & $(7)$ & $1$ & & $0$ & $2$ \\
      $G_2$ & $\alpha_1$ & $5$ & $(7)$ & $1$ & & $0$ & $2$ \\
      $G_2$ & $\alpha_2$ & $5$ & $(14)$ & $1$ & & $0$ & $2$ \\
      
      $A_3 \times A_1$ & $\alpha_2,\beta_1$ & $5$ & $(2,6)$ & $2$ & & $1$ & $3$ \\
      $B_2 \times A_1$ & $\alpha_1,\alpha_2,\beta_1$ & $5$ & $(2,4,5)$ & $3$ & & $-1$ & $0$ \\
      
      $A_2 \times A_2$ & $\alpha_1, \alpha_2, \beta_1$ & $5$ & $(3, 3, 3)$ & $3$ & & $-1$ & $0$ \\
      $B_2 \times A_2$ & $\alpha_1, \beta_2$ & $5$ & $(3, 5)$ & $2$ & & $1$ & $3$ \\
      
      $A_2 \times A_1 \times A_1$ & $\alpha_1, \alpha_2, \beta_1, \gamma_1$ & $5$ & $(2, 2, 3, 3)$ & $4$ & & $-2$ & $-1$ \\
      $B_2 \times A_1 \times A_1$ & $\alpha_1, \beta_1, \gamma_1$ & $5$ & $(2, 2, 5)$ & $3$ & & $-1$ & $0$  \\
      
      \hline 
      $A_3$ & $\alpha_1,\alpha_2,\alpha_3$ & $6$ & $(4, 4, 6)$ & $3$ & & & $4$ \\
      $A_4$ & $\alpha_2$ & $6$ & $(10)$ & $1$ & & & $6$ \\
      $B_3$ & $\alpha_3$ & $6$ & $(8)$ & $1$ & & & $6$ \\
      $C_3$ & $\alpha_3$ & $6$ & $(14)$ & $1$ & & & $6$ \\
      $D_4$ & $\alpha_1$ & $6$ & $(8)$ & $1$ & & & $6$ \\
      $G_2$ & $\alpha_1,\alpha_2$ & $6$ & $(7,14)$ & $2$ & & & $5$ \\
      
      $A_3 \times A_1$ & $\alpha_1,\alpha_2,\beta_1$ & $6$ & $(2, 4, 6)$ & $3$ & & & $4$ \\
      $A_3 \times A_1$ & $\alpha_1,\alpha_3,\beta_1$ & $6$ & $(2, 4, 4)$ & $3$ & & & $4$ \\
      $B_3 \times A_1$ & $\alpha_1,\beta_1$ & $6$ & $(2, 7)$ & $2$ & & & $5$ \\
      $G_2 \times A_1$ & $\alpha_1,\beta_1$ & $6$ & $(2, 7)$ & $2$ & & & $5$ \\
      $G_2 \times A_1$ & $\alpha_2,\beta_1$ & $6$ & $(2, 14)$ & $2$ & & & $5$ \\
      
      $A_3 \times A_2$ & $\alpha_2, \beta_1$ & $6$ & $(3, 6)$ & $2$ & & & $5$ \\
      $B_2 \times A_2$ & $\alpha_1,\alpha_2, \beta_1$ & $6$ & $(3, 4,5)$ & $3$ & & & $4$ \\

      $A_3 \times A_1 \times A_1$ & $\alpha_2, \beta_1, \gamma_1$ & $6$ & $(2, 2, 6)$ & $3$ & & & $4$ \\
      $B_2 \times A_1 \times A_1$ & $\alpha_1,\alpha_2,\beta_1,\gamma_1$ & $6$ & $(2, 2, 4,5)$ & $4$ & & & $3$ \\
      
      $A_2 \times A_2$ & $\alpha_1, \alpha_2, \beta_1, \beta_2$ & $6$ & $(3, 3, 3, 3)$ & $4$ & & & $3$ \\
      $A_2 \times B_2$ & $\alpha_1, \alpha_2, \beta_1$ & $6$ & $(3, 3, 5)$ & $3$ & & & $4$ \\
      $B_2 \times B_2$ & $\alpha_1, \beta_1$ & $6$ & $(5, 5)$ & $2$ & & & $5$ \\
      
      $A_2 \times A_3$ & $\alpha_1, \alpha_2, \beta_1$ & $6$ & $(3, 3, 4)$ & $3$ & & & $4$ \\
      $A_2 \times B_2$ & $\alpha_1, \alpha_2, \beta_2$ & $6$ & $(3, 3)$ & $3$ & & & $4$ \\
      $B_2 \times A_3$ & $\alpha_1, \beta_1$ & $6$ & $(4, 4, 5)$ & $2$ & & & $5$ \\
      $B_2 \times B_2$ & $\alpha_1, \beta_2$ & $6$ & $(4, 5)$ & $2$ & & & $5$ \\
      
      $A_2 \times A_2 \times A_1$ & $\alpha_1, \alpha_2, \beta_1, \gamma_1$ & $6$ & $(2, 3, 3, 3)$ & $4$ & & & $3$ \\
      $B_2 \times A_2 \times A_1$ & $\alpha_1, \beta_1, \gamma_1$ & $6$ & $(2, 3, 5)$ & $3$ & & & $4$ \\
      
      $A_2 \times A_1 \times A_1 \times A_1$ & $\alpha_1, \alpha_2, \beta_1, \gamma_1, \delta_1$ & $6$ & $(2, 2, 2, 3, 3)$ & $5$ & & & $2$ \\
      $B_2 \times A_1 \times A_1 \times A_1$ & $\alpha_1, \beta_1, \gamma_1, \delta_1$ & $6$ & $(2, 2, 2, 6)$ & $4$ & & & $3$ \\
      \hline
    \end{tabular}
    \caption{Nontoric flag varieties of dimension up to $6$}
    \label{tab:classification_flag6}
  \end{table}

  Table~\ref{tab:classification_flag6} contains the data of all nontoric flag
  varieties $G/P$ required here. It can be computed from
  Table~\ref{tab:classification_flag5} by forming products. The parabolic
  subgroup $P$ is described by the complement of the subset of the simple
  roots used in \cite[Theorem~8.4.3]{spr98}. It follows that the set of colors
  of $G/P$ is in bijection with the subset of simple roots given in the
  tables; see \cite[after D\'efinition 2.6]{pas08}. By
  \cite[Proposition~4.1.1]{bri07}, the rank of $\Cl G/P$ is the number of
  colors. The dimension of $G/P$ can be deduced, for instance, by
  \cite[p.~9]{tim11}. For simple $G$, it follows from
  \cite[Proposition~6.1]{gh17} that $G/P$ is toric if and only if the Dynkin
  diagram of $G$ marked with the subset of simple roots given in the tables
  appears in \cite[Lemme~2.13]{pasth}. The meaning of $\mathscr{Z}$ will be
  explained below.

  First, assume that $X^\dagger$ or $\stX^\dagger$ is isomorphic to $X$.  Then
  we have $\dim X = 3$. Recall that the effective cones of $X^\dagger$ and
  $\stX^\dagger$ are not simplicial. Since the effective cone of any flag
  variety is simplicial, we deduce $\dim G/P \le 2$. It follows that $G/P$ is
  isomorphic to a toric variety, and hence the same is true for $X$. But
  according to Section~\ref{sec:Xdagger}, the Cox rings of $X^\dagger$ and
  $\stX^\dagger$ are not polynomial rings, a contradiction.

  Next assume that $X_5$ is isomorphic to $X$. Then we have $\dim X = 4$. As
  before, we obtain $\dim G/P \le 3$ from the fact that the effective cone of
  $X_5$ is not simplicial and $\dim G/P \ge 3$ from the fact that the variety
  $X_5$ is not isomorphic to a toric one. Hence we have $\dim G/P = 3$, and
  therefore, $\rank \Cl G/P \le 3$. Moreover, we have $\dim Y = 1$ and
  therefore $r \le 2$. We obtain $\rank \Cl X \le 4$, a~contradiction to
  $\rank \Cl X_5 = 5$.

  Next assume that $X_6$ is isomorphic to $X$. Then we have $\dim X = 5$. Let
  $\mathscr{Z}(X_6)$ be the ordered tuple of the dimensions of the homogeneous
  parts of the Cox ring $\Rm(X_6)$ for the generators of the effective cone of
  $X_6$. According to Section~\ref{sec:cox_X6}, we have
  \begin{align*}
    \mathscr{Z}(X_6) = (1, 1, 2, 3)\text{.}
  \end{align*}
  As in the previous cases, we obtain $3 \le \dim G/P \le 4$. The possible
  values for $\mathscr{Z}(G/P)$ and $r = r_{X_6}$ are given in
  Table~\ref{tab:classification_flag6} (the toric cases are excluded). The
  values for $\mathscr{Z}(G/P)$ are computed using the Weyl dimension formula;
  see, for instance, \cite[Corollary~24.3]{hum80}. We have a natural
  surjective map $\phi\colon \Cl G/P \times \Zd^r \to \Cl X $ compatible with
  the $\Cl X $-grading and the finer $\Cl G/P \times \Zd^r$-grading of
  $\Rm(X)$. It maps the cone $\Eff G/P \times \Zd_{\ge 0}^r$ generated by
  $\Eff G/P$ and the degrees of $X_1, \dots, X_r$ onto $\Eff X$. Moreover, we
  have $(\Eff G/P \times \Zd_{\ge 0}^r) \cap \ker \phi = \{0\}$. It follows that
  every element of $\mathscr{Z}(X_6)$ is a sum where the summands are taken
  from the elements of $\mathscr{Z}(R/P)$ and from $r_{X_6}$ times the
  summand~$1$ and each summand may be used at most once in total. This is
  impossible for all cases in Table~\ref{tab:classification_flag6}. The same
  argument works for $X_8$, which satisfies
  \begin{align*}
    \mathscr{Z}(X_8) = (1, 1, 1, 1, 1, 1, 2, 2)
  \end{align*}
  according to Section~\ref{sec:cox_X8}.

  Finally assume that $X_7$ is isomorphic to $X$. According to Section~\ref{sec:cox_X7}, we have
  \begin{align*}
    \mathscr{Z}(X_7) = (1, 1, 1, 1, 1, 1)\text{.}
  \end{align*}
  It follows that there exists an isomorphism
  \begin{align*}
    \Rm(X_7) &\to \Rm(G/P)[X_1, \dots, X_r]\text{,} \\
    (x_{03}, x_{31}, x_{32}, x_{33}, x_{34}, x_{35})  &\mapsto (X_1, X_2, X_3, X_4, X_5, X_6)\text{.}
  \end{align*}
  After dividing out the ideal $(x_{03}, x_{31}, x_{32}, x_{33}, x_{34}, x_{35})$, we obtain
  an isomorphism
  \begin{align*}
    \Qd[x_{01}, x_{02}, x_{11}, x_{12}, x_{21}, x_{22}]/(x_{11}x_{12}-x_{21}x_{22})
    \to \Rm(G/P)[X_{7}, \dots, X_{r}]\text{.}
  \end{align*}
  This is a contradiction since the second ring is factorial by
  \cite[Proposition~1.4.1.5(i)]{adhl15}, while the first ring is not.
\end{proof}

\end{document}